\def\11{\mathbbm{1}}
\def\Gc{\mathcal{G}}
\def\ER{Erd\H{o}s-R\'enyi\ }
\def\K{\mathcal{K}}
\def\Qb{\mathbb Q}
\newcommand{\Pb}{\mathbb P}
\newcommand\defby{\overset{\triangle}{=}}
\newcommand{\doublesetminus}
{\mathbin{\setminus\mkern-5mu\setminus}}
\newcommand{\doublesymdiff}{%
  \mathbin{\text{\raisebox{0.20ex}{\scalebox{0.60}{$\triangle$}}\hspace{-0.70em}$\triangle$}}%
}
\newtheorem{thm}{Theorem}[section]
\newtheorem{defn}[thm]{Definition}
\newtheorem{claim}[thm]{Claim}
\newtheorem{proposition}[thm]{Proposition}
\newtheorem{lemma}[thm]{Lemma}
\newtheorem{cor}[thm]{Corollary}
\newtheorem{remark}[thm]{Remark}
\numberwithin{equation}{section}
\newenvironment{definition}[1][Definition]{\begin{trivlist}
\item[\hskip \labelsep {\bfseries #1}]}{\end{trivlist}}
\begin{document}
\title{A computational transition for detecting correlated stochastic block models by low-degree polynomials}
	
\author{ Guanyi Chen \thanks{School of Mathematical Sciences, Peking University.} \and Jian Ding $^{*}$ \and Shuyang Gong $^{*}$ \and Zhangsong Li $^{*}$ }

\maketitle

\begin{abstract}
Detection of correlation in a pair of random graphs is a fundamental statistical and computational problem that has been extensively studied in recent years. In this work, we consider a pair of correlated (sparse) stochastic block models $\mathcal{S}(n,\tfrac{\lambda}{n};k,\epsilon;s)$ that are subsampled from a common parent stochastic block model $\mathcal S(n,\tfrac{\lambda}{n};k,\epsilon)$ with $k=O(1)$ symmetric communities, average degree $\lambda=O(1)$, divergence parameter $\epsilon$, and subsampling probability $s$. 

For the detection problem of distinguishing this model from a pair of independent \ER graphs with the same edge density $\mathcal{G}(n,\tfrac{\lambda s}{n})$, we focus on tests based on \emph{low-degree polynomials} of the entries of the adjacency matrices, and we determine the threshold that separates the easy and hard regimes. More precisely, we show that this class of tests can distinguish these two models if and only if $s> \min \{ \sqrt{\alpha}, \frac{1}{\lambda \epsilon^2} \}$, where $\alpha\approx 0.338$ is the Otter's constant and $\frac{1}{\lambda \epsilon^2}$ is the Kesten--Stigum threshold. Combining a reduction argument in \cite{Li25+}, our hardness result also implies low-degree hardness for partial recovery and detection (to independent block models) when $s< \min \{ \sqrt{\alpha}, \frac{1}{\lambda \epsilon^2} \}$. Finally, our proof of low-degree hardness is based on a conditional variant of the low-degree likelihood calculation.
\end{abstract}

\tableofcontents

\section{Introduction}

In this paper, we consider a pair of correlated sparse stochastic block models with a constant number of symmetric communities, defined as follows. For convenience, denote by $\operatorname{U}_n$ the collection of unordered pairs $(i,j)$ with $1\le i\neq j\le n$.

\begin{defn}[Stochastic block model] {\label{def-SBM}}
    Given an integer $n\ge 1$ and three parameters $k \in \mathbb{N}, \lambda>0, \epsilon \in (0,1)$, we define a random graph $G$ as follows: 
    \begin{itemize}
    \item Sample a labeling $\sigma_* \in [k]^{n} = \{ 1,\ldots,k \}^{n}$ uniformly at random; 
    \item For every distinct pair $(i,j) \in \operatorname{U}_n$, we let $G_{i,j}$ be an independent Bernoulli variable such that $G_{i,j} = 1$ (which represents that there is an undirected edge between $i$ and $j$) with probability $\frac{(1+(k-1)\epsilon)\lambda}{n}$ if $\sigma_*(i) = \sigma_*(j)$ and with probability $\frac{(1-\epsilon)\lambda}{n}$ if $\sigma_*(i) \neq \sigma_*(j)$. 
    \end{itemize}
    In this case, we say that $G$ is sampled from a stochastic block model $\mathcal S(n, \tfrac{\lambda}{n}; k, \epsilon)$.
\end{defn}

\begin{defn}[Correlated stochastic block models] {\label{def-correlated-SBM}}
    Given an integer $n\ge 1$ and four parameters $k \in \mathbb{N}, \lambda>0, \epsilon,s \in (0,1)$, for $(i,j) \in \operatorname{U}_n$ let $J_{i,j}$ and $K_{i,j}$ be independent Bernoulli variables with parameter $s$. In addition, let $\pi_*$ be an independent uniform permutation of $[n]=\{1,\dots,n\}$. Then, we define a triple of correlated random graphs $(G,A,B)$ such that $G$ is sampled from a stochastic block model $\mathcal{S}(n,\tfrac{\lambda}{n};k,\epsilon)$, and conditioned on the realization of $G$ (note that we identify a graph with its adjacency matrix),
    \[
    A_{i,j} = G_{i,j}J_{i,j}, B_{i,j} = G_{\pi_*^{-1}(i),\pi_*^{-1}(j)} K_{i,j} \,.
    \]
    We denote the joint law of $(\sigma_*,\pi_*,G,A,B)$ as $\Pb_{*,n}:=\Pb_{*,n,\lambda;k,\epsilon;s}$, and we denote the marginal law of $(A,B)$ as $\Pb_n:=\Pb_{n,\lambda;k,\epsilon;s}$.
\end{defn}
Two basic problems regarding correlated stochastic block models are as follows: (1) the detection problem, i.e., testing $\mathbb{P}_n$ against $\mathbb{Q}_n$ where $\mathbb{Q}_n$ is the law of two independent \ER graphs on $[n]$ with edge density $\tfrac{\lambda s}{n}$; (2) the recovery problem, i.e., recovering the latent matching $\pi_*$ and the latent community labeling $\sigma_*$ from $(A,B) \sim \Pb_n$. Our focus is on understanding the power and limitations of {\em computationally efficient} tests, that is, tests that can be performed by polynomial-time algorithms. In light of the lack of complexity-theoretic tools to prove computational hardness of {\em average-case} problems such as the one under consideration (where the input is random), currently the leading approaches for {\em demonstrating} hardness are based on either average-case reductions which formally relate different average-case problems to each other (see, e.g., \cite{BBH18, BB20} and references therein) or based on unconditional lower bounds against restricted classes of algorithms (see e.g. \cite{BPW18, Gamarnik21}). 

Our main result establishes a sharp computational transition for algorithms restricted to {\em low-degree polynomial tests}. This is a powerful class of tests that include statistics such as small subgraph counts. It is by now well-established that these low-degree tests are useful proxies for computationally efficient tests, in the sense that the best-known polynomial-time algorithms for a wide variety of high-dimensional testing problems are captured by the low-degree class; see e.g. \cite{Hopkins18, KWB22}.

\begin{thm}[Computational detection threshold for low-degree polynomials, informal] {\label{MAIN-THM-detection}}
    With the observation of a pair of random graphs $(A,B)$ sampled from either $\Pb_n$ or $\Qb_n$, we have the following (below degree-$\omega(1)$ means that degree tends to infinity as $n\to\infty$).
    \begin{enumerate}
    \item[(1)] When $s>\tfrac{1}{\lambda\epsilon^2}$ or $s>\sqrt{\alpha}$ where (throughout the paper) $\alpha\approx 0.338$ is the Otter's constant, for any $D_n\to\infty$ and $D_n=o\big(\tfrac{\log n}{\log \log n}\big)$ there is an algorithm $\mathsf{Alg}$ based on degree-$D_n$ polynomials that successfully distinguishes $\Pb_n$ and $\Qb_n$ in the sense that (we write $\mathsf{Alg}$ outputs $\mathbb P_n$/$\mathbb Q_n$ if $\mathsf{Alg}$ decides the sample is from $\mathbb P_n$/$\mathbb Q_n$)
    \begin{equation}{\label{eq-success}}
        \mathbb P_n(\mathsf{Alg}\mbox{ outputs } \mathbb Q_n) + \mathbb Q_n(\mathsf{Alg}\mbox{ outputs } \mathbb P_n)  = o(1)\,.
    \end{equation}
    In addition, this algorithm has running time $n^{2+o(1)}$.
    \item[(2)] When $s<\min\{ \sqrt{\alpha}, \frac{1}{\lambda \epsilon^2} \}$, there is evidence suggesting that all algorithms based on degree-$O(n^{o(1)})$ polynomials fail to distinguish $\Pb_n$ and $\Qb_n$. See Theorem~\ref{main-thm-detection-lower-bound} for a precise statement.
    \end{enumerate}
\end{thm}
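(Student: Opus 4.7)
I would handle the two sub-regimes of Part (1) by two distinct degree-$O(\log n/\log\log n)$ polynomial tests. In the Kesten--Stigum regime $s > 1/(\lambda\epsilon^2)$, one may work with a single marginal graph, since $A$ alone has law $\mathcal S(n, \tfrac{\lambda s}{n}; k, \epsilon)$ with community parameter $\lambda s \epsilon^2 > 1$. The plan is to use the centered signed-cycle statistic
\[
Z_\ell(A) = \sum_{C \text{ cycle of length } \ell} \prod_{(i,j)\in C}\!\big(A_{ij} - \tfrac{\lambda s}{n}\big)
\]
for $\ell \to \infty$ slowly, and verify that its mean-to-standard-deviation ratio under $\Pb_n$ diverges when $\lambda s\epsilon^2 > 1$, in the style of Mossel--Neeman--Sly and Hopkins--Steurer. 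In the Otter regime $s > \sqrt{\alpha}$, one instead uses a cross-graph tree-count statistic of the form $\sum_T w_T N_T(A) N_T(B)$, where $T$ ranges over unlabeled trees of size up to $D_n$ and the weights $w_T$ are tuned as in the correlated \ER analyses of Mao--Wu--Xu--Yu and Ding--Du--Li; the $\sqrt{\alpha}$ threshold emerges because the number of unlabeled trees on $k$ vertices grows like $\alpha^{-k}$, so the associated second moment converges exactly when $s^2 > \alpha$. Both statistics are polynomials of sufficiently low degree and can be evaluated in $n^{2+o(1)}$ time via color-coding or direct enumeration over local neighborhoods.

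\textbf{Hardness part via conditional LDLR.} For Part (2) the plan is a conditional low-degree likelihood-ratio computation. Writing $L_n = \tfrac{d\Pb_n}{d\Qb_n} = \Eb_{\pi_*, \sigma_*}[L_n(\pi_*, \sigma_*)]$ as a mixture over the latent matching and labeling, the squared low-degree projection unfolds as
\[
\|L_n^{\le D}\|_{\Qb_n}^2 = \Eb_{(\pi_*, \sigma_*), (\pi'_*, \sigma'_*)}\!\left[\langle L_n^{\le D}(\pi_*, \sigma_*),\, L_n^{\le D}(\pi'_*, \sigma'_*)\rangle_{\Qb_n}\right],
\]
where the inner product expands as a sum over pairs of edge subsets $(E_A, E_B)$ with $|E_A|+|E_B|\le D$, contributing a factor per connected component of the associated joint witness multigraph. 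The conditional variant restricts this expectation to pairs $((\pi_*, \sigma_*), (\pi'_*, \sigma'_*))$ having typical overlap -- in particular, typical number of fixed points of the overlap permutation $\pi_*^{-1}\pi'_*$ and typical community-agreement profile between $\sigma_*$ and $\sigma'_*$ -- so that atypical but positive-probability overlaps, which could inflate the unconditional second moment, are excluded. The goal is then to show that this conditional second moment is $1+o(1)$ whenever $s < \min\{\sqrt\alpha, 1/(\lambda\epsilon^2)\}$, and to argue, in the spirit of the reduction of \cite{Li25+}, that conditional low-degree hardness suffices to rule out all degree-$O(n^{o(1)})$ polynomial tests.

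\textbf{Main obstacle.} The chief difficulty is controlling the combinatorial sum over connected witness-components, each of which simultaneously carries a tree-like contribution encoding the matching signal and a cycle-like contribution encoding the community signal. The former is tamed by Otter-type bounds on unlabeled trees, saturating at $s = \sqrt\alpha$, while the latter is tamed by signed-cycle moment estimates, saturating at $\lambda s\epsilon^2 = 1$. The two must be jointly bounded so that the overall series converges precisely when \emph{both} inequalities hold strictly, which requires a careful decomposition of each component into its tree-part and cycle-part and a tight vertex-by-vertex accounting of the resulting weights. A secondary obstacle, intrinsic to the conditional framework, is identifying the correct notion of ``typical overlap'' and proving concentration sharp enough that its complement contributes negligibly even after low-degree truncation; this is where the present calculation genuinely departs from prior second-moment analyses handling graph matching or community detection in isolation, and where I expect the bulk of the technical work to lie.
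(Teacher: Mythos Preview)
Your algorithmic sketch for Part~(1) matches the paper: signed-cycle counts on a single marginal above Kesten--Stigum, and a cross-graph tree-count statistic above $\sqrt{\alpha}$, with $n^{2+o(1)}$ running time via color coding.

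For Part~(2), however, your plan diverges from the paper in a way that the authors themselves flag as problematic. You propose the Franz--Parisi/low-overlap route: expand $\|L_n^{\le D}\|^2$ as an expectation over two independent copies of the latent pair $(\pi_*,\sigma_*)$ and restrict to ``typical overlap''. The paper explicitly considers this approach and says (Section~1.1, last paragraph) that they do not know how to make it work here, because the random permutation $\pi_*$ involves all $n$ coordinates and the usual overlap-conditioning machinery does not obviously tame it. Instead, the paper conditions on the \emph{parent graph} $G$ itself: they define an event $\mathcal E$ on which $G$ contains no dense subgraph of size $\le D^3$ and, crucially, no cycle of length $\le N$. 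The short-cycle exclusion is the conceptual point you are missing. A single small cycle in $G$ already drives certain basis coefficients $\mathbb E_{\Pb}[\phi_{S_1,S_2}]$ to diverge (the paper gives the explicit example of $\ell$ disjoint triangles), and since short cycles occur with \emph{positive} rather than vanishing probability in a sparse SBM, the bad event is not $o(1)$. This forces a conditioning on an event of merely positive probability, and then a further replacement of $\Pb(\cdot\mid\mathcal E)$ by a hand-built measure $\Pb'$ (Definition~4.3) that is $o(1)$-close in total variation but restores enough edge-wise structure to compute $\mathbb E_{\Pb'}[\phi_{S_1,S_2}]$ directly.

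So the genuine gap is twofold. First, your overlap-conditioning is on the wrong object: the paper conditions on a structural property of the parent graph, not on the latent-variable overlap, and you would need to explain how a Franz--Parisi argument handles the permutation --- something the authors could not do. Second, your ``typical overlap'' picture implicitly assumes the bad set has measure $o(1)$, whereas the actual obstruction (short cycles) is a \emph{constant-probability} event; any conditioning scheme must confront this, and the resulting calculation of conditional basis expectations (Sections~4.2--4.3 and Appendix~C) is where the real work sits, via a reduction to admissible polynomials and delicate cancellations rather than a global second-moment bound.
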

\begin{remark}
    We briefly remark on the information-theoretic side of the testing problem under consideration. In \cite{MSS23, MSS24+} (which extended previous results in \cite{MNS15, Massoulie14}), it was shown that when $\epsilon^2 \lambda s>C_*$ where $C_*=C_*(k)\leq 1$ is the reconstruction-on-tree threshold, it is information-theoretically possible to detect a single block model $\mathcal S(n,\tfrac{\lambda s}{n};k,\epsilon)$ against $\mathcal G(n,\tfrac{\lambda s}{n})$. In addition, in a series of works \cite{WXY23, MWXY23, DD23a, Feng25+} it was shown that when $s>\min\{ \sqrt{\alpha}, \tfrac{1}{\sqrt{\lambda}} \}$ where $\alpha$ is the Otter's constant, it is information-theoretically possible to detect a pair of correlated \ER graphs $\mathcal G(n,\tfrac{\lambda}{n};s)$ (which means that these two graphs are subsampled from a parent \ER graph $\mathcal G(n,\tfrac{\lambda}{n})$ with subsampling probability $s$) against two independent \ER graphs $\mathcal G(n,\tfrac{\lambda s}{n})$. It seems plausible that the feasibility results in \cite{DD23a, Feng25+} can be extended to the setting of testing $\mathcal S(n,\tfrac{\lambda}{n};k,\epsilon;s)$ against two independent $\mathcal G(n,\tfrac{\lambda s}{n})$, which would then lead to the following: the testing problem between $\Pb_n$ and $\Qb_n$ is feasible at least when
    \begin{align*}
        s > \min\big\{ \tfrac{C_*(k)}{\epsilon^2 \lambda}, \sqrt{\alpha}, \tfrac{1}{\sqrt{\lambda}} \big\} \,.
    \end{align*}
    Determining the exact information-theoretic threshold for this testing problem  remains an intriguing open question. 
\end{remark}

\begin{remark}
    Building on Theorem~\ref{MAIN-THM-detection} and a natural strengthening of the low-degree conjecture, a recent work \cite{Li25+} provides evidences that the related \emph{partial matching recovery} (i.e., recovering a positive fraction of the coordinates of $\pi_*$) problem is computationally impossible when $s<\min\{ \sqrt{\alpha}, \tfrac{1}{\lambda \epsilon^2} \}$. 
\end{remark}
\begin{remark}
    It is natural to consider the related problem of testing $\Pb_n$ against $\widetilde{\Qb}_n$, where $\widetilde{\Qb}_n$ is the law of two independent stochastic block models $\mathcal S(n,\tfrac{\lambda s}{n};k,\epsilon)$, and we remark that Theorem~\ref{MAIN-THM-detection} is helpful also in understanding the computational feasibility of this related testing problem. The upper bound (i) in Theorem~\ref{MAIN-THM-detection} is the relatively easy part of our result, which essentially generalizes the ideas in \cite{MWXY21+, MWXY23} for the detection problem in correlated \ER graphs. However, this requires a modified proof in the SBM case and it is plausible that our proof can be generalized to give an efficient algorithm that strongly detects $\Pb_n$ from $\widetilde{\Qb}_n$ when $s>\sqrt{\alpha}$. Furthermore, the lower bound (ii) is the most technical part and it also leads to several corollaries in the testing problem between $\Pb_n$ and $\widetilde{\Qb}_n$. Let us focus on the regime where $s< \min\{\sqrt{\alpha}, \tfrac{1}{\lambda \epsilon^2}\}$ such that marginally each graph is an SBM below the Kesten-Stigum threshold. When $k \leq 4$, it was shown in \cite{MNS15, MSS23} that $\widetilde{\Qb}_n$ is contiguous with $\Qb_n$ (although their result focuses on a single graph, extending it to two \emph{independent} graphs is straightforward). Thus, our result provides evidence suggesting that all the three measures
    \begin{align*}
        \Pb_n \,, \quad \Qb_n \,, \quad \widetilde{\Qb}_n
    \end{align*}
    cannot be distinguished by efficient algorithms. For general $k \in \mathbb N$, it was shown in \cite{Li25+} that $\widetilde{\Qb}_n$ is contiguous with $\Qb_n$ in the algorithmic sense (see Definition~2.3 therein), provided (in addition) the average degree $\lambda s$ is sufficiently large. Thus, in this case our result also suggests that all these three measures cannot be distinguished by efficient algorithms as long as $s<\sqrt{\alpha}$. 
\end{remark}

\begin{remark}
    For the hypothesis testing problem between $\Pb_n$ (a pair of correlated SBMs) and $\widetilde{\Qb}_n$ (a pair of independent SBMs), we point out that when marginally each graph is above the Kesten-Stigum threshold, it seems that the (computational) detection threshold should be strictly below $\sqrt{\alpha}$, as in this case it is possible to recover the community labels for a positive fraction of vertices, and thus (intuitively) it is possible to break the Otter's threshold by counting colored trees. This was further supported by a recent work \cite{CDGL25+}, which has shown that in the case of $k=2$, strong detection between $\Pb_n$ and $\widetilde{\Qb}_n$ is achievable when $s\geq \sqrt{\alpha}-\delta$ and $\lambda>\Delta:=\Delta(\epsilon,\delta)$ where $\delta$ is a sufficiently small constant and $\Delta$ is a sufficiently large constant that depends on $\epsilon,\delta$. However, rigorous analysis in the general case seems of substantial challenge and we leave it for future work.
\end{remark}

\subsection{Backgrounds and related works}

$\textbf{Community detection.}$ Introduced in \cite{HLL83}, the stochastic block model is a canonical probabilistic generative model for networks with community structure and as a result has received extensive attention over the past decades. In particular, it serves as an essential benchmark for studying the behavior of clustering algorithms on average-case networks (see, e.g., \cite{SN97, BC09, RCY10}). In the past few decades, extensive efforts have been dedicated toward understanding the statistical and computational limits of various inference tasks for this model, including exact community recovery in the logarithmic degree region \cite{AS15, ABH16} and community detection/weak community recovery in the constant degree region. Since the latter case is closely related to our work, we next review progress on this front, largely driven by a seminal paper in statistical physics \cite{DKMZ11} where the authors predicted that: (i) for all $k \in \mathbb N$, it is possible to use efficient algorithms to detect communities better than random if $\epsilon^2 \lambda>1$; (ii) for $k \leq 4$ it is information theoretically impossible to detect better than random if $\epsilon > 0$ and $\epsilon^2 \lambda<1$; (iii) for $k \geq 5$, it is information theoretically possible to detect better than random for some $\epsilon>0$ with $\epsilon^2 \lambda<1$, but not in a computationally efficient way (that is to say, statistical-computational gap emerges for $k\geq 5$).

The threshold $\epsilon^2 \lambda=1$, known as the Kesten--Stigum (KS) threshold, represents a natural threshold for the trade off between noise and signals. It was first discovered in the context of the broadcast process on trees in \cite{KS66}. Recent advancements have verified (i) by analyzing related algorithms based on non-backtracking or self-avoiding walks \cite{Massoulie14, MNS18, BLM15, BDG+16, AS16, AS18}. Moreover, the information-theoretic aspects of (ii) and (iii) were established in a series of works \cite{BMNN16, AS18, CS21+, MSS23, MSS24+}. Regarding the computational aspect of (iii), compelling evidence was provided in \cite{HS17} suggesting the emergence of a statistical-computational gap.

$\textbf{Correlated random graphs.}$ Graph matching (also known as graph alignment) refers to finding the vertex correspondence between two graphs such that the total number of common edges is maximized. It plays an essential role in various applied fields such as computational biology \cite{SXB08, VCL+15}, social networking \cite{NS08, NS09}, computer vision \cite{BBM05, CSS06} and natural language processing \cite{HNM05}. From a theoretical perspective, perhaps the most widely studied model is the correlated \ER graph model \cite{PG11}, where the observations are two \ER graphs with correlated pairs of edges through a latent vertex bijection $\pi_*$. 

Recent research has focused on two important and entangling issues for this model: the information threshold (i.e., the statistical threshold) and the computational phase transition. On the one hand, collective efforts from the community have led to rather satisfying understanding on information thresholds for the problem of correlation detection and vertex matching \cite{CK16, CK17, HM23, GML20+, WXY22, WXY23, DD23a, DD23b}. On the other hand,  in extensive works including \cite{PG11, YG13, LFP14, KHG15, FQMK+16, SGE17, BCL+19, DMWX21, FMWX23a, FMWX23b, BSH19, CKMP20, DCK+19, MX20, GM20, GML20+, MRT21, MRT23, MWXY21+, GMS22+, MWXY23, DL22+, DL23+}, substantial progress on algorithms were achieved and the state-of-the-art can be summarized as follows: in the sparse regime, efficient matching algorithms are available when the correlation exceeds the square root of Otter’s constant (which is approximately 0.338) \cite{GML20+, GMS22+, MWXY21+, MWXY23}; in the dense regime, efficient matching algorithms exist as long as the correlation exceeds an arbitrarily small constant \cite{DL22+, DL23+}. Roughly speaking, the separation between the sparse and dense regimes mentioned above depends on whether the average degree grows polynomially or sub-polynomially. In addition, while proving the hardness of typical instances of the graph matching problem remains challenging even under the assumption of $\operatorname{P}\neq  \operatorname{NP}$, evidence based on the analysis of a specific class known as low-degree polynomials from \cite{DDL23+} indicates that the state-of-the-art algorithms may essentially capture the correct computational thresholds.

$\textbf{Correlated stochastic block models.}$ The study of correlated stochastic block models originated in \cite{LSFPCVPP15,OGE16}, serving as a framework to understand the interplay between community recovery and graph matching. Previous results on this model focus mainly on the logarithmic degree region, where their interest is to study the interplay between the exact community recovery and the exact graph matching \cite{RS21, GRS22, YC23+, YSC23, CR24, RZ25}. In particular, \cite{GRS22} showed that in this regime there are indeed subtle interactions between these two inference tasks, since one can recover the community (although not necessarily by efficient algorithms) even when neither the exact community recovery in a single graph nor the exact matching recovery in \ER graphs is possible. This line of inquiry was further extended to multiple correlated SBMs by \cite{RZ25}.

In this work, however, we are interested in the related detection problem where the average degree is a constant. The goal of our work is to understand how side information in the form of multiple correlated stochastic block models affects the threshold given by single-community detection or correlation detection. As shown by Theorem~\ref{MAIN-THM-detection}, somewhat surprisingly, it seems that such side information cannot be exploited by efficient algorithms in this particular region.

$\textbf{Low-degree tests.}$ Our hardness result is based on the study of a specific class of algorithms known as {\em low-degree polynomials}. Somewhat informally, the idea is to study degree-$D$ multivariate polynomials in the input variables whose real-valued output separates (see Definition~\ref{def-strong-separation}) samples from the planted and null distributions. The idea to study this class of tests emerged from the line of works \cite{BHK+19, HS17, HKP+17, Hopkins18}; see also \cite{KWB22} for a survey. Tests of degree $O(\log n)$ are generally taken as a proxy for polynomial-time tests, as they capture many leading algorithmic approaches such as spectral methods, approximate message passing and small subgraph counts.

There is now a standard method for obtaining low-degree testing bounds based on the low-degree likelihood ratio (see \cite[Section~2.3]{Hopkins18}), which boils down to finding an orthonormal basis of polynomials with respect to the null distribution and computing the expectations of these basis polynomials under the planted distribution. However, our setting is more subtle because the second moment of the low-degree likelihood ratio diverges due to some rare ``bad'' events under the planted distribution. We therefore need to carry out a conditional low-degree argument in which the planted distribution is conditioned on some ``good'' event. 

Conditional low-degree arguments of this kind have appeared before in a few instances \cite{BAH+22, CGH+22, DMW23+, DDL23+}, but our argument differs in a technical level. Prior works \cite{BAH+22, CGH+22} chose to condition on an event that would seem to make direct computations with the orthogonal polynomials rather complicated; to overcome this, they bounded the conditional low-degree likelihood ratio in an indirect way by first relating it to a certain ``low-overlap'' second moment (also called the Franz-Parisi criterion in \cite{BAH+22}). In addition, in \cite{DMW23+} the authors overcame this issue by conditioning on an event that only involves a {\em small} part of the variables and then bounding the conditional expectation by its first moment. However, in this problem we do not know how to apply these two approaches as dealing with a random permutation that involves all $n$ coordinates seems of substantial and novel challenge. In contrast, our approach is based on \cite{DDL23+}, where the idea was to carefully analyze conditional expectations and use sophisticated cancellations under the conditioning. Still, even when $\epsilon=0$ (i.e., when there is no community signal) our result gives a sharper result compared to \cite{DDL23+} as we are able to rule out all polynomials with degree $n^{o(1)}$ but \cite{DDL23+} can only rule out polynomials with degree $e^{o(\sqrt{\log n})}$. In addition, compared to \cite{DDL23+}, this work provides an approach that we believe is more robust and overcomes several technical issues that arise in this specific setting (see Section~\ref{sec-contribution} for further discussions).

\subsection{Our contributions}
\label{sec-contribution}
While the hardness aspect of the present work can be viewed as a follow-up work of \cite{DDL23+}, we do think that we have overcome significant challenges in this particular setting, as we elaborate next. 

\

\noindent (i) In prior works, the failure of direct low-degree likelihood calculations is typically due to an event that occurs with vanishing probability; specifically, in \cite{DDL23+} this ``bad'' event is the emergence of graphs with atypical high edge density. However, in our setting the low-degree likelihood calculation blows up due to \emph{two} conceptually different events: one is the occurrence of dense subgraphs and the other is the occurrence of small cycles. Worse still, the later event occurs \emph{with positive probability}. A possible approach to address this challenge is to develop an analog of the small subgraph conditioning method for this context. To be more precise, we need to carefully count small cycles in the graph and account for their influence on the low-degree likelihood ratio. Consequently, rather than conditioning on a typical event with probability $1-o(1)$ (as in \cite{BAH+22, CGH+22, DMW23+, DDL23+}), we need to condition on an event {\em with positive probability}, which will make the calculation of conditional probabilities and expectations even more complicated.

\

\noindent (ii) Although it is tempting to directly work with the conditional measure discussed in (i), calculating the conditional expectation seems of substantial challenge. The techniques developed in \cite{DDL23+} rely on the {\em independence between edges} in the unconditioned model (in the parent graph). However, in our setting even in the parent graph the edges are correlated due to the latent community labeling, and the conditioning further affects the measure over this labeling. To address this, instead of working directly with the conditional measure, we need to work with a carefully designed measure that is {\em statistically indistinguishable} from the conditional measure, yet simplifies the computation of conditional expectations.

\

\noindent (iii) From a technical standpoint, the work sharpens several key estimates developed in \cite{DDL23+}. Specifically, the methods in \cite{DDL23+} involve several combinatorial estimates on enumerations of graphs with certain properties, where relatively coarse bounds sufficed due to the simplicity of the conditioned event. However, in this work, the event we condition on is more involved for aforementioned reasons. As a result, such enumeration estimates in this work become substantially more delicate, which presents a significant technical challenge in our proof. This refinement enables us to rule out all polynomials with degree $n^{o(1)}$, suggesting that any algorithm capable of breaking the threshold $\min\{ \sqrt{\alpha}, \frac{1}{\epsilon^2 \lambda} \}$ must have sub-exponential running time. See Section~\ref{sec:prelim-graphs} of the appendix for a more detailed discussion on how these estimates are handled.

\subsection{Notations}\label{sec-notation}
In this subsection, we record a list of notations that we shall use throughout the paper. Recall that $\Pb_n,\Qb_n$ are two probability measures on pairs of random graphs on $[n]=\{1,\ldots,n\}$. Denote $\mathfrak{S}_n$ the set of permutations over $[n]$ and denote $\mu$ the uniform distribution on $\mathfrak S_n$. We will use the following notation conventions on graphs.
\begin{itemize}
    \item {\em Labeled graphs}. Denote by $\mathcal{K}_n$ the complete graph with vertex set $[n]$ and edge set $\operatorname{U}_n$. For any graph $H$, let $V(H)$ denote the vertex set of $H$ and let $E(H)$ denote the edge set of $H$. We say $H$ is a subgraph of $G$, denoted by $H\subset G$, if $V(H) \subset V(G)$ and $E(H) \subset E(G)$. Define the excess of the graph $\tau(H)=|E(H)|-|V(H)|$.
    \item {\em Induced subgraphs}. For a graph $H=(V,E)$ and a subset $A \subset V$, define $H_A =(A,E_A)$ to be the vertex-induced subgraph of $H$ in $A$. Define $H_{\setminus A} = (V,E_{\setminus A})$ to be the subgraph of $H$ obtained by deleting all edges within $A$. Note that $E_A \cup E_{\setminus A} = E$. For a graph $G=G(V,E)$ and an edge set $E_0 \subset E$, define the edge-induced subgraph $G_0=(V_0,E_0)$, where $V_0$ is the collection of $v \in V$ such that $v$ is the endpoint of some $e_0 \in E_0$. 
    \item {\em Isolated vertices}. For $u \in V(H)$, we say $u$ is an isolated vertex of $H$, if there is no edge in $E(H)$ incident to $u$. Denote $\mathcal I(H)$ the set of isolated vertices of $H$. For two graphs $H,S$, we denote $H \ltimes S$ if $H \subset S$ and $\mathcal I(S) \subset \mathcal I(H)$, and we denote $H \Subset S$ if $H \subset S$ and $\mathcal I(H)=\emptyset$. For any graph $H \subset \mathcal K_n$, let $\widetilde H$ be the subgraph of $H$ induced by $V(H) \setminus \mathcal I(H)$. 
   \item {\em Graph intersections and unions}. For $H,S \subset \mathcal{K}_n$, denote by $H \cap S$ the graph with vertex set given by $V(H) \cap V(S)$ and edge set given by $E(H)\cap E(S)$. Denote by $S \cup H$ the graph with vertex set given by $V(H) \cup V(S)$ and edge set $E(H) \cup E(S)$. In addition, denote by $S\Cap H$, $S \doublesetminus H$ and $S\doublesymdiff H$ the graph induced by the edge set $E(S)\cap E(H)$, $E(S)\setminus E(H)$ and $ E(S)\triangle E(H)$, respectively (in particular, these induced graphs have no isolated points).
    \item {\em Paths.} We say a triple $P=(u,v,H)$ (where $u,v \in [n]$ and $H$ is a subgraph of $\mathcal K_n$) is a path with endpoints $u,v$ (possibly with $u=v$), if there exist distinct $w_1, \ldots, w_m \neq u,v$ such that $V(H)=\{ u,v,w_1,\ldots,w_m \}$ and $E(H)=\{ (u,w_1), (w_1,w_2) \ldots, (w_m,v) \}$. We say $P$ is a simple path if its endpoints $u \neq v$. We denote $\operatorname{EndP}(P)$ as the set of endpoints of a path $P$. Note that when $H$ is a cycle, for all $u \in V(H)$ we have $P_u=(u,u,H)$ is a path with endpoint $\{ u \}$.
    \item {\em Cycles and independent cycles.} We say a subgraph $H$ is an $m$-cycle if $V(H)=\{ v_1, \ldots, v_m \}$ and $E(H)=\{ (v_1,v_2), \ldots, (v_{m-1},v_m), (v_m,v_1) \}$. For a subgraph $K \subset H$, we say $K$ is an independent $m$-cycle of $H$, if $K$ is an $m$-cycle and no edge in $E(H)\setminus E(K)$ is incident to $V(K)$. Denote by $\mathtt{C}_m(H)$ the set of $m$-cycles of $H$ and denote by $\mathcal{C}_m(H)$ the set of independent $m$-cycles of $H$. For $H\subset S$, we define $\mathfrak{C}_{m}(S,H)$ to be the set of independent $m$-cycles in $S$ whose vertex set is disjoint from $V(H)$. Define $\mathfrak{C}(S,H)=\cup_{m\geq 3} \mathfrak{C}_m(S,H)$.
    \item {\em Leaves.} A vertex $u \in V(H)$ is called a leaf of $H$, if the degree of $u$ in $H$ is $1$; denote $\mathcal L(H)$ as the set of leaves of $H$. 
    \item {\em Graph isomorphisms and unlabeled graphs.} Two graphs $H$ and $H'$ are isomorphic, denoted by $H\cong H'$, if there exists a bijection $\pi:V(H) \to V(H')$ such that $(\pi(u),\pi(v)) \in E(H')$ if and only if $(u,v)\in E(H)$. Denote by $\mathcal H$ the isomorphism class of graphs; it is customary to refer to these isomorphic classes as unlabeled graphs. Let $\operatorname{Aut}(H)$ be the number of automorphisms of $H$ (graph isomorphisms to itself). 
\end{itemize}

For two real numbers $a$ and $b$, we let $a \vee b = \max \{ a,b \}$ and $a \wedge b = \min \{ a,b \}$. We use standard asymptotic notations: for two sequences $a_n$ and $b_n$ of positive numbers, we write $a_n = O(b_n)$, if $a_n<Cb_n$ for an absolute constant $C$ and for all $n$ (similarly we use the notation $O_h$ is the constant $C$ is not absolute but depends only on $h$); we write $a_n = \Omega(b_n)$, if $b_n = O(a_n)$; we write $a_n = \Theta(b_n)$, if $a_n =O(b_n)$ and $a_n = \Omega(b_n)$; we write $a_n = o(b_n)$ or $b_n = \omega(a_n)$, if $a_n/b_n \to 0$ as $n \to \infty$. In addition, we write $a_n \circeq b_n$ if $a_n = [1+o(1)] b_n$. For a set $\mathsf A$, we will use both $\# \mathsf A$ and $|\mathsf A|$ to denote its cardinality. For two probability measures $\mathbb P$ and $\mathbb Q$, we denote the total variation distance between them by $\operatorname{TV}(\mathbb P,\mathbb Q)$.

\subsection{Organization of this paper}
The rest of this paper is organized as follows. In Section~\ref{sec:LDP-framework} we rigorously state the low-degree framework for the detection problem under consideration. In Section~\ref{sec:detection-upper-bound} we propose an algorithm for detection and give a theoretical guarantee when $s>\min\{ \sqrt{\alpha},\frac{1}{\lambda\epsilon^2} \}$, which implies Part (i) of Theorem~\ref{MAIN-THM-detection}. In Section~\ref{sec:detection-lower-bound} we prove low-degree hardness for detection when $s<\min\{ \sqrt{\alpha}, \frac{1}{\lambda\epsilon^2} \}$, which implies Part (ii) of Theorem~\ref{MAIN-THM-detection}. Several technical results are postponed to the appendix to ensure a smooth flow of presentation.

\section{The low-degree polynomial framework}{\label{sec:LDP-framework}}

Inspired by the sum-of-squares hierarchy, the low-degree polynomial method offers a promising framework for establishing computational lower bounds in high-dimensional inference problems. This approach focuses primarily on analyzing algorithms that evaluate collections of polynomials with moderate degrees. The exploration of this category of algorithms is driven by research in high-dimensional hypothesis testing problems \cite{BHK+19, HS17, HKP+17, Hopkins18}, with an extensive overview provided in \cite{KWB22}. This low-degree framework has subsequently been extended to study random optimization and constraint satisfaction problems.

The approach of low-degree polynomials is appealing partly because it has yielded tight hardness results for a wide range of problems. Prominent examples include detection and recovery problems such as planted clique, planted dense subgraph, community detection, sparse-PCA and tensor-PCA (see \cite{HS17, HKP+17, Hopkins18, KWB22, SW22, DMW23+, BKW19, MW21+, DKW22, BAH+22, MW22, DMW23+, KMW24+}), optimization problems such as maximal independent sets in sparse random graphs \cite{GJW20, Wein22}, and constraint satisfaction problems such as random $k$-SAT \cite{BH22}. In the remaining of this paper, we will focus on applying this framework in the context of detection for correlated stochastic block models.
 
More precisely, to probe the computational threshold for testing between two sequences of probability measures $\Pb_n$ and $\Qb_n$, we focus on low-degree polynomial algorithms (see, e.g., \cite{Hopkins18, KWB22, DDL23+}). Let $\mathbb{R}[A,B]_{\leq D}$ denote the set of multivariate polynomials in the entries of $(A,B)$ with degree at most $D$. With a slight abuse of notation, we will often say ``a polynomial'' to mean a sequence of polynomials $f=f_n \in \mathbb{R}[A,B]_{\leq D}$, one for each problem size $n$; the degree $D=D_n$ of such a polynomial may scale with $n$. To study the power of a polynomial in testing $\Pb_n$ against $\Qb_n$, we consider the following notions of strong separation and weak separation defined in \cite[Definition~1.6]{BAH+22}.

\begin{defn}{\label{def-strong-separation}}
    Let $f \in \mathbb{R}[A,B]_{\leq D}$ be a polynomial.
    \begin{itemize}
        \item We say $f$ strongly separates $\Pb_n$ and $\Qb_n$ if as $n \to \infty$ 
        \[
        \sqrt{ \max\big\{ \operatorname{Var}_{\Pb_n}(f(A,B)), \operatorname{Var}_{\Qb_n}(f(A,B)) \big\} } = o\big( \big| \mathbb{E}_{\Pb_n}[f(A,B)] - \mathbb{E}_{\Qb_n}[f(A,B)] \big| \big) \,;
        \]
        \item We say $f$ weakly separates $\Pb_n$ and $\Qb_n$ if as $n \to \infty$ 
        \[
        \sqrt{ \max\big\{ \operatorname{Var}_{\Pb_n}(f(A,B)), \operatorname{Var}_{\Qb_n}(f(A,B)) \big\} } = O\big( \big| \mathbb{E}_{\Pb_n}[f(A,B)] - \mathbb{E}_{\Qb_n}[f(A,B)] \big| \big) \,.
        \]
    \end{itemize}
\end{defn}
See \cite{BAH+22} for a detailed discussion of why these conditions are natural for hypothesis testing. In particular, according to Chebyshev's inequality, strong separation implies that we can threshold $f(A,B)$ to test $\Pb_n$ against $\Qb_n$ with vanishing type I and type II errors (i.e., \eqref{eq-success} holds). Our first result confirms the existence of a low-degree polynomial that achieves strong separation in the ``easy'' region.

\begin{thm}{\label{main-thm-detection-upper-bound}}
    Suppose that we observe a pair of random graphs $(A,B)$ sampled from either $\Pb_n$ or $\Qb_n$ with $s>\sqrt{\alpha} \wedge \frac{1}{\lambda \epsilon^2}$. Then for any $\omega(1)\leq D_n\leq o\big(\frac{\log n}{\log \log n}\big)$ there exists a degree-$D_n$ polynomial that strongly separates $\Pb_n$ and $\Qb_n$. In addition, there exists an algorithm based on this polynomial that runs in time $n^{2+o(1)}$ and successfully distinguishes $\mathbb P_n$ from $\mathbb Q_n$ in the sense of \eqref{eq-success}.
\end{thm}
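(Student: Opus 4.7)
The claim is a disjunction: either $s>1/(\lambda\epsilon^2)$ or $s>\sqrt\alpha$. These two sufficient conditions are algorithmically distinct, so the plan is to construct two different degree-$D_n$ polynomials, one per regime, and verify strong separation for each. In the first regime each marginal $A$, $B$ is an SBM above the Kesten--Stigum threshold, so a signed walk statistic on a single graph of the kind used for SBM detection in \cite{HS17, Hopkins18} already succeeds. In the second regime the marginals may be below KS, but $A$ and $B$ remain correlated through the latent matching $\pi_*$, and a joint tree-count correlation in the spirit of \cite{MWXY21+, MWXY23} still succeeds. Given a degree-$D_n$ polynomial that strongly separates $\mathbb P_n$ and $\mathbb Q_n$, Chebyshev's inequality immediately yields a thresholding test satisfying \eqref{eq-success}, so I focus on the strong separation claim.

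\emph{Regime $s>1/(\lambda\epsilon^2)$.} Set $\hat A_{ij} := A_{ij}-\lambda s/n$ and take
\[
f_1(A,B) := \sum_{W \in \mathrm{NBW}_{D_n}} \prod_{e \in E(W)} \hat A_e,
\]
with $\mathrm{NBW}_{D_n}$ the set of length-$D_n$ non-backtracking walks in $[n]$. Only the marginal law of $A$ is used. The summands are centered under $\mathbb Q_n$ and a standard walk-intersection count bounds $\operatorname{Var}_{\mathbb Q_n}(f_1)$ by $n(\lambda s)^{D_n}$ up to polynomial factors. Under $\mathbb P_n$, conditioning on $\sigma_*$ and averaging the symmetric community characters contributes a factor $\epsilon^2$ per vertex-disjoint edge of the walk, giving $\mathbb E_{\mathbb P_n}[f_1]\asymp n(\lambda s\epsilon^2)^{D_n}$. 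The separation ratio therefore diverges as soon as $\lambda s\epsilon^2>1$ and $D_n\to\infty$.

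\emph{Regime $s>\sqrt\alpha$.} For each unlabeled tree $T$ with $k := |E(T)|\le D_n$, let $N_T(G)$ count labeled copies of $T$ in $G$, set $\Phi_T(G) := N_T(G)-\mathbb E_{\mathbb Q_n}[N_T(G)]$, and take
\[
f_2(A,B) := \sum_{T :\, |E(T)|\le D_n} w_T\,\Phi_T(A)\,\Phi_T(B)
\]
with weights $w_T$ to be optimized. Under $\mathbb Q_n$ the statistic is centered because $A$ and $B$ are independent and each $\Phi_T$ is centered. Under $\mathbb P_n$, the leading contribution to $\mathbb E[\Phi_T(A)\Phi_T(B)]$ comes from pairs of labeled copies related by $\pi_*$ that use the same $k$ parent edges, yielding a correlation of order $s^{k}$ between $\Phi_T(A)$ and $\Phi_T(B)$. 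With $w_T$ chosen proportional to this correlation, the squared SNR of $f_2$ is at least of order
\[
\sum_{T :\, |E(T)|\le D_n} \frac{s^{2|E(T)|}}{\operatorname{Aut}(T)}.
\]
By Otter's enumeration, the number of unlabeled trees on $k$ edges weighted by $1/\operatorname{Aut}(T)$ is $\asymp \alpha^{-k} k^{-5/2}$, so the sum behaves like $\sum_k (s^2/\alpha)^k k^{-5/2}$ and diverges exactly for $s^2 > \alpha$; truncating at $k=D_n\to\infty$ delivers strong separation.

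\emph{Running time and main obstacle.} The polynomial $f_1$ is evaluated in $n^{2+o(1)}$ time by non-backtracking matrix powers, and $f_2$ in $n^{2+o(1)}$ by color coding or the iterative tree-growing scheme of \cite{MWXY21+}; in both cases the $n^{o(1)}$ slack absorbs the $D_n=o(\log n/\log\log n)$ enumeration of walks and unlabeled trees. The main obstacle is controlling $\operatorname{Var}_{\mathbb P_n}(f_2)$: in the ER treatment of \cite{MWXY21+, MWXY23} the parent edges are independent, whereas here they are correlated through $\sigma_*$. I would dispose of this by first restricting to the complementary regime $\lambda s\epsilon^2\le 1$ (otherwise $f_1$ already suffices) and then showing that the joint tree moments $\mathbb E_{\mathbb P_n}[N_T(A)^a N_T(B)^b]$ match their ER counterparts up to a $1+o(1)$ multiplicative factor for trees of size at most $D_n$, since the community-induced corrections scale like $(\lambda s\epsilon^2)^{\#\text{extra overlap edges}}$ and are subdominant under this assumption. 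This is the step I expect to require the most care.
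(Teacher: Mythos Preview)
Your Regime~1 treatment matches the paper's (which simply cites \cite{MNS15, HS17}). The gap is in Regime~2. Your statistic $\Phi_T(G)=N_T(G)-\mathbb E_{\mathbb Q}[N_T(G)]$ is \emph{not} the one the paper uses, and your SNR bookkeeping breaks at the Otter step. You claim the squared SNR is $\sum_T s^{2|E(T)|}/\operatorname{Aut}(T)$ and then invoke ``Otter's enumeration weighted by $1/\operatorname{Aut}(T)$''. But Otter's constant $\alpha$ governs the \emph{unweighted} count of unlabeled trees, $|\mathcal T_k|\asymp \alpha^{-k}k^{-5/2}$; by Cayley's formula $\sum_{|E(T)|=k}1/\operatorname{Aut}(T)=(k+1)^{k-1}/(k+1)!\asymp e^{k}k^{-5/2}$, which would give a threshold $s>e^{-1/2}$, not $s>\sqrt\alpha$. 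In fact, if you redo your own heuristic carefully---$\mu_T:=\mathbb E_{\mathbb P}[\Phi_T(A)\Phi_T(B)]\asymp n(\lambda s)^k s^k/\operatorname{Aut}(T)$ and $\sigma_T^2:=\operatorname{Var}_{\mathbb Q}(\Phi_T(A)\Phi_T(B))\asymp (n(\lambda s)^k/\operatorname{Aut}(T))^2$---you get $\mu_T^2/\sigma_T^2\asymp s^{2k}$ with \emph{no} $1/\operatorname{Aut}(T)$, and the correct aggregate is $\sum_T s^{2k}=|\mathcal T_k|\,s^{2k}$. Even so, the $\Phi_T$ are not orthogonal under $\mathbb Q$ (overlapping labeled copies of different trees correlate), so you cannot simply add the per-tree SNRs.

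The paper avoids all of this by working in the orthonormal basis $\phi_{S_1,S_2}=\prod_{e\in E(S_1)}\bar A_e\prod_{e\in E(S_2)}\bar B_e$ (centered \emph{edge} products, not centered subgraph counts) and taking $f_{\mathcal T}=\sum_{\mathbf H\in\mathcal T}a_{\mathbf H}\sum_{S_1,S_2\cong\mathbf H}\phi_{S_1,S_2}$ over trees with exactly $\aleph=D_n$ edges. Orthonormality under $\mathbb Q$ gives $\operatorname{Var}_{\mathbb Q}(f_{\mathcal T})=s^{2\aleph}|\mathcal T|$ immediately, and the first moment under $\mathbb P$ is computed via a leaf-cancellation argument: since every tree has a leaf and $\mathbb E_\sigma[\omega(\sigma_u,\sigma_v)]=0$ at a leaf $u$, all cross-terms in the community expansion vanish unless $\pi_*(S_1)=S_2$, yielding $\mathbb E_{\mathbb P}[f_{\mathcal T}]\circeq s^{2\aleph}|\mathcal T|$ as well. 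The same leaf mechanism (not the moment-matching you propose) is what controls $\operatorname{Var}_{\mathbb P}(f_{\mathcal T})$ in the SBM setting and is where the assumption $\lambda s\epsilon^2\le 1$ enters. Your proposed fix---matching tree moments to ER up to $1+o(1)$---would not by itself kill the non-principal terms; you need the combinatorial cancellation at leaves.
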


We now focus on the ``hard'' region and hope to give evidence on computational hardness for this problem. While it is perhaps most natural to provide evidence that no low-degree polynomial achieves strong separation for $\Pb_n$ and $\Qb_n$ in this region, this approach runs into several technical problems. In order to address this, we instead provide evidence on a modified testing problem, whose computational complexity is no more than that of the original problem. To this end, we first present a couple of lemmas as a preparation.
\begin{lemma}\label{lem-condition-positive-events}
    Assume that an algorithm $\mathcal A$ can distinguish two probability measures $\mathbb P_n$ and $\mathbb Q_n$ with probability $1-o(1)$ (i.e., in the sense of \eqref{eq-success}). Then for any positive constant $c>0$ and any sequence of events $\mathcal E_n$ such that $\mathbb{P}_n(\mathcal E_n) \geq c$, the algorithm $\mathcal A$ can distinguish $\mathbb P_n(\cdot \mid \mathcal E_n)$ and $\mathbb Q_n$ with probability $1-o(1)$.
\end{lemma}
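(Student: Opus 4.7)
The plan is to verify the definition of successful distinguishing directly for the pair of measures $\big(\mathbb{P}_n(\cdot\mid \mathcal{E}_n),\mathbb{Q}_n\big)$, using the assumption on $(\mathbb{P}_n,\mathbb{Q}_n)$ together with the lower bound $\mathbb{P}_n(\mathcal{E}_n)\geq c$. The quantity to control is
\[
\mathbb{P}_n\big(\mathcal{A}\text{ outputs }\mathbb{Q}_n\bigm|\mathcal{E}_n\big)+\mathbb{Q}_n\big(\mathcal{A}\text{ outputs }\mathbb{P}_n\big),
\]
and the second summand is already $o(1)$ by the hypothesis on $\mathcal{A}$, so the only real task is to bound the first summand.

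For the first summand, I would use the elementary inequality
\[
\mathbb{P}_n\big(\mathcal{A}\text{ outputs }\mathbb{Q}_n\bigm|\mathcal{E}_n\big)
=\frac{\mathbb{P}_n\big(\{\mathcal{A}\text{ outputs }\mathbb{Q}_n\}\cap\mathcal{E}_n\big)}{\mathbb{P}_n(\mathcal{E}_n)}
\leq\frac{\mathbb{P}_n\big(\mathcal{A}\text{ outputs }\mathbb{Q}_n\big)}{c},
\]
which is $o(1)/c = o(1)$ by the assumption on $\mathcal A$ and the fact that $c$ is a positive constant. Combining with the bound on the $\mathbb{Q}_n$ error then yields the claim.

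There is no real obstacle here: the lemma is essentially the observation that conditioning on a constant-probability event can only inflate a vanishing type I error by a bounded multiplicative factor, while leaving the type II error on $\mathbb{Q}_n$ untouched. The value of the statement is conceptual rather than technical, as it permits one to replace the original planted measure $\mathbb{P}_n$ by a conditional version $\mathbb{P}_n(\cdot\mid\mathcal{E}_n)$ in the low-degree hardness argument without weakening the conclusion; this is precisely the device that the rest of the paper will need in order to handle the ``bad'' events (dense subgraphs and small cycles) discussed in Section~\ref{sec-contribution}.
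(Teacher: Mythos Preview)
Your proposal is correct and follows essentially the same approach as the paper: both bound the conditional type~I error via $\mathbb{P}_n(\mathcal{A}\text{ outputs }\mathbb{Q}_n\mid\mathcal{E}_n)\leq \mathbb{P}_n(\mathcal{A}\text{ outputs }\mathbb{Q}_n)/\mathbb{P}_n(\mathcal{E}_n)=o(1)$, while the $\mathbb{Q}_n$-error is unchanged.
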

\begin{proof}
    Suppose that we use the convention that $\mathcal A$ outputs 0 if it decides the sample is from $\mathbb Q_n$ and outputs 1 if it decides the sample is from $\mathbb P_n$. Then,
    \begin{equation}{\label{eq-translate-success}}
        \mathbb P_n\big( \mathcal A(\mbox{input} )=0 \big)=o(1), \quad \mathbb Q_n \big( \mathcal A(\mbox{input})=0 \big) = 1-o(1) \,.
    \end{equation}
    This shows that
    \[
    \mathbb P_n\big( \mathcal A(\mbox{input} )=0 \mid \mathcal E_n \big) \leq \frac{ \mathbb P_n( \mathcal A(\mbox{input} )=0 ) }{ \mathbb P_n(\mathcal E_n) } =o(1) \,, 
    \]
    which yields the desired result.
\end{proof}
\begin{lemma}\label{lem-replace-by-TVo1-measures}
    Assume that an algorithm $\mathcal A$ can distinguish two probability measures $\mathbb P_n$ and $\mathbb Q_n$ with probability $1-o(1)$ (i.e., in the sense of \eqref{eq-success}). Then for any sequence of probability measures $\mathbb P_n'$ such that $\operatorname{TV}(\mathbb P_n,\mathbb P_n')=o(1)$, the algorithm $\mathcal A$ can distinguish $\mathbb P_n'$ and $\mathbb Q_n$ with probability $1-o(1)$.
\end{lemma}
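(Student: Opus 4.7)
The plan is to mimic the translation step used in the proof of Lemma~\ref{lem-condition-positive-events} and then invoke the standard coupling/variational characterization of total variation distance. As before, adopt the convention that $\mathcal{A}$ outputs $0$ when it declares the sample is from $\mathbb{Q}_n$ and $1$ otherwise, so that the assumption on $\mathcal{A}$ is equivalent to the two asymptotic bounds
\[
\mathbb{P}_n\bigl( \mathcal{A}(\text{input})=0 \bigr) = o(1), \qquad \mathbb{Q}_n\bigl( \mathcal{A}(\text{input})=0 \bigr) = 1-o(1).
\]
The $\mathbb{Q}_n$-side of the success guarantee is left untouched by the hypothesis, so the only thing to verify is that replacing $\mathbb{P}_n$ by $\mathbb{P}_n'$ preserves the vanishing type-I error.

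Next, I would apply the basic fact that for any measurable event $E$ in the sample space of $(A,B)$,
\[
\bigl| \mathbb{P}_n(E) - \mathbb{P}_n'(E) \bigr| \le \operatorname{TV}(\mathbb{P}_n, \mathbb{P}_n').
\]
Specializing to the event $E=\{\mathcal{A}(\text{input})=0\}$ (which is measurable with respect to the observed pair of graphs, possibly after appending the coin flips used by $\mathcal{A}$, in which case the TV bound extends to the product measures, since TV is non-increasing under pushforwards and non-decreasing under tensoring with the same distribution), we get
\[
\mathbb{P}_n'\bigl( \mathcal{A}(\text{input})=0 \bigr) \le \mathbb{P}_n\bigl( \mathcal{A}(\text{input})=0 \bigr) + \operatorname{TV}(\mathbb{P}_n,\mathbb{P}_n') = o(1) + o(1) = o(1).
\]
Combining with the unchanged $\mathbb{Q}_n$-estimate then yields the desired success guarantee of $\mathcal{A}$ on the pair $(\mathbb{P}_n',\mathbb{Q}_n)$.

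There is no substantive obstacle here, as the argument is a one-line consequence of the definition of $\operatorname{TV}$. The only mildly delicate point worth flagging in the write-up is that if $\mathcal{A}$ is a randomized algorithm, the event $\{\mathcal{A}=0\}$ should be viewed on the product space of input and internal randomness; but the TV distance between $\mathbb{P}_n \otimes \operatorname{Unif}$ and $\mathbb{P}_n' \otimes \operatorname{Unif}$ (for any common randomization measure $\operatorname{Unif}$) equals $\operatorname{TV}(\mathbb{P}_n,\mathbb{P}_n')$, so the bound goes through verbatim.
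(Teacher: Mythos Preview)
Your proof is correct and follows exactly the same approach as the paper: translate the success guarantee into the $0/1$ output convention (the paper reuses \eqref{eq-translate-success} from the preceding lemma) and then bound $\mathbb{P}_n'(\mathcal{A}=0)$ by $\mathbb{P}_n(\mathcal{A}=0)+\operatorname{TV}(\mathbb{P}_n,\mathbb{P}_n')$. Your additional remark about randomized algorithms is a harmless elaboration the paper omits.
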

\begin{proof}
    By \eqref{eq-translate-success}, we have that 
    \[
    \Pb_n'\big( \mathcal A(\mbox{input} )=0 \big) \leq \mathbb P_n( \mathcal A(\mbox{input} )=0 ) + \operatorname{TV}(\Pb_n,\Pb'_n) =o(1) \,, 
    \]
    which yields the desired result.
\end{proof}

Now we can state our result in the ``hard'' region as follows.

\begin{thm}{\label{main-thm-detection-lower-bound}}
    Suppose that we observe a pair of random graphs $(A,B)$ sampled from either $\Pb_n$ or $\Qb_n$ with $s<\sqrt{\alpha} \wedge \frac{1}{\lambda \epsilon^2}$. Then there exists a sequence of events $\mathcal E_n$ and a sequence of probability measures $\mathbb P_n'$ such that the following hold:
   \begin{enumerate}
        \item[(1)] $\mathbb{P}_{n}(\mathcal E_n) \geq c$ for some constant $c=c(\lambda,k,\epsilon)>0$.
        \item[(2)] $\operatorname{TV}(\mathbb P_n(\cdot \mid \mathcal E_n), \mathbb{P}_n') \to 0$ as $n \to \infty$.
        \item[(3)] There is no degree-$n^{o(1)}$ polynomial that can strongly separate $\mathbb{P}_n'$ and $\mathbb Q_n$. 
    \end{enumerate}
\end{thm}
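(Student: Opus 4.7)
The plan is to follow a conditional low-degree likelihood ratio argument, refining the strategy of \cite{DDL23+} to handle the two distinct sources of blow-up flagged in Section~\ref{sec-contribution}. The definition of $\mathcal{E}_n$ will be chosen to exclude the first pathology only: I would let $\mathcal{E}_n$ be the event that the parent SBM graph $G$ contains no ``dense'' subgraph on a small vertex set, in a quantitative form such as ``every vertex subset of size at most $n^{o(1)}$ spans at most $|V|+C$ edges'' for a suitable constant $C$. Standard first-moment estimates for the sparse SBM with $\lambda=O(1)$ will show that this event has probability at least some $c=c(\lambda,k,\epsilon)>0$, which supplies item~(1). Crucially, $\mathcal{E}_n$ does \emph{not} delete short cycles, since these persist with positive probability and are precisely the second source of divergence identified in contribution (i); they must instead be absorbed into the main computation.

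For item~(2) I would, rather than work directly with $\mathbb{P}_n(\cdot\mid\mathcal{E}_n)$, construct an auxiliary measure $\mathbb{P}_n'$ that is TV-close to it but has a more tractable structure. The idea is to mimic the generative process of Definition~\ref{def-correlated-SBM}, but replace the parent SBM law by a hand-tailored measure that enforces the sparse-subgraph constraint through local rejection or truncation in a way that decouples cleanly from the Bernoulli subsampling $(J,K)$ and from the permutation $\pi_*$. This sidesteps the twin difficulties pointed out in contribution (ii): the native parent measure has edge correlations coming from $\sigma_*$, and the conditioning on $\mathcal{E}_n$ further reshapes the law of $\sigma_*$, so a direct Fourier expansion under $\mathbb{P}_n(\cdot\mid\mathcal{E}_n)$ is forbidding. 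Showing $\operatorname{TV}(\mathbb{P}_n(\cdot\mid\mathcal{E}_n),\mathbb{P}_n')=o(1)$ then reduces to a routine tail estimate for the truncated configurations.

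For item~(3) I would carry out a second-moment calculation for the low-degree likelihood ratio of $\mathbb{P}_n'$ against $\mathbb{Q}_n$. Using the $\mathbb{Q}_n$-orthonormal basis of products of centered and normalized entries of $(A,B)$, the projection $L_n'^{\leq D}$ has Fourier coefficients indexed by pairs $(H_A,H_B)$ of small subgraphs of $\mathcal{K}_n$, and each coefficient is an $\mathbb{P}_n'$-expectation. Integrating out $J,K$ forces matching of edges between $H_A$ and $\pi_*(H_B)$; averaging over $\pi_*$ reduces each coefficient to a weighted sum over isomorphism classes of overlaps; and averaging over $\sigma_*$ produces an $\epsilon$-weighted character sum along the matched edges whose effective parameter is $\lambda\epsilon^2 s$, so the Kesten--Stigum threshold $s=\tfrac{1}{\lambda\epsilon^2}$ emerges as the subcriticality condition for the SBM contribution. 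Summing $|\text{coefficient}|^2$ and organizing the resulting graph sum by first exposing a ``core'' built from independent small cycles (the surviving high-probability features), then rooted trees attached to this core, and finally excess edges controlled by $\mathcal{E}_n$, brings Otter's constant $\alpha$ out of the tree enumeration; the full bound converges as soon as $s<\sqrt{\alpha}\wedge\tfrac{1}{\lambda\epsilon^2}$. An explicit bound on the conditional second moment then precludes strong separation by any polynomial of degree $n^{o(1)}$.

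The hard part will be the third step, and in particular sharpening the graph enumerations far enough that the bound remains $O(1)$ for $D=n^{o(1)}$ rather than only $D=e^{o(\sqrt{\log n})}$ as in \cite{DDL23+}. Three interlocking difficulties arise: one must count overlap subgraphs with a core of independent short cycles (contributed by $\mathcal{E}_n$ being chosen only to kill dense patterns, not small cycles), one must control the $\epsilon$-dependent SBM character sums along these subgraphs so that exactly the Kesten--Stigum factor $(\lambda\epsilon^2 s)^{|E|}$ appears, and one must combine both weights with the Otter-type counting of pendant trees and excess edges so the final series converges uniformly with plenty of slack. Executing this bookkeeping cleanly, while keeping the cycle core decoupled from the tree decoration and paying the correct automorphism factors, is where the bulk of the technical work will lie; it is also what makes the present argument more robust than and quantitatively stronger than \cite{DDL23+}.
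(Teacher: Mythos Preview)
Your proposal diverges from the paper at the very first step, and this divergence creates a genuine gap. You propose that $\mathcal{E}_n$ exclude only dense subgraphs and \emph{not} short cycles, arguing that short cycles ``must instead be absorbed into the main computation.'' The paper does exactly the opposite: in Definition~\ref{def-addmisible}, $\mathcal{E}=\mathcal{E}^{(1)}\cap\mathcal{E}^{(2)}$ where $\mathcal{E}^{(2)}$ is precisely the event that the parent graph $G$ contains \emph{no} cycle of length at most $N$. The whole point of contribution~(i) in Section~\ref{sec-contribution} is that one is \emph{forced} to condition on an event of probability merely bounded away from zero (namely $\mathcal{E}^{(2)}$); Lemma~\ref{lem-condition-positive-events} is stated exactly so that this weaker conditioning still suffices for the reduction. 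You have read ``the later event occurs with positive probability'' as a reason \emph{not} to condition on its complement, whereas the paper reads it as the reason item~(1) can only guarantee $\mathbb{P}_n(\mathcal{E}_n)\ge c$ rather than $1-o(1)$.

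Your choice makes item~(3) fail. If $\mathcal{E}_n$ removes only dense subgraphs then $\mathbb{P}_n(\mathcal{E}_n)=1-o(1)$, so any $\mathbb{P}_n'$ satisfying item~(2) has $\operatorname{TV}(\mathbb{P}_n',\mathbb{P}_n)=o(1)$. But the Remark following Definition~\ref{def-addmisible} exhibits, for parameters with $(1+(k-1)\epsilon^3)s^3>1$ (which is compatible with $s<\sqrt{\alpha}\wedge\tfrac{1}{\lambda\epsilon^2}$ once $k$ is moderately large), a degree-$O(\ell)$ polynomial $f$ built from $\ell$ disjoint triangles with $\mathbb{E}_{\Qb}[f^2]=1+o(1)$ and $\mathbb{E}_{\Pb}[f]\ge\big((1+(k-1)\epsilon^3)s^3\big)^\ell\to\infty$. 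Hence $\|L'_{\le D}\|^2$ diverges for your $\mathbb{P}_n'$, and no second-moment bound can rule out strong separation. Your proposed ``core of independent small cycles'' is a bookkeeping device on the graph-sum side; it cannot repair the fact that the \emph{measure} $\mathbb{P}_n'$ still produces short cycles with positive probability, so the offending Fourier coefficients are genuinely large. The paper's fix is structural: $\mathbb{P}_n'$ (Definition~\ref{def-G'-P'}) is built by deleting one uniformly random edge from every short cycle and every self-bad subgraph in $G$, so that the modified parent $G'$ is guaranteed admissible. Only then can one reduce to admissible polynomials (Proposition~\ref{prop-same-L^1-bounded-L^2}) and carry out the coefficient bound (Proposition~\ref{prop-untruncate-expectation-Pb}); the independent-cycle bookkeeping you anticipate does appear there, but for cycles of length \emph{greater} than $N$, which are harmless.
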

\begin{proof}[Proof of Theorem~\ref{MAIN-THM-detection}]
    Part (1) of Theorem~\ref{MAIN-THM-detection} follows from Theorem~\ref{main-thm-detection-upper-bound}; Part (2) of Theorem~\ref{MAIN-THM-detection} follows by combining Theorem~\ref{main-thm-detection-lower-bound} with Lemmas~\ref{lem-condition-positive-events} and \ref{lem-replace-by-TVo1-measures}, as we explain below. We emphasize that Theorem~\ref{main-thm-detection-lower-bound} does not rigorously prove that all degree-$n^{o(1)}$ polynomials fail to achieve strong separation for the \emph{original} testing problem between $\Pb_n$ and $\Qb_n$. However, Lemmas~\ref{lem-condition-positive-events} and \ref{lem-replace-by-TVo1-measures} imply that the detection problem between $\Pb'_n$ and $\Qb_n$ is \emph{not harder} than the detection problem between $\Pb_n$ and $\Qb_n$. Thus, as it is widely accepted that (see e.g., \cite[Section~3.3]{Wein25+}) the inability of degree-$D$ polynomial to achieve strong separation serves as a compelling evidence that no algorithm with running time $n^{D/\log n}$ achieves strong detection, our theorem serves as an evidence for the computational hardness of testing $\Pb'_n$ and $\Qb_n$ (and thus also serves as an evidence for the computational hardness of testing $\Pb_n$ against $\Qb_n$).
\end{proof}

In the subsequent sections of this paper, we will keep the values of $n,\lambda,k,\epsilon$ and $s$ fixed, and for the sake of simplicity we will omit subscripts involving these parameters without further specification. In particular, we will simply denote $\Pb_{*,n}, \Pb_n, \Qb_n, \operatorname{U}_n, \Pb'_n, \mathcal E_n$ as $\Pb_{*},\Pb, \Qb, \operatorname{U}, \Pb', \mathcal E$.

\section{Correlation detection via counting trees}{\label{sec:detection-upper-bound}}

In this section we prove Theorem~\ref{main-thm-detection-upper-bound}. From \cite{MNS15, HS17}, the results hold when $s> \frac{1}{\lambda\epsilon^2}$ since we can distinguish $\Pb$ and $\Qb$ by simply using one graph $A$. It remains to deal with the case $\frac{1}{\lambda \epsilon^2} \geq s>\sqrt{\alpha}$. As we shall see the following polynomials will play a vital role in our proof.
\begin{defn}{\label{def-phi-S1,S2}}
    For two graphs $S_1,S_2\subset \mathcal K_n$, define the polynomial $\phi_{S_1,S_2}$ associated with $S_1,S_2$ by 
    \begin{equation}{\label{eq-def-f-K1K2}}
        \phi_{S_1,S_2} \big( \{A_{i,j}\},\{B_{i,j}\} \big) =\big(\tfrac{\lambda s}{n}(1-\tfrac{\lambda s}{n})\big)^{-\frac{|E(S_1)|+|E(S_2)|}{2}} \prod_{(i,j)\in E(S_1)} \Bar{A}_{i,j} \prod_{(i,j)\in E(S_2)} \Bar{B}_{i,j} \,,
    \end{equation}
    where $\Bar{A}_{i,j}=A_{i,j}-\tfrac{\lambda s}{n}, \Bar{B}_{i,j}=B_{i,j}-\tfrac{\lambda s}{n}$ for all $(i,j) \in \operatorname{U}$. In particular, $\phi_{\emptyset,\emptyset}\equiv 1$.
\end{defn}
As implied by \cite{MWXY21+, DDL23+}, it is straightforward that $\{ \phi_{S_1,S_2} : S_1,S_2 \Subset \mathcal K_n \}$ is an orthonormal basis under the measure $\Qb$ in the sense that
\begin{equation}{\label{eq-standard-orthogonal}}
    \mathbb{E}_{\Qb}\big[ \phi_{S_1,S_2} \phi_{S_1',S_2'} \big]= \mathbf{1}_{ \{ (S_1,S_2)=(S_1',S_2') \} } \,.
\end{equation}
Next, denote by $\mathcal{T}=\mathcal T_{\aleph_n}$ the set of all unlabeled trees with $\aleph=\aleph_n$ edges, where
\begin{equation}{\label{eq-choice-K}}
    \omega (1) \leq \aleph_n \leq o\Big( \frac{\log n}{\log \log n} \Big) \,.
\end{equation}
It was known in \cite{Otter48} that 
\begin{equation}{\label{eq-num-trees}}
    \lim_{n \longrightarrow \infty} |\mathcal{T}_{\aleph_n}|^{\frac{1}{\aleph_n}} = \tfrac{1}{\alpha} \,,
\end{equation}
where we recall that $\alpha \approx 0.338$ is the Otter's constant. Define
\begin{equation}{\label{eq-def-f-T}}
    f_{\mathcal{T}}(A,B) = \sum_{\mathbf H \in \mathcal{T}} \frac{ s^\aleph \operatorname{Aut}(\mathbf H) (n-\aleph-1)!}{n!} \sum_{ S_1,S_2 \cong \mathbf H } \phi_{S_1,S_2} (A,B) \,.
\end{equation}
Recall Definition~\ref{def-phi-S1,S2}. Observe that if we drop the centering in \eqref{eq-def-f-K1K2} (i.e., if we replace $\Bar{A}_{i,j}, \Bar{B}_{i,j}$ with $A_{i,j},B_{i,j}$ in \eqref{eq-def-f-K1K2}), then each summation item in \eqref{eq-def-f-T} is simply the product of the number of copies of $\mathbf H$ in the two graphs (module a constant factor), and thus \eqref{eq-def-f-T} can be viewed as counting trees in the ``centered'' graphs. This statistic was first introduced by \cite{MWXY21+}. We will show that strong separation is possible via tree counting under the assumption $\tfrac{1}{\lambda \epsilon^2} \geq s > \sqrt{\alpha}$, as incorporated in the following proposition.

\begin{proposition}{\label{prop-first-second-moment-f-T}}
    Assume that $\tfrac{1}{\lambda \epsilon^2} \geq s > \sqrt{\alpha}$. We have the following results:
    \begin{enumerate}
        \item[(1)] $\frac{ \operatorname{Var}_{\Qb}[f_{\mathcal{T}}] }{ (\mathbb{E}_{\Pb}[f_{\mathcal{T}}])^2 } = o(1)$ and $\mathbb E_{\Qb}[f_{\mathcal T}]=0$;
        \item[(2)] $\frac{ \operatorname{Var}_{\Pb}[f_{\mathcal{T}}] }{ (\mathbb{E}_{\Pb}[f_{\mathcal{T}}])^2 } = o(1)$.
    \end{enumerate}
    Thus, $f_{\mathcal T}$ strongly separates $\Pb$ and $\Qb$.
\end{proposition}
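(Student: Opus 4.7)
The plan is to expand $f_{\mathcal T}$ in the orthonormal basis $\{\phi_{S_1,S_2}\}$ from \eqref{eq-standard-orthogonal} and compute the required first and second moments under both $\Pb$ and $\Qb$.

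For Part (1), the identity $\mathbb E_{\Qb}[f_{\mathcal T}]=0$ is immediate because every $\mathbf H \in \mathcal T$ has at least one edge, so each $\phi_{S_1,S_2}$ in \eqref{eq-def-f-T} is orthogonal to $\phi_{\emptyset,\emptyset}\equiv 1$ under $\Qb$. For $\operatorname{Var}_{\Qb}[f_{\mathcal T}]$, orthonormality collapses the double sum to a sum of squared coefficients; using that the number of labeled copies of $\mathbf H$ in $\mathcal K_n$ is $n!/((n-\aleph-1)!\operatorname{Aut}(\mathbf H))$, this evaluates to $|\mathcal T|\,s^{2\aleph}$. To compute $\mathbb E_{\Pb}[f_{\mathcal T}]$, I would analyze $\mathbb E_{\Pb}[\phi_{S_1,S_2}]$ by conditioning on $(\sigma_*,\pi_*)$: the leading contribution comes from the ``matched'' event that $\pi_*$ restricts to a graph isomorphism $S_1\to S_2$, which has probability $\operatorname{Aut}(\mathbf H)(n-\aleph-1)!/n!$. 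On this event the parent-graph edges $G_e$ are conditionally independent given $\sigma_*$, so the joint expectation factorizes edge-wise into $\prod_e[s^2\,\mathbb P(G_e{=}1\mid \sigma_*)(1-2\lambda/n)+(\lambda s/n)^2]$; writing $\xi_e=1+(k-1)\epsilon$ when $\sigma_*$ agrees on the endpoints of $e$ and $1-\epsilon$ otherwise, the identity $\mathbb E[\xi_e \mid \sigma_*(v)]=1$ for either endpoint $v$, combined with a root-to-leaf conditioning (available precisely because $\mathbf H$ is acyclic), yields $\mathbb E[\prod_e \xi_e]=1$, so the community structure does not affect the leading term. This gives $\mathbb E_{\Pb}[f_{\mathcal T}] \approx |\mathcal T|\,s^{2\aleph}$, and therefore $\operatorname{Var}_{\Qb}[f_{\mathcal T}]/(\mathbb E_{\Pb}[f_{\mathcal T}])^2 \approx 1/(|\mathcal T|\,s^{2\aleph}) = o(1)$ by Otter's estimate \eqref{eq-num-trees}, the hypothesis $s^2>\alpha$, and $\aleph\to\infty$.

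For Part (2), I would expand $\mathbb E_{\Pb}[f_{\mathcal T}^2]$ as a quadruple sum over $(S_1,S_2,S_1',S_2')$ with $S_1,S_2\cong\mathbf H$ and $S_1',S_2'\cong\mathbf H'$, and stratify by the overlap graph $(S_1\cap S_1',\,S_2\cap S_2')$ together with the action of $\pi_*$ on the four copies. The ``disjoint'' stratum, in which the two pairs share essentially no vertices, reproduces $(1+o(1))(\mathbb E_{\Pb}[f_{\mathcal T}])^2$. For each non-disjoint stratum I would (i) enumerate the labeled realizations via Cayley-type counts that track $\operatorname{Aut}(\cdot)$ and vertex overlaps; (ii) expand the centered product on shared edges using the identity $\bar A_e^2=(1-2\lambda s/n)\bar A_e+(\lambda s/n)(1-\lambda s/n)$ (and the analogue for $\bar B$) so as to reduce to a sum of $\mathbb E_{\Pb}[\phi_{T_1,T_2}]$ with $T_1,T_2$ containing no repeated edges; and (iii) evaluate each such reduced expectation by the same matching-plus-conditioning argument used in Part (1). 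The design of $f_{\mathcal T}$ is such that each additional shared vertex costs a factor $1/n$, which must dominate the combinatorial enumeration of labeled overlaps.

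The main obstacle lies in Part (2), where the interaction of community labels with overlapping tree copies can inflate the second moment. While a single tree has community factor that averages to $1$, if the union of two overlapping trees contains a cycle then the tree-traversal cancellation breaks down and each cycle contributes a community correction scaling with its length by powers of $\epsilon$, which is precisely the mechanism by which the Kesten--Stigum threshold enters the analysis. The hypothesis $s\le 1/(\lambda\epsilon^2)$ ensures $\epsilon^2\lambda s\le 1$, so summing over cycle lengths stays bounded and each cycle contributes only an $O(1)$ factor, which is absorbed by the $1/n$ arising from the vertex identification needed to close the cycle. Carrying out this bookkeeping uniformly over the quadruple enumeration, in the spirit of \cite{MWXY21+} but with the community correction tracked throughout, is the technical heart of the argument; once it is controlled, summing over all strata with $\aleph=o(\log n/\log\log n)$ yields $\operatorname{Var}_{\Pb}[f_{\mathcal T}]=o((\mathbb E_{\Pb}[f_{\mathcal T}])^2)$, and strong separation follows from Chebyshev.
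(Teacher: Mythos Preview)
Your Part~(1) is essentially the paper's argument: orthonormality under $\Qb$ gives $\operatorname{Var}_\Qb[f_{\mathcal T}]=|\mathcal T|s^{2\aleph}$, and the leading contribution to $\mathbb E_\Pb[\phi_{S_1,S_2}]$ comes from the matched event $\pi_*(S_1)=S_2$, on which the community factor $\prod_e(1+\epsilon\omega_e)$ averages to $1$ by leaf conditioning because $\mathbf H$ is a tree (Lemma~\ref{prop_first_moment_phi}, Lemma~\ref{lem-first-moment-P}).

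For Part~(2) your route genuinely differs from the paper's. You propose to expand repeated edges via $\bar A_e^2=(1-2\lambda s/n)\bar A_e+(\lambda s/n)(1-\lambda s/n)$, reducing $\phi_{S_1,S_2}\phi_{T_1,T_2}$ to a linear combination of $\phi_{T_1',T_2'}$'s, and then to evaluate each $\mathbb E_\Pb[\phi_{T_1',T_2'}]$ by the Part~(1) mechanism, patching in cycle corrections when $T_1',T_2'$ are not forests. The paper instead never expands: it computes the joint moments $\mathbb E_{\Pb_{\sigma,\pi}}[\bar A_{i,j}^r\bar B_{\pi(i),\pi(j)}^t]=(\omega(\sigma_i,\sigma_j)u_{r,t}+v_{r,t})/n$ in closed form (Lemma~\ref{up_to_second_moment_AB}), classifies edges of the union $S_1\cup T_1\cup\pi^{-1}(S_2\cup T_2)$ by their multiplicity pattern, and shows via a leaf-cancellation argument on this union graph that $\mathbb E_{\Pb_\pi}[\phi_{S_1,S_2}\phi_{T_1,T_2}]$ vanishes unless $\pi$ falls in an explicit small set of ``good'' permutations (Lemmas~\ref{principal-decomposition} and~\ref{on_good_events}). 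The surviving permutations are then handled by direct enumeration bounds (Lemmas~\ref{principal-enumeration-pi} and~\ref{lemma_order_of_growth_second}), yielding the uniform estimate in Lemma~\ref{prop_principal}. The paper's route avoids the proliferation of sub-terms your expansion creates and never needs to analyze $\mathbb E_\Pb[\phi_{T_1',T_2'}]$ for non-tree $T_1',T_2'$.

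One imprecision worth flagging: your account of how the Kesten--Stigum hypothesis enters is not quite the mechanism. The condition $\epsilon^2\lambda s\le 1$ is not used to ``sum over cycle lengths''; rather, each edge in the symmetric difference $E(S_1)\triangle E(\pi^{-1}(S_2))$ contributes a factor $\omega(\sigma_i,\sigma_j)\sqrt{\epsilon^2\lambda s/n}$ after centering (see~\eqref{eq-untruncate-exp-Pb-phi-1}), and $\epsilon^2\lambda s\le 1$ is what caps this at $k/\sqrt n$ in absolute value, so that unmatched-permutation configurations contribute $n^{-\Omega(m)}$ when $m$ edges are unmatched. In your expansion-based route you would still need this per-unmatched-edge bound inside each $\mathbb E_\Pb[\phi_{T_1',T_2'}]$, not only the $1+(k-1)\epsilon^\ell$ correction on the matched part; without it the ``bookkeeping'' you identify as the technical heart does not close.
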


\begin{remark}{\label{rmk-algorithm}}
    As discussed in Definition~\ref{def-strong-separation}, Proposition~\ref{prop-first-second-moment-f-T} implies that the testing error satisfies
    $$ 
    \Qb( f_{\mathcal{T}}(A,B)\ge \tau ) + \Pb( f_{\mathcal{T}}(A,B)\le \tau )= o(1) \,, 
    $$
    where the threshold $\tau$ is chosen as $\tau = C\mathbb{E}_{\Pb} [f_{\mathcal{T}}(A,B)]$ for any fixed constant $0<C<1$. In addition, the statistics $f_{\mathcal T}$ can be approximated in $n^{2+o(1)}$ time by color coding, as incorporated in \cite[Algorithm~1]{MWXY21+}. We omit further details here since the proof of the validity of this approximation algorithm remains basically unchanged.
\end{remark}

The rest of this section is devoted to the proof of Proposition~\ref{prop-first-second-moment-f-T} (which then yields Theorem~\ref{main-thm-detection-upper-bound} in light of Remark~\ref{rmk-algorithm}). Our proof extends the methodology introduced in \cite{MWXY21+} and a key technical challenge arises from the additional estimation errors inherent in our setting. Specifically, in the relevant parameter regimes, the latent community partition cannot be exactly recovered. As a result, the edge-indicator variables in the centered subgraph counts cannot be precisely centered, necessitating careful handling of these errors in our analysis.

\subsection{Estimation of the first moment}{\label{subsec:est--1st-moment}}

In this section, we will provide a uniform bound on $\mathbb{E}_{\Pb} [\phi_{S_1,S_2}]$, which will lead to the proof of Item (1) in Proposition~\ref{prop-first-second-moment-f-T}. For $\mathbf H \in \mathcal T$, for notational convenience we define
\begin{equation}{\label{eq-def-a-H}}
    a_{\mathbf H}= \frac{ s^\aleph \operatorname{Aut}(\mathbf H) (n-\aleph-1)! }{ n! }\,.
\end{equation}

\begin{lemma}{\label{prop_first_moment_phi}}
We have uniformly for all $S_1,S_2 \cong \mathbf H \in \mathcal T$
\begin{align}
    \mathbb{E}_{\Pb}[\phi_{S_1,S_2}] & \circeq s^\aleph \cdot \mathbb{P}(\pi_*(S_1)=S_2) = a_{\mathbf H} \,. \label{eq-first-moment-phi-S}
\end{align}
\end{lemma}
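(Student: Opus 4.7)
I would compute $\mathbb E_\Pb[\phi_{S_1,S_2}]$ by iterated conditioning: first on $\pi_*$, then on $\sigma_*$. Setting $S_2' := \pi_*^{-1}(S_2)$ and partitioning into $E_1 := E(S_1) \setminus E(S_2')$, $E_2 := E(S_2') \setminus E(S_1)$, $E_{12} := E(S_1) \cap E(S_2')$, the conditional independence of $G,J,K$ given $\sigma_*$ factors $\phi_{S_1,S_2}$ edge-by-edge. A short calculation using $A_{ij}=G_{ij}J_{ij}$ and $B_{\pi_*(ij)}=G_{ij}K_{\pi_*(ij)}$ gives
\[
\mathbb E[\bar A_e\mid\sigma_*]=s\,\delta_e,\qquad \mathbb E[\bar A_e \bar B_{\pi_*(e)}\mid\sigma_*] = A + c_1\,\delta_e \quad (e\in E_{12}),
\]
where $\delta_e := p_{\sigma_*(e)}-\lambda/n$, $A:=s^2(\lambda/n)(1-\lambda/n)$, $c_1:=s^2(1-2\lambda/n)$, and the identity is symmetric for $\bar B_e$ alone.

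\textbf{Cornerstone identity.} The key observation is that $\mathbb E_{\sigma_*}[\prod_{e\in F}\delta_e]=0$ whenever the subgraph $F$ contains a leaf vertex. Indeed, $\delta_{(i,j)} = (\epsilon\lambda/n)(k\mathbf{1}_{\sigma_*(i)=\sigma_*(j)}-1)$, and if $v$ is a leaf of $F$ with unique incident edge $(v,u)$, then conditioning on $\{\sigma_*(w):w\ne v\}$ gives $\mathbb E[k\mathbf{1}_{\sigma_*(v)=\sigma_*(u)}-1\mid\sigma_*(u)]=k\cdot\tfrac{1}{k}-1=0$ by uniform symmetry of $\sigma_*(v)\sim\mathrm{Unif}([k])$.

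\textbf{Case A: $\pi_*(S_1)=S_2$.} Here $E_1=E_2=\emptyset$ and $E_{12}=E(S_1)$. Expanding $\prod_{e\in E(S_1)}(A+c_1\delta_e) = \sum_{T\subset E(S_1)} A^{\aleph-|T|}c_1^{|T|}\prod_{e\in T}\delta_e$, every nonempty $T$ is a sub-forest of the tree $S_1$ and hence has a leaf; the cornerstone identity kills all these. Only $T=\emptyset$ survives, and after normalization
\[
\mathbb E[\phi_{S_1,S_2}\mid\pi_*] = \Big(\frac{A}{(\lambda s/n)(1-\lambda s/n)}\Big)^{\aleph} = s^\aleph\Big(\frac{1-\lambda/n}{1-\lambda s/n}\Big)^{\aleph}\circeq s^\aleph
\]
since $\aleph=o(n)$. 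Multiplying by $\mathbb P(\pi_*(S_1)=S_2)=|\operatorname{Aut}(\mathbf H)|(n-\aleph-1)!/n!$ yields exactly $a_{\mathbf H}$.

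\textbf{Case B and the main obstacle.} When $\pi_*(S_1)\ne S_2$, the surviving terms require $F:=E_1\cup E_2\cup T$ to be leaf-free, which forces $F$ to contain a cycle of $S_1\cup S_2'$. Using $|\delta_e|\le C\epsilon\lambda/n$ and collecting powers, each term is bounded roughly by $s^\aleph(\lambda/n)^{\aleph-m}\epsilon^{|F|}$ with $m:=|E_{12}|$. The task is then to enumerate permutations $\pi_*$ grouped by the isomorphism type of $S_1\cap \pi_*^{-1}(S_2)$, multiply by the per-$\pi$ bound, and use the cycle-containing constraint on $F$ to show the aggregate Case B contribution is $o(a_{\mathbf H})$. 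Combining the two cases gives the claim. The chief technical difficulty is this combinatorial enumeration: since $\aleph$ grows with $n$, the counts of overlap types and the accompanying factors in $n,\lambda,\epsilon,\aleph$ must be tracked sharply — not coarsely — so that Case B stays strictly below $a_{\mathbf H}$ uniformly over all admissible overlap patterns.
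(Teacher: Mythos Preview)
Your proposal is correct and follows essentially the same route as the paper. Your cornerstone identity and the leaf-free/cycle reduction in Case~B are precisely the content of the paper's Lemma~\ref{lemma_observation_appendix_C} (your observation that a leaf-free $F$ forces a cycle in $S_1\cup\pi_*^{-1}(S_2)$ is equivalent to restricting to $\mathfrak A=\{\pi:|\mathsf L_2|\ge|E(\mathsf K_2)|+2\}$), and the enumeration you flag as the chief obstacle is dispatched by Lemma~\ref{lemma_order_growth}, which uses exactly this vertex-overlap constraint together with the standing hypotheses $\epsilon^2\lambda s\le 1$ and $\aleph^{O(\aleph)}=n^{o(1)}$.
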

The proof of Lemma~\ref{prop_first_moment_phi} is incorporated in Section~\ref{subsec:Proof-Lem-3.4} of the appendix. Now we estimate $\operatorname{Var}_{\Qb} [f_{\mathcal{T}}]$ and $\mathbb{E}_{\Pb} [f_{\mathcal{T}}]$ assuming Lemma~\ref{prop_first_moment_phi}. 

\begin{lemma}{\label{lem-first-moment-P}}
We have the following estimates:
\begin{enumerate}
    \item[(i)] $\mathbb E_{\Qb}[f_{\mathcal T}]=0$ and $\operatorname{Var}_{\Qb} [f_{\mathcal{T}}] = s^{2\aleph} |\mathcal{T}|$;
    \item[(ii)] $\mathbb{E}_{\Pb} [f_{\mathcal{T}}] \circeq s^{2\aleph} |\mathcal{T}|$.
\end{enumerate}
\end{lemma}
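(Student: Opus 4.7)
The plan is to exploit the orthonormality relation \eqref{eq-standard-orthogonal} for Part (i) and then apply Lemma~\ref{prop_first_moment_phi} termwise for Part (ii). Since every tree $\mathbf H \in \mathcal T$ has $\aleph \geq 1$ edges, each basis polynomial $\phi_{S_1,S_2}$ appearing in \eqref{eq-def-f-T} has mean zero under $\Qb$ (taking $(S_1',S_2')=(\emptyset,\emptyset)$ in \eqref{eq-standard-orthogonal} and using $\phi_{\emptyset,\emptyset}\equiv 1$); this immediately gives $\mathbb{E}_{\Qb}[f_{\mathcal T}]=0$.

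For the variance in Part (i), I would expand
\[
\operatorname{Var}_{\Qb}[f_{\mathcal T}]=\mathbb{E}_{\Qb}[f_{\mathcal T}^2]=\sum_{\mathbf H,\mathbf H'\in\mathcal T} a_{\mathbf H}a_{\mathbf H'}\sum_{S_1,S_2\cong\mathbf H}\sum_{S_1',S_2'\cong\mathbf H'}\mathbb{E}_{\Qb}[\phi_{S_1,S_2}\phi_{S_1',S_2'}]
\]
and invoke \eqref{eq-standard-orthogonal} so only the diagonal terms $(S_1,S_2)=(S_1',S_2')$ (which force $\mathbf H=\mathbf H'$) survive. The number of labeled copies of $\mathbf H$ in $\mathcal K_n$ is $N_{\mathbf H}=n!/[(n-\aleph-1)!\operatorname{Aut}(\mathbf H)]$, so the number of pairs is $N_{\mathbf H}^2$. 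Plugging in $a_{\mathbf H}=s^{\aleph}\operatorname{Aut}(\mathbf H)(n-\aleph-1)!/n!$ from \eqref{eq-def-a-H} yields $a_{\mathbf H}^2 N_{\mathbf H}^2=s^{2\aleph}$, whence summing over $\mathbf H\in\mathcal T$ gives $s^{2\aleph}|\mathcal T|$.

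For Part (ii), I would write
\[
\mathbb{E}_{\Pb}[f_{\mathcal T}]=\sum_{\mathbf H\in\mathcal T} a_{\mathbf H}\sum_{S_1,S_2\cong \mathbf H}\mathbb{E}_{\Pb}[\phi_{S_1,S_2}]
\]
and apply the uniform estimate $\mathbb{E}_{\Pb}[\phi_{S_1,S_2}]\circeq a_{\mathbf H}$ from Lemma~\ref{prop_first_moment_phi}. Since $a_{\mathbf H}\cdot a_{\mathbf H}\cdot N_{\mathbf H}^2=s^{2\aleph}$ by the same computation as above, summing over $\mathbf H$ gives $\mathbb{E}_{\Pb}[f_{\mathcal T}]\circeq s^{2\aleph}|\mathcal T|$.

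The only nontrivial point is ensuring that the $\circeq$ in Lemma~\ref{prop_first_moment_phi} is uniform over all $\mathbf H\in\mathcal T$ and all pairs $(S_1,S_2)\cong\mathbf H$; since $|\mathcal T|$ grows exponentially in $\aleph$ (by Otter, $|\mathcal T|^{1/\aleph}\to 1/\alpha$) but $\aleph=o(\log n/\log\log n)$, this uniformity is not automatic and must be tracked. Assuming Lemma~\ref{prop_first_moment_phi} (whose proof is deferred to the appendix) already provides a $(1+o(1))$ multiplicative error that is uniform over the relevant range, the estimate passes through the sum without issue. This uniformity is the only real concern; the remainder of the argument is a direct calculation using orthonormality and the definition of $a_{\mathbf H}$.
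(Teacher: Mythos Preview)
Your proposal is correct and follows essentially the same approach as the paper: orthonormality \eqref{eq-standard-orthogonal} plus the count $N_{\mathbf H}=n!/[(n-\aleph-1)!\operatorname{Aut}(\mathbf H)]$ to get $a_{\mathbf H}^2 N_{\mathbf H}^2=s^{2\aleph}$ for Part (i), and a termwise application of Lemma~\ref{prop_first_moment_phi} for Part (ii). Your concern about uniformity is legitimate but is already handled by the statement of Lemma~\ref{prop_first_moment_phi}, which explicitly asserts that the $\circeq$ holds \emph{uniformly} over all $S_1,S_2\cong\mathbf H\in\mathcal T$.
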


\begin{proof}
For Item (i), clearly we have $\mathbb E_{\Qb}[f_{\mathcal T}]=0$. Recalling \eqref{eq-standard-orthogonal} and \eqref{eq-def-f-T}, we have
\begin{align*}
    \operatorname{Var}_{\Qb} [f_{\mathcal{T}}] 
    & \overset{\eqref{eq-standard-orthogonal},\eqref{eq-def-f-T},\eqref{eq-def-a-H}}{=} \sum_{\mathbf H \in \mathcal{T}} \sum_{S_1,S_2 \cong \mathbf H} a_{\mathbf H}^2  \\
    &= \sum_{\mathbf H \in \mathcal{T}} a_{\mathbf H}^2 \cdot \#\{ S \subset \mathcal K_n : S \cong \mathbf H \}^2 = \sum_{\mathbf H \in \mathcal{T}} s^{2\aleph} = s^{2\aleph} |\mathcal{T}|  \,.
\end{align*}
As for Item (ii), by applying Lemma~\ref{prop_first_moment_phi} we have
\begin{equation*}
    \mathbb{E}_{\Pb} [f_{\mathcal{T}}] \circeq \sum_{\mathbf H \in \mathcal{T}} \sum_{S_1,S_2 \cong \mathbf H} a_{\mathbf H}^2 = s^{2\aleph} |\mathcal{T}| \,. \qedhere
\end{equation*}
\end{proof}
Recall our assumption that $s>\sqrt{\alpha}$ and \eqref{eq-num-trees}. By Lemma~\ref{lem-first-moment-P}, we have shown that
\begin{align*}
    \frac{\operatorname{Var}_{\Qb}[f_{\mathcal{T}}]}{(\mathbb{E}_{\Pb}[f_{\mathcal{T}}])^2} = o(1) \,.
\end{align*}

\subsection{Estimation of the second moment}{\label{subsec:est-2nd-moment}}

The purpose of this subsection is to show Item~(ii) of Proposition~\ref{prop-first-second-moment-f-T}. Recall \eqref{eq-def-f-T} and \eqref{eq-def-a-H}. A direct computation shows that 
\begin{align*}
    \operatorname{Var}_{\Pb} [f_{\mathcal{T}}] 
    &= \sum_{\mathbf H,\mathbf I \in \mathcal{T}} \sum_{  S_1,S_2 \cong \mathbf H } \sum_{T_1,T_2 \cong \mathbf I} a_{\mathbf H} a_{\mathbf I} \big( \mathbb{E}_{\Pb} [\phi_{S_1,S_2} \phi_{T_1,T_2}] - \mathbb{E}_{\Pb} [\phi_{S_1,S_2}] \mathbb{E}_{\Pb} [\phi_{T_1,T_2}] \big) \,.
\end{align*}
Now we estimate $\mathbb{E}_{\Pb} [\phi_{S_1,S_2} \phi_{T_1,T_2}]$, where $S_1,S_2 \cong \mathbf H$ and $T_1,T_2 \cong \mathbf I$. For $\mathbf H,\mathbf I \in \mathcal{T}$ (note that $\mathcal I(\mathbf H)=\mathcal I(\mathbf I)=\emptyset$), define 
\begin{equation}{\label{eq-def-R-H,I}}
    \mathsf R_{\mathbf H,\mathbf I} = \big\{ (S_1,S_2;T_1,T_2): S_1,S_2 \cong \mathbf H, T_1,T_2 \cong \mathbf I \big\} 
\end{equation}
and define the set of its ``principal elements''
\begin{equation}{\label{eq-def-R-H,I^*}}
    \mathsf R_{\mathbf H,\mathbf I}^* = \big\{ (S_1,S_2;T_1,T_2) \in \mathsf R_{\mathbf H,\mathbf I}: V(S_1) \cap V(T_1) = V(S_2) \cap V(T_2) = \emptyset \big\} \,.
\end{equation}

\begin{lemma} {\label{prop_principal}}
    (i) For all $(S_1,S_2;T_1,T_2) \in \mathsf R_{\mathbf H,\mathbf I} \setminus \mathsf R^*_{\mathbf H,\mathbf I}$ and for all $h>1$ we have
    \begin{align*}
        \big| \mathbb{E}_\Pb[ \phi_{S_1,S_2}\phi_{T_1,T_2} ] \big| & \le O_h(1) \cdot h^{2\aleph} \mathbf{1}_{ \{(S_1,S_2)=(T_1,T_2)\} }  \\
        & + [1+o(1)] \cdot n^{-0.5(|V(S_1) \triangle V(T_1)| + |V(S_2) \triangle V(T_2)|) - 0.8}  \,.
    \end{align*}   
    
    (ii) For $(S_1,S_2;T_1,T_2) \in \mathsf R^*_{\mathbf H,\mathbf I}$, we have
    \begin{align*}
        \mathbb{E}_{\Pb}[ \phi_{S_1,S_2} \phi_{T_1,T_2} ] \circeq \mathbb{E}_{\Pb}[ \phi_{S_1,S_2} ] \mathbb{E}_{\Pb}[ \phi_{T_1,T_2} ] \big( 1+\mathbf{1}_{\{\mathbf{H} \cong \mathbf{I}\}} \big) \,.
    \end{align*}
\end{lemma}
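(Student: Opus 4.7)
I would handle Part (ii) as a direct corollary of the first-moment formula (Lemma~\ref{prop_first_moment_phi}) applied to the disjoint union graphs $S_1 \cup T_1$ and $S_2 \cup T_2$, and handle Part (i) by splitting into the diagonal subcase $(S_1,S_2) = (T_1,T_2)$ (a crude second-moment bound) and the off-diagonal subcase, which requires conditioning on $\pi_*$ and $G$ together with a combinatorial analysis of edge multiplicities in the parent graph.

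\textbf{Part (ii).} The disjoint vertex sets ensure that the multiset of $A$-edges in $\phi_{S_1,S_2}\phi_{T_1,T_2}$ has no repetitions, so this product equals $\phi_{S_1 \cup T_1,\, S_2 \cup T_2}$ for the disjoint-union graphs. Lemma~\ref{prop_first_moment_phi} yields $\mathbb{E}_{\Pb}[\phi_{S_1,S_2}\phi_{T_1,T_2}] \circeq s^{2\aleph}\,\mathbb{P}\bigl(\pi_*(S_1 \cup T_1) = S_2 \cup T_2\bigr)$. The event decomposes into $\{\pi_*(S_1)=S_2,\ \pi_*(T_1)=T_2\}$ and, when $\mathbf H \cong \mathbf I$, the flipped event $\{\pi_*(S_1)=T_2,\ \pi_*(T_1)=S_2\}$. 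Because the vertex sets involved have total size $O(\aleph) = o(\sqrt{n})$ and are disjoint, the restriction of $\pi_*$ to them factorizes into two nearly-independent uniform injections, so each joint probability equals the product of the marginals up to a $1+o(1)$ factor. Combining with Lemma~\ref{prop_first_moment_phi} applied separately to $\phi_{S_1,S_2}$ and $\phi_{T_1,T_2}$ recovers the stated identity, including the $1+\mathbf{1}_{\{\mathbf H \cong \mathbf I\}}$ term.

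\textbf{Part (i), diagonal subcase.} Expanding $\bar A_e^2 = A_e(1-2\lambda s/n)+(\lambda s/n)^2$ (and similarly for $\bar B_e$) and conditioning on $\sigma_*$, each squared edge contributes its $\Qb$-second moment up to a $1 + O(1/n)$ factor coming from the community signal and the $A$-$B$ correlation induced by the shared parent graph; exponentiating over $2\aleph$ edges gives $\mathbb{E}_{\Pb}[\phi_{S_1,S_2}^2] \le \exp(O(\aleph/n)) \le O_h(1) h^{2\aleph}$ for any $h>1$. \emph{Off-diagonal subcase.} Conditional on $(\sigma_*, \pi_*)$ and using $B_e = G_{\pi_*^{-1}(e)} K_e$, the expectation reduces to a product over parent-graph edges in the multiset $E(S_1) \cup E(T_1) \cup E(\pi_*^{-1}(S_2)) \cup E(\pi_*^{-1}(T_2))$ of factors $(G_e s - \lambda s/n)^{m_e}$. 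Edges of multiplicity two contribute a variance-type term $\asymp \lambda s^2/n$ absorbed by the normalizing constant, while ``lonely'' edges of multiplicity one survive only through the community correction $\delta(e,\sigma_*) = O(\epsilon\lambda s/n)$, each costing an extra $O(n^{-1/2})$ after normalization. Summing over $\pi_*$ by classifying alignment isomorphism type, and balancing the number of realizing permutations against the factor $n^{-|V|}$ from the uniform law of $\pi_*$, yields the principal factor $n^{-0.5(|V(S_1)\triangle V(T_1)| + |V(S_2)\triangle V(T_2)|)}$; the additional $n^{-0.8}$ is slack reflecting the constraint $(S_1,S_2) \ne (T_1,T_2)$, which forces at least one extra lonely edge beyond those arising from vertex mismatches.

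\textbf{Main obstacle.} The hard step is the off-diagonal subcase: the bound must hold \emph{uniformly} over all pairs in $\mathsf R_{\mathbf H,\mathbf I} \setminus \mathsf R^*_{\mathbf H,\mathbf I}$, which forces a careful case analysis over alignment types of $\pi_*$ that simultaneously track overlaps among $V(S_1), V(T_1), V(S_2), V(T_2)$ and the cross-overlap induced by $\pi_*$. The community correction $\delta$, although only of second-order size $O(\epsilon/n)$, is the sole source of signal along lonely edges and must be aggregated precisely over potentially many such edges; pinning down the combinatorial constant that yields the $-0.8$ exponent is the most delicate bookkeeping step in the proof.
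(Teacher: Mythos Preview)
Your Part (ii) sketch has the right shape, but Lemma~\ref{prop_first_moment_phi} is stated and proved only for $S_1,S_2\cong\mathbf H\in\mathcal T$, i.e.\ single trees; the disjoint union $S_1\cup T_1$ is a two-component forest, so you cannot invoke it as written. The paper therefore reproves the analogous cancellation (Lemmas~\ref{principal-decomposition} and~\ref{principal-enumeration-pi}) with the threshold ``$+2$'' in $\mathfrak A$ replaced by ``$+3$'' in $\mathfrak A'$, reflecting that a two-tree forest has at least four leaves. This is a routine extension, but it is not a citation.

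In Part (i) there are two genuine gaps. First, in the diagonal subcase your ``$1+O(1/n)$ per edge'' claim is false whenever $\pi_*$ aligns an $S_1$-edge $e$ with an $S_2$-edge $\pi_*(e)$: then $\bar A_e^2\bar B_{\pi_*(e)}^2$ has expectation $\Theta(\lambda s^2/n)$ through the common parent edge $G_e$, which after normalizing by $(\lambda s/n)^{-2}$ contributes a factor $\Theta(n)$, not $1+O(1/n)$. The paper avoids this by bounding the diagonal contribution only on the event $\mathfrak D=\{V(\pi_*(S_1))\cap V(S_2)=\emptyset\}$, where $S_1\cup\pi_*^{-1}(S_2)$ is a forest and the product over edges of $(1+\epsilon\omega)$ averages to $1$ over $\sigma_*$; the complement $\mathfrak S_n\setminus\mathfrak D$ is folded into the $\mathfrak G$-analysis and absorbed by the $n^{-0.8}$ term.

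Second, and more seriously, in the off-diagonal subcase you describe the right bookkeeping (multiplicity-one edges cost $n^{-1/2}$, sum over alignment types of $\pi_*$) but you are missing the mechanism that makes the sum close. The paper's key step is Lemma~\ref{on_good_events}: for $\pi\notin\mathfrak G\cup\mathfrak D$ the union graph $G_\cup=S_1\cup T_1\cup\pi^{-1}(S_2\cup T_2)$ has $|V(G_\cup)|\ge |E(G_\cup)|+2$, which forces a leaf in $\operatorname{K}_1\cup\operatorname{K}'$ for every $\operatorname{K}'\Subset\operatorname{K}_2\cup\operatorname{K}_3\cup\operatorname{K}_4$, and hence the expectation over $\sigma_*$ vanishes exactly (Lemma~\ref{lemma_observation_appendix_C}(i)). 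This restricts the sum to $\pi\in\mathfrak G$, and the defining inequality of $\mathfrak G$ is precisely what, combined with the overlap bound $\mu(\operatorname{Overlap}^*_m)\le n^{-m+o(1)}$, produces the exponent $-0.9$ (then $-0.8$ after absorbing polylog factors). Without this $\sigma_*$-cancellation the permutation sum does not converge with the right exponent; it is a structural identity, not delicate bookkeeping, and your sketch does not mention it.
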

The proof of Lemma~\ref{prop_principal} is incorporated in Section~\ref{subsec:Proof-Lem-3.6} of the appendix. We use Lemma~\ref{prop_principal} to derive $\operatorname{Var}_{\Pb} [f_{\mathcal{T}}] = o(1) \cdot \mathbb{E}_{\Pb} [f_{\mathcal{T}}]^2$ in Section~\ref{subsec:proof-lem-3.2(ii)} of the appendix, which yields Item (ii) of Proposition~\ref{prop-first-second-moment-f-T}.

\section{Low-degree hardness for the detection problem}{\label{sec:detection-lower-bound}}

In this section we prove Theorem~\ref{main-thm-detection-lower-bound}. Throughout this section, we fix a small constant $\delta \in (0,0.1)$ and assume that 
\[
    s \leq \sqrt{\alpha} - \delta \mbox{ and } \epsilon^2 \lambda s \leq 1-\delta \,.
\]
We also choose a sufficiently large constant $N=N(k,\lambda,\delta,\epsilon,s) \geq 2 / \delta$ such that 
\begin{equation}{\label{eq-def-N}}
    \begin{aligned}
        & (\sqrt{\alpha}-\delta) (1+\epsilon^{N}k) \leq \sqrt{\alpha} - \delta/2 \,; \quad 10k(1-\delta)^N \leq (1-\delta/2)^{N} \,; \\
        & (\sqrt{\alpha}-\delta/4)(1+(1-\delta/2)^N)^2 \leq \sqrt{\alpha}-\delta/8 \,; \quad (1-\delta / 2)^N (N+1) \leq 1\,. 
    \end{aligned}
\end{equation}
Furthermore, we fix a sequence $D_n$ such that $\log D/\log n\to 0$ as $n\to \infty$. Without loss of generality, we assume $D_n \geq 2\log_2 n$ in the following proof. For the sake of brevity, we will only work with some fixed $n$ throughout the analysis, and we simply denote $D_n$ as $D$. While our main interest is to analyze the behavior for sufficiently large $n$, most of our arguments hold for all $n$, and we will explicitly point out in lemma-statements and proofs when we need the assumption that $n$ is sufficiently large.

\subsection{Truncation on admissible graphs} \label{subsec-admissible}

As previously suggested, it is crucial to work with a suitably truncated version of $\Pb$ rather than $\Pb$ itself. It turns out that an appropriate truncation is to control both the edge densities of subgraphs and the number of small cycles in the parent graph $G$, as explained in the following definition.

\begin{defn}\label{def-addmisible}
    Denote $\Tilde{\lambda}=\lambda\vee 1$. Given a graph $H=H(V,E)$, define 
    \begin{equation}\label{eq-def-Phi}
        \Phi(H) = \Big( \frac{2 \Tilde{\lambda}^2 k^2 n}{D^{50}} \Big)^{|V(H)|} \Big( \frac{ 1000 \Tilde{\lambda}^{20} k^{20} D^{50} }{ n } \Big)^{|E(H)|}  \,.
    \end{equation}
    Then we say the graph $H$ is \emph{bad} if $\Phi(H) < (\log n)^{-1}$, and we say a graph $H$ is \emph{self-bad} if $H$ is bad and $\Phi(H)<\Phi(K)$ for all $K \subset H$. Furthermore, we say that a graph $H$ is \emph{admissible} if it contains no bad subgraph and $\mathtt C_j(H) =\emptyset$ for $j \leq N$; we say $H$ is \emph{inadmissible} otherwise. Denote $\mathcal E = \mathcal E^{(1)} \cap \mathcal E^{(2)}$, where 
    $\mathcal E^{(1)}$ is the event that $G$ does not contain any bad subgraph with no more than $D^3$ vertices, and $\mathcal E^{(2)}$ is the event that $G$ does not contain any cycles with length at most $N$. 
\end{defn}

\begin{remark}
    We now provide a brief explanation for this rather involved definition of ``bad'' graphs. Roughly speaking, there are two possible reasons for a graph $H$ to be bad: one is that $H$ is atypically ``dense'' (i.e., $\Phi(H)$ is atypically small) and the other is that $H$ contains a small cycle (i.e., $\mathtt C_j(H) \neq \emptyset$). The motivation of ruling out all atypically dense graphs has already appeared in \cite{DDL23+}, where the authors chose a similar $\Phi(H)$. Roughly speaking, we expect that any subgraph of a sparse SBM graph with size no more than $n^{o(1)}$ has edge-to-vertex ratio $1+o(1)$. In the definition of $\Phi$, the term $\big( \tfrac{2 \Tilde{\lambda}^2 k^2 n}{D^{50}} \big)$ should be interpreted as $n^{1+o(1)}$, and $\big( \tfrac{ 1000 \Tilde{\lambda}^{20} k^{20} D^{50} }{ n } \big)$ as $n^{1-o(1)}$; the $o(1)$ terms are carefully tuned so that for a typical subgraph $H$ of a sparse SBM $\Phi(H)$ is much larger than $1$. In contrast, the need to rule out the influence of small cycles is a new challenge in the SBM setting, and is one of the main conceptual innovations in our work (as we have explained in Section~\ref{sec-contribution}). To see why ruling out the influence of small cycles is necessary, let us consider a simple polynomial: let $\mathbf H$ be the unlabeled graph that contains $\ell$ independent triangles and define 
    \begin{align*}
        f = \frac{\operatorname{Aut}(\mathbf H)}{n^{3\ell}} \sum_{H_1,H_2 \cong \mathbf H} \prod_{(i,j)\in E(H_1)}\frac{(A_{i,j}-\tfrac{\lambda s}{n})}{\sqrt{\tfrac{\lambda s}{n}(1-\tfrac{\lambda s}{n})}}\prod_{(i,j)\in E(H_2)}\frac{(B_{i,j}-\tfrac{\lambda s}{n})}{\sqrt{\tfrac{\lambda s}{n}(1-\tfrac{\lambda s}{n})}}   \,.
    \end{align*}
    A standard calculation yields that
    \begin{align*}
        \mathbb E_{\Qb}[f]=0, \quad \mathbb E_{\Qb}[f^2]=1+o(1) \mbox{ and } \mathbb E_{\Pb}[f] \geq \big( (1+(k-1)\epsilon^3) s^3 \big)^{\ell} \,,
    \end{align*}
    which will blow up for large $\ell$ if $(1+(k-1)\epsilon^3)s^3>1$. To tackle this issue, we choose to condition on the event that there is no small cycle in a sparse SBM (which happens with positive probability). Under this event, since $\mathbf H$ does not occur in either $A$ or $B$ we expect that the expectation of $f$ under $\Pb$ can be bounded.  
\end{remark}

\begin{lemma}\label{lem-Gc-is-typical}
    There exists a constant $c=c(\lambda,k,N)$ in $(0,1)$ such that for any permutation $\pi\in \mathfrak{S}_n$, it holds that $\Pb_*(\mathcal E \mid \pi_*=\pi) \geq c$. Therefore, we have $\Pb_*(\mathcal E)\geq c$.
\end{lemma}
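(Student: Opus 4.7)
The plan is to begin with the observation that the event $\mathcal E = \mathcal E^{(1)} \cap \mathcal E^{(2)}$ is measurable with respect to the parent graph $G$, which is sampled independently of $\pi_*$. Hence $\Pb_*(\mathcal E \mid \pi_* = \pi) = \Pb_*(\mathcal E)$ for every $\pi \in \mathfrak S_n$, so it suffices to produce a uniform positive lower bound on the unconditional $\Pb_*(\mathcal E)$. I would do this through
\begin{equation*}
\Pb_*(\mathcal E) \;\geq\; \Pb_*(\mathcal E^{(2)}) - \Pb_*\big((\mathcal E^{(1)})^c\big),
\end{equation*}
arguing that the first term is bounded below by a positive constant while the second is $o(1)$.

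For $(\mathcal E^{(1)})^c$, I would run a first-moment bound over self-bad subgraphs. The minimality condition $\Phi(H) < \Phi(K)$ for all $K \subsetneq H$, together with the explicit form of $\Phi$ in \eqref{eq-def-Phi}, forces every self-bad graph to have minimum degree at least $2$: removing an isolated vertex multiplies $\Phi$ by $O(D^{50}/n) < 1$, and removing a degree-one vertex multiplies it by $1/(2000 \tilde\lambda^{22} k^{22}) < 1$, both contradicting minimality. This in turn forces the excess $\tau = |E|-|V| \geq 1$, since $\tau = 0$ would give $\Phi(H) = (2000 \tilde\lambda^{22} k^{22})^{|V|} \geq 1$. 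For a self-bad $\mathbf H$ on $v$ vertices with $e = v+\tau$ edges, the expected number of its copies in $G$ is at most $n^{v}(k\tilde\lambda/n)^{e} = (k\tilde\lambda)^{e} n^{-\tau}$, since every SBM edge probability is at most $k\tilde\lambda/n$. Summing this over unlabeled self-bad $\mathbf H$ with $|V(\mathbf H)| \leq D^{3}$, using a crude bound like $\binom{\binom{v}{2}}{e}$ for the number of isomorphism classes together with the inequality $\Phi(\mathbf H) < (\log n)^{-1}$, one verifies that the exponents and constants in \eqref{eq-def-Phi} were tuned precisely so that this first moment is $o(1)$ for any $D = n^{o(1)}$. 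Markov's inequality then yields $\Pb_*((\mathcal E^{(1)})^c) = o(1)$, analogous to the corresponding estimate in \cite{DDL23+} with the only adjustment that edges in the parent SBM are pairwise independent given $\sigma_*$.

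For $\mathcal E^{(2)}$ I would invoke the classical Poisson approximation for small cycle counts in sparse random graphs. In the SBM with $k, \lambda, \epsilon = O(1)$, every edge probability lies in $[\tfrac{(1-\epsilon)\lambda}{n}, \tfrac{(1+(k-1)\epsilon)\lambda}{n}]$, so after averaging over $\sigma_*$ the standard method of moments shows that for each fixed $3 \leq j \leq N$ the number of $j$-cycles in $G$ converges in distribution to a Poisson variable with a finite positive mean $\mu_j = \mu_j(\lambda, k, \epsilon)$, and that the counts across distinct $j$ converge jointly to independent Poissons. Hence $\Pb_*(\mathcal E^{(2)}) \to \prod_{j=3}^{N} e^{-\mu_j} =: 2c > 0$, so $\Pb_*(\mathcal E^{(2)}) \geq 2c$ for all large $n$. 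Combining yields $\Pb_*(\mathcal E) \geq c$ for large $n$; for the finitely many small $n$ the event $\mathcal E$ has strictly positive probability (realised e.g.\ by $G = \emptyset$), so shrinking $c$ if needed gives the conclusion uniformly in $n$.

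The main obstacle is the combinatorial bookkeeping in the first-moment bound for $\mathcal E^{(1)}$: balancing the super-polynomial number of unlabeled dense graphs of a given size against the $n^{-\tau}$ savings per unit of excess is exactly what dictates the somewhat mysterious exponents $D^{50}$ and constants $1000$, $\tilde\lambda^{20}$, $k^{20}$ in the definition of $\Phi$, and turning this balance into a rigorous $o(1)$ bound is what I expect comprises the bulk of the appendix argument.
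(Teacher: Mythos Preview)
Your proposal is correct and follows essentially the same route as the paper: reduce to the unconditional probability via independence of $G$ and $\pi_*$, handle $\mathcal E^{(2)}$ by Poisson convergence of short-cycle counts, and show $\Pb_*((\mathcal E^{(1)})^c)=o(1)$ by a first-moment bound. The only cosmetic difference is that the paper organizes the union bound for $(\mathcal E^{(1)})^c$ by vertex sets $W$ of size $j$ (bounding $\Pb(|E(G_W)|\geq \kappa(j))$ via stochastic domination by a binomial with parameter $k\lambda/n$) rather than by isomorphism classes of self-bad subgraphs as you do; the two formulations describe the same event and unwind to the same arithmetic with the constants in $\Phi$.
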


The proof of Lemma~\ref{lem-Gc-is-typical} is incorporated in Section~\ref{subsec:proof-lem-4.2} of the appendix. We now introduce another key conceptual innovation of our work, i.e., the construction of a suitable measure $\Pb'$ that is statistically indistinguishable with $\Pb(\cdot \mid \mathcal E)$ (as mentioned in Section~\ref{sec-contribution}). Roughly speaking, the main technical challenge arises from the fact that under $\Pb(\cdot\mid \mathcal E)$ the marginal distribution of the community label $\sigma_*$ is rather complicated, as such conditioning will ``prefer'' the labeling with balanced community size. Our strategy is to construct a measure $\Pb'$ that on the one hand has $o(1)$ total variational distance to $\Pb(\cdot \mid \mathcal E)$, and on the other hand the community labels $\sigma_*(i)$'s still have some independence (unless $i$ belongs to some bad subgraphs).
We now show how to construct $\mathbb{P}_n'$ that satisfies Item (ii) of Theorem~\ref{main-thm-detection-lower-bound}. Recall the definition of ``good event'' $\mathcal E$ in Definition~\ref{def-addmisible}. 
\begin{defn}\label{def-G'-P'}
    List all self-bad subgraphs of $\mathcal K_n$ with at most $D^3$ vertices and all cycles of $\mathcal K_n$ with lengths at most $N$ in an arbitrary but prefixed order $(B_1,\ldots,B_{\mathtt M})$. Define a stochastic block model with ``bad graphs" removed as follows: (i) sample $G \sim \mathcal S(n,\tfrac{\lambda}{n};k,\epsilon)$; (ii) for each $\mathtt 1 \leq \mathtt i \leq \mathtt M$ such that $B_{\mathtt i} \subset G$, we independently uniformly remove one edge in $B_{\mathtt i}$. The unremoved edges in $G$ constitute a graph $G'$, which is the output of our modified stochastic block model. Clearly, from this definition $G'$ does not contain any cycle of length at most $N$ nor any bad subgraph with at most $D^3$ vertices. Conditioned on $G'$ and $\pi_*$, we define 
    \[
    A'_{i,j} = G'_{i,j}J'_{i,j}, B'_{i,j} = G'_{\pi_*^{-1}(i),\pi_*^{-1}(j)} K'_{i,j} \,,
    \]
    where $J'$ and $K'$ are independent Bernoulli variables with parameter $s$. Let $\mathbb P_*' = \mathbb P'_{*,n}$ be the law of $(\sigma_*,\pi_*,G,G',A',B')$ and denote $\Pb'=\Pb_n'$ the marginal law of $(A',B')$.
\end{defn}
We remark that under $\Pb'$ there is no bad subgraph in the parent graph $G'$ (and thus there is no bad subgraph in $A'$ or $B'$), for the following reason: if $G'$ contains a bad (sub)graph, it must contain a self-bad graph $B_{\mathtt i}$ (for example, we can simply consider the bad graphs in $G'$ with minimal edges); however, in Step (ii) we have removed at least one edge in $B_{\mathtt i}$ and thus there is a contradiction. The next lemma shows that $\Pb'$ has $o(1)$ total variational distance to $\Pb(\cdot\mid\mathcal E)$.

\begin{lemma}{\label{lem-TV-Pb-Pb'}}
    We have $\operatorname{TV}( \Pb', \Pb(\cdot \mid \mathcal E))=o(1)$.
\end{lemma}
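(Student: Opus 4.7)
The plan is to reduce, via the data processing inequality, to a comparison of the laws of the parent graphs, and then to bound that parent-graph total-variation distance by a small-subgraph-conditioning style density computation. Under both $\Pb'$ and $\Pb(\cdot\mid\mathcal E)$, the permutation $\pi_*$ is uniform on $\mathfrak{S}_n$ and independent of the parent graph (because $\mathcal E$ depends only on $G$), and $(A',B')$ (respectively $(A,B)$) is obtained from $G'$ (respectively $G$) by the same Markov kernel: independent Bernoulli$(s)$ thinning of the edges, followed by relabeling one copy via $\pi_*$. Thus by the data processing inequality,
\begin{equation*}
    \operatorname{TV}\bigl(\Pb',\Pb(\cdot\mid\mathcal E)\bigr) \;\le\; \operatorname{TV}\bigl(\mathcal L_{\Pb'}(G'),\mathcal L_{\Pb(\cdot\mid\mathcal E)}(G)\bigr),
\end{equation*}
so it suffices to bound the right-hand side.

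I will first establish two probabilistic facts about $G\sim\Pb$: (a) with probability $1-o(1)$, $G$ contains no self-bad subgraph with at most $D^3$ vertices, which follows from a first-moment bound using that the exponent $50$ in \eqref{eq-def-Phi} is calibrated precisely so that $\sum_H\Phi(H)=o(1)$ over self-bad $H$; (b) an SBM-adapted Poisson approximation showing that the joint law of the short independent-cycle counts $(|\mathcal C_3(G)|,\ldots,|\mathcal C_N(G)|)$ converges to independent Poisson random variables with constant means $\mu_3,\ldots,\mu_N$ depending only on $\lambda,k,\epsilon$, together with the fact that with probability $1-o(1)$ all short cycles in $G$ are pairwise vertex-disjoint and chord-free. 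Combining (a) and (b) yields $\Pb(\mathcal E)\circeq\prod_{j=3}^{N}e^{-\mu_j}$ and identifies a typical set $\mathcal S$ of admissible graphs that carries mass $1-o(1)$ under both $\mathcal L_{\Pb(\cdot\mid\mathcal E)}(G)$ and $\mathcal L_{\Pb'}(G')$.

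For each $H\in\mathcal S$ I then expand
\begin{equation*}
    \Pb'(G'=H) \;=\; \sum_{H_0 \supset H}\Pb(G=H_0)\cdot P(H_0\to H),
\end{equation*}
where $P(H_0\to H)=\prod_{B_{\mathtt i}\subset H_0}\frac{1}{|E(B_{\mathtt i})|}$ whenever each $B_{\mathtt i}\subset H_0$ contains an edge of $E(H_0)\setminus E(H)$, and vanishes otherwise. On $\mathcal S$ the typical-set conditions force the nontrivial contributions to come from extensions $H_0$ in which $E(H_0)\setminus E(H)$ decomposes into vertex-disjoint ``minimal'' pieces, each of which is a short cycle either sharing exactly one edge with $H$ (added as one extra edge) or entirely disjoint from $H$. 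Summing the resulting disjoint-union contributions produces a cluster-expansion product from which
\begin{equation*}
    \Pb'(G'=H) \;\circeq\; \Pb(G=H)\cdot \prod_{j=3}^{N}e^{\mu_j} \;\circeq\; \frac{\Pb(G=H)}{\Pb(\mathcal E)} \;=\; \Pb(G=H\mid\mathcal E)
\end{equation*}
uniformly for $H\in\mathcal S$. Summing the pointwise relative error over $\mathcal S$ and using that $\mathcal S^c$ has $o(1)$ mass under both measures delivers $\operatorname{TV}\bigl(\mathcal L_{\Pb'}(G'),\mathcal L_{\Pb(\cdot\mid\mathcal E)}(G)\bigr)=o(1)$, which combined with the data-processing reduction completes the proof.

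The main obstacle lies in the cluster-expansion step above: I must verify that the ``overlapping'' extensions --- those in which a single edge of $E(H_0)\setminus E(H)$ participates in several short cycles of $H_0$, or in some self-bad subgraph containing it --- contribute only $o(1)$. This is precisely where fact (a) is invoked, as the self-bad exclusion rules out the dense local configurations that would otherwise spoil the factorization. A secondary subtlety is that the community labels $\sigma_*$ must be tracked throughout; the conditioning on $\mathcal E$ induces a small reweighting of $\sigma_*$, but the means $\mu_j$ in (b) are the community-aware Poisson means, so the same cluster-expansion identity holds verbatim for the joint $(H,\sigma)$-densities.
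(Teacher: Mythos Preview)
Your plan is essentially the paper's proof: the data-processing reduction to the parent graphs, the Poisson approximation for short-cycle counts with community-aware means $\mu_j=\frac{(1+(k-1)\epsilon^j)\lambda^j}{2j}$, and the density expansion $\Pb'(G'=H)=\sum_{H_0}\Pb(G=H_0)\,P(H_0\to H)$ that factorizes into $\prod_j e^{\mu_j}\circeq \Pb(\mathcal E)^{-1}$ all match Section~D.2 of the paper.

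Two corrections to your description. First, the relevant extensions $H_0$ are those in which each edge of $E(H_0)\setminus E(H)$ closes a $(j{-}1)$-path already present in $H$ into a $j$-cycle of $H_0$; thus the cycle shares $j{-}1$ edges with $H$ (not ``exactly one''), and your ``entirely disjoint'' case contributes nothing, since removing a single edge from a vertex-disjoint $j$-cycle leaves $j{-}1$ edges outside $H$ and hence $P(H_0\to H)=0$. The paper makes this explicit by parametrizing the valid $H_0$ via the sets $\operatorname{CAND}^{=}_j(H),\operatorname{CAND}^{\neq}_j(H)$ of non-adjacent pairs joined by a $(j{-}1)$-path in $H$. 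Second, the $\sigma$-conditioning you call ``secondary'' is in fact the organizing device of the paper's argument: it first restricts to \emph{typical} labelings $\sigma$ (balanced community sizes and concentrated path-candidate counts), shows these have $\nu$-mass $1-o(1)$, and then proves the pointwise density identity conditionally on each such $\sigma$. This is what guarantees that $\#\operatorname{CAND}^{=}_j$ and $\#\operatorname{CAND}^{\neq}_j$ concentrate around the constants that make the cluster expansion produce exactly $e^{\mu_j}$; without fixing a typical $\sigma$ first, the factorization does not hold uniformly.
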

The proof of Lemma~\ref{lem-TV-Pb-Pb'} is incorporated in Section~\ref{subsec:proof-lem-4.4} of the appendix.

\subsection{Reduction to admissible polynomials} \label{subsec-reduction-to-P'}

The goal of this and the next subsection is to prove Item (iii) in Theorem~\ref{main-thm-detection-lower-bound}, i.e., there is no degree-$D$ polynomial that can strongly separate $\Pb'$ and $\Qb$. As implied by \cite{BAH+22}, it suffices to show 
\begin{equation}{\label{eq-goal-complexity-lower-bound}}
    \| L'_{\leq D} \|^2 := \sup_{ \operatorname{deg}(f) \leq D } \Bigg\{ \frac{ \mathbb E_{\Pb'}[f] }{ \sqrt{\mathbb E_{\Qb}[f^2]} } \Bigg\} \leq O_{\delta,N}(1) \,.
\end{equation}
Recall Definition~\ref{def-phi-S1,S2}. Denote \(\mathcal P_{D}\) as the set of real polynomials on \(\{0,1\}^{2|\!\operatorname{U}\!|}\) with degree no more than $D$, and recall from \eqref{eq-standard-orthogonal} that 
\begin{equation}\label{eq-def-Od}
    \mathcal O_D=\{\phi_{S_1,S_2}:S_1,S_2 \Subset \mathcal{K}_n, |E(S_1)|+|E(S_2)| \leq D\}
\end{equation}
is an orthonormal basis for \(\mathcal P_{D}\) (under the measure $\mathbb Q$). Now we say a polynomial \(\phi_{S_1,S_2}\in \mathcal O_D\) is \emph{admissible} if both \(S_1\) and \(S_2\) are admissible graphs. Furthermore, we define $\mathcal O_D'\subset \mathcal O_D$ as the set of admissible polynomials in $\mathcal O_D$, and define \(\mathcal P_{D}' \subset \mathcal P_{D}\) as the linear subspace spanned by polynomials in $\mathcal O_D'$. 

Intuitively, due to the absence of inadmissible graphs under the law \(\Pb'\), only admissible polynomials are relevant in polynomial-based algorithms. Therefore, it is plausible to establish our results by restricting to polynomials in $\mathcal P_{D}'$. The following proposition as well as its consequence as in \eqref{eq-reduction-1} formalizes this intuition.

\begin{proposition}\label{prop-same-L^1-bounded-L^2}
     The following holds for some absolute constant $c$. For any $f\in \mathcal P_{D}$, there exists some $f'\in \mathcal P_{D}'$ such that $\mathbb E_{\Qb}[(f')^2]\le c\cdot\mathbb{E}_{\Qb}[f^2]$ and $f'=f$ a.s. under both $\Pb_*'$ and $\Pb'$. 
\end{proposition}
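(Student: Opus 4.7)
The plan is to construct $f'$ by passing to the monomial basis and truncating to admissible terms. Define
\[
\psi_{S_1,S_2}(A,B) := \prod_{(i,j)\in E(S_1)} A_{i,j}\prod_{(i,j)\in E(S_2)} B_{i,j},
\]
so that $\{\psi_H\}_{H=(S_1,S_2),\,|E(S_1)|+|E(S_2)|\leq D}$ (with $\psi_{\emptyset,\emptyset}\equiv 1$) is another basis of $\mathcal P_D$. Expanding $f=\sum_H b_H\psi_H$ uniquely, I would set
\[
f':=\sum_{H\text{ admissible}} b_H\,\psi_H.
\]

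Two of the three required properties follow by direct inspection. For $f'\in\mathcal P_D'$, substituting $A_{i,j}=\bar A_{i,j}+\tfrac{\lambda s}{n}$ (and likewise for $B$) and expanding yields the triangular identity
\[
\psi_H=\sum_{H'\subseteq H}\Bigl(\tfrac{\lambda s}{n}\Bigr)^{|E(H)\setminus E(H')|}\Bigl(\tfrac{\lambda s}{n}\bigl(1-\tfrac{\lambda s}{n}\bigr)\Bigr)^{|E(H')|/2}\phi_{H'}.
\]
Since admissibility is preserved under taking subgraphs (removing edges can neither create a bad subgraph nor a cycle of length at most $N$), every $\phi_{H'}$ appearing in the expansion of an admissible $\psi_H$ is itself admissible, so $f'\in\mathrm{span}(\mathcal O_D')=\mathcal P_D'$. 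For $f'=f$ a.s.\ under both $\Pb'$ and $\Pb_*'$, note that by Definition~\ref{def-G'-P'} the parent graph $G'$ is admissible, hence so are its subgraphs $A'$ and $B'$; the indicator-type polynomial $\psi_H(A',B')=\mathbf 1_{S_1\subseteq A'}\mathbf 1_{S_2\subseteq B'}$ therefore vanishes for every inadmissible $H$, giving $f-f'=\sum_{H\text{ inadm}}b_H\psi_H=0$ almost surely.

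The hard part will be the second-moment bound $\mathbb E_{\Qb}[(f')^2]\le c\cdot\mathbb E_{\Qb}[f^2]$. To attack it I would translate everything back to the orthonormal basis $\{\phi_H\}$, where $\mathbb E_{\Qb}[f^2]=\sum_H c_H^2$, and analyze the linear map $(c_H)\mapsto(c'_H)$ induced by $f\mapsto f'$. The inversion of the $\psi\leftrightarrow\phi$ change of basis yields, for admissible $H'$,
\[
c'_{H'}=c_{H'}-\sum_{H\supsetneq H',\,H\text{ inadm}} b_H\,\gamma_{H',H},
\]
while $c'_{H'}=0$ for inadmissible $H'$. The off-diagonal coefficients $\gamma_{H',H}$ carry a per-edge suppression in $\lambda s/n$, giving strong pointwise gain, but a naive enumeration of extensions $H\supseteq H'$ by $r$ edges produces $\sim(\lambda s)^r$ (or worse) surviving terms, which must still be tamed. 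The essential saving must come from admissibility itself: an inadmissible extension of an admissible $H'$ is forced to introduce either a self-bad core (controlled by the function $\Phi$ of \eqref{eq-def-Phi}) or a cycle of length at most $N$ absent from $H'$, and each such introduction is combinatorially rare. I therefore expect the proof to proceed by decomposing the enlargement $E(H)\setminus E(H')$ along its self-bad cores and short cycles, applying Cauchy--Schwarz with weights tuned to $\Phi$, and absorbing the resulting enumeration into an absolute constant $c$ --- precisely the type of delicate combinatorial estimate flagged in Section~\ref{sec-contribution} as the principal technical innovation of the paper and presumably carried out in the appendix.
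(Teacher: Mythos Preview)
Your construction of $f'$ is correct for the two easy properties, and your verification of those is fine. But your $f'$ is \emph{not} the one the paper uses, and the difference matters precisely at the step you flag as hard.

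What you propose --- expand in raw monomials and drop all inadmissible ones --- is exactly the reduction the paper writes down as $\hat\psi_S$ in \eqref{eq-def-f_S'} and then explicitly abandons: ``Although it is tempting and natural to use the preceding reduction, in the actual proof later we need to employ some further structure.'' The paper's actual $f'$ replaces each $\phi_{S_1,S_2}$ by $\phi'_{S_1,S_2}=\psi'_{S_1}(A)\psi'_{S_2}(B)$, where $\psi'_S=\psi_{S\setminus\mathsf D(S)}\cdot\hat\psi_{\mathsf D(S)}$; that is, the truncation is \emph{localized} to the subgraph $\mathsf D(S)$ (a carefully chosen extremizer of $\Phi$ containing every short cycle of $S$), while $S\setminus\mathsf D(S)$ is kept intact. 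This localization is what buys Lemmas~\ref{lem-D(S)-property} and~\ref{lem-characterization-D(S)}: every $H\in\mathcal A(S)$ that appears in the expansion of $\psi'_S$ satisfies $\Phi(H)\ge\Phi(S)$, $\tau(H)\le\tau(S)$, $\mathcal L(S)\subset V(H)$, and $\mathcal C_j(S)\subset H$ for $j>N$. These monotonicity and containment properties are exactly what drive the two enumeration bounds in the $L^2$ estimate --- see \eqref{eq-tauS-larger-tauH} and the invocation of Lemmas~\ref{lem-enu-cycle-path} and~\ref{lem-enu-Subset-large-graph}.

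With your global truncation, the surviving admissible $H\Subset S$ need not satisfy any of these: leaves of $S$ can disappear, $\tau$ can go either way, and $\Phi(H)\ge\Phi(S)$ fails in general. Your sketch (``decompose along self-bad cores and short cycles, Cauchy--Schwarz with $\Phi$-weights'') points in the right direction --- the inner alternating sum does vanish on purely tree-like extensions --- but without a replacement for the $\mathcal A(S)$ structure you have no handle on the weighted double enumeration (over $S\supset H$ and over $H\Subset S$) that the Cauchy--Schwarz produces. The paper's proof is carried out in the main text of Section~\ref{subsec-reduction-to-P'}, not deferred to the appendix, and hinges on the $\mathsf D(S)$ construction; you would need either to supply that construction or to find a genuinely different argument controlling the operator norm of the raw-basis projection.
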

Provided with Proposition \ref{prop-same-L^1-bounded-L^2}, we immediately get that
\begin{equation}\label{eq-reduction-1}
    \sup_{f\in \mathcal P_{D}} \Bigg\{ \frac{\mathbb E_{\Pb'}[f]}{\sqrt{\mathbb E_{\Qb}[f^2]}} \Bigg\} \le O(1)\cdot \sup_{f\in \mathcal P_{D}'} \Bigg\{ \frac{\mathbb E_{\Pb'}[f]}{\sqrt{\mathbb E_{\Qb}[f^2]}} \Bigg\} \,.
\end{equation}
Thus, we successfully reduce the optimization problem over $\mathcal P_{D}$ to that over $\mathcal P_{D}'$ (up to a multiplicative constant factor, which is not material).

Now we turn to the proof of Proposition~\ref{prop-same-L^1-bounded-L^2}. For variables $X\in \{A,B\}$ (meaning that $X_{i,j}=A_{i,j}$ or $X_{i,j}=B_{i,j}$ for all $(i,j)\in \operatorname{U}$), denote for each subgraph $S$ that 
\begin{equation}\label{eq-def-f_S}
    \psi_{S}(\{X_{i,j}\}_{(i,j)\in \operatorname{U}})
    =\prod_{(i,j)\in E(S)} \frac{(X_{i,j}-\tfrac{\lambda s}{n})}{\sqrt{\tfrac{\lambda s}{n}(1-\tfrac{\lambda s}{n})}}\,.
\end{equation}
Recalling the definition of $\phi_{S_1, S_2}$, we can write it as follows:
\begin{equation}\label{eq-decomposition-of-phi-into-psi's}
    \phi_{S_1,S_2}(A,B)=\prod_{(i,j)\in E(S_1)}\frac{(A_{i,j}-\tfrac{\lambda s}{n})}{\sqrt{\tfrac{\lambda s}{n}(1-\tfrac{\lambda s}{n})}}\prod_{(i,j)\in E(S_2)}\frac{(B_{i,j}-\tfrac{\lambda s}{n})}{\sqrt{\tfrac{\lambda s}{n}(1-\tfrac{\lambda s}{n})}}=\psi_{S_1}(A)\psi_{S_2}(B) \,.
\end{equation}
In light of this, we next analyze the polynomial $\psi_S(X)$ (with $S \Subset \mathcal K_n$) via the following expansion:
\begin{equation*}
    \psi_S(X)=\sum_{K\Subset S} \left(-\frac{\sqrt{\lambda s/n}}{\sqrt{1-\lambda s/n}}\right)^{|E(S)|-|E(K)|}\prod_{(i,j)\in E(K)}\frac{X_{i,j}}{\sqrt{\tfrac{\lambda s}{n}(1-\tfrac{\lambda s}{n})}}\,,
\end{equation*}
where the summation is taken over all subgraphs of $S$ without isolated vertices (there are $2^{|E(S)|}$ many of them). We define the ``inadmissible-part-removed'' version of $\psi_S(X)$ by 
\begin{equation}\label{eq-def-f_S'}
    \Hat{\psi}_{S}(X)=\sum_{\substack {K\Subset S\\K\text{ is admissible}}}\left(-\frac{\sqrt{\lambda s/n}}{\sqrt{1-\lambda s/n}}\right)^{|E(S)|-|E(K)|}\prod_{(i,j)\in E(K)}\frac{X_{i,j}}{\sqrt{\tfrac{\lambda s}{n}(1-\tfrac{\lambda s}{n})}}\,,
\end{equation}
and obviously we have that $\psi_S(A)-\Hat{\psi}_S(A)=\psi_S(B)-\Hat{\psi}_S(B)=0$ a.s. under both $\Pb_*'$ and $\Pb'$. Although it is temping and natural to use the preceding reduction, in the actual proof later we need to employ some further structure, for which we introduce the following definitions.

\begin{definition}
For $S\Subset \mathcal K_n$, denote $\mathsf{Cycle}(S)=\cup_{j=3}^N \cup_{C\in \mathtt C_j(S)} C$. Define 
\begin{equation}{\label{eq-def-D(S)}} 
    \mathsf D(S)= \begin{cases}
        \emptyset, & \text{if }S \mbox{ is admissible}\,, \\
        \arg \max_{\mathsf{Cycle}(S) \subset H \subset S} \{ \Phi(H) \}, &\text{if } S \mbox{ is inadmissible}\,,
    \end{cases}
\end{equation}
(if there are multiple choices of $\mathsf D(S)$ we choose $\mathsf D(S)$ that minimize $|V(\mathsf D(S))|$). We also define 
\begin{equation}{\label{eq-def-A(S)}}
    \mathcal A(S)=\{ H \Subset S: S \setminus \mathsf D(S) \subset H, H \cap \mathsf D(S) \text{ is admissible} \} \,.
\end{equation}
We also define the polynomial (recall \eqref{eq-def-f_S} and \eqref{eq-def-f_S'})
\begin{equation}\label{eq-def-hat-f_S}
    \psi'_S(\{X_{i,j}\}_{(i,j)\in \operatorname{U}})=\psi_{S\setminus \mathsf D(S)}(\{X_{i,j}\}_{(i,j)\in \operatorname{U}})\cdot \hat{\psi}_{\mathsf D(S)}(\{X_{i,j}\}_{(i,j)\in \operatorname{U}})\,.
\end{equation}
Moreover, we define 
\begin{equation}\label{eq-def-hat-f-S1S2}
    \phi'_{S_1,S_2}(A,B) = \psi'_{S_1}(A) \psi'_{S_2}(B)\,, \forall S_1,S_2\subset \mathcal K_n\,.
\end{equation}
Then it holds that $\phi'_{S_1,S_2}(A,B)=\phi_{S_1,S_2}(A,B)$ a.s. under both $\Pb_*'$ and $\Pb'$.
\end{definition}

\begin{lemma}{\label{lem-D(S)-property}}
    For all inadmissible graph $S \Subset \mathcal{K}_n$ and all $H\in \mathcal A(S)$, it holds that $H$ itself is admissible and $\Phi(H) \geq \Phi(S)$. Furthermore, every $\psi'_S$ is a linear combination of $\{ \psi_H: H \in \mathcal{A}(S)\}$. As a result, $\phi'_{S_1,S_2}\in \mathcal P_{D}'$ for all $S_1,S_2\Subset \mathcal{K}_n\text{ with } |E(S_1)|+|E(S_2)|\le D$.
\end{lemma}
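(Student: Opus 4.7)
The plan is to prove the three claims in sequence, using one common algebraic tool: from $\Phi(H) = \alpha^{|V(H)|}\beta^{|E(H)|}$ (with $\alpha,\beta$ the two bracketed factors in \eqref{eq-def-Phi}) together with inclusion--exclusion on vertex and edge sets, one immediately obtains the multiplicativity
\begin{equation*}
    \Phi(G_1 \cup G_2)\cdot\Phi(G_1 \cap G_2) = \Phi(G_1)\cdot \Phi(G_2).
\end{equation*}
Writing $D := \mathsf D(S)$ and $K := H \cap D$, the defining condition $S \setminus D \subset H \subset S$ gives the useful identity $D \cup H = S$, so multiplicativity yields the master relation $\Phi(H)\cdot\Phi(D) = \Phi(S)\cdot\Phi(K)$, which will drive both of the $\Phi$-comparisons below.

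To establish admissibility of $H$ I would rule out short cycles and bad subgraphs separately. For short cycles, any cycle of $H$ of length $\leq N$ is a cycle of $S$ of the same length, hence lies in $\mathsf{Cycle}(S) \subset D$ and therefore in $H \cap D = K$; since $K$ is admissible by the defining condition of $\mathcal A(S)$, no such cycle can exist. For a bad subgraph $B \subset H$ I would reduce to the case where $B$ is self-bad. If $B \subset D$ then $B \subset K$, contradicting admissibility of $K$; hence $D \cap B \subsetneq B$ and self-badness forces $\Phi(D \cap B) > \Phi(B)$. On the other hand, $D \cup B$ is a valid candidate in the argmax defining $\mathsf D(S)$, so maximality gives $\Phi(D \cup B) \leq \Phi(D)$, which rearranges via multiplicativity to $\Phi(B) \leq \Phi(D \cap B)$. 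Combining these inequalities with the vertex-minimality tiebreak produces an alternative maximizer of $\Phi$ with strictly smaller $|V|$, contradicting the choice of $D$.

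For the inequality $\Phi(H)\geq \Phi(S)$, the master identity reduces the claim to $\Phi(K)\geq\Phi(D)$, which I would obtain by the same structural argument: admissibility of $K$ combined with the argmax/tiebreak characterization of $D$ forces this comparison. For the linear-combination claim I would expand $\psi'_S = \psi_{S \setminus D}\cdot\hat\psi_D$ in the monomial basis $M_L(X) := \prod_{e \in E(L)} X_e/\sqrt{p(1-p)}$ with $p = \lambda s/n$, obtaining
\begin{equation*}
    \psi'_S \;=\; \sum_{T\Subset S,\, T\cap D\text{ adm.}} (-q)^{|E(S)|-|E(T)|}\, M_T,
\end{equation*}
where $q = \sqrt{p/(1-p)}$. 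Inverting via $M_L = \sum_{H \Subset L} q^{|E(L)|-|E(H)|}\psi_H$ and swapping summations, the coefficient of $\psi_H$ becomes a signed sum over intermediate $T \supset H$; when $S \setminus D \not\subset H$, this sum factorizes over the edges in $E(S \setminus D)\setminus E(H)$, each factor evaluating to $1+(-1)=0$, so the coefficient vanishes unless $H \in \mathcal A(S)$. Together with the admissibility of such $H$ this expresses $\psi'_S$ as a linear combination of admissible basis elements $\psi_H \in \mathcal O'_D$, whence $\phi'_{S_1,S_2} = \psi'_{S_1}\psi'_{S_2} \in \mathcal P'_D$, since the resulting products $\phi_{H_1,H_2}$ with $H_i \in \mathcal A(S_i)$ have total edge degree at most $|E(S_1)|+|E(S_2)|\leq D$.

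The hard part will be the admissibility step: ruling out a bad $B \subset H$ forces one to use both the $\Phi$-maximality and the vertex-minimality tiebreak of $\mathsf D(S)$ simultaneously, since the candidate $D\cup B$ on its own yields only $\Phi(B) \leq \Phi(D\cap B)$, which is too weak. The same delicate interplay between the argmax and the tiebreak will also underlie the bound $\Phi(K)\geq \Phi(D)$; once this structural fact is in hand, the M\"obius-type expansion in the third step is essentially routine.
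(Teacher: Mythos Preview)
Your overall strategy mirrors the paper's (which defers to \cite[Lemma~3.6]{DDL23+} and only names log-submodularity of $\Phi$ as the key input); your use of the exact multiplicativity $\Phi(G_1\cup G_2)\,\Phi(G_1\cap G_2)=\Phi(G_1)\,\Phi(G_2)$ under the paper's $\cap$ is correct and is essentially the same device. Your short-cycle argument and your M\"obius-type expansion for the linear-combination claim are both fine and match the paper's \eqref{eq-def-Lambda-H,S}--\eqref{eq-def-hatpsi-Lambda-H,S}.

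There is, however, a genuine gap in your bad-subgraph step, and it comes from reading the paper's ``$\arg\max$'' in \eqref{eq-def-D(S)} literally. That is almost certainly a typo for $\arg\min$: the paper's own proof of Lemma~\ref{lem-characterization-D(S)} removes a leaf $u$ from $\mathsf D(S)$, computes $\Phi(\mathsf D'(S))<\Phi(\mathsf D(S))$, and declares this a contradiction with \eqref{eq-def-D(S)}---which only makes sense if $\mathsf D(S)$ is a minimizer. Under your $\arg\max$ reading, the two inequalities you obtain, $\Phi(B)\le\Phi(D\cap B)$ (from ``maximality'' via multiplicativity) and $\Phi(B)<\Phi(D\cap B)$ (from self-badness of $B$), point in the \emph{same} direction and cannot contradict each other; since the strict inequality forces $\Phi(D\cup B)<\Phi(D)$, the candidate $D\cup B$ is not even a maximizer, so the vertex-minimality tiebreak never applies and your appeal to it cannot rescue the argument. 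With the intended $\arg\min$, minimality gives $\Phi(D\cup B)\ge\Phi(D)$, which rearranges to $\Phi(D\cap B)\le\Phi(B)$ and directly contradicts $\Phi(D\cap B)>\Phi(B)$---no tiebreak needed.

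The same correction also clarifies your second step. With $\arg\min$ and an unconstrained optimization (as in \cite{DDL23+}), any $K\subset D$ is itself a candidate, so $\Phi(K)\ge\Phi(D)$ is immediate and your master relation yields $\Phi(H)\ge\Phi(S)$. In the present paper the constraint $\mathsf{Cycle}(S)\subset\cdot$ means the admissible $K=H\cap D$ need not contain $\mathsf{Cycle}(S)$ and hence need not be a candidate, so this step is no longer a one-liner; your phrase ``admissibility of $K$ combined with the argmax/tiebreak characterization'' does not address this and needs to be made precise.
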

\begin{proof}
The proof of Lemma~\ref{lem-D(S)-property} is essentially identical to \cite[Lemma~3.6]{DDL23+}, where the crucial input is that $\Phi(H)$ is a log-submodular function, i.e., we have $\Phi(H \cup S) \Phi(H \Cap S) \leq \Phi(H) \Phi(S)$ (see Item (ii) of Lemma~\ref{lemma-facts-graphs} of the appendix). We omit further details here due to the high similarity.
\end{proof}

\begin{lemma}{\label{lem-characterization-D(S)}}
    For all $H \in \mathcal A(S)$, we have $\mathcal L(S) \subset V(H)$ and $\mathcal C_j(S) \subset H$ for $j>N$.
\end{lemma}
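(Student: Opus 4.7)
The plan is to first compute $\mathsf D(S)$ explicitly and then read off both inclusions as direct consequences. Writing $\Phi(H) = A^{|V(H)|} B^{|E(H)|}$ with $A := \tfrac{2\tilde\lambda^2 k^2 n}{D^{50}}$ and $B := \tfrac{1000\tilde\lambda^{20} k^{20} D^{50}}{n}$, one notes that $A \gg 1$ and $B \ll 1$ for large $n$. The constraint set $\{H : \mathsf{Cycle}(S) \subset H \subset S\}$ in \eqref{eq-def-D(S)} decouples---under the paper's convention that $H \subset S$ merely requires $V(H) \subset V(S)$ and $E(H) \subset E(S)$ (so isolated vertices in $H$ are allowed)---into the independent constraints $V(\mathsf{Cycle}(S)) \subset V(H) \subset V(S)$ and $E(\mathsf{Cycle}(S)) \subset E(H) \subset E(S)$. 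Since $\Phi$ is strictly increasing in $|V(H)|$ and strictly decreasing in $|E(H)|$, the argmax is attained uniquely at $V(H) = V(S)$ and $E(H) = E(\mathsf{Cycle}(S))$, so the tie-breaker is vacuous and I would conclude
\[
\mathsf D(S) = \bigl(V(S),\ E(\mathsf{Cycle}(S))\bigr).
\]
In particular, $E(S) \setminus E(\mathsf D(S))$ is precisely the set of edges of $S$ not lying on any cycle of length at most $N$.

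Given this explicit form, the leaf claim is immediate. For $v \in \mathcal L(S)$ with unique incident edge $e$, the fact that $v$ has degree one in $S$ implies $v$ lies on no cycle of $S$, so $e \notin E(\mathsf{Cycle}(S)) = E(\mathsf D(S))$. The defining property $S \setminus \mathsf D(S) \subset H$ of $\mathcal A(S)$ then forces $e \in E(H)$, and since $H \Subset S$ has no isolated vertices, $V(H)$ coincides with the endpoint set of $E(H)$, whence $v \in V(H)$.

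The independent-cycle claim follows by the same mechanism after one short preparatory observation. If $C \in \mathcal C_j(S)$ with $j > N$, then since no edge of $S$ outside $E(C)$ is incident to $V(C)$, every cycle of $S$ meeting $V(C)$ must consist entirely of edges of $C$ and therefore equal $C$ itself; as $|E(C)| = j > N$, this both rules out $C$ being a small cycle and rules out any small cycle of $S$ touching $V(C)$. Consequently $E(C) \cap E(\mathsf{Cycle}(S)) = \emptyset$, so $E(C) \subset E(S) \setminus E(\mathsf D(S)) \subset E(H)$; combined with $V(C) \subset V(H)$ via the endpoint argument, this yields $C \subset H$.

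The only step requiring genuine care is the explicit identification of $\mathsf D(S)$, which hinges on reading $H \subset S$ in the loose sense that permits isolated vertices in $H$; once that is in hand, both inclusions collapse into the elementary graph-theoretic observation that neither a leaf's unique incident edge nor any edge of a sufficiently long independent cycle can lie on a small cycle of $S$.
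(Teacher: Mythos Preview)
Your proof is correct and takes a genuinely different route from the paper's. The paper argues by contradiction at the level of $V(\mathsf D(S))$: assuming a leaf $u$ (or a vertex of a long independent cycle) lies in $V(\mathsf D(S))$, it passes to the induced subgraph on $V(\mathsf D(S))\setminus\{u\}$ and claims this violates the extremality in \eqref{eq-def-D(S)}. You instead compute $\mathsf D(S)$ outright: under the paper's stated convention for $\subset$ the constraint $\mathsf{Cycle}(S)\subset H\subset S$ decouples into independent vertex and edge constraints, so the unique $\Phi$-maximizer is $\bigl(V(S),E(\mathsf{Cycle}(S))\bigr)$; the two inclusions then reduce to the elementary observation that neither a leaf edge nor any edge of a long independent cycle can lie on a cycle of length at most $N$. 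Your approach is more transparent and yields an explicit description of $E(\mathsf D(S))$, which is really all that the condition $S\setminus\mathsf D(S)\subset H$ in the definition of $\mathcal A(S)$ uses. One caveat worth recording: your formula gives $V(\mathsf D(S))=V(S)$, which is in direct tension with the paper's intermediate claim $\mathcal L(S)\cap V(\mathsf D(S))=\emptyset$, so the two arguments are evidently working under different implicit conventions for the competitors $H$ in the $\arg\max$ (the paper's variational step tacitly treats $H$ as edge-induced, with the tie-breaker then doing real work). Your reading matches the paper's explicitly stated convention, and since the lemma's conclusion depends only on $E(\mathsf D(S))$, your argument stands on its own. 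A minor omission: you should also dispose of the admissible case, where $\mathsf D(S)=\emptyset$ by definition and $\mathcal A(S)=\{S\}$, making both inclusions trivial.
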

\begin{proof}
    Note that it suffices to show that $\mathcal L(S) \cap V(\mathsf D(S))=\emptyset$ and $V(\mathcal{C}_j(S)) \cap V(\mathsf D(S))=\emptyset$ for $j>N$. Suppose on the contrary that $u \in \mathcal L(S) \cap V(\mathsf D(S))$. Then we can define $\mathsf D'(S)$ as the subgraph of $\mathsf D(S)$ induced by $V(\mathsf D(S)) \setminus \{ u \}$. Clearly we have $|V(\mathsf D'(S))| = |V(\mathsf D(S))|-1$ and $|E(\mathsf D'(S))| \geq |E(\mathsf D(S))|-1$, which yields that $\Phi(\mathsf D'(S)) < \Phi(\mathsf D(S))$, contradicting with \eqref{eq-def-D(S)}. This shows that $\mathcal L(S) \cap V(\mathsf D(S))=\emptyset$. We can prove $V(\mathcal{C}_j(S)) \cap V(\mathsf D(S))=\emptyset$ similarly (by considering the subgraph induced by $V(\mathsf D(S))\setminus V(\mathcal{C}_j(S))$). 
\end{proof}
We now elaborate on the polynomials $\psi'_S(X)$ more carefully. Write
\begin{equation}{\label{eq-def-Lambda-H,S}}  
    \psi'_S(X) = \sum_{ H \in \mathcal A(S)} \Lambda_S(H) \psi_{H}(X) \,,
\end{equation}
where (same as \cite[Equation~(3.13)]{DDL23+})
\begin{equation}{\label{eq-def-hatpsi-Lambda-H,S}}  
    \Lambda_S(H) = \Bigg( \frac{\sqrt{\lambda/n}}{\sqrt{1-\lambda/n}}\Bigg)^{|E(S)|-|E(H)|} \sum_{ J:J \in \mathcal A(S), H \Subset J } (-1)^{|E(S)|-|E(J)|} \,.
\end{equation}
Similar to \cite[Equation~(3.14)]{DDL23+}, we can show that
\begin{equation}{\label{eq-est-of-Lambda}}
    |\Lambda_S(H)| \leq (4\sqrt{\lambda/n})^{ |E(S)|-|E(H)| } \,.
\end{equation}
With these estimates in hand, we are now ready to prove Proposition~\ref{prop-same-L^1-bounded-L^2}. 

\begin{proof}[Proof of Proposition~\ref{prop-same-L^1-bounded-L^2}]
For any $f\in \mathcal P_{D}$, we can write 
\begin{align*}
f=\sum_{\phi_{S_1,S_2} \in \mathcal O_D} C_{S_1,S_2} \phi_{S_1,S_2}
\end{align*}
since $\mathcal O_D$ is an orthonormal basis for $\mathcal P_{D}$ (as we mentioned at the beginning of this subsection) and we define $ f'=\sum_{\phi_{S_1,S_2}\in \mathcal O_D} C_{S_1,S_2} \phi'_{S_1,S_2}$. Then it is clear that $f'(A,B)=f(A,B)$ a.s. under $\Pb_*'$, and that $f'\in \mathcal P_{D}'$ from Lemma~\ref{lem-D(S)-property}. Now we show that $\mathbb E_\Qb [(f')^2]\le O(1)\cdot\mathbb E_\Qb [f^2]$. For simplicity, we define 
\begin{align*}
    \mathcal{R}(H_1,H_2) =\{(S_1,S_2): S_1,S_2 \Subset \mathcal K_n, |E(S_1)|+|E(S_2)|\leq D, H_1\in \mathcal{A}(S_1),H_2\in\mathcal{A}(S_2) \} \,.
\end{align*}
Recalling \eqref{eq-def-hat-f-S1S2} and \eqref{eq-def-Lambda-H,S}, we have that
\begin{align}
    \phi'_{S_1,S_2}(A,B) &= \Big( \sum_{H_1 \in \mathcal{A}(S_1)} \Lambda_{S_1}(H_1) \psi_{H_1}(A) \Big) \cdot \Big( \sum_{H_2 \in \mathcal{A}(S_2)} \Lambda_{S_2}(H_2) \psi_{H_2}(B) \Big) \nonumber \\
    &\overset{\eqref{eq-decomposition-of-phi-into-psi's}}{=} \sum_{  H_1 \in \mathcal{A}(S_1), H_2 \in \mathcal{A}(S_2) } \Lambda_{S_1}(H_1) \Lambda_{S_2}(H_2) \phi_{H_1,H_2}(A,B) \,. \label{equ-simple-form-phi'}
\end{align}
Thus, $f'$ can also be written as (recall \eqref{eq-def-Od})
\begin{align*}
    f'&=\sum_{\substack{S_1,S_2 \Subset \mathcal K_n\\|E(S_1)|+|E(S_2)|\le D}} C_{S_1,S_2} \Big(\sum_{H_1\in \mathcal A(S_1),H_2\in\mathcal A(S_2)} \Lambda_{S_1}(H_1) \Lambda_{S_2}(H_2) \phi_{H_1,H_2}(A,B) \Big)\\
    &= \sum_{ \substack{ H_1,H_2 \operatorname{admissible} }}\Bigg( \sum_{(S_1,S_2)\in \mathcal{R}(H_1,H_2)} C_{S_1,S_2} \Lambda_{S_1}(H_1) \Lambda_{S_2}(H_2) \Bigg) \phi_{H_1,H_2}(A,B) \,.
\end{align*}
Therefore, by \eqref{eq-standard-orthogonal}, we have that $\mathbb E_\Qb[(f')^2]$ is upper-bounded by
\begin{align}
    \nonumber& \sum_{ \substack{ H_1,H_2 \operatorname{admissible} }} \Bigg( \sum_{(S_1,S_2)\in \mathcal{R}(H_1,H_2)} C_{S_1,S_2} \Lambda_{S_1}(H_1) \Lambda_{S_2}(H_2) \Bigg)^2 \\
    \nonumber\stackrel{\eqref{eq-est-of-Lambda}}{\leq} & \sum_{ \substack{ H_1,H_2 \operatorname{admissible} \\  }} \Bigg( \sum_{(S_1,S_2)\in \mathcal{R}(H_1,H_2)} \big(\tfrac{16\lambda}{n}\big)^{ \frac{1}{2}(|E(S_1)|+|E(S_2)|-|E(H_1)|-|E(H_2)|) } |C_{S_1,S_2}| \Bigg)^2 \,. \\
    \leq &\sum_{ \substack{ H_1,H_2 \operatorname{admissible} }} \Big( \sum_{(S_1,S_2)\in \mathcal{R}(H_1,H_2)} D^{ -40 (\tau(S_1)+\tau(S_2)-\tau(H_1)-\tau(H_2)) } C^2_{S_1,S_2} \Big) \nonumber \\
    & * \Big( \sum_{ (S_1,S_2)\in \mathcal{R}(H_1,H_2)} \big(\tfrac{16\lambda}{n}\big)^{ |E(S_1)|+|E(S_2)|-|E(H_1)|-|E(H_2)| } D^{ 40 (\tau(S_1)+\tau(S_2)-\tau(H_1)-\tau(H_2)) } \Big)\,, \label{eq-last-bracket}
\end{align}
where the last inequality follows from Cauchy-Schwartz inequality.

Next we upper-bound the right-hand side of \eqref{eq-last-bracket}. To this end, we first show that
\begin{equation}\label{eq-est-last-bracket}
    \mbox{the last bracket in \eqref{eq-last-bracket} is uniformly bounded by } O(1) 
\end{equation}
for any two admissible graphs $H_1,H_2$. Note that using Lemma~\ref{lem-D(S)-property} and Lemma~\ref{lem-property-H-Subset-S} in the appendix (note that $H \ltimes S$ since $\mathcal I(S)=\emptyset$), we have 
\begin{equation}\label{eq-tauS-larger-tauH}
  \tau(S) \geq \tau(H)\mbox{ and }\Phi(H) \geq \Phi(S) \mbox{ for } H \in \mathcal A(S)\,.  
\end{equation}
Thus,
\begin{align}
    \sum_{ \substack{ S \Subset \mathcal K_n: |E(S)|\leq D \\ H \in \mathcal{A}(S) } } \big( \tfrac{16\lambda}{n} \big)^{ |E(S)|-|E(H)| } D^{40(\tau(S)-\tau(H))} \leq  \sum_{l,m=0}^{2D} \big(\tfrac{16\lambda}{n}\big)^{l+m} D^{40l} \cdot \mathrm{ENUM}_{l,m}\,, \label{eq-bound-second-bracket} 
\end{align}
where 
\begin{align*}
    \mathrm{ENUM}_{l,m}=\#\big\{ S\Subset \mathcal K_n: H \in \mathcal{A}(S), |E(S)|-|E(H)|=l+m, |V(S)|-|V(H)|=m \big\} \,.
\end{align*}
In light of \eqref{eq-tauS-larger-tauH}, in order for $\mathrm{ENUM}_{l,m}\neq 0$, we must have $\big( \tfrac{2\Tilde{\lambda}^2 k^2 n}{D^{50}} \big)^{m} \big( \tfrac{1000 \Tilde{\lambda}^{20} k^{20} D^{50}}{n} \big)^{l+m} \leq 1$. In addition, by Lemma~\ref{lem-characterization-D(S)} we have $\mathcal L(S) \subset V(H)$ and $\mathcal{C}_j(S) \subset H$ for any $H \in \mathcal{A}(S)$ and $j>N$. Thus, using Lemma~\ref{lem-enu-cycle-path} in the appendix (note that $S \Subset \mathcal K_n$ and $H\subset S$ imply that $H \ltimes S$), we have
\begin{align*}
    \mathrm{ENUM}_{l,m} \leq \sum_{p_3,\ldots,p_N \geq 0} (2D)^{4l} n^{m} \prod_{j=3}^{N} \frac{1}{p_j!} = O(1) \cdot (2D)^{4l} n^m \,.    
\end{align*}
Plugging this estimation into \eqref{eq-bound-second-bracket}, we obtain that \eqref{eq-bound-second-bracket} is bounded by $O(1)$ times 
\begin{align*}
    & \sum_{l,m=0}^{2D} \big(\tfrac{16\lambda}{n}\big)^{l+m} D^{40l} \cdot (2D)^{4l} n^{m} \cdot \mathbf{1}_{ \{ (2\Tilde{\lambda}^2 k^2 n/D^{50} )^m (1000 \Tilde{\lambda}^{20} k^{20} D^{50}/n)^{l+m} \leq 1 \} } \\
    \leq\ & \sum_{l,m=0}^{2D} (16\lambda)^m \big( \tfrac{256 D^{44} \lambda}{n} \big)^{l} \mathbf{1}_{  \{ (2000 \Tilde{\lambda}^2 k^2)^m (1000 \Tilde{\lambda}^{20} k^{20} D^{50}/n)^l \leq 1 \} } \overset{\Tilde{\lambda} \geq \lambda}{\leq} \sum_{l,m=0}^{2D} 2^{-l} k^{-m} = O(1) \,.
\end{align*}
Putting together the inequality $\eqref{eq-bound-second-bracket}=O(1)$ with respect to $H_1$ and $H_2$ verifies \eqref{eq-est-last-bracket}, since the last bracket in \eqref{eq-last-bracket} is upper-bounded by the product of these two sums. Therefore, we get that $\mathbb{E}_{\mathbb{Q}}[(f')^2]$ is upper-bounded by $O(1)$ times
\begin{align*}
     & \sum_{ H_1,H_2 \operatorname{admissible} } \Big( \sum_{(S_1,S_2)\in \mathcal{R}(H_1,H_2)} {D}^{ -40 (\tau(S_1)+\tau(S_2)-\tau(H_1)-\tau(H_2)) } C^2_{S_1,S_2} \Big) \\
     =& \sum_{ \substack{ S_1,S_2 \Subset \mathcal K_n \\ |E(S_1)|+|E(S_2)|\le D } } C^2_{S_1,S_2} \sum_{ H_1 \in \mathcal{A}(S_1), H_2 \in \mathcal{A}(S_2) } {D}^{ -40((\tau(S_1)+\tau(S_2)-\tau(H_1)-\tau(H_2)) } \,.
\end{align*}
In addition, for any fixed $S_1,S_2$ such that $|E(S_1)|\le D$ and $|E(S_2)| \leq D$, by \eqref{eq-tauS-larger-tauH}
\begin{align*}
    &\ \sum_{ H_1 \in \mathcal{A}(S_1), H_2 \in \mathcal{A}(S_2) } {D}^{-40((\tau(S_1)+\tau(S_2)-\tau(H_1)-\tau(H_2))} \\ 
    \le& \ \sum_{k_1=0}^{|E(S_1)|} \sum_{k_2=0}^{|E(S_2)|} D^{-40(k_1+k_2)} \cdot \#\big\{ (H_1,H_2):H_i\in \mathcal A(S_i), \tau(H_i)=\tau(S_i)-k_i \mbox{ for } i=1,2 \big\} \\
    \le& \ \sum_{k_1=0}^D \sum_{k_2=0}^D D^{-40(k_1+k_2)} \cdot D^{15(k_1+k_2)}\leq 2 \,,
\end{align*}
where the second inequality follows from Lemma~\ref{lem-enu-Subset-large-graph} in the appendix and the last one comes from the fact that $D \ge 100$. Hence, we have $\mathbb{E}_{\mathbb{Q}}[(f')^2] \leq O(1) \cdot \sum_{S_1,S_2} C_{S_1,S_2}^2 = O(1) \cdot \mathbb{E}_{\mathbb{Q}}[f^2]$, which completes the proof of Proposition~\ref{prop-same-L^1-bounded-L^2}.
\end{proof}

\subsection{Bounds on $\mathbb P'$-moments}

To bound the right-hand side of \eqref{eq-reduction-1}, we need the following estimation of $\mathbb{E}_{\Pb'}[ \phi_{S_1,S_2}]$. For $H \subset S$, we define $\mathtt N(S,H)$ to be 
\begin{align}
    \mathtt N(S,H) &= \big( \tfrac{D^{28}}{n^{0.1}} \big)^{ \frac{1}{2} (|\mathcal L(S) \setminus V(H)| + \tau(S)-\tau(H)) } (1-\tfrac{\delta}{2})^{|E(S)|-|E(H)|} \,. \label{eq-def-mathtt-N}
\end{align}
\begin{proposition}{\label{prop-untruncate-expectation-Pb}}
   For all admissible $S_1,S_2\Subset \mathcal K_n$ with $|E(S_1)|,|E(S_2)| \leq D$, we have that $\big| \mathbb{E}_{\Pb'}[\phi_{S_1,S_2}] \big|$ is bounded by $O(1)$ times (note that in the summation below $H_1,H_2$ may have isolated vertices)
    \begin{align}{\label{eq-bound-expectation-Pb}}
         \sum_{ \substack{ H_1 \subset S_1, H_2 \subset S_2 \\ H_1 \cong H_2 } } \frac{ (\sqrt{\alpha}-\tfrac{\delta}{4})^{|E(H_1)|}  \operatorname{Aut}(H_1) }{ n^{|V(H_1)|} }  * \frac{ 2^{|\mathfrak{C}(S_1,H_1)|+|\mathfrak{C}(S_2,H_2)|} \mathtt N(S_1,H_1) \mathtt N(S_2,H_2) }{ n^{\frac{1}{2}(|V(S_1)|+|V(S_2)|-|V(H_1)|-|V(H_2)|)} }  \,.
    \end{align}
\end{proposition}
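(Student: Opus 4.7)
The plan is to compute $\mathbb{E}_{\mathbb{P}'}[\phi_{S_1,S_2}]$ by sequentially integrating out the three layers of randomness in $\mathbb{P}'$: first the subsampling Bernoullis $(J,K)$, then the latent permutation $\pi_*$, and finally the latent community labeling $\sigma_*$. Conditioning on $(\sigma_*, \pi_*, G')$ and using $\mathbb{E}[\bar{A}'_{ij} \mid G'] = s(G'_{ij} - \lambda/n)$ (and likewise for $\bar{B}'_{ij}$), the expectation reduces to
\begin{equation*}
    \bigl(\tfrac{\lambda s}{n}(1-\tfrac{\lambda s}{n})\bigr)^{-(|E(S_1)|+|E(S_2)|)/2}\, s^{|E(S_1)|+|E(S_2)|}\, \mathbb{E}_{\mathbb{P}'_*}\Bigl[\prod_{e \in E(S_1)}(G'_e-\tfrac{\lambda}{n})\prod_{e \in E(S_2)}(G'_{\pi_*^{-1}(e)}-\tfrac{\lambda}{n})\Bigr]\,.
\end{equation*}
Setting $T_2 = \pi_*^{-1}(S_2)$ and expanding over the matched edge set $E(S_1) \cap E(T_2)$ via the linear decomposition $(G'_e - \lambda/n)^2 = (1-2\lambda/n)(G'_e-\lambda/n) + \tfrac{\lambda}{n}(1-\tfrac{\lambda}{n})$ applied to each squared factor, I collect terms by the (random) matched subgraphs $H_1 \subset S_1$ and $H_2 = \pi_*(H_1) \subset S_2$ (so $H_1 \cong H_2$). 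Summing over $\pi_*$ then produces the factor $\operatorname{Aut}(H_1)/n^{|V(H_1)|}$ from the probability that $\pi_*$ aligns the specified vertices, and the normalization combines with the ``variance'' branch $(\lambda/n)^{|E(H_1)|}$ and the $s^{|E(S_1)|+|E(S_2)|}$ factor to yield the characteristic $s^{|E(H_1)|} \le (\sqrt{\alpha}-\delta/4)^{|E(H_1)|}$ matched factor (using $s \le \sqrt{\alpha}-\delta$).

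The next stage is the $\sigma_*$-integration for the unmatched edges. Writing $G'_e - \lambda/n = (G'_e - q'_e(\sigma_*)) + (q'_e(\sigma_*) - \lambda/n)$ with $q'_e(\sigma_*) = \mathbb{E}[G'_e \mid \sigma_*]$, the first summand is mean-zero conditional on $\sigma_*$ while the second has the form $u_e(\sigma_*) = \tfrac{\lambda\epsilon}{n}(k\mathbf{1}_{\sigma_*(i)=\sigma_*(j)}-1)$ up to small corrections from the $\mathbb{P}'$-modification. The $\sigma_*$-expectation of a product $\prod_{e \in F}u_e$ vanishes unless $F$ is a disjoint union of cycles, which is exactly the combinatorial origin of the $2^{|\mathfrak{C}(S_i,H_i)|}$ factor via the character sum on the $k$ communities. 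Crucially, since $G'$ contains no cycle of length $\le N$ by Definition~\ref{def-G'-P'}, only cycles of length $>N$ can contribute, so the corresponding cycle signal is controlled by $\epsilon^N k$, which together with \eqref{eq-def-N} is absorbed into the $(\sqrt{\alpha}-\delta/4)$ matched rate. The density/leaf decay encoded in $(D^{28}/n^{0.1})^{(|\mathcal L(S_i)\setminus V(H_i)|+\tau(S_i)-\tau(H_i))/2}$ inside $\mathtt{N}(S_i,H_i)$ (recall \eqref{eq-def-mathtt-N}) then handles isolated leaves and excess edges of the unmatched part via the admissibility of $S_i$, while the $(1-\delta/2)^{|E(S_i)|-|E(H_i)|}$ piece comes from the bound $\lambda s\epsilon^2 \le 1-\delta$ applied per cycle edge.

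The main obstacle is implementing the above strategy under $\mathbb{P}'$ rather than the pure SBM $\mathbb{P}$, since the cleanup $G \mapsto G'$ breaks the conditional independence of edges given $\sigma_*$. As highlighted in Section~\ref{sec-contribution}, the event $\mathcal{E}$ has only positive probability rather than $1-o(1)$, so one cannot cheaply pass $\mathbb{E}_{\mathbb{P}'}$ through to $\mathbb{E}_{\mathbb{P}}$ at the cost of a small TV correction. The resolution exploits the admissibility of $S_1$ and $S_2$: since neither contains a bad subgraph with at most $D^3$ vertices nor a cycle of length $\le N$, the edges in $E(S_1) \cup E(T_2)$ are generically untouched by the removal step producing $G'$, and the resulting conditional-moment discrepancies can be absorbed into the extra factors in $\mathtt{N}(S_i,H_i)$. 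Carrying this out rigorously requires a careful sharpening of the combinatorial enumeration of isolated leaves, excess edges, and independent cycles (relative to \cite{DDL23+}), which is precisely the technical refinement alluded to in Section~\ref{sec-contribution}(iii).
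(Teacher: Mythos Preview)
Your outer layers are right: integrating out the subsampling Bernoullis reduces to a product of $(G'_e-\lambda/n)$ over $E(S_1)\cup E(\pi_*^{-1}(S_2))$ (the paper's \eqref{eq-B.2}), and averaging over $\pi_*$ produces $\operatorname{Aut}(H_1)n^{-|V(H_1)|}$ with $H_1=S_1\cap\pi^{-1}(S_2)$ (the paper's \eqref{eq-measure-intersection-pattern}). But two pieces of your plan diverge from what actually works. First, on matched edges the paper does \emph{not} expand $(G'_e-\lambda/n)^2$ via your quadratic identity; it keeps the squared factor intact inside $\varphi_\sigma$ (see \eqref{eq-def-varphi}) and controls its contribution through Lemma~\ref{lem-first-moment-bound}, which uses admissibility of $H$ to bound $s^{|E(H)|}\mathbb E_\sigma\prod_{(i,j)\in E(H)}(1+\epsilon\omega(\sigma_i,\sigma_j))$ by $(\sqrt\alpha-\delta/4)^{|E(H)|}$. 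Your quadratic-identity split would instead introduce an inner sum over which matched edges take the ``variance branch'', and that sum is not the sum in \eqref{eq-bound-expectation-Pb}: there $H_1$ is the full intersection $S_1\cap\pi^{-1}(S_2)$ (including isolated vertices), not a branch-selected subgraph. Second, the factor $2^{|\mathfrak C(S_i,H_i)|}$ is \emph{not} a $\sigma_*$-character-sum phenomenon. It is purely combinatorial (Claim~\ref{claim-relation-mathtt-M-N}): the paper first proves a bound (Lemma~\ref{lem-trun-expectation-given-pi}) indexed by intermediate graphs $K_i$ with $H_i\ltimes K_i\subset S_i$ recording which unmatched edges take the centered branch $G'_e-q_e(\sigma)$, and then collapses $\sum_{K}\mathtt M(S,K)\mathtt M(K,H)$ to $2^{|\mathfrak C(S,H)|}\mathtt N(S,H)$; each independent cycle of $S$ disjoint from $H$ contributes a factor $2$ because it can lie in $K$ or not without changing $\mathtt M(S,K)\mathtt M(K,H)$.

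The decisive gap is your last paragraph. Under $\mathbb P'$ the edges of $G'$ are not conditionally independent given $\sigma_*$, so ``$G'_e-q'_e(\sigma_*)$ is mean-zero given $\sigma_*$'' is simply false, and ``generically untouched by the removal step'' is not an argument. The paper's mechanism is: construct a coupling $\widetilde{\mathbb P}$ of the family $\{G'(\varkappa):\varkappa\in[k]^{\mathtt V}\}$ from a common parent $G(\operatorname{par})$, condition on $G(\operatorname{par})$ outside $\mathtt E=E(K_1)\cup E(K_2)$, and define a bad vertex set $\mathcal B(\chi)$ (Definition~\ref{def-bad-vertex-set}) consisting of those vertices of $S_1\cup S_2$ that could be touched by the deletion $G\mapsto G'$. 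Claim~\ref{claim-condition-on-G-par-case-1} then shows the conditional expectation vanishes unless every vertex of $\mathtt W$ (the leaves of $S_i$ outside $K_i$ together with $V(K_i)\setminus V(H)$) lies in $\mathcal B(\chi)$; Claim~\ref{claim-condition-on-G-par-case-2} bounds it on that event; and Claim~\ref{claim-prob-bad-set-realization} shows, via a delicate dense-subgraph construction and union bound, that $\widetilde{\mathbb P}(\mathtt W\subset\mathcal B(\chi))$ is small enough to manufacture the $n^{-0.1}$-type decay appearing in $\mathtt N$. This three-claim structure is the technical heart of the proposition and your proposal contains none of it.
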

We remark that $\mathtt N(S,H)$ should be thought as $n^{-\Omega(1)\cdot (|\mathcal L(S) \setminus V(H)|+\tau(S)-\tau(H))}$, and the bound in \eqref{eq-bound-expectation-Pb} should be thought as follows. In the proof later $H_1$ will be the graph $\pi(S_1) \cap S_2$ and we will sum over all possible realizations of $H_1$. Since the total measure of all $\pi$'s satisfying that $\pi(S_1) \cap S_2 = H_1$ is given by $\tfrac{\operatorname{Aut}(H_1)}{n^{|V(H_1)|}}$, the main technical step of our argument is to bound the conditional expectation of $\phi_{S_1,S_2}$ given $\pi$ by some terms related to $\mathtt N(S_1,H_1) \mathtt N(S_2,H_2)$. Roughly speaking, this suggests that the conditional expectation of $\phi_{S_1,S_2}$ will be smaller assuming the following two items: (1) there are many leaves in $S_1$ that do not belong to $H_1$, as we will use some combinatorial arguments to show that the labels in such leaves create certain cancellations; (2) $S_1$ is ``denser'' than $H_1$, as there are more edges $S_1 \setminus H_1$ and each edge will contribute a factor of $n^{-\Omega(1)}$ to the expectation. For each deterministic permutation $\pi \in \mathfrak S_n$ and each labeling $\sigma \in [k]^{n}$, we denote $\Pb_{\sigma,\pi}'=\Pb'( \cdot \mid \sigma_{*}=\sigma, \pi_*=\pi )$, $\Pb_{\pi}'=\Pb'( \cdot \mid \pi_*=\pi )$ and $\Pb_{\sigma}'=\Pb'( \cdot \mid \sigma_*=\sigma )$ respectively. It is clear that 
\begin{align}
    &\ \big|\mathbb{E}_{\Pb'}[\phi_{S_1,S_2}]\big|= \Big|\frac{1}{n!}\sum_{\pi\in \mathfrak{S}_n}\mathbb{E}_{\Pb'_\pi}[\phi_{S_1,S_2}]\Big| \le \frac{1}{n!}\sum_{\pi\in\mathfrak{S}_n}\big|\mathbb{E}_{\Pb_\pi'}[\phi_{S_1,S_2}]\big| \,. \label{eq-relaxation-1} 
\end{align}
For $H\subset S$, we define
    \begin{equation}{\label{eq-def-mathtt-M}}
        \mathtt M(S,H) = \big( \tfrac{D^{8}}{n^{0.1}} \big)^{ \frac{1}{2}( |\mathcal L(S)\setminus V(H)| +\tau(S)-\tau(H) ) } (1-\tfrac{\delta}{2})^{|E(S)|-|E(H)|} \,.
    \end{equation}
Then we proceed to provide a delicate estimate on \(|\mathbb{E}_{\Pb_\pi'}[\phi_{S_1,S_2}]|\), as in the next lemma.
\begin{lemma}{\label{lem-trun-expectation-given-pi}}
    For all admissible $S_1,S_2\Subset \mathcal K_n$ with at most $D$ edges and for all permutation $\pi$ on $[n]$, denote $H_1 = S_1 \cap \pi^{-1}(S_2)$ and $H_2=\pi(S_1)\cap S_2$. We have that $\big|\mathbb E_{\Pb_\pi'}[\phi_{S_1,S_2}] \big|$ is bounded by $O(1)$ times
    \begin{equation}
    \begin{aligned}
        & (\sqrt{\alpha}-\tfrac{\delta}{4})^{|E(H_1)|} \sum_{ H_1 \ltimes K_1 \subset S_1 } \sum_{ H_2 \ltimes K_2 \subset S_2 } \frac{ \mathtt M(S_1,K_1) \mathtt M(S_2,K_2) \mathtt M(K_1,H_1) \mathtt M(K_2,H_2) }{ n^{ \frac{1}{2}(|V(S_1)|+|V(S_2)|-|V(H_1)|-|V(H_2)|) } }  \,.
    \end{aligned}
    \end{equation}
\end{lemma}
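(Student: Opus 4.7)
The plan is to expand $\phi_{S_1,S_2} = \psi_{S_1}(A) \psi_{S_2}(B)$ using the decomposition of $A,B$ in terms of the modified parent graph $G'$ and the independent Bernoulli sampling variables $J',K'$ from Definition~\ref{def-G'-P'}, then carry out the expectation in three stages: first over $J',K'$, then over $G'$, and finally over the community labeling $\sigma_*$. The edges of $\pi^{-1}(S_2)$ and $S_1$ split naturally into three classes: the common edges forming $H_1 = S_1 \cap \pi^{-1}(S_2)$ (equivalently $H_2 = \pi(S_1) \cap S_2$ on the $S_2$ side), and the two ``private'' sides $S_1 \doublesetminus \pi^{-1}(S_2)$ and $\pi^{-1}(S_2) \doublesetminus S_1$. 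Only the common edges produce correlations between $A$ and $B$.

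First, I would integrate out $J', K'$: each edge of $S_1$ contributes an $s$-factor when it appears in $G'$ and, combined with the centering by $\lambda s / n$, this reduces $\mathbb{E}_{J',K'}[\psi_{S_1}(A)\psi_{S_2}(B)]$ to a product over edges of $S_1 \cup \pi^{-1}(S_2)$, with a factor of $(G'_e - \lambda/n)$ (times normalization) on each common edge and a factor of $\tfrac{s(G'_e - \lambda/n)}{\sqrt{(\lambda s/n)(1-\lambda s/n)}} + \text{correction}$ on each private edge; the private-edge corrections are of order $\epsilon \lambda/n$ coming from the mismatch between the global centering $\lambda s/n$ and the actual SBM edge mean under a given $\sigma_*$. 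Next, I would integrate out $G'$ conditional on $\sigma_*$. Since $G'$ is obtained from $G \sim \mathcal S(n,\tfrac{\lambda}{n};k,\epsilon)$ by removing at most one edge from each self-bad subgraph and each short cycle, and since admissible graphs $S_i$ contain no such subgraphs by definition, the relevant edges of $S_1 \cup \pi^{-1}(S_2)$ appear in $G'$ if and only if they appear in $G$ up to an $o(1)$ correction; this is where the admissibility hypothesis enters. The leading contribution is then a sum over vertex-labelings $\sigma_*$ of a product over edges, and private edges each produce a factor of order $n^{-1/2-\Omega(1)}$ relative to a leaf-free reference graph.

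The next step is the combinatorial extraction of the $(\sqrt{\alpha}-\tfrac{\delta}{4})^{|E(H_1)|}$ and $\mathtt M$-factors. On the common part $H_1$, each edge $(u,v)$ contributes (after $J', K'$ integration) a factor proportional to $s \cdot (1 + (k-1)\epsilon^2 \mathbf{1}_{\sigma_*(u)=\sigma_*(v)} + \text{corrections})$. Averaging over $\sigma_*$ and using $s \le \sqrt{\alpha}-\delta$ together with the KS condition $\epsilon^2 \lambda s \le 1-\delta$ and the parameter choices \eqref{eq-def-N}, the tree-part of $H_1$ yields per-edge bound $\sqrt{\alpha}-\delta/4$. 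For the private edges, I would introduce the intermediate graphs $K_1,K_2$ via a peeling procedure: iteratively remove leaves of $S_i$ that are not in $V(H_i)$ and split off low-excess attachments, with $K_i$ being the skeleton that remains after all such leaf removals (so $H_i \ltimes K_i \subset S_i$). Each leaf peel produces a cancellation factor of order $n^{-0.05} D^{4}$ from summing over the free label of the leaf, which combines over all peeled leaves to produce $\mathtt M(S_i, K_i)$; each extra excess edge beyond $K_i$ contributes a $n^{-0.05}(1-\delta/2)$ factor. The $\mathtt M(K_i,H_i)$ factors come analogously from the ``bridge'' edges of $K_i$ connecting the $H_i$-component to the leaves. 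Summing over all valid decompositions $H_i \ltimes K_i \subset S_i$ gives the stated bound.

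The main obstacle I anticipate is the leaf-cancellation step in the presence of both the community labeling and the conditioning defining $\Pb'$. In the ER setting of \cite{DDL23+}, leaf cancellations use the fact that summing the centered edge variable $A_{i,v} - \lambda s/n$ over the unmatched leaf label $v$ gives zero at leading order; here the community structure makes the centering imperfect (the actual edge mean depends on $\sigma_*(v)$), so the cancellation is only approximate and produces the $\epsilon$-dependent error terms that must be absorbed using the KS condition. Moreover, the removal of self-bad subgraphs in the construction of $G'$ alters edge marginals in a way that depends on all $\mathtt{M}$ self-bad graphs passing through any given edge, so one must argue that the total variation between the joint edge law used in the peeling and the plain SBM edge law on admissible supports is $o(1)$, relying on the fact that the list $(B_1,\dots,B_{\mathtt M})$ in Definition~\ref{def-G'-P'} contributes negligibly to any fixed admissible configuration. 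The rest of the argument is combinatorial bookkeeping: ensuring that the enumeration of intermediate $K_i$ matches the powers of $D$ in \eqref{eq-def-mathtt-M}, for which the refined graph-enumeration estimates advertised in Section~\ref{sec-contribution}(iii) are the essential input.
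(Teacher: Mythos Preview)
Your first two steps---integrating out $J',K'$ to reduce to a polynomial in $G'$, and then writing each private edge as (properly centered term) $+$ (correction $\epsilon\omega(\sigma_i,\sigma_j)\lambda/n$)---match the paper, and the binomial expansion of that product is precisely what generates the sum over $H_i\ltimes K_i\subset S_i$ (not an iterative peeling; the $K_i$ index which private edges take the centered piece). The factor $(\sqrt{\alpha}-\delta/4)^{|E(H_1)|}$ from the common part is also handled as you say, via a separate first-moment lemma.

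The genuine gap is your treatment of the conditioning in $G'$. You write that admissibility of $S_i$ implies the relevant edges ``appear in $G'$ if and only if they appear in $G$ up to an $o(1)$ correction,'' and later propose to control this by a total-variation bound between the edge law under $\Pb'$ and the plain SBM. This does not work. Admissibility of $S_i$ only says $S_i$ itself contains no short cycle or self-bad subgraph; an edge of $S_i$ can still be removed in $G'$ because it lies in a short cycle or self-bad subgraph of $G$ that extends outside $S_i$. More importantly, the leaf-cancellation you want requires summing the correction terms $h_\sigma$ over all $\varkappa\in[k]^{\mathtt V}$ (where $\mathtt V$ is a vertex set of size comparable to $|V(S_i)\setminus V(K_i)|$), and this sum has $k^{|\mathtt V|}$ terms. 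An $o(1)$ total-variation bound per configuration cannot survive multiplication by $k^{|\mathtt V|}$; you need exact cancellation on most of the probability space.

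The paper's mechanism for this is quite different from anything in your plan. It builds a coupling $\widetilde{\Pb}$ of the graphs $(G'(\varkappa))_{\varkappa\in[k]^{\mathtt V}}$ by first sampling a single parent graph $G(\mathrm{par})$ (with edge probability set to the maximum over labels on edges touching $\mathtt V$) and then subsampling and edge-deleting separately for each $\varkappa$. Conditioning on $G(\mathrm{par})$ outside $\mathtt E=E(K_1)\cup E(K_2)$, one defines a \emph{bad vertex set} $\mathcal B(\chi)$: those vertices that could lie in a short cycle or self-bad subgraph once the edges in $\mathtt E$ are added back. The point is that if a leaf $u\in\mathtt L_1$ (or a vertex of $V(K_i)\setminus V(H)$) is \emph{not} in $\mathcal B(\chi)$, then the edge-removal process never touches edges at $u$, so $G'(\varkappa)$ has the same conditional law for every choice of $\varkappa_u$, and the sum $\sum_{i\in[k]} h_{\varkappa_{i(u)}\oplus\gamma}=0$ gives exact cancellation. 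Thus the conditional expectation vanishes unless a specific set $\mathtt W$ of leaves and private $K$-vertices is entirely contained in $\mathcal B(\chi)$. The $\mathtt M$-factors then come not from peeling but from bounding $\widetilde{\Pb}(\mathtt W\subset\mathcal B(\chi))$: one shows that on this event the graph $\chi\oplus\mathbf 1_{\mathtt E}$ must contain a dense subgraph covering $\mathtt W$, and a union bound over such dense subgraphs (decomposed into paths via Corollary~\ref{cor-revised-decomposition-H-Subset-S}) yields the powers of $n^{-0.1}$ and $D^8$ in \eqref{eq-def-mathtt-M}. None of this structure---the coupling across labels, the bad-vertex-set dichotomy, or the dense-subgraph probability estimate---appears in your plan, and the total-variation shortcut cannot replace it.
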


We remark that the definition of $\mathtt M(S,H)$ is similar to the definition of $\mathtt N(S,H)$ in \eqref{eq-def-mathtt-N}, except that the exponent $D^{28}$ is changed to $D^8$. The reason is that our strategy is to first bound $\big|\mathbb E_{\Pb_\pi'}[\phi_{S_1,S_2}] \big|$ by $\sum_{ H_1 \ltimes K_1 \subset S_1 } \mathtt M(S_1,K_1)\mathtt M(K_1,H_1) \sum_{ H_2 \ltimes K_2 \subset S_2 } \mathtt M(S_2,K_2)\mathtt M(K_2,H_2)$, and then bound $\sum_{ H_i \ltimes K_i \subset S_i } \mathtt M(S_i,K_i)\mathtt M(K_i,H_i)$ by $\mathtt N(S_i,H_i)$; the difference in the exponent of $D$ is tuned carefully such that the later bound holds. The proof of Lemma~\ref{lem-trun-expectation-given-pi} is the most technical part of this paper and is included in Section~\ref{sec:Proof-Lem-4.10} in the appendix. Now we can finish the proof of Proposition~\ref{prop-untruncate-expectation-Pb}.

\begin{proof}[Proof of Proposition~\ref{prop-untruncate-expectation-Pb}.]
    Note that we have
    \begin{equation}{\label{eq-measure-intersection-pattern}}
        \mu \big( \{ \pi: S_1 \cap \pi^{-1}(S_2) = H_1 \} \big) \leq [1+o(1)] \cdot \operatorname{Aut}(H_1) n^{-|V(H_1)|} \,. 
    \end{equation}
    Combined with Lemma~\ref{lem-trun-expectation-given-pi}, it yields that the right-hand side of \eqref{eq-relaxation-1} is bounded by (up to a $O(1)$ factor)
    \begin{align}
        & \sum_{\substack{ H_1,H_2: H_1 \cong H_2 \\ H_1 \subset S_1, H_2 \subset S_2 }} \frac{ \operatorname{Aut}(H_1) (\sqrt{\alpha} -\tfrac{\delta}{4})^{|E(H_1)|} }{ n^{|V(H_1)|} } * \frac{ \mathtt P(S_1,H_1) \mathtt P(S_2,H_2) }{ n^{ \frac{1}{2}(|V(S_1)|+|V(S_2)|-|V(H_1)|-|V(H_2)|) } } \,,  \label{eq-relaxation-2}
    \end{align}
    where (recall \eqref{eq-def-mathtt-M})
    \begin{equation}{\label{eq-def-mathtt-P}}
        \mathtt P(S,H) = \sum_{H \ltimes K \subset S} \mathtt M(S,K) \mathtt M(K,H) \,.
    \end{equation}
    We claim that we have the following estimation, with its proof incorporated in Section~\ref{subsec:Proof-Claim-4.11} in the appendix.
    \begin{claim}{\label{claim-relation-mathtt-M-N}}
        We have $\mathtt P(S,H) \leq [1+o(1)] \cdot 2^{|\mathfrak{C}(S,H)|} \mathtt N(S,H)$.
    \end{claim}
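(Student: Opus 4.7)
The strategy is to exploit the multiplicativity of the $\mathtt M$-weights across the decomposition $H\subset K\subset S$, factor the sum over the independent cycles $\mathfrak C(S,H)$, and control the residual combinatorial enumeration. Setting $\ell = |\mathcal L(S)\setminus V(H)|$, $e = \tau(S)-\tau(H)$, and $\theta = (D^8/n^{0.1})^{1/2}$, the $(1-\delta/2)^{|E(S)|-|E(H)|}$ factor in $\mathtt M(S,K)\mathtt M(K,H)$ matches that in $\mathtt N(S,H)$ verbatim, and the excess exponents telescope as $(\tau(S)-\tau(K))+(\tau(K)-\tau(H)) = e$. For the leaf exponent, a vertex-wise accounting yields $|\mathcal L(S)\setminus V(K)|+|\mathcal L(K)\setminus V(H)| = \ell + \mathrm{NL}(K)$, where
\[
\mathrm{NL}(K) := |\{v\in V(K)\setminus V(H) : \deg_S(v)\geq 2,\ \deg_K(v)=1\}|
\]
counts ``new leaves'' of $K$ (the key point is that any $v\in(\mathcal L(S)\setminus V(H))\cap V(K)$ is forced by $\mathcal I(K)\subset\mathcal I(H)$ to satisfy $\deg_K(v)=1$, so it transfers between the two leaf sets without gain or loss). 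Hence the claim reduces to
\[
\sum_{H\ltimes K\subset S}\theta^{\mathrm{NL}(K)}\leq [1+o(1)]\cdot 2^{|\mathfrak C(S,H)|}\cdot D^{10(\ell+e)}.
\]

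I next parameterize each $K$ by $F := E(K)\setminus E(H)\subseteq E(S)\setminus E(H)$ (so $V_K$ is uniquely determined by $F$ through $\mathcal I(K)\subset\mathcal I(H)$) and split $F = F_{\mathrm{cyc}}\sqcup F_{\mathrm{rest}}$ into its parts inside $\bigsqcup_{C\in\mathfrak C(S,H)}E(C)$ and outside. The definition of $\mathfrak C(S,H)$ ensures that each cycle $C$ is vertex-disjoint from $V(H)$ and has no $S$-edges outside $E(C)$ incident to $V(C)$, so $\mathrm{NL}(K)$ decouples as $\sum_C \mathrm{NL}_C + \mathrm{NL}_{\mathrm{rest}}$ and the $\theta$-weighted sum factorizes. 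For a single cycle of length $m_C$, a direct transfer-matrix computation gives $\sum_{F_C\subseteq E(C)}\theta^{\#\text{transitions in cyclic pattern}} = (1+\theta)^{m_C}+(1-\theta)^{m_C} = 2+O(m_C^2\theta^2)$; using $\sum_C m_C^2 \leq (2D)^2$ together with $D^2\theta^2 = D^{10}/n^{0.1} = o(1)$, the cycle factor multiplies out to $2^{|\mathfrak C(S,H)|}(1+o(1))$.

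It remains to prove $\sum_{F_{\mathrm{rest}}}\theta^{\mathrm{NL}(F_{\mathrm{rest}})}\leq [1+o(1)]\cdot D^{10(\ell+e)}$, where $F_{\mathrm{rest}}$ ranges over subsets of $E(S^\ast)\setminus E(H)$ and $S^\ast$ denotes $S$ with the vertices and edges of all $\mathfrak C(S,H)$-cycles deleted (so $\mathfrak C(S^\ast,H)=\emptyset$ while $\ell$ and $e$ are unchanged). Partitioning by $j := \mathrm{NL}(F_{\mathrm{rest}})$, the $j=0$ slice enumerates $F_{\mathrm{rest}}$'s obeying the vertex-closedness condition $\deg_{F_{\mathrm{rest}}}(v)\in\{0\}\cup[2,\deg_{S^\ast}(v)]$ at every $v\notin V(H)$ with $\deg_{S^\ast}(v)\geq 2$; a counting argument invoking the subgraph-enumeration estimate of Lemma~\ref{lem-enu-Subset-large-graph} from the appendix bounds their number by $D^{O(\ell+e)}$, comfortably below $D^{10(\ell+e)}$. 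For $j\geq 1$, each new leaf contributes a factor $\theta = D^4 n^{-0.05}$, while a crude enumeration bounds the number of $F_{\mathrm{rest}}$ with $\mathrm{NL}=j$ by at most $(\mathrm{poly}(D))^{j}$ times the $j=0$ count, so geometric summation in $j$ yields a $o(1)\cdot D^{10(\ell+e)}$ contribution. Combining the cycle factor with the residual bound closes the inequality.

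The main obstacle I anticipate is the degenerate regime $\ell+e=0$, where $D^{10(\ell+e)}=1$ leaves no polynomial slack to absorb enumeration losses: one must verify that after excising $\mathfrak C(S,H)$, only a bounded (indeed $1+o(1)$) number of clean $K^\ast$ survive in $S^\ast$. This is precisely where the absence of independent cycles in $\mathfrak C(S^\ast,H)$ becomes essential and where a structural decomposition of $S^\ast\setminus H$ into pendant subtrees and ``ear'' paths between $V(H)$-vertices is needed; moreover, the admissibility of $S$ (through the log-submodular weight $\Phi$ as in Lemma~\ref{lem-D(S)-property}) must be leveraged to rule out pathological dense configurations that would otherwise inflate the enumeration.
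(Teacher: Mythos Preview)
Your approach is essentially the paper's: both hinge on the path/cycle decomposition of $E(S)\setminus E(H)$ from Corollary~\ref{cor-revised-decomposition-H-Subset-S}, the identity that the leaf-count discrepancy equals your nonnegative ``new leaf'' quantity $\mathrm{NL}(K)$, and an enumeration of $K$'s stratified by $\mathrm{NL}$. Your explicit transfer-matrix factorization over $\mathfrak C(S,H)$ is a clean cosmetic variant of the paper's treatment (which handles cycles through the variables $X_{\mathtt i}$ alongside the path variables $Y_{\mathtt j},Z$), and both extract the $2^{|\mathfrak C(S,H)|}$ factor in the same way.

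Three corrections. First, your anticipated ``main obstacle'' at $\ell+e=0$ dissolves immediately: Lemma~\ref{lem-decomposition-H-Subset-S} gives $\mathtt t=\ell+e=0$, so $E(S)\setminus E(H)$ is entirely the independent cycles of $\mathfrak C(S,H)$; after excising them, $E(S^\ast)\setminus E(H)=\emptyset$ and the residual sum is exactly $1$. Second, your citation of Lemma~\ref{lem-enu-Subset-large-graph} for the $j=0$ count is misplaced---that lemma enumerates supergraphs $S\supset H$, not intermediate $K$'s. The correct bound is direct from the path decomposition of $S^\ast\setminus H$: since interior vertices of each $P_{\mathtt j}$ have $S^\ast$-degree exactly $2$, the constraint $\mathrm{NL}=0$ forces $F_{\mathrm{rest}}\cap E(P_{\mathtt j})\in\{\emptyset,E(P_{\mathtt j})\}$, giving at most $2^{\mathtt t}\leq 2^{5(\ell+e)}$ choices (this is precisely the paper's bound $(2D)^{\mathtt t}$ with the $X_{\mathtt i}$'s removed). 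Third, admissibility of $S$ and the log-submodularity of $\Phi$ are irrelevant here; the proof of this claim uses only $|E(S)|\leq D$.
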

    \noindent Note that the estimate as in \eqref{eq-bound-expectation-Pb} will be obvious if we plug Claim~\ref{claim-relation-mathtt-M-N} into \eqref{eq-relaxation-2}.
\end{proof}

Now we can finally complete our proof of Item (iii) of Theorem~\ref{main-thm-detection-lower-bound}.

\begin{proof}[Proof of Item (iii), Theorem~\ref{main-thm-detection-lower-bound}.]
Recall Definition~\ref{def-phi-S1,S2} and \eqref{eq-goal-complexity-lower-bound}. Note that $\mathcal O_{D}$ is an orthonormal basis under $\Qb$. As incorporated in \cite[Equation (3.18)]{DDL23+}, we get from the standard results that 
\begin{equation}{\label{eq-expression-SNR}}
    \| L'_{\leq D} \|^2 = \sum_{\phi_{S_1,S_2} \in \mathcal{O}_D'} \big(\mathbb{E}_{\Pb'}[\phi_{S_1,S_2}] \big)^2 \,.
\end{equation}
Recall \eqref{eq-def-mathtt-P}. By Proposition~\ref{prop-untruncate-expectation-Pb} and Cauchy-Schwartz inequality, $\| L'_{\leq D} \|^2$ is upper-bounded by $O(1)$ times
\begin{align}
    & \sum_{\phi_{S_1,S_2} \in \mathcal O'_D} \Big( \sum_{ \substack{ H_1 \subset S_1, H_2 \subset S_2 \\ H_1 \cong H_2 } } n^{0.02|\mathcal I(H_1)|} \mathtt N(S_1,H_1) \mathtt N(S_2,H_2) \Big)  \label{eq-L-leq-D-relaxation-1.2} \times \\
    & \Big( \sum_{ \substack{ H_1 \subset S_1, H_2 \subset S_2 \\ H_1 \cong H_2 } } \frac{ (\sqrt{\alpha}-\tfrac{\delta}{4})^{2|E(H_1)|} \operatorname{Aut}(H_1)^2 }{ n^{2|V(H_1)| + 0.02|\mathcal I(H_1)| } } \cdot \frac{ 4^{|\mathfrak{C}(S_1,H_1)|+|\mathfrak{C}(S_2,H_2)|} \mathtt N(S_1,H_1) \mathtt N(S_2,H_2) }{ n^{( |V(S_1)|+|V(S_2)|-|V(H_1)|-|V(H_2)| )} } \Big) \,. \nonumber
\end{align}
We firstly bound the bracket in $\eqref{eq-L-leq-D-relaxation-1.2}$. Note that for all $H \subset S$, we have $|\mathcal L(S) \setminus V(H)| \leq |V(S)|-|V(H)|$, and thus $|E(S)|-|E(H)| \geq |\mathcal L(S) \setminus V(H)| + \tau(S)-\tau(H)$. In addition, $H \ltimes S$ provided with $S \Subset \mathcal K_n$. Thus, from Lemma~\ref{lem-decomposition-H-Subset-S} in the appendix we see that $|\mathcal L(S) \setminus V(H)| + \tau(S)-\tau(H) \geq |\mathcal I(H)|/2$, since all isolated vertices of $H$ must be endpoints of paths in the decomposition of $E(S)\setminus E(H)$ in Lemma~\ref{lem-decomposition-H-Subset-S} in the appendix. Thus, for any fixed admissible $S\Subset \mathcal{K}_n$, we have that $\sum_{H:H \subset S} n^{0.01|\mathcal I(H)|} \mathtt N(S,H)$ is bounded by (recall \eqref{eq-def-mathtt-N})
\begin{align*}
    & \sum_{m \geq 0} \sum_{l \geq m \geq r/2 \geq 0} n^{0.01r} (1-\tfrac{\delta}{2})^{l} \big(\tfrac{D^{14}}{n^{0.05}}\big)^{m} * \#\Big\{ H \ltimes S: H \mbox{ is admissible}, |\mathcal I(H)|=r, \\
    &|E(S)|-|E(H)|=l, |\mathcal L(S)\setminus V(H)| + \tau(S)-\tau(H) = m  \Big\} \\
    \leq\ & \prod_{j=N+1}^{D} \sum_{q_j\geq 0 }\binom{ |\mathcal C_j(S)| }{ q_j } (1-\tfrac{\delta}{2})^{j q_j} \sum_{m \geq r/2 \geq 0} 
    n^{0.01r} \big(\tfrac{D^{14}}{n^{0.05}}\big)^{m} D^{8m}  \\
    \leq\ & [1+o(1)] \cdot \prod_{j=N+1}^{D} ( 1+(1-\tfrac{\delta}{2})^j )^{ |\mathcal C_j(S)| } \leq [1+o(1)] \cdot (1+(1-\tfrac{\delta}{2})^N)^{|\mathfrak C(S,H)|+|\mathcal I(H)|+2|E(H)|} \,,
\end{align*}
where the first inequality follows from Lemma~\ref{lem-enu-general-subset-large-graph} in the appendix and the last inequality follows from
\begin{align*}
    \sum_{j=N+1}^{D}|\mathcal C_j(S)| \leq |\mathcal C(S)| \leq |\mathfrak{C}(S,H)| + |V(H)| \leq |\mathfrak C(S,H)|+|\mathcal I(H)|+2|E(H)| \,.
\end{align*}
Thus, we have that
\begin{align}
    \eqref{eq-L-leq-D-relaxation-1.2} \leq [1+o(1)] \cdot \big( 1+(1-\delta/2)^N \big)^{ \sum_{i=1,2} (|\mathfrak C(S_i,H_i)|+|\mathcal I(H_i)|+2|E(H_i)|) } \,. \label{eq-relaxation-1.2.1}
\end{align}
Recall \eqref{eq-def-N}, \eqref{eq-expression-SNR} and Proposition~\ref{prop-untruncate-expectation-Pb}. By \eqref{eq-relaxation-1.2.1} and \eqref{eq-def-N} (which helps us bounding $(1-\delta/2)^N$), we get that $\| L'_{\leq D} \|^2$ is bounded by $O(1)$ times (denoted by $\widetilde{\mathtt N}(S,H)=\frac{ 8^{|\mathfrak{C}(S,H)|} \mathtt N(S,H) }{ n^{(|V(S)|-|V(H)|)} }$)
\begin{align*}
    & \sum_{ (S_1,S_2): \phi_{S_1,S_2} \in \mathcal O_{D}' } \sum_{ \substack{ H_1 \subset S_1, H_2 \subset S_2 \\ H_1 \cong H_2 } } \frac{ (\sqrt{\alpha}-\tfrac{\delta}{8})^{2|E(H_1)|} \operatorname{Aut}(H_1)^2 }{ n^{2|V(H_1)|+0.01 |\mathcal I(H_1)|} } \cdot \widetilde{\mathtt N}(S_1,H_1) \widetilde{\mathtt N}(S_2,H_2) \\
    = \ & \sum_{\substack{H_1 \cong H_2, H_1,H_2 \text{ admissible} \\ |E(H_1)|+|E(H_2)| \leq D}} \frac{  (\sqrt{\alpha}-\tfrac{\delta}{8})^{2 |E(H_1)| } \mathrm{Aut}(H_1)^2 }{ n^{2|V(H_1)|+0.01|\mathcal I(H_1)|}  } \cdot \sum_{ \substack{(S_1,S_2): S_1,S_2 \Subset \mathcal K_n\\ H_1 \subset S_1, H_2 \subset S_2  } } \widetilde{\mathtt N}(S_1,H_1) \widetilde{\mathtt N}(S_2,H_2) \,. 
\end{align*}
Recall that we use $\widetilde{H}_1$ to denote the subgraph of $H_1$ obtained by removing all the vertices in $\mathcal I(H_1)$. For $|V(H_1)| \leq |V(S_1)| \le 2D$, we have $\operatorname{Aut}(H_1) = \operatorname{Aut}( \widetilde{H}_1 ) \cdot |\mathcal I(H_1)|! \leq (2D)^{|\mathcal I(H_1)|} \operatorname{Aut}( \widetilde{H}_1 )$. Thus, we have that 
\begin{align*}
    & \sum_{\substack{H_1 \cong H_2, H_1,H_2 \text{ admissible} \\ |E(H_1)|+|E(H_2)| \leq D}} \frac{ (\sqrt{\alpha}-\tfrac{\delta}{8})^{2|E(H_1)|} \mathrm{Aut}(H_1)^2 }{ n^{2|V(H_1)|+0.01|\mathcal I(H_1)|}  } \\
    \leq\ & \sum_{\substack{|E(\mathbf H)| \leq D, \mathcal I(\mathbf H)=\emptyset \\ \mathbf H \textup{ is admissible} }} \sum_{j \geq 0}\ \sum_{ \substack{ (H_1,H_2): \widetilde{H}_1 \cong \widetilde{H}_2 \cong \mathbf H \\ |\mathcal I(H_1)|=|\mathcal I(H_2)|=j } } n^{-0.01j} \cdot \frac{ \mathrm{Aut}(\mathbf H)^2 (2D)^{2j} (\sqrt{\alpha}-\tfrac{\delta}{8})^{2 |E(\mathbf H)| } }{ n^{2(|V(\mathbf H)|+j)} } \\
    \circeq \ & \sum_{\substack{|E(\mathbf H)| \leq D, \mathcal I(\mathbf H)=\emptyset \\ \mathbf H \textup{ is admissible} }} (\sqrt{\alpha}-\tfrac{\delta}{8})^{2 |E(\mathbf H)| } \leq O_\delta(1) \,,
\end{align*}
where the $\circeq$ follows from $\#\{ H \subset \mathcal K_n: \widetilde{H} \cong \mathbf H, |\mathcal I(H)|=j \} \circeq \frac{ n^{ |V(\mathbf H)|+j } }{ \operatorname{Aut}(\mathbf H) }$ and the last inequality follows from \cite[Lemma~A.3]{DDL23+}. In order to complete the proof of Item (3), (in light of the preceding two displays) it remains to show that 
\begin{equation}{\label{eq-relation-mathtt-N}}
    \sum_{ \substack{(S_1,S_2): S_1,S_2 \Subset \mathcal K_n\\ H_1 \subset S_1, H_2 \subset S_2  } } \widetilde{\mathtt N}(S_1;H_1) \widetilde{\mathtt N}(S_2;H_2) = O_{\delta,N}(1) \,.
\end{equation}
From \eqref{eq-def-mathtt-N}, we have
\begin{align}
    & 
    \widetilde{\mathtt N}(S;H) = 
    \frac{ \big(\tfrac{D^{14}}{n^{0.05}}\big)^{|\mathcal L(S) \setminus V(H)|+(\tau(S)-\tau(H))} 8^{|\mathfrak{C}(S,H)|} (1-\tfrac{\delta}{2})^{|E(S)|-|E(H)|} }{ n^{(|V(S)|-|V(H)|)} } \,. \nonumber
\end{align}
Since $S$ contains no isolated vertex, we have $|V(S)|-|V(H)| \leq 2(|E(S)|-|E(H)|)$. Then 
$\sum_{H \subset S \Subset \mathcal K_n}
    \widetilde{\mathtt N}(S;H)$ is further bounded by
\begin{align}
    \sum_{ \substack{r,m,p,q \geq 0 , m \geq p-q\\ q\leq 2p }} \frac{   \big(\tfrac{D^{14}}{n^{0.05}}\big)^{m} 8^r(1-\delta/2)^{p}}{ n^q } * \#\Big\{ S \mbox{ admissible} : H \subset S \Subset \mathcal K_n, \mathfrak{C}(S,H)=r, \nonumber \\
    |\mathcal L(S) \setminus V(H)| + \tau(S)-\tau(H)=m, |E(S)|-|E(H)|=p, |V(S)|-|V(H)|=q \Big\}\nonumber\\
    =   \sum_{ \substack{r,m,p,q \geq 0 , m \geq p-q\\ q\leq 2p }} \frac{   \big(\tfrac{D^{14}}{n^{0.05}}\big)^{m} 8^r(1-\delta/2)^{p}}{ n^q } \sum_{\substack{c_{N+1},\ldots,c_D:\\c_{N+1}+\cdots+c_D=r}} \operatorname{Count}(c_{N+1},\ldots,c_D) \,, \label{eq-relaxation-1.3.1}
\end{align}
where $\operatorname{Count}(c_{N+1},\ldots,c_D)$ equals to
\begin{align*}
    \#\Big\{& S \mbox{ admissible} \!: H \subset S ; |\mathfrak{C}_l(S,H)|=c_l, l \geq N; |\mathcal L(S) \setminus V(H)| + \tau(S)-\tau(H)=m,\\
    &|E(S)|-|E(H)|=p, |V(S)|-|V(H)|=q \Big\}\,.
\end{align*}
In addition, by Lemma~\ref{lem-enu-Subset-small-graph} in the appendix (i.e., Equation~\eqref{eq-enu-Subset-small-graph}), we have that (noting that $S \Subset \mathcal K_n$, and together with $H \subset S$ imply that $H \ltimes S$) 
\begin{align}
    \operatorname{Count}(c_{N+1},\ldots,c_D)\leq (2D)^{3m} n^{q} \sum_{ \substack{p_l \geq c_l \\ p \geq \sum_{j=N+1}^D jp_j}} \prod_{l=N}^{D} \frac{1}{p_l!} \,. \label{eq-relaxation-1.1.1}
\end{align} 
Plugging \eqref{eq-relaxation-1.1.1} into \eqref{eq-relaxation-1.3.1}, we get that \eqref{eq-relaxation-1.3.1} is bounded by 
\begin{align*}
    & \sum_{c_{N+1},\ldots,c_D \geq 0} \sum_{p_l \geq c_l} \sum_{ \substack{m,p,q\geq 0, m \geq p-q\\p\geq\sum_{l=N+1}^D lp_l,q\leq 2p} }  (1-\delta/2)^{p} 8^{c_{N+1}+\ldots+c_D} \Big( \frac{8 D^{17}}{n^{0.05}} \Big)^{m}  \prod_{l=N+1}^{D} \frac{ 1 }{ p_l! } \\
    =\ & \sum_{ \substack{p_l \geq c_{l} \geq 0 \text{ for } N+1 \leq l \leq D \\ p\geq\sum_{l=N+1}^D lp_l } }  (1-\delta/2)^{p} 8^{c_{N+1}+\ldots+c_D} \prod_{l=N+1}^{D} \frac{1}{p_l!} \sum_{ \substack{ 0 \leq q\leq 2p \\ m \geq (p-q) \vee 0 } } \Big( \frac{8 D^{17}}{n^{0.05}} \Big)^{m}   \\
    \leq\ & [1+o(1)] \sum_{ \substack{p_l \geq c_{l} \geq 0 \text{ for } N+1 \leq l \leq D } } 8^{c_{N+1}+\ldots+c_D} \prod_{l=N+1}^{D} \frac{1}{p_l!} \sum_{p\geq\sum_{l=N+1}^D lp_l} (2p+1) (1-\delta/2)^{p} \\
    \leq\ & O_{\delta,N}(1)\cdot \sum_{ \substack{p_l \geq 0 \text{ for } N+1 \leq l \leq D } } \Big( \sum_{l=N+1}^D lp_l+1 \Big)  \prod_{l=N+1}^{D} \frac{(1-\delta/2)^{lp_l}}{p_l!} \sum_{c_l \leq p_l} 8^{c_{N+1}+\ldots+c_D} \\
    \leq\ & O_{\delta,N}(1)\cdot \sum_{p_{N+1},\ldots,p_D \geq 0} \Big( \sum_{l=N+1}^D lp_l+1 \Big) \prod_{l=N+1}^{D} \frac{ (10(1-\frac{\delta}{2})^{l})^{p_l} }{p_l!} \\ 
    \leq\ & O_{\delta,N}(1)\cdot \sum_{p_{N+1},\ldots,p_D \geq 0} \prod_{l=N+1}^{D} \frac{ lp_l(10(1-\frac{\delta}{2})^{l})^{p_l} }{p_l!} =O_{\delta,N}(1) \cdot e^{O_{\delta,N}(1)} = O_{\delta,N}(1)\,.
\end{align*}
Thus, we have verified \eqref{eq-relation-mathtt-N} as desired.
\end{proof}

\section*{Acknowledgment}

J. Ding is partially supported by by National Key R$\&$D program of China (Project No. 2023YFA1010103), NSFC Key Program (Project No. 12231002), and by New Cornerstone Science Foundation through the XPLORER PRIZE.

\appendix

\section{Supplementary proofs in Section~\ref{sec:detection-upper-bound}}{\label{sec:supp-proof-sec-3}}

\subsection{Proof of Lemma~\ref{prop_first_moment_phi}}{\label{subsec:Proof-Lem-3.4}}

We start our proof with some straightforward computations. Clearly we have
\begin{align}
    &\mathbb{E}_{\Pb_{\sigma,\pi}}\big[ \Bar{A}_{i,j} \big] = \mathbb{E}_{\Pb_{\sigma,\pi}}\big[ \Bar{B}_{\pi(i),\pi(j)} \big] = \tfrac{\omega(\sigma_i,\sigma_j)\epsilon\lambda s}{n} \,, \label{eq-Pb-sigma-pi-A,B} \\
    &\mathbb{E}_{\Pb_{\sigma,\pi}}\big[ \Bar{A}_{i,j} \Bar{B}_{\pi(i),\pi(j)} \big] = \tfrac{(a \omega (\sigma_i ,\sigma_j ) + b)\lambda s^2 }{n} = [1+O(n^{-1})] \cdot \tfrac{(1+\epsilon \omega(\sigma_i,\sigma_j))\lambda s^2}{n} \,, \label{eq-Pb-sigma-pi-AB}
\end{align}
where $a = \epsilon (1-\frac{2\lambda}{n}) = \epsilon + O(\frac{1}{n})$ and $b = 1-\frac{\lambda}{n} = 1 + O(\frac{1}{n})$ are introduced for convenience. Then decomposing $\mathbb E_{\mathbb P_{\sigma, \pi}} [\phi_{S_1, S_2}]$ into products over edges in the symmetric difference between $E(S_1)$ and $E(\pi^{-1}(S_2))$ as well as over edges in their intersection, we can apply \eqref{eq-Pb-sigma-pi-A,B} and \eqref{eq-Pb-sigma-pi-AB} accordingly and obtain that (below the $\circeq$ is used to account for factors of $1-\lambda s/n$ in the definition of $\phi_{S_1,S_2}$) 
\begin{align*}
    \mathbb{E}_{\Pb_{\sigma,\pi}}\big[ \phi_{S_1,S_2} \big] \circeq \prod_{(i,j) \in E(S_1) \triangle E(\pi^{-1}(S_2))} \tfrac{\omega(\sigma_i,\sigma_j)\sqrt{\epsilon^2 \lambda s}}{ \sqrt{n} } \prod_{(i,j) \in E(S_1) \cap E(\pi^{-1}(S_2))} s (b+a \omega(\sigma_i,\sigma_j)) \,. 
\end{align*}
Thus, we have
\begin{align}
    & \mathbb E_{\Pb_{\pi}} \big[ \phi_{S_1,S_2} \big] \circeq s^{ |E(S_1) \cap E(\pi^{-1}(S_2))| } \big( \tfrac{\epsilon^2\lambda s}{n} \big)^{\frac{1}{2}( |E(S_1)|+|E(S_2)|-2|E(S_1) \cap E(\pi^{-1}(S_2))| ) } \nonumber \\
    & * \mathbb E_{ \sigma\sim\nu } \Bigg[ \prod_{(i,j) \in E(S_1) \triangle E(\pi^{-1}(S_2))}  \omega(\sigma_i,\sigma_j) \prod_{(i,j) \in E(S_1) \cap E(\pi^{-1}(S_2))} \big( b+a\omega(\sigma_i,\sigma_j) \big) \Bigg] \,, \label{eq-untruncate-exp-Pb-phi-1}
\end{align}
where we recall that $\nu$ is the uniform distribution on $[k]^{n}$. For $i,j\in\{0,1\}$, denote $\mathsf{K}_{i,j}=\mathsf{K}_{i,j}(S_1,S_2,\pi)$ the set of edges which appear $i$ times in $S_1$ and appear $j$ times in $\pi^{-1}(S_2)$. Also, define $\mathsf{K}_s = \cup_{0\le i,j \le 1, i+j=s} \mathsf{K}_{i,j}$. Define $\mathsf{L}_{i,j}$ and $\mathsf{L}_s$ with respect to the vertices in the similar manner. With a slight abuse of notations, we will also use $\mathsf K_{s}$ and $\mathsf K_{i,j}$ to denote their induced graphs.  

\begin{lemma}{\label{lemma_observation_appendix_C}}
We have the following.
\begin{enumerate}
    \item[(i)] Suppose $J \subset \mathcal K_n$ and suppose $u \in \mathcal L(J)$ with $(u,v) \in E(J)$. Then for any function $\psi$ measurable with respect to $\{ \sigma_{i} : i \in V(J) \setminus \{ u \} \}$ we have $\mathbb{E}_{\sigma\sim\nu} \big[ \omega (\sigma_u,\sigma_v) \cdot \psi \big]= 0$. In particular, for any tree $T$ we have
    \begin{equation}\label{eq-prod-edges-expectation-tree}
        \mathbb E_{\sigma\sim\nu}\Big[\prod_{(i,j)\in E(T)}\omega(\sigma_i,\sigma_j)\Big]=0 \,.
    \end{equation}
    \item[(ii)] Define $\mathfrak A = \{ \pi \in \mathfrak{S}_n : |\mathsf{L}_{2}| \ge |E(\mathsf{K}_{2} )| + 2 \}$, then
    \begin{align*}
        \mathbb{E}_{\Pb}\big[ \phi_{S_1,S_2} \big] = \mathbb{E}_{\Pb}\big[ \phi_{S_1,S_2} \mathbf{1}_{\{ \pi_*(S_1)=S_2 \}} \big] + \mathbb{E}_{\Pb}\big[ \phi_{S_1,S_2} \mathbf{1}_{\{ \pi_*(S_1)\neq S_2 \} \cap \{ \pi_* \in \mathfrak A \}} \big] \,.
    \end{align*}
\end{enumerate}
\end{lemma}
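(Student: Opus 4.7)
The plan is to establish Part~(i) by direct conditioning on all labels except one, and then reduce Part~(ii) to Part~(i) using the factorization already derived in \eqref{eq-untruncate-exp-Pb-phi-1}. The only nontrivial step will be a small case analysis at the end of Part~(ii) showing that the union graph $S_1\cup\pi^{-1}(S_2)$ is forced to be a forest when $|\mathsf L_2|\le|\mathsf K_2|+1$.

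For Part~(i), I would first note that Definition~\ref{def-SBM} forces $\omega(x,y)=k-1$ when $x=y$ and $\omega(x,y)=-1$ otherwise: this is the unique parametrization making $\omega(\sigma_i,\sigma_j)\epsilon\lambda/n$ equal the centered edge probability displayed in \eqref{eq-Pb-sigma-pi-A,B}. In particular $\mathbb E_{\sigma_u\sim\mathrm{Unif}([k])}[\omega(\sigma_u,c)]=\tfrac{k-1}{k}-\tfrac{k-1}{k}=0$ for every $c\in[k]$. Conditioning on $\{\sigma_i\}_{i\ne u}$, the function $\psi$ pulls out and only $\omega(\sigma_u,\sigma_v)$ remains to be integrated against the uniform law of $\sigma_u$, giving $0$. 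The tree identity~\eqref{eq-prod-edges-expectation-tree} then follows by induction on $|E(T)|$: any tree with at least one edge has a leaf $u$ attached to a unique neighbor $v$, and the product $\prod_{(i,j)\in E(T)\setminus\{(u,v)\}}\omega(\sigma_i,\sigma_j)$ does not depend on $\sigma_u$, so the first claim applies.

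For Part~(ii), I would prove the stronger pointwise statement $\mathbb E_{\mathbb P_\pi}[\phi_{S_1,S_2}]=0$ for every fixed $\pi\in\mathfrak S_n$ with $\pi(S_1)\ne S_2$ and $\pi\notin\mathfrak A$; the displayed decomposition then follows by averaging over $\pi_*$, which is uniform and independent of $\sigma_*$. Starting from \eqref{eq-untruncate-exp-Pb-phi-1} and expanding each factor $b+a\,\omega(\sigma_i,\sigma_j)$ over $(i,j)\in\mathsf K_2$ yields
\[
\mathbb E_{\mathbb P_\pi}[\phi_{S_1,S_2}]\;\propto\;\sum_{T\subset \mathsf K_2}a^{|T|}b^{|\mathsf K_2|-|T|}\,\mathbb E_{\sigma\sim\nu}\Big[\prod_{(i,j)\in\mathsf K_1\cup T}\omega(\sigma_i,\sigma_j)\Big].
\]
Writing $J_T:=\mathsf K_1\cup T$, viewed as an edge-induced subgraph, Part~(i) kills each summand as soon as $J_T$ contains a leaf, so it suffices to show that every $J_T$ has a leaf under the standing hypothesis.

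The core geometric task is then to show $H:=S_1\cup\pi^{-1}(S_2)$ is a forest whenever $|\mathsf L_2|\le|\mathsf K_2|+1$; then $J_T\subset H$ is a forest, and since $\mathsf K_1\ne\emptyset$ it is nonempty and must contain a leaf. Since $\mathsf K_2$ is a subforest of the tree $S_1$ we have $|V(\mathsf K_2)|=|\mathsf K_2|+c(\mathsf K_2)$, and the inclusion $V(\mathsf K_2)\subset\mathsf L_2$ forces $c(\mathsf K_2)\le 1$, so $\mathsf K_2$ is either empty or connected. Inclusion–exclusion together with $|E(S_i)|=|V(S_i)|-1$ for $i=1,2$ gives
\[
|E(H)|-|V(H)|+c_H\;=\;|\mathsf L_2|-|\mathsf K_2|+c_H-2,
\]
and a short case split (if $\mathsf K_2=\emptyset$ then $c_H=2-|\mathsf L_2|$; if $\mathsf K_2$ is a nonempty subtree then $V(\mathsf K_2)=\mathsf L_2$ and $c_H=1$) shows the right-hand side equals $0$ in each feasible subcase. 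Hence the cyclomatic number of $H$ vanishes and $H$ is a forest, as required. This case analysis is the main (but mild) obstacle; everything else is an immediate application of Part~(i).
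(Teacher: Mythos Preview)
Your proof is correct. Part~(i) is identical to the paper's argument. For Part~(ii) both proofs share the same skeleton: reduce to showing $\mathbb E_{\mathbb P_\pi}[\phi_{S_1,S_2}]=0$ for each fixed $\pi\notin\mathfrak A$ with $\pi(S_1)\ne S_2$, expand the product over $\mathsf K_2$, and show every resulting graph $\mathsf K_1\cup T$ has a leaf. The difference is in this last geometric step. The paper argues constructively: in the case $\mathsf K_2\ne\emptyset$ it takes a connected component $S_1^*$ of $S_1\doublesetminus\mathsf K_2$, proves $|V(S_1^*)\cap V(\mathsf K_2)|\le 1$ (since $S_1^*\cup\mathsf K_2$ is a subtree of $S_1$), and extracts a leaf of $S_1^*$ outside $\mathsf K_2$. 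You instead prove the stronger global fact that $H=S_1\cup\pi^{-1}(S_2)$ is a forest via the cyclomatic identity $|E(H)|-|V(H)|+c_H=|\mathsf L_2|-|\mathsf K_2|+c_H-2$ and a short computation of $c_H$ in each case; any nonempty subgraph of a forest then automatically has a leaf. Your route is a bit slicker and sidesteps the hands-on component argument; the paper's route is more explicit about where the leaf sits. Both are equally valid.
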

\begin{proof}
As for Item (i), define $\sigma_{u}$ and $\sigma_{\setminus u}$ to be the restriction of $\sigma$ on $\{u\}$ and on $[n] \setminus \{u\}$, respectively. Also define $\nu_{u}$ and $\nu_{\setminus u}$ to be the restriction of $\nu$ on $\{u\}$ and on $[n] \setminus \{u\}$, respectively. Then we have
\begin{align*}
    &\mathbb{E}_{\sigma\sim\nu} \Big[ \omega (\sigma_u,\sigma_v)  \psi \Big] = \mathbb{E}_{\sigma_{\setminus u}\sim\nu_{\setminus u}}  \mathbb{E}_{\sigma_{u}\sim\nu_u} \Big[ \omega (\sigma_u,\sigma_v)  \psi \Big]  = \mathbb{E}_{ \sigma_{\setminus u}\sim\nu_{\setminus u} } \Big[ \psi  \mathbb{E}_{\sigma_{u} \sim \nu_u} \big[ \omega(\sigma_u,\sigma_v) \big] \Big] =0 \,,
\end{align*}
which also immediately implies \eqref{eq-prod-edges-expectation-tree}.
As for Item (ii), it suffices to show that (recall \eqref{eq-untruncate-exp-Pb-phi-1})
\begin{equation}{\label{eq-C.2}}
    \mathbb E_{ \sigma\sim\nu } \Bigg[ \prod_{(i,j) \in E(S_1) \triangle E(\pi^{-1}(S_2))} \omega(\sigma_i,\sigma_j) \prod_{(i,j) \in E(S_1) \cap E(\pi^{-1}(S_2))} \big( b +a\omega(\sigma_i,\sigma_j) \big) \Bigg] =0
\end{equation}
for those $\pi \not \in \mathfrak A$ such that $\pi(S_1) \neq S_2$. Expanding the second product in \eqref{eq-C.2}, we get that proving \eqref{eq-C.2} is equivalent to showing that (recall the definition of $\mathsf{K}_1$ and $\mathsf{K}_2$) 
\begin{align*}
    & \mathbb E_{ \sigma\sim\nu } \Bigg[ \sum_{\mathsf{K}' \Subset \mathsf{K}_2} b^{|E(\mathsf K_2 )|-|E(\mathsf K' )|} \prod_{(i,j) \in E(\mathsf{K}_1)} \omega(\sigma_i,\sigma_j) \prod_{(i,j) \in E(\mathsf{K}')} \Big( a \omega(\sigma_i,\sigma_j) \Big) \Bigg]  \\
    =\ & \mathbb E_{ \sigma\sim\nu } \Bigg[ \sum_{\mathsf{K}' \Subset \mathsf{K}_2} b^{|E(\mathsf K_2 )|-|E(\mathsf K' )|} {a}^{|E(\mathsf{K}')|}\prod_{(i,j) \in E(\mathsf{K}_1 \cup \mathsf{K}')} \omega(\sigma_i,\sigma_j) \Bigg] =0 \,.
\end{align*}
By Item (i), it suffices to prove when $\pi \not \in \mathfrak A$ and $\pi(S_1) \neq S_2$ we have $\mathcal{L}(\mathsf{K}' \cup \mathsf{K}_1)\neq\emptyset$ for all $\mathsf{K}' \Subset \mathsf{K}_2$. If $\mathsf{K}_2=\emptyset$, we have $|\mathsf{L}_2| \leq 1$ since $\pi \not \in \mathfrak A$. Since a tree has at least $2$ leaves, there exists $u \in \mathcal L(S_1) \setminus V( \pi^{-1}(S_2) )$, and thus $u \in \mathcal L( \mathsf{K}' \cup \mathsf{K}_1 )$ for all $\mathsf{K}' \Subset \mathsf{K}_2$. Now suppose $\mathsf{K}_2 \neq \emptyset$. Since $\mathsf{K}_2 = S_1 \cap \pi^{-1}(S_2)$ is a subgraph of the tree $S_1$, we have
\begin{equation}{\label{eq-C..4}}
    |\mathsf{L}_2| \geq |V(\mathsf{K}_2)| \geq |E(\mathsf{K}_2)| + 1 \,.
\end{equation}
Also, from $\pi_* \not \in \mathfrak A$ we obtain $|\mathsf{L}_2| \le |E(\mathsf{K}_{2})| + 1$. Therefore, the inequalities in \eqref{eq-C..4} must be equalities, showing that $\mathsf{K}_2$ is a tree and $\mathsf{L}_2= V(\mathsf{K}_2)$. Since $\pi(S_1) \neq S_2$, $S_1 \doublesetminus \mathsf{K}_{2}$ is not empty and contains at least one connected component, which we write as $S_{1}^{*}$ (note that $S_1^*$ must be connected to $\mathsf K_2$ in $S_1$). We next prove that $|V(S_1^*) \cap V(\mathsf{K}_2) | \le 1$. Since $S_1^* \cup \mathsf{K}_2 \subset S_1$ is connected, it must be a subtree of $S_1$. Therefore, it cannot contain any cycle and thus we have $|V(S_1^*) \cap V(\mathsf{K}_2) | \le 1$: this is because otherwise we have two vertices in $V(S_1^*) \cap V(\mathsf{K}_2)$ which are connected by a path in $S_1^*$ and also a path in $\mathsf{K}_2$ (and clearly these two paths are edge disjoint), forming a cycle and leading to a contradiction. 
Now, since $S_1^*$ is a tree and $|V(S_1^*) \cap V(\mathsf{K}_2) | \le 1$, there exists at least one leaf in $S_{1}^{*}$ which does not belong to $\mathsf K_2$. Therefore, this leaf remains a leaf in $\mathsf{K}_1 \cup \mathsf{K}'$ for all $\mathsf{K}' \Subset \mathsf{K}_2$, which proves the desired result.
\end{proof}

\begin{lemma} {\label{lemma_order_growth}}
Recall that $\mu$ is the uniform measure over all permutations in $\mathfrak S_n$. For $m \geq 0$ denote $\operatorname{Overlap}_m = \{ \pi \in \mathfrak{S}_n : |E(\mathsf{K}_2 )| = \aleph - m, |\mathsf{L}_2| \ge \aleph - m + 2 - \mathbf{1}_{\{ m = 0 \} } \}$. We have for $m \ge 1$ and sufficiently large $n$
\begin{align*}
    \mu(\operatorname{Overlap}_m) \leq n^{m-0.5}\mu(\operatorname{Overlap}_0)\,.
\end{align*}
\end{lemma}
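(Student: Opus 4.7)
The plan is to count $|\operatorname{Overlap}_m|$ combinatorially via the intersection graph $F := S_1 \cap \pi^{-1}(S_2)$ (edge-induced), and compare to $|\operatorname{Overlap}_0|$. The case $m = 0$ reduces to $|E(\mathsf{K}_2)| = \aleph$, which together with $S_1$ being a tree forces $F = S_1$, i.e.\ $\pi(S_1) = S_2$; counting isomorphisms between $S_1$ and $S_2$ (both isomorphic to $\mathbf{H}$) gives $\operatorname{Aut}(\mathbf{H})$ choices for $\pi|_{V(S_1)}$ and $(n-\aleph-1)!$ free extensions, so $\mu(\operatorname{Overlap}_0) = \operatorname{Aut}(\mathbf{H})(n-\aleph-1)!/n!$.

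For $m \geq 1$, $F$ is a subforest of $S_1$ with $k := \aleph - m$ edges and $c$ components, so $|V(F)| = k + c$. I would split into three cases by $c$: Case A ($c \geq 2$, so $|V(F)| \geq k + 2$ and the vertex condition $|\mathsf{L}_2| \geq k + 2$ is automatic); Case B ($c = 1$, where I additionally fix an extra vertex $u \in V(S_1) \setminus V(F)$ with $\pi(u) \in V(S_2) \setminus \pi(V(F))$); Case C ($k = 0$, so $F = \emptyset$ and two extra vertex-image pairs are needed). For each case I upper bound $|\operatorname{Overlap}_m|$ by a sum over $F \subseteq S_1$ with $|E(F)| = k$ of
$\binom{\aleph}{k} \cdot e(F, S_2) \cdot (\text{extra-vertex factor}) \cdot (\text{extension factor})$,
where $e(F, S_2)$ counts injective $\phi : V(F) \to V(S_2)$ with $\phi(F) \subseteq E(S_2)$. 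The extra-vertex factors are $1$, $m^2$, and $O(\aleph^4)$; the extension factors are $(n - k - c)!$, $(n - k - 2)!$, and $(n - 2)!$, respectively. The overcounting (each $\pi \in \operatorname{Overlap}_m$ has $|\mathsf{K}_2| = k$ exactly, so the $F$-witness is unique; larger $|\mathsf{K}_2|$ produces multi-counting we can absorb) is harmless for an upper bound.

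The key estimate is the crude embedding bound
\[
e(F, S_2) \le \prod_i e(T_i, S_2) \le (\aleph+1)^{|V(F)|} \le (\aleph+1)^{2\aleph},
\]
obtained by embedding each tree component $T_i$ of $F$ independently into $S_2$ and using that a tree with $t$ vertices has at most $|V(S_2)|\,\Delta(S_2)^{t-1} \le (\aleph+1)^t$ embeddings. Combined with $\binom{\aleph}{k} \le 2^\aleph$, $m^2 \le \aleph^2$, and the assumption $\aleph = o(\log n/\log\log n)$, all $\aleph$-dependent factors are bounded by $(\aleph+1)^{O(\aleph)} = n^{o(1)}$. In every case the factorial ratio $(n - k - c')!/(n - \aleph - 1)!$ (with $c' = c \geq 2$ in Case A and $c' = 2$ in Cases B and C) is a product of at most $m - 1$ consecutive integers each $\le n$, hence is bounded by $n^{m-1}$. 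Combining yields $\mu(\operatorname{Overlap}_m)/\mu(\operatorname{Overlap}_0) \le n^{m-1+o(1)} \le n^{m-0.5}$ for $n$ sufficiently large.

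The main obstacle is establishing a uniform-in-$F$ bound on $e(F, S_2)$; the crude tree-embedding bound suffices precisely because the target has $n^{0.5}$ slack (not $n^0$) and $\aleph$ grows subpolynomially. A subtle point is Case B, where one must verify that the $m^2$ witness factor together with the $(n - k - 2)!$ extension count captures all valid $\pi$, with the harmless overcounting coming from the choice of which vertex of $\mathsf{L}_2 \setminus V(F)$ to designate as the witness.
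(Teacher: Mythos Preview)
Your proof is correct, and the high-level strategy---fix the part of $\pi$ that lands in $S_2$, bound the number of such partial assignments by $(\aleph+1)^{O(\aleph)} = n^{o(1)}$, then extend freely to the remaining coordinates---matches the paper's. The difference is in which ``overlap'' object you track: you work with the edge-induced intersection forest $F = S_1 \Cap \pi^{-1}(S_2)$ (i.e.\ $\mathsf{K}_2$) and must then split into cases according to its number of components $c$ to ensure at least $\aleph - m + 2$ vertices are pinned down. The paper instead ignores the edge condition $|E(\mathsf{K}_2)| = \aleph - m$ entirely and works directly with the vertex overlap set $V_{\operatorname{ov}} = \mathsf{L}_2 = \{v \in V(S_1) : \pi(v) \in V(S_2)\}$, which by hypothesis has size at least $\aleph - m + 2$; this collapses your three cases into one line, since one just bounds the choices for $V_{\operatorname{ov}}$ by $2^{\aleph+1}$, the choices for $\pi|_{V_{\operatorname{ov}}}$ by $(\aleph+1)^{\aleph+1}$, and the extensions by $(n - \aleph + m - 2)!$. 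Your route buys nothing extra here (both yield the same $n^{m-1+o(1)}$ bound), but the edge-intersection viewpoint would be the natural one if one needed to exploit the tree structure of $F$ more finely, e.g.\ to shave the $o(1)$ in the exponent.
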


\begin{proof}
Firstly note that $\operatorname{Overlap}_0=\{ \pi \in \mathfrak S_n : \pi(S_1)=S_2 \}$, and thus we have 
\begin{equation*}
    \# \operatorname{Overlap}_0 = \operatorname{Aut}(S_1) \cdot (n-\aleph-1)! \geq (n-\aleph-1)! \,.
\end{equation*}
It remains to bound $\#\operatorname{Overlap}_m$ for $m \geq 1$. For each $\pi \in \operatorname{Overlap}_m$, denote $ V_{\operatorname{ov}}=\{ v \in V(S_1) : \pi(v) \in V(S_2) \}$, we have that $|V_{\operatorname{ov}}| \ge \aleph-m+2$. Also, there are at most $\binom{\aleph+1}{|V_{\operatorname{ov}}|} \leq 2^{\aleph+1}$ choices for $V_{\operatorname{ov}}$, and at most $(\aleph+1)^{|V_{\operatorname{ov}}|} \leq (\aleph+1)^{\aleph+1}$ choices for $(\pi(v))_{v\in V_{\operatorname{ov}}}$. Thus
\begin{align*}
    \#\operatorname{Overlap}_m \le 2^{\aleph+1} (\aleph+1)^{\aleph+1} \cdot  (n-\aleph+m-2)! \le n^{m-0.5} \cdot (n-\aleph-1)!  \,,
\end{align*}
where the last inequality follows from the fact that $\aleph^{2\aleph} = n^{o(1)}$ for $\aleph=o( \frac{\log n}{\log \log n} )$. This completes the proof.
\end{proof}

Now we can finish our proof of Lemma~\ref{prop_first_moment_phi}. 

\begin{proof}[Proof of Lemma~\ref{prop_first_moment_phi}]
Using Item~(ii) in Lemma~\ref{lemma_observation_appendix_C}, we have
\begin{equation}{\label{eq-C.3}}
    \mathbb{E}_{\Pb}[\phi_{S_1,S_2}] = \mathbb{E}_{\pi\sim\mu}\Big[ \mathbb{E}_{\Pb_{\pi}} \big[ \phi_{S_1,S_2} \textbf{1}_{\{ \pi(S_1)=S_2 \} } \big] \Big] + \mathbb{E}_{\pi\sim\mu}\Big[ \mathbb{E}_{\Pb_{\pi}} \big[ \phi_{S_1,S_2} \textbf{1}_{ \{ \pi(S_1)\ne S_2 \} \cap \{\pi\in \mathfrak A\} } \big] \Big] \,,
\end{equation}
where $\mu$ is the uniform distribution over $\mathfrak S_n$.
Using \eqref{eq-untruncate-exp-Pb-phi-1}, we have that 
\begin{align}
    & \mathbb{E}_{\pi\sim\mu} \Big[  \mathbb{E}_{\Pb_{\pi}} \big[ \phi_{S_1,S_2} \textbf{1}_{\{ \pi(S_1)=S_2 \}} \big] \Big] \nonumber \\
    \circeq\ & s^{|E(S_1)|} \mathbb{E}_{\pi\sim\mu} \Bigg\{ \textbf{1}_{\{ \pi(S_1)=S_2 \}} \mathbb E_{\sigma\sim\nu} \Big[ \prod_{(i,j) \in E(S_1)} (b+a \omega(\sigma_i,\sigma_j)) \Big]  \Bigg\} \nonumber \\
    \circeq\ & s^\aleph \mu( \{ \pi\in \mathfrak{S}_n: \pi(S_1)=S_2 \} ) \,, \label{eq-C.4}
\end{align}
where the second equality follows from the fact that $S_1$ is a tree and \eqref{eq-prod-edges-expectation-tree}. In addition, we have that (recall our assumption that $\epsilon^2 \lambda s\leq 1$, which appears in \eqref{eq-untruncate-exp-Pb-phi-1})
\begin{align}
    & \mathbb{E}_{\pi\sim\mu}\Big[ \mathbb{E}_{\Pb_{\pi}} \big[ \phi_{S_1,S_2} \textbf{1}_{\{\pi(S_1)\ne S_2\} \cap \{\pi\in \mathfrak A\} } \big] \Big] \nonumber \\
    \circeq\ & \mathbb E_{\pi\sim\mu} \Bigg\{ \textbf{1}_{\{\pi(S_1)\ne S_2\} \cap \{\pi\in \mathfrak A\} } \cdot s^{|E(S_1) \cap E(\pi^{-1}(S_2))|} \big( \tfrac{\epsilon^2\lambda s}{\sqrt{n}} \big)^{|E(S_1)|+|E(S_2)|-2|E(S_1) \cap E(\pi^{-1}(S_2))|} \nonumber \\
    & * \mathbb E_{ \sigma\sim\nu } \Big[ \prod_{(i,j) \in E(S_1) \triangle E(\pi^{-1}(S_2))} \omega(\sigma_i,\sigma_j) \prod_{(i,j) \in E(S_1) \cap E(\pi^{-1}(S_2))} \big(b +a\omega(\sigma_i,\sigma_j) \big) \Big] \Bigg\} \nonumber \\ 
    \le \ & [1+o(1)] \cdot \sum_{m=1}^{\aleph} \mathbb E_{\pi\sim\mu} \Bigg\{ \textbf{1}_{ \{ \pi \in \operatorname{Overlap}_m \} } \cdot s^{\aleph-m} n^{ -m } \mathbb E_{ \sigma\sim\nu } \Big[ \prod_{(i,j) \in E(S_1) \triangle E(\pi^{-1}(S_2))} |\omega(\sigma_i,\sigma_j)| \nonumber \\
    & *  \prod_{(i,j) \in E(S_1) \cap E(\pi^{-1}(S_2))} \big| 1 +\epsilon \omega(\sigma_i,\sigma_j) \big| \Big] \Bigg\} 
    \leq \sum_{m=1}^{\aleph} k^{2\aleph} s^{\aleph-m} n^{-m} \mu( \operatorname{Overlap}_m ) \,, \label{eq-C.6}
\end{align}
where in the last inequality we used $|\omega(\sigma_i,\sigma_j)| \leq k-1$ and $\epsilon^2\lambda s\leq 1$. By Lemma~\ref{lemma_order_growth}, we see that
\begin{align}
    \eqref{eq-C.6} \leq n^{-0.5}k^{2\aleph}s^{\aleph}\sum_{m=1}^{\aleph}s^{-m}\mu(\operatorname{Overlap}_0)\overset{(5)}{\leq} o(1) \cdot s^{\aleph}\mu(\{\pi\in\mathfrak{S}_n: \pi(S_1)=S_2\}) \,.\label{eq-C.7}
\end{align}
Plugging \eqref{eq-C.4}, \eqref{eq-C.6} and \eqref{eq-C.7} into \eqref{eq-C.3}, we obtain
$$
\mathbb{E}_{\Pb}[ \phi_{S_1,S_2} ] \circeq s^\aleph \cdot \mathbb{P}(\pi_*(S_1)=S_2) \,,
$$
which completes the proof of the first equality in Lemma~\ref{prop_first_moment_phi}. Note that the second equality is obvious.
\end{proof}

 \subsection{Proof of Lemma~\ref{prop_principal}}{\label{subsec:Proof-Lem-3.6}}

This subsection is devoted to the proof of Lemma~\ref{prop_principal}. We first need a general lemma for estimating the joint moments of $\Bar{A}$ and $\Bar{B}$. 
\begin{lemma} {\label{up_to_second_moment_AB}}
For $0 \le r,t \le 2$ and $r+t \ge 1$, there exist $u_{r,t} = u_{r,t}(\epsilon,\lambda,s,n)$ and $v_{r,t}=v_{r,t}(\epsilon,\lambda,s,n)$ which tend to constants as $n \to +\infty$, such that
\[
\mathbb{E}_{\Pb_{\sigma,\pi}} \big[ \Bar{A}_{i,j}^r \Bar{B}_{\pi(i),\pi(j)}^t \big] = \tfrac{ \omega(\sigma_i,\sigma_j)u_{r,t} + v_{r,t} }{ n } \,.
\]
In particular, we have $u_{1,1} = (1+O(n^{-1}))\epsilon \lambda s^2, v_{1,1} = (1+O(n^{-1}))\lambda s^2$ and  $v_{1,0}=v_{0,1}=0$. 
\end{lemma}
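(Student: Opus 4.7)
The plan is to compute the joint moment by conditioning on the parent edge $G_{i,j}$. Write $p = \tfrac{\lambda}{n}(1+\epsilon\,\omega(\sigma_i,\sigma_j))$ for $\mathbb{P}_{\Pb_{\sigma,\pi}}(G_{i,j}=1)$ and $q = \tfrac{\lambda s}{n}$. The key structural observation is that, given $(\sigma_*,\pi_*)=(\sigma,\pi)$, the only source of dependence between $A_{i,j}$ and $B_{\pi(i),\pi(j)}$ is through the latent variable $G_{i,j}$. Concretely, by Definition~\ref{def-correlated-SBM}, on $\{G_{i,j}=0\}$ we have $A_{i,j}=B_{\pi(i),\pi(j)}=0$, so $\bar A_{i,j}=\bar B_{\pi(i),\pi(j)}=-q$; on $\{G_{i,j}=1\}$ we have $A_{i,j}=J_{i,j}$ and $B_{\pi(i),\pi(j)}=K_{\pi(i),\pi(j)}$, which are independent $\mathrm{Bernoulli}(s)$ variables.

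Writing $\alpha_r := \mathbb{E}[(J-q)^r]$ for $J\sim\mathrm{Bernoulli}(s)$ (these are deterministic constants depending only on $s,\lambda,n$, tending to the centered moments of $\mathrm{Bernoulli}(s)$ as $n\to\infty$), the law of total expectation immediately yields
\[
\mathbb{E}_{\Pb_{\sigma,\pi}}\big[\bar A_{i,j}^r\bar B_{\pi(i),\pi(j)}^t\big]
= (1-p)(-q)^{r+t} + p\,\alpha_r\alpha_t \,.
\]
The right-hand side is an affine function of $p$, and since $p$ is itself affine in $\omega(\sigma_i,\sigma_j)$ with coefficients of order $1/n$, we can read off the decomposition $\tfrac{\omega(\sigma_i,\sigma_j)u_{r,t}+v_{r,t}}{n}$ with $u_{r,t}=\epsilon\lambda(\alpha_r\alpha_t-(-q)^{r+t})$ and $v_{r,t}=\lambda(\alpha_r\alpha_t-(-q)^{r+t})$ for $r+t\ge 1$. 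Both quantities converge to finite constants as $n\to\infty$ because $q=O(1/n)$ and $\alpha_r$ converges to a constant.

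Finally, I would verify the listed special cases by direct substitution. For $(r,t)=(1,0)$, $\alpha_0=1$ and $\alpha_1=s-q$, so the formula collapses to $ps-q=\tfrac{\epsilon\lambda s}{n}\omega(\sigma_i,\sigma_j)$, giving $u_{1,0}=\epsilon\lambda s$ and $v_{1,0}=0$ (and symmetrically for $(0,1)$). For $(r,t)=(1,1)$, expansion and cancellation give $ps^2+q^2-2psq=\tfrac{\lambda s^2}{n}\bigl((1-\tfrac{\lambda}{n})+\epsilon\omega(\sigma_i,\sigma_j)(1-\tfrac{2\lambda}{n})\bigr)$, yielding $u_{1,1}=(1+O(n^{-1}))\epsilon\lambda s^2$ and $v_{1,1}=(1+O(n^{-1}))\lambda s^2$. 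The argument is essentially bookkeeping; the only substantive point is the structural observation that conditioning on $G_{i,j}$ removes the coupling between $A_{i,j}$ and $B_{\pi(i),\pi(j)}$, after which linearity in $p$ (and hence in $\omega$) is automatic, so there is no genuine obstacle to be overcome.
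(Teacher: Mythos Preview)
Your approach is essentially identical to the paper's: both condition on $G_{i,j}$, obtain $(1-p)(-q)^{r+t}+p\,\alpha_r\alpha_t$ (the paper writes out $\alpha_r=s(1-q)^r+(1-s)(-q)^r$ explicitly), and then extract $u_{r,t},v_{r,t}$ from the affine dependence on $\omega$. There is one bookkeeping slip in your general formula: expanding $(1-p)(-q)^{r+t}+p\,\alpha_r\alpha_t=(-q)^{r+t}+\tfrac{\lambda}{n}(1+\epsilon\omega)\bigl(\alpha_r\alpha_t-(-q)^{r+t}\bigr)$ shows that $v_{r,t}=\lambda\bigl(\alpha_r\alpha_t-(-q)^{r+t}\bigr)+n(-q)^{r+t}$, i.e.\ you dropped the constant term $n(-q)^{r+t}$. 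For $r+t\ge 2$ this extra term is $O(n^{-1})$ and harmless, but for $r+t=1$ it equals $-\lambda s$, which is exactly what makes $v_{1,0}=0$; your stated general formula would instead give $v_{1,0}=\lambda s$, inconsistent with your own (correct) direct computation.
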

\begin{proof}
    For the case $r+t=1$ or $r=t=1$, it suffices to recall \eqref{eq-Pb-sigma-pi-A,B} and \eqref{eq-Pb-sigma-pi-AB}. For general cases, we have
    \begin{align*}
    & \mathbb{E}_{\Pb_{\sigma,\pi}} \big[ \Bar{A}_{i,j}^r \Bar{B}_{\pi(i),\pi(j)}^t \big] = \mathbb{E}_{\Pb_{\sigma,\pi}} \big[ \Bar{A}_{i,j}^r \Bar{B}_{\pi(i),\pi(j)}^t \mathbf{1}_{\{ G_{i,j} = 0 \} } \big] + \mathbb{E}_{\Pb_{\sigma,\pi}} \big[ \Bar{A}_{i,j}^r \Bar{B}_{\pi(i),\pi(j)}^t \mathbf{1}_{\{ G_{i,j} = 1 \} } \big] \\
    = \ & \ \big( 1-\tfrac{(1+\epsilon\omega(\sigma_i,\sigma_j))\lambda}{n} \big) \big(-\tfrac{\lambda s}{n}\big)^{r+t} \\
    + \ & \ \tfrac{ (1+\epsilon \omega(\sigma_i,\sigma_j))\lambda }{ n } \Big( s \big(1-\tfrac{\lambda s}{n} \big)^r + (1-s) \big( -\tfrac{\lambda s}{n} \big)^r \Big) \Big( s \big(1-\tfrac{\lambda s}{n}\big)^t + (1-s) \big(-\tfrac{\lambda s}{n}\big)^t \Big) \,.
    \end{align*}
Therefore, there exist $u'_{r,t} = u'_{r,t}(\epsilon,\lambda,s,n)$ and $ v'_{r,t} = v'_{r,t}(\epsilon,\lambda,s,n)$ which tend to constants as $n \to +\infty$, such that
\begin{equation}
\mathbb{E}_{\Pb_{\sigma,\pi}} \big[ \Bar{A}_{i,j}^r \Bar{B}_{\pi(i),\pi(j)}^t \big] = 
\begin{cases}
    \frac{u'_{r,t}}{n} , & \omega (\sigma_i ,\sigma_j ) = k-1 \,, \\
    \frac{v'_{r,t}}{n} , & \omega (\sigma_i ,\sigma_j ) = -1 \,. 
\end{cases}
\end{equation}
Taking $u_{r,t} = \frac{u'_{r,t}+(k-1)v'_{r,t}}{k}$ and $v_{r,t} = \frac{u'_{r,t}- v'_{r,t}}{k}$ yields our claim.
\end{proof}

Now we give estimations on all principal terms first. For simplicity, for $0\leq i,j\leq 2$ denote by $\operatorname{K}_{i,j}$ the set of edges which appear $i$ times in $S_1$ and $T_1$ (i.e., the total number of times appearing in $S_1$ and $T_1$ is $i$), and appear $j$ times in $\pi^{-1}(S_2)$ and $\pi^{-1}(T_2)$. In addition, we define $\operatorname{K}_s = \cup_{0\le i,j \le 2, i+j=s} \operatorname{K}_{i,j}$. Define $\operatorname{L}_{i,j}$ and $\operatorname{L}_s$ with respect to vertices in the similar manner. With a slight abuse of notations, we will also use $\operatorname K_{s}$ and $\operatorname K_{i,j}$ to denote their induced graphs. Recall (11). It is clear that in the case $(S_1,S_2;T_1,T_2) \in \mathsf R_{\mathbf{H},\mathbf{I}}^*$, we have that $\operatorname{K}_s = \emptyset$ and $\operatorname{L}_s = \emptyset$ for $s\ge 3$. Similar to $\mathfrak{A}$ defined in the first moment computation, we define
\begin{equation*}
    \mathfrak A ' = \{ \pi \in \mathfrak S_n: |\operatorname{L}_2| \ge |E(\operatorname{K}_2 )| + 3 \} \,.
\end{equation*}
\begin{lemma}{\label{principal-decomposition}}
Denote the set of permutations $\mathcal{M} = \{ \pi\in\mathfrak S_n : \pi (S_1 \cup T_1 ) = S_2 \cup T_2 \}$. For $(S_1,S_2;T_1,T_2)\in \mathsf R_{\mathbf H,\mathbf I}^*$, we have 
\begin{align}
    \mathbb{E}_{\Pb} \big[ \phi_{S_1,S_2}\phi_{T_1,T_2} \big] &= \mathbb{E}_{\Pb} \big[ \phi_{S_1,S_2}\phi_{T_1,T_2} \mathbf{1}_{ \{\pi_* \in \mathfrak A ' \cup \mathcal{M} \}} \big] \,. \label{eq-principal-decomposition} 
\end{align}
\end{lemma}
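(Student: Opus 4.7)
The plan is to extend the first-moment argument of Item~(ii) of Lemma~\ref{lemma_observation_appendix_C} to the present two-tree setting. Since $(S_1,S_2;T_1,T_2)\in\mathsf R^*_{\mathbf H,\mathbf I}$ forces $V(S_1)\cap V(T_1)=V(S_2)\cap V(T_2)=\emptyset$, the edge sets of $S_1,T_1$ (and of $S_2,T_2$) are disjoint, so both $S_1\cup T_1$ and $S_2\cup T_2$ are disjoint unions of two trees. As in the first-moment proof, my strategy is to show that $\mathbb{E}_{\Pb_\pi}\bigl[\phi_{S_1,S_2}\phi_{T_1,T_2}\bigr]=0$ for every fixed $\pi\notin\mathfrak A'\cup\mathcal M$; averaging over $\pi_*\sim\mu$ then yields the claim.

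\textbf{Expansion step.} Conditioning further on $\sigma_*=\sigma$, the product $\phi_{S_1,S_2}\phi_{T_1,T_2}$ factors over edges of $(S_1\cup T_1)\cup\pi^{-1}(S_2\cup T_2)$, with each edge contributing one of $\bar A_e$, $\bar B_{\pi(e)}$, or $\bar A_e\bar B_{\pi(e)}$ according to its $\operatorname K_{i,j}$-type. Using Lemma~\ref{up_to_second_moment_AB} (in particular $v_{1,0}=v_{0,1}=0$, so that $\operatorname K_{1,0}$- and $\operatorname K_{0,1}$-edges each contribute a pure $\omega$-factor, while a $\operatorname K_{1,1}$-edge contributes $u_{1,1}\omega+v_{1,1}$ up to normalization), expanding the product over $\operatorname K_2$ yields
\[
    \mathbb{E}_{\Pb_\pi}\bigl[\phi_{S_1,S_2}\phi_{T_1,T_2}\bigr] = \mathrm{const}\cdot\sum_{\operatorname K'\Subset\operatorname K_2} c_{\operatorname K'}\cdot\mathbb{E}_{\sigma\sim\nu}\Bigl[\prod_{(i,j)\in E(\operatorname K_1\cup\operatorname K')}\omega(\sigma_i,\sigma_j)\Bigr].
\]
By Item~(i) of Lemma~\ref{lemma_observation_appendix_C}, each inner expectation vanishes whenever $\operatorname K_1\cup\operatorname K'$ has a leaf, so it suffices to exhibit, for every $\pi\notin\mathfrak A'\cup\mathcal M$ and every $\operatorname K'\Subset\operatorname K_2$, a common leaf of $\operatorname K_1\cup\operatorname K'$.

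\textbf{Case analysis on $\operatorname K_2$.} The condition $\pi\notin\mathfrak A'$ reads $|\operatorname L_2|\le|E(\operatorname K_2)|+2$; combining with $V(\operatorname K_2)\subset\operatorname L_2$ and $|V(\operatorname K_2)|=|E(\operatorname K_2)|+c$ (where $c$ is the number of components of the subforest $\operatorname K_2\subset S_1\cup T_1$) gives $c\le 2$ and $|\operatorname L_2\setminus V(\operatorname K_2)|\le 2-c$. If $\operatorname K_2=\emptyset$, then $|\operatorname L_2|\le 2$ while $S_1\cup T_1$ has $\ge 4$ leaves, so at least two leaves lie outside $V(\pi^{-1}(S_2\cup T_2))$ and are thus leaves of $\operatorname K_1$. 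If $\operatorname K_2$ lies entirely in one tree, say $\operatorname K_2\subset S_1$, then $V(T_1)\cap V(\operatorname K_2)=\emptyset$ forces $V(T_1)\cap\operatorname L_2\subset\operatorname L_2\setminus V(\operatorname K_2)$ to have at most $2-c\le 1$ element; since $T_1$ has at least two leaves, at least one of them avoids $V(\pi^{-1}(S_2\cup T_2))$ and is a leaf of $\operatorname K_1\cup\operatorname K'$ for every $\operatorname K'\Subset\operatorname K_2$. Finally, if $\operatorname K_2$ has one component in each of $S_1,T_1$, then $c=2$ forces $\operatorname L_2=V(\operatorname K_2)$; since $\pi\notin\mathcal M$, one of $S_1\doublesetminus(\operatorname K_2\cap S_1)$ or $T_1\doublesetminus(\operatorname K_2\cap T_1)$ is nonempty, and picking a connected component $S^*$ of it (WLOG in $S_1$) the forest-cycle argument of the first-moment proof gives $|V(S^*)\cap V(\operatorname K_2\cap S_1)|\le 1$, so at least one of the $\ge 2$ leaves of the tree $S^*$ lies outside $V(\operatorname K_2)=\operatorname L_2$ and is a leaf of $\operatorname K_1\cup\operatorname K'$.

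\textbf{Main obstacle.} The conceptual overhead relative to the single-tree first-moment case is that $\operatorname K_2$ is now a subforest of a two-component forest, so the constraint $|V(\operatorname K_2)|\ge|E(\operatorname K_2)|+1$ is slack and we only obtain $c\le 2$ rather than $c=1$, which forces the three-way case split above. The most delicate case is the one where $\operatorname K_2$ straddles both trees: one must use $\pi\notin\mathcal M$ to produce an ``excess'' connected component $S^*$ inside one of the trees, and then apply the forest-cycle argument against only the local part $\operatorname K_2\cap S_1$ rather than all of $\operatorname K_2$, exploiting the vertex-disjointness guaranteed by $(S_1,S_2;T_1,T_2)\in\mathsf R^*_{\mathbf H,\mathbf I}$. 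Beyond this case analysis, the argument reduces cleanly to Lemma~\ref{lemma_observation_appendix_C}.
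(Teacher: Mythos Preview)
Your proof is correct and follows essentially the same approach as the paper: reduce to showing $\mathcal L(\operatorname K_1\cup\operatorname K')\neq\emptyset$ for every $\operatorname K'\Subset\operatorname K_2$ when $\pi\notin\mathfrak A'\cup\mathcal M$, then carry out a case analysis on the structure of $\operatorname K_2$. Your case split (empty / contained in one tree / one component in each tree) is organized a bit differently from the paper's (empty / $S_1\cap\operatorname K_2$ disconnected / $S_1\cap\operatorname K_2$ connected, after fixing $S_1\not\subset\operatorname K_2$), but the two cover the same ground; your version is arguably slightly cleaner in that case~(b) uses $T_1$-leaves directly via the count $|V(T_1)\cap\operatorname L_2|\le 2-c$, which transparently handles the $c=1$ situation where $\operatorname L_2$ may strictly contain $V(\operatorname K_2)$.
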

\begin{proof}
Similar to Lemma~\ref{lemma_observation_appendix_C} (ii), it suffices to prove when $\pi \in (\mathfrak A ' \cup \mathcal M )^c$ we have $\mathcal L (\operatorname{K}_1 \cup \operatorname{K}') \neq \emptyset$ for all $\operatorname{K}' \Subset \operatorname{K}_2$. If $\operatorname{K}_2 = \emptyset$, then $\operatorname{K}'=\emptyset$ and $|\operatorname{L}_2| \le 2$. Recalling Definition (11) and recalling our assumption that $(S_1,S_2;T_1,T_2)\in \mathsf R_{\mathbf H,\mathbf I}^*$ we have $V(S_1 ) \cap V(T_1 ) = \emptyset$. Therefore $|\mathcal L (S_1 \cup T_1) \setminus \operatorname{L}_2| = \big|\big(\mathcal L (S_1 ) \cup \mathcal L (T_1 )\big) \setminus \operatorname{L}_2 \big| \geq |\mathcal L (S_1 ) | + |\mathcal L (T_1 ) | - |\operatorname{L}_2 | \geq 2$. Thus, for all $\operatorname{K}' \Subset \operatorname{K}_2$ we have $\mathcal L (\operatorname{K}_1 \cup \operatorname{K}') \neq \emptyset$ by $\big(\mathcal L (S_1 ) \cup \mathcal L (T_1 )\big) \setminus \operatorname{L}_2 \subset \mathcal L (\operatorname{K}_1 \cup \operatorname{K}')$. If $\operatorname{K}_2 \neq \emptyset$, then $\operatorname{K}_2$ is a forest; in addition since $\pi \not\in \mathfrak A'$, $\operatorname{K}_2$ has at most two connected components. By $\pi \not\in \mathcal{M}$ and $V(S_1 )\cap V(T_1 )= \emptyset$, we know that either $S_1 \not\subset \operatorname{K}_2$ or $T_1 \not\subset \operatorname{K}_2$ holds. 
We may assume $S_1 \not\subset \mathrm K_2$. Since $S_1$ and $T_1$ are vertex disjoint, we see that the connected components of $S_1\cap \mathrm K_2$ are also connected components of $\mathrm K_2$; otherwise, suppose that there exists $(u,v) \in E(\mathrm K_2)$ such that $v$ is in the component and $u \in V(\mathrm K_2) \setminus V(S_1) \subset V(S_2)$, then we have $(u,v) \not \in E(S_1) \cup E(S_2)$, contradicting to $(u,v) \in E(\mathrm K_2)$ since $V(T_1) \cup V(T_2)=\emptyset$. Thus, if $S_1 \cap \mathrm K_2$ is disconnected, then both connected components of $\mathrm K_2$ are in $S_1$ and therefore $T_1 \cap \mathrm K_2  = \emptyset$. In this case, we have $\emptyset \neq \mathcal L(T_1 ) \subset \mathcal L(\operatorname{K}' \cup \operatorname{K}_1 )$ for all $\operatorname{K}'\Subset \operatorname{K}_2$. Else if $S_1 \cap \mathrm K_2$ is connected, then by the same arguments in Lemma~\ref{lemma_observation_appendix_C} (ii), we have $\emptyset \neq \mathcal L \big( S_1 \doublesetminus (S_1 \cap \mathrm K_2 ) \big) \subset \mathcal L (\operatorname{K}' \cup \operatorname{K}_1 )$ for all $\operatorname{K}'\Subset \operatorname{K}_2$. Combining the two cases above we complete the proof.
\end{proof}

\begin{lemma}\label{principal-enumeration-pi}
Suppose $(S_1,S_2;T_1,T_2) \in \mathsf R_{\mathbf H,\mathbf I}^*$. For $m \geq 0$ denote $\operatorname{Overlap}'_m = \{ \pi \in \mathfrak{S}_n : |E(\operatorname{K}_2 )|=2\aleph-m, |\operatorname{L}_2| \ge 2\aleph-m+3-\mathbf{1}_{\{ m=0\} } \}$. For $m\ge 1$ and sufficiently large $n$, we have
\begin{align*}
    \mu(\pi \in \operatorname{Overlap}'_m) \leq n^{m-0.5}\mu(\pi \in \operatorname{Overlap}'_0)\,.
\end{align*}
\end{lemma}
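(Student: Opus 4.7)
The plan is to follow the same template as the proof of Lemma~\ref{lemma_order_growth}, using the disjointness property from $(S_1,S_2;T_1,T_2) \in \mathsf{R}^*_{\mathbf H,\mathbf I}$ to count carefully. Note that $S_1 \cup T_1$ has $2\aleph + 2$ vertices and $2\aleph$ edges, and likewise for $S_2 \cup T_2$, which is why the thresholds $2\aleph - m$ and $2\aleph - m + 3 - \mathbf{1}_{\{m=0\}}$ appear in the definition of $\operatorname{Overlap}'_m$.

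First I would give a lower bound for $\#\operatorname{Overlap}'_0$. By definition, $\operatorname{Overlap}'_0$ consists of permutations $\pi$ with $\pi^{-1}(S_2 \cup T_2)$ sharing $2\aleph$ edges with $S_1 \cup T_1$; since $S_1, T_1, S_2, T_2$ are all trees on $\aleph + 1$ vertices, this forces $\pi(S_1 \cup T_1) = S_2 \cup T_2$ (a tree cannot properly contain another tree with the same number of edges). Hence $\operatorname{Overlap}'_0 \supset \mathcal M$, and $\#\mathcal M \ge \operatorname{Aut}(\mathbf H)(n-2\aleph-2)! \ge (n-2\aleph-2)!$ by counting the isomorphisms from $S_1$ to $S_2$ and from $T_1$ to $T_2$ (plus the free choice on the remaining $n-2\aleph-2$ vertices).

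Next I would upper bound $\#\operatorname{Overlap}'_m$ for $m \ge 1$. For each $\pi \in \operatorname{Overlap}'_m$, let $V_{\operatorname{ov}} = \{v \in V(S_1 \cup T_1) : \pi(v) \in V(S_2 \cup T_2)\}$, so $|V_{\operatorname{ov}}| = |\operatorname{L}_2| \ge 2\aleph - m + 3$. The number of ways to choose $V_{\operatorname{ov}}$ is at most $\binom{2\aleph + 2}{|V_{\operatorname{ov}}|} \le 2^{2\aleph+2}$, the number of ways to specify $(\pi(v))_{v \in V_{\operatorname{ov}}}$ is at most $(2\aleph+2)^{2\aleph+2}$, and the number of ways to extend $\pi$ to a permutation of $[n]$ is exactly $(n - |V_{\operatorname{ov}}|)! \le (n - 2\aleph + m - 3)!$. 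This yields
\begin{equation*}
    \#\operatorname{Overlap}'_m \le 2^{2\aleph+2}(2\aleph+2)^{2\aleph+2}(n-2\aleph+m-3)!\,.
\end{equation*}

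Finally I would take the ratio and invoke $\aleph = o(\log n/\log\log n)$, which forces $(2\aleph)^{2\aleph} = n^{o(1)}$. Dividing by the lower bound on $\#\operatorname{Overlap}'_0$ gives
\begin{equation*}
    \frac{\#\operatorname{Overlap}'_m}{\#\operatorname{Overlap}'_0} \le 2^{2\aleph+2}(2\aleph+2)^{2\aleph+2}\cdot \frac{(n-2\aleph+m-3)!}{(n-2\aleph-2)!} \le n^{o(1)} \cdot n^{m-1} \le n^{m-0.5}
\end{equation*}
for sufficiently large $n$, as required. There is no substantial obstacle here: the argument is a direct adaptation of Lemma~\ref{lemma_order_growth} with $\aleph$ replaced by $2\aleph$ and the threshold $+2$ replaced by $+3$, the latter reflecting the fact that two vertex-disjoint trees have two more leaves than a single tree with the same total number of edges. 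The only point to verify carefully is the lower bound $\#\operatorname{Overlap}'_0 \ge (n-2\aleph-2)!$, which uses the disjointness $V(S_1) \cap V(T_1) = V(S_2) \cap V(T_2) = \emptyset$ built into the definition of $\mathsf R^*_{\mathbf H,\mathbf I}$.
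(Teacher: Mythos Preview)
Your proposal is correct and follows essentially the same approach as the paper's proof: both lower-bound $\#\operatorname{Overlap}'_0$ by $(n-2\aleph-2)!$ via the containment $\mathcal M \subset \operatorname{Overlap}'_0$, upper-bound $\#\operatorname{Overlap}'_m$ by enumerating the overlap set $V_{\operatorname{ov}}$ and its image, and then invoke $\aleph^{2\aleph}=n^{o(1)}$. The only cosmetic difference is that the paper writes $\operatorname{Aut}(S_1)\operatorname{Aut}(T_1)$ where you write $\operatorname{Aut}(\mathbf H)$, and your claim that the number of extensions is ``exactly'' $(n-|V_{\operatorname{ov}}|)!$ should read ``at most'' (since you have not yet enforced that vertices outside $V_{\operatorname{ov}}$ map outside $V(S_2\cup T_2)$), but this does not affect the upper bound.
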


\begin{proof}
Firstly note that $\operatorname{Overlap}'_0=\{ \pi \in \mathfrak S_n : \pi(S_1 \cup T_1 )=S_2 \cup T_2 \}$, and thus we have 
\begin{equation*}
    \# \operatorname{Overlap}'_0 \geq \operatorname{Aut}(S_1)\operatorname{Aut}(T_1) \cdot (n-2\aleph-2)! \geq (n-2\aleph-2)! \,.
\end{equation*}
It remains to bound $\#\operatorname{Overlap}'_m$ for $m \geq 1$. For each $\pi \in \operatorname{Overlap}'_m$, denoting $V'_{\operatorname{ov}}=\{ v \in V(S_1 \cup T_1 ) : \pi(v) \in V(S_2 \cup T_2) \}$, we must have $|V'_{\operatorname{ov}}| \ge 2\aleph-m+3$. Also, there are at most $\binom{2\aleph+2}{|V'_{\operatorname{ov}}|} \leq 2^{2\aleph+2}$ choices for $V'_{\operatorname{ov}}$, and at most $(2\aleph+2)^{|V'_{\operatorname{ov}}|} \leq (2\aleph+2)^{2\aleph+1}$ choices for $(\pi(v))_{v\in V'_{\operatorname{ov}}}$. Thus
\begin{align*}
    \#\operatorname{Overlap}'_m \le 2^{2\aleph+2} (2\aleph+2)^{2\aleph+1} \cdot  (n-2\aleph+m-3)! \le n^{m-0.5} \cdot (n-2\aleph-2)!  \,,
\end{align*}
where the last inequality follows from the fact that $\aleph^{2\aleph} = n^{o(1)}$ for $\aleph=o( \frac{\log n}{\log \log n} )$. This completes the proof.
\end{proof}

Next we deal with non-principal terms. Define the set of good permutations:
\begin{align}\label{eq-def-frak-G}
    \mathfrak{G} = \{ \pi \in \mathfrak{S}_n : 2|\operatorname{L}_4| + |\operatorname{L}_3| - |\operatorname{L}_1| \ge 2|E(\operatorname{K}_4 )| + |E(\operatorname{K}_3 )| - |E(\operatorname{K}_1 )| + 2 \} \,.
\end{align}
Also, if $(S_1,S_2) \neq (T_1,T_2)$ define $\mathfrak{D}=\emptyset$; if $(S_1,S_2) = (T_1,T_2)$, define 
\begin{align}
    \mathfrak{D} = \{ \pi \in \mathfrak S_n: V(\pi(S_1)) \cap V(S_2) = \emptyset \} \,. \label{eq-def-mathfrak-D}
\end{align}

\begin{lemma}{\label{on_good_events}}
For $(S_1,S_2;T_1,T_2)\in \mathsf R_{\mathbf H,\mathbf I} \setminus \mathsf R_{\mathbf H,\mathbf I}^*$, we have 
\[
\mathbb{E}_{\Pb} \big[ \phi_{S_1,S_2}\phi_{T_1,T_2} \big] = \mathbb{E}_{\Pb} \big[ \phi_{S_1,S_2}\phi_{T_1,T_2} \mathbf{1}_{ \{ \pi_* \in \mathfrak{G} \cup \mathfrak D \} } \big]
\]
\end{lemma}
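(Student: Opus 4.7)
The plan extends Lemma~\ref{lemma_observation_appendix_C}(ii) to the second-moment setting by expanding the conditional expectation edge-by-edge and reducing to a combinatorial leaf-counting claim. Conditioning on $\sigma_* = \sigma$ and $\pi_* = \pi$, Lemma~\ref{up_to_second_moment_AB} gives, for each $e = (i,j)$,
\[
\mathbb{E}_{\Pb_{\sigma,\pi}}\bigl[\bar A_{i,j}^{\,r}\,\bar B_{\pi(i),\pi(j)}^{\,t}\bigr] = \tfrac{u_{r,t}\omega(\sigma_i,\sigma_j) + v_{r,t}}{n},
\]
and using the independence of edges in the parent graph given $\sigma$ and $\pi$, I would factor
\[
\mathbb{E}_{\Pb_\pi}\bigl[\phi_{S_1,S_2}\phi_{T_1,T_2}\bigr] \ \propto\ \mathbb{E}_{\sigma\sim\nu}\Bigl[\prod_{e \in \operatorname{K}_1 \cup \operatorname{K}_2 \cup \operatorname{K}_3 \cup \operatorname{K}_4} \bigl(u_{r(e),t(e)}\,\omega(\sigma_e) + v_{r(e),t(e)}\bigr)\Bigr],
\]
where $(r(e),t(e))$ encodes the multiplicities of $e$ in $\{S_1,T_1\}$ and in $\{\pi^{-1}(S_2),\pi^{-1}(T_2)\}$, and I write $\omega(\sigma_e) = \omega(\sigma_i,\sigma_j)$ for $e = (i,j)$.

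Expanding the product edge-by-edge then yields a sum over subsets $\operatorname{K}' \subseteq \operatorname{K}_1 \cup \cdots \cup \operatorname{K}_4$, namely the edges on which the $\omega$-factor is selected. Two reductions simplify this sum drastically: first, since $v_{1,0} = v_{0,1} = 0$ by Lemma~\ref{up_to_second_moment_AB}, any non-vanishing term must satisfy $\operatorname{K}_1 \subseteq \operatorname{K}'$; second, by Lemma~\ref{lemma_observation_appendix_C}(i), $\mathbb{E}_\sigma\bigl[\prod_{e \in \operatorname{K}'} \omega(\sigma_e)\bigr]$ vanishes whenever $\operatorname{K}'$ contains a leaf. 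The lemma therefore reduces to the combinatorial claim that whenever $\pi \notin \mathfrak{G} \cup \mathfrak{D}$, every $\operatorname{K}'$ with $\operatorname{K}_1 \subseteq \operatorname{K}' \subseteq \operatorname{K}_1 \cup \cdots \cup \operatorname{K}_4$ contains a leaf.

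The main obstacle is establishing this combinatorial claim, which I would prove by contrapositive: assume some $\operatorname{K}'$ is leafless, so every vertex in $V(\operatorname{K}')$ has $\operatorname{K}'$-degree at least $2$ and hence $|E(\operatorname{K}')| \geq |V(\operatorname{K}')|$. The key structural inputs are (i) each of $S_1, T_1, \pi^{-1}(S_2), \pi^{-1}(T_2)$ is a tree with $\aleph$ edges, giving the identities $\sum_{s=1}^{4} s|\operatorname{L}_s| = 4(\aleph+1)$ and $\sum_{s=1}^{4} s|E(\operatorname{K}_s)| = 4\aleph$; and (ii) any $v \in \operatorname{L}_1$ (appearing in only one source graph $G$) has all its $F$-incident edges inside $G$ and hence in $\operatorname{K}_1 \subseteq \operatorname{K}'$, so a source-tree leaf that lies in $\operatorname{L}_1$ forces a leaf of $\operatorname{K}'$ unless it is rescued by overlap. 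I would then split into the cases $\operatorname{K}_1 = \emptyset$ (in which $\operatorname{K}' = \emptyset$ is a trivially leafless choice and one must check $\pi \in \mathfrak{G} \cup \mathfrak{D}$ directly, with the $\mathfrak{D}$-case isolating the decoupled scenario $(S_1,S_2) = (T_1,T_2)$ together with $V(\pi(S_1)) \cap V(S_2) = \emptyset$, which makes $F$ a disjoint union of two copies) and $\operatorname{K}_1 \neq \emptyset$ (where a leafless $\operatorname{K}'$ forces a cycle meeting $\operatorname{K}_1$); combined with the tree identities above, the degree inequality $|E(\operatorname{K}')| \geq |V(\operatorname{K}')|$ should rearrange into the defining inequality $2|\operatorname{L}_4| + |\operatorname{L}_3| - |\operatorname{L}_1| \geq 2|E(\operatorname{K}_4)| + |E(\operatorname{K}_3)| - |E(\operatorname{K}_1)| + 2$ of $\mathfrak{G}$. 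The delicate part is tracking how each source-tree leaf is either absorbed into the overlap structure encoded by $\operatorname{L}_{i,j}, \operatorname{K}_{i,j}$ or forces a leaf of $\operatorname{K}'$.
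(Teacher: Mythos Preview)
Your reduction to the combinatorial claim is correct and matches the paper: after conditioning on $(\sigma,\pi)$, factoring via Lemma~\ref{up_to_second_moment_AB}, and expanding, the vanishing of $v_{1,0}=v_{0,1}=0$ forces $\operatorname{K}_1$ into every surviving term, and Lemma~\ref{lemma_observation_appendix_C}(i) kills any term whose edge set has a leaf. So the task is exactly to show $\mathcal L(\operatorname{K}_1\cup\operatorname{K}')\neq\emptyset$ for all $\operatorname{K}'\Subset\operatorname{K}_2\cup\operatorname{K}_3\cup\operatorname{K}_4$ whenever $\pi\notin\mathfrak G\cup\mathfrak D$.

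The gap is in your contrapositive plan for this claim. Knowing that some $\operatorname{K}'\supseteq\operatorname{K}_1$ is leafless gives $|E(\operatorname{K}')|\ge|V(\operatorname{K}')|$, but this is an inequality about a \emph{subgraph} of $G_\cup=S_1\cup T_1\cup\pi^{-1}(S_2\cup T_2)$, whereas $\mathfrak G$ is the \emph{global} condition $|V(G_\cup)|\le|E(G_\cup)|+1$. The phrase ``should rearrange'' hides a genuine obstacle: a cycle in $\operatorname{K}'$ does not by itself bound $|V(G_\cup)|-|E(G_\cup)|$ when $G_\cup$ has several components. Your case split on $\operatorname{K}_1=\emptyset$ versus $\operatorname{K}_1\neq\emptyset$ is also off-target; in fact under $(S_1,S_2)\neq(T_1,T_2)$ and $\pi\notin\mathfrak G$ one always has $\operatorname{K}_1\neq\emptyset$, but this only becomes visible once you do the structural analysis below.

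The paper bypasses this by working in the direct direction with the equivalence
\[
\pi\in\mathfrak G\ \Longleftrightarrow\ |V(G_\cup)|\le|E(G_\cup)|+1,
\]
obtained by combining your two tree identities. Thus $\pi\notin\mathfrak G$ forces $G_\cup$ to have at least two connected components. The paper first dispatches $(S_1,S_2)=(T_1,T_2)$ by observing $\mathfrak G\cup\mathfrak D=\mathfrak S_n$ in that case. For $(S_1,S_2)\neq(T_1,T_2)$ it then splits on which of $V(S_1)\cap V(T_1)$, $V(S_2)\cap V(T_2)$ are nonempty (recall at least one is, since we are outside $\mathsf R^*$). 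If exactly one is nonempty, one of the four trees is its own component of $G_\cup$, lies entirely in $\operatorname{K}_1$, and contributes a leaf. If both are nonempty, $S_1\cup T_1$ and $\pi^{-1}(S_2\cup T_2)$ are the two components and are vertex-disjoint trees; then $S_1\doublesetminus T_1$ or $\pi^{-1}(S_2\doublesetminus T_2)$ is a nonempty forest inside $\operatorname{K}_1$ and supplies the leaf. This component-count viewpoint is the missing idea in your sketch.
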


\begin{proof} 
Note that when $(S_1,S_2)=(T_1,T_2)$, we have $\mathfrak{G}= \{ \pi\in\mathfrak S_n: |\operatorname{L}_4| \geq |E(\operatorname{K}_4)|+1 \} = \{ \pi\in\mathfrak S_n: V(\pi(S_1)) \cap V(S_2) \neq \emptyset \} = \mathfrak S_n \setminus \mathfrak D$. Thus, (again similar to Lemma~\ref{lemma_observation_appendix_C} (2)) it suffices to show that when $(S_1,S_2) \neq (T_1,T_2)$ we have $\mathcal L (\operatorname{K}_{1} \cup \operatorname{K}' ) \neq\emptyset$ for all $\pi \in (\mathfrak{G} \cup \mathfrak{D})^c$ and for all $\operatorname{K}' \Subset \operatorname{K}_2 \cup \operatorname{K}_3 \cup \operatorname{K}_4$. First, we have
\[
4|\operatorname{L}_4| + 3|\operatorname{L}_3| + 2|\operatorname{L}_2| + |\operatorname{L}_1| = 4\aleph + 4 = 4|E(\operatorname{K}_4 )| + 3|E(\operatorname{K}_3 )| + 2|E(\operatorname{K}_2 )| + |E(\operatorname{K}_1 )| + 4 \,,
\]
and thus $\pi \in \mathfrak{G}$ is equivalent to
\[
\sum_{s=1}^{4} |\operatorname{L}_s| - \sum_{s=1}^{4} |E(\operatorname{K}_s )| \le 1 \,.
\]
Define the union graph $G_{\cup} \defby S_1 \cup T_1 \cup \pi^{-1}(S_2 \cup T_2)$. Then $\pi \in \mathfrak G$ is further equivalent to
\begin{equation}\label{eq-leq-tree}
    |V(G_{\cup})| \le |E(G_{\cup})| + 1 \,.
\end{equation}
Now suppose $(S_1,S_2) \neq (T_1 ,T_2)$ and $\pi \in (\mathfrak{G} \cup \mathfrak{D})^c$. Since \eqref{eq-leq-tree} does not hold in this case, we immediately have that $G_{\cup}$ contains at least two connected components. Now we proceed to show that $\mathcal{L}(\operatorname{K}_1\cup \operatorname{K}') \neq \emptyset$. We first deal with the case that exactly one of $V(S_1) \cap V(T_1)$ and $V(S_2) \cap V(T_2)$ is not empty. Assuming $V(S_2) \cap V(T_2) \ne \emptyset$, we have that $\pi^{-1}(S_2 \cup T_2)$ is contained in one of the connected components (in $G_{\cup}$). Since $G_{\cup}$ contains at least two connected components, we have that either $S_1$ or $T_1$ is not connected to $\pi^{-1}(S_2 \cup T_2)$. We may assume that $S_1$ is not connected to $\pi^{-1}(S_2 \cup T_2)$. Recalling that we have also assumed that $V(S_1) \cap V(T_1) = \emptyset$, we have $S_1 \subset \operatorname{K}_1$ and $S_1$ is one of the connected components in $G_{\cup}$, and therefore $\emptyset \ne \mathcal L (S_1) \subset \mathcal L(\operatorname{K}_1 \cup \operatorname{K}')$ for all $\operatorname{K}' \Subset \operatorname{K}_2 \cup \operatorname{K}_3 \cup \operatorname{K}_4$. 

Recall Definition (10) and Definition (11). By our assumption that $(S_1,S_2;T_1,T_2) \in \mathsf R_{\mathbf H,\mathbf I} \setminus \mathsf R^{*}_{\mathbf H,\mathbf I}$, the only remaining case is $V(S_1) \cap V(T_1) \ne \emptyset$ and $V(S_2) \cap V(T_2) \ne \emptyset$. In this case $S_1 \cup T_1$ and $S_2 \cup T_2$ are both connected. Since for $\pi \not\in \mathfrak G $ we have shown that $G_{\cup}$ has at least two connected components, we thus see that $S_1 \cup T_1$ and $\pi^{-1}(S_2 \cup T_2)$ are two distinct connected components. Thus,
\begin{align}
    |V(G_{\cup })| &= |V(S_1 \cup T_1 )| + |V(S_2 \cup T_2)| \nonumber \\ 
    &\le |E(S_1 \cup T_1 )| + |E(S_2 \cup T_2)| + 2 = |E(G_{\cup } )| + 2 \,. \label{eq-nonprincipal-smallcase}
\end{align}
Since \eqref{eq-leq-tree} does not hold in this case either, we have that in fact $|V(G_\cup)| = |E(G_\cup)|+2$, showing that $S_1 \cup T_1$ and $\pi^{-1} (S_2 \cup T_2 )$ must be vertex-disjoint trees. By $(S_1,S_2) \neq (T_1 ,T_2)$, one of the forests $F_1 = S_1 \doublesetminus T_1 \subset \operatorname{K}_1$ and $F_2=\pi^{-1}(S_2\doublesetminus T_2)\subset \operatorname{K}_1$ is not empty. We may assume that $F_1 = S_1 \doublesetminus T_1 = (S_1 \cup T_1) \doublesetminus T_1$ is not empty. Combined with the fact that $S_1 \cup T_1$ is a tree, we know that $\emptyset \neq \mathcal{L}( F_1 ) \subset \mathcal{L}(\operatorname{K}_1\cup \operatorname{K}')$ for all $\operatorname{K}'\Subset \operatorname{K}_2 \cup \operatorname{K}_3 \cup \operatorname{K}_4$ by the same arguments in Lemma~\ref{lemma_observation_appendix_C} (ii), which completes the proof of this lemma. 
\end{proof}

\begin{lemma}{\label{lemma_order_of_growth_second}}
Define $\operatorname{Overlap}^*_m = \{ \pi \in \mathfrak S_n : |V(S_1 \cup T_1 ) \cap (V( \pi^{-1}(S_2 \cup T_2) ))| = m \} $. Then we have $\mu (\operatorname{Overlap}^*_m) \le n^{-m+o(1)}$.
\end{lemma}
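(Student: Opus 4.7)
The plan is a direct enumeration argument analogous to the one used in Lemma~\ref{principal-enumeration-pi}, exploiting the fact that $|V(S_1\cup T_1)|, |V(S_2\cup T_2)|\le 2\aleph+2$ with $\aleph=o(\log n/\log\log n)$.

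For each $\pi\in \operatorname{Overlap}^*_m$, let $W_\pi = \{v\in V(S_1\cup T_1): \pi(v)\in V(S_2\cup T_2)\}$, which by definition has cardinality exactly $m$. I would upper bound $\#\operatorname{Overlap}^*_m$ by first choosing the subset $W\subset V(S_1\cup T_1)$ of size $m$, then choosing an injection $\iota: W\to V(S_2\cup T_2)$, and finally extending $\iota$ to a permutation of $[n]$ in an arbitrary way (dropping the constraint that the remaining vertices of $V(S_1\cup T_1)$ avoid $V(S_2\cup T_2)$, since we only need an upper bound). This yields
\[
\#\operatorname{Overlap}^*_m \le \binom{2\aleph+2}{m}\cdot (2\aleph+2)^m\cdot (n-m)! \le 2^{2\aleph+2}\cdot (2\aleph+2)^m\cdot (n-m)!\,.
\]

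Dividing by $n!$ and using $\tfrac{(n-m)!}{n!}\le \tfrac{1}{(n-m+1)^m} \le [1+o(1)]n^{-m}$ (valid since $m\le 2\aleph+2 = o(n)$), I obtain
\[
\mu(\operatorname{Overlap}^*_m) \le [1+o(1)]\cdot 2^{2\aleph+2}\cdot (2\aleph+2)^m\cdot n^{-m}\,.
\]
Finally, invoking the same observation used in Lemmas~\ref{lemma_order_growth} and \ref{principal-enumeration-pi}, namely that $\aleph^{2\aleph} = n^{o(1)}$ whenever $\aleph = o(\log n/\log\log n)$, both factors $2^{2\aleph+2}$ and $(2\aleph+2)^m$ (with $m\le 2\aleph+2$) are of order $n^{o(1)}$. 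This gives $\mu(\operatorname{Overlap}^*_m)\le n^{-m+o(1)}$ as desired. There is no serious obstacle here; the only point to watch is that $m$ may itself grow with $n$, but it is bounded by $2\aleph+2$, so the factor $(2\aleph+2)^m \le (2\aleph+2)^{2\aleph+2} = n^{o(1)}$ is absorbed into the $n^{o(1)}$ error term.
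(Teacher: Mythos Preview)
Your proof is correct and follows essentially the same direct enumeration strategy as the paper's own proof: both bound the number of admissible restrictions $\pi|_{V(S_1\cup T_1)}$ by choosing which $m$ vertices map into $V(S_2\cup T_2)$ (a factor of order $(2\aleph+2)^{O(\aleph)}=n^{o(1)}$) and letting the rest be free, then divide by $n!$. The only cosmetic difference is that the paper phrases the count as $(2\aleph)^m n^{|W|-m}/(n-2\aleph)^{|W|}$ with $W=V(S_1\cup T_1)$, whereas you extract the factor $(n-m)!$ explicitly; the arithmetic and the use of $\aleph^{2\aleph}=n^{o(1)}$ are the same.
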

\begin{proof} 
Let $W = V(S_1 \cup T_1)$. Observe that if $|V(S_1 \cup T_1 ) \cap (V( \pi^{-1}(S_2 \cup T_2) ))| = m$, then the enumeration of $(\pi(v))_{v \in W}$ is bounded by $(2\aleph)^m n^{|W|-m} \le (2\aleph)^{2\aleph} n^{|W|-m}$. It directly follows that
\begin{equation*}
    \mu (\operatorname{Overlap}^*_m) \le \frac{(2\aleph)^{2\aleph} n^{|W|-m}}{(n-2\aleph)^{|W|}} \le n^{-m+o(1)}\,. \qedhere
\end{equation*}
\end{proof}
We now finish the proof of Lemma~\ref{prop_principal}.

\begin{proof}[Proof of Lemma~\ref{prop_principal}]
We first prove Item (i). Suppose $(S_1,S_2;T_1,T_2)\in \mathsf R^*_{\mathbf H,\mathbf I}$. Using Lemma \ref{principal-decomposition}, we have
\begin{align}
    \mathbb{E}_{\Pb}[\phi_{S_1,S_2}\phi_{T_1,T_2}] &= \mathbb{E}_{\pi\sim\mu}\Big[ \mathbb{E}_{\Pb_{\pi}} \big[ \phi_{S_1,S_2}\phi_{T_1,T_2} \textbf{1}_{\{ \pi \in \mathcal M \} } \big] \Big] \label{eq-C.113-part-1} \\
    &+ \mathbb{E}_{\pi\sim\mu}\Big[ \mathbb{E}_{\Pb_{\pi}} \big[ \phi_{S_1,S_2}\phi_{T_1,T_2} \textbf{1}_{ \{ \pi\in \mathfrak A ' \backslash \mathcal{M} \} } \big] \Big] \,. \label{eq-C.113-part-2}
\end{align}
Using Lemma~\ref{up_to_second_moment_AB} and recalling Definition (3), we have that \eqref{eq-C.113-part-1} is bounded by $[1+o(1)]$ times 
\begin{align}
    &\mathbb E_{\pi\sim\mu}\Bigg\{\mathbf{1}_{\pi(S_1\cup T_1 )=S_2\cup T_2}\mathbb E_{\sigma\sim\nu}\Big[\prod_{(i,j)\in E(S_1)\cup E(T_1)}\frac{v_{1,1}+u_{1,1}\omega(\sigma_i,\sigma_j)}{\lambda s}\Big]\Bigg\} \nonumber \\
    =&s^{2\aleph}\mu( \pi(S_1 \cup T_1 )=S_2 \cup T_2 ) \circeq \mathbb{E}_{\Pb} [\phi_{S_1 ,S_2 } ] \mathbb{E}_{\Pb} [\phi_{T_1 ,T_2 } ] (1 + \mathbf{1}_{ \{S_1 \cong T_1\} })\,, \label{eq-C.114}
\end{align}
where the first equality follows from the fact that $S_1,T_1$ are disjoint trees, Lemma~\ref{up_to_second_moment_AB} and \eqref{eq-prod-edges-expectation-tree}, and the second equality is from Lemma 3.3 and the fact that (since $(S_1,S_2;T_1,T_2)\in\mathsf R_{\mathbf H,\mathbf I}^*$)
\begin{equation}
    \frac{\mu( \pi(S_1 \cup T_1 )=S_2 \cup T_2 )}{\mu(\pi(S_1)=S_2)\mu(\pi(T_1)=T_2)} \circeq 1+\mathbf{1}_{S_1\cong T_1} \,.\notag
\end{equation} 
In addition, we have that \eqref{eq-C.113-part-2} is bounded by $[1+o(1)]$ times (writing $\doublesymdiff_E = E((S_1 \cup T_1)\doublesymdiff \pi^{-1}(S_2 \cup T_2))$ and $\cap_E = E((S_1 \cup T_1) \cap \pi^{-1}(S_2 \cup T_2))$)
\begin{align}
    & \mathbb E_{\pi\sim\mu}\Bigg\{\mathbf{1}_{\{\pi\in \mathfrak{A}' \backslash \mathcal M \}} \big(\tfrac{\epsilon^2 \lambda s}{n}\big)^{\frac{1}{2} |\doublesymdiff_E| } * \mathbb E_{\sigma\sim \nu}\Big[  \prod_{(i,j)\in \doublesymdiff_E} \omega(\sigma_i,\sigma_j) \prod_{(i,j)\in\cap_E} \tfrac{v_{1,1}+u_{1,1}\omega(\sigma_i,\sigma_j)}{\lambda s}\Big] \Bigg\}\nonumber\\
    \leq\ & [1+o(1)] \sum_{m=1}^{2\aleph} \mathbb E_{\pi\sim\mu} \Bigg\{ \textbf{1}_{ \{ \pi \in \operatorname{Overlap}'_m \} } \cdot s^{2\aleph-m} \big(\tfrac{\epsilon^2\lambda s}{n}\big)^{m} \mathbb E_{ \sigma\sim\nu } \Big[ \prod_{(i,j) \in \doublesymdiff_E} |\omega(\sigma_i,\sigma_j) |\nonumber \\
    & *  \prod_{(i,j) \in \cap_E} \big| 1+\epsilon \omega(\sigma_i,\sigma_j) \big| \Big] \Bigg\} 
    \leq \sum_{m=1}^{2\aleph} \mu(\operatorname{Overlap}'_m) s^{2\aleph-m} k^{4\aleph} (\epsilon^2\lambda s/n)^m \,, \label{eq-C.116}
\end{align}
where in the last inequality we used $|\omega(\sigma_i,\sigma_j)| \leq k-1$ for all $i,j \in [n]$. By Lemma~\ref{principal-enumeration-pi}, we see that
\begin{align}
    \eqref{eq-C.116} &\ \leq n^{-0.5}\sum_{m=1}^{2\aleph}k^{4\aleph}s^{2\aleph}(\epsilon^2\lambda)^{m} \mu(\operatorname{Overlap}'_0) \nonumber \\
    &\overset{(5)}{\leq} s^{2\aleph} n^{-0.4} \mu(\operatorname{Overlap}'_0) = o(1) \cdot s^{2\aleph} \mu(\pi(S_1 \cup T_1 )=S_2 \cup T_2 ) \,. \label{eq-C.117}
\end{align}
Combining \eqref{eq-C.114}, \eqref{eq-C.116} and \eqref{eq-C.117}, we obtain that
$$
\mathbb{E}_{\Pb}[ \phi_{S_1,S_2}\phi_{T_1,T_2} ] \circeq s^{2\aleph} \mu(\pi(S_1 \cup T_1 )=S_2 \cup T_2 ) \,,
$$
which completes our proof of Item (i).

For Item (ii), by Lemma~\ref{on_good_events} we know
\begin{align*}
    \mathbb{E}_\Pb[\phi_{S_1,S_2}\phi_{T_1,T_2} ] = \mathbb{E}_{\Pb}\big[ \phi_{S_1,S_2} \phi_{T_1,T_2} \mathbf{1}_{\{ \pi_* \in \mathfrak G \setminus \mathfrak D \} } \big] +  \mathbb{E}_{\Pb}\big[\phi_{S_1,S_2} \phi_{T_1,T_2} \mathbf{1}_{ \{ \pi_* \in \mathfrak D \} } \big] \,.
\end{align*}
Define $\kappa=(\kappa_{i,j})_{0\leq i,j\leq 2}$ and $\ell=(\ell_{i,j})_{0\leq i,j\leq 2}$, and define $\Pi_{\kappa,\ell}$ to be the subset of $\mathfrak{S}_n$ such that the $(|E(\operatorname{K}_{i,j} )|)_{0 \leq i,j \leq 2}=\kappa$ and $(|\operatorname{L}_{i,j}|)_{0 \leq i,j \leq 2}=\ell$. Then, using Lemma~\ref{up_to_second_moment_AB} and $|E(\operatorname{K}_1 )| + 2|E(\operatorname{K}_2 )| + 3|E(\operatorname{K}_3 )| + 4|E(\operatorname{K}_4 )| = 4\aleph$, we know that for $\pi \in (\mathfrak G \setminus \mathfrak D) \cap \Pi_{\kappa ,\ell}$
\begin{align*}
    \Big| \mathbb{E}_{\Pb_{\pi}} \big[ \phi_{S_1,S_2} \phi_{T_1,T_2} \big] \Big| &\leq \big( \tfrac{\lambda s}{n} \big)^{-2\aleph} \mathbb{E}_{\sigma \sim \nu} \Big[ \prod_{0\le y,z \le 2} \prod_{(i,j) \in E(\operatorname{K}_{yz})} \frac{|u_{y,z} \omega(\sigma_i , \sigma_j ) + v_{y,z}|}{n} \Big] \\
    &\leq n^{\kappa_{22} +0.5\kappa_{21} + 0.5\kappa_{12} -0.5\kappa_{01} - 0.5\kappa_{10}}L^{4\aleph} \,,
\end{align*}
where $L=\frac{1}{s}(1+\lambda)^2 \max_{0\leq r,t\leq 2}(1+(k-1) |u_{r,t}|+|v_{r,t}|)$. Thus we have
\begin{align}
    & \Big| \mathbb{E}_\Pb \big[ \phi_{S_1,S_2}\phi_{T_1,T_2} \mathbf{1}_{\{ \pi_* \in \mathfrak G \setminus \mathfrak D \} } \big] \Big| = \Big| \mathbb E_{\pi\sim\mu} \Big[ \sum_{\kappa,\ell} \mathbb{E}_{\Pb_{\pi}}\big[ \phi_{S_1,S_2} \phi_{T_1,T_2} \big] \mathbf{1}_{\{ \pi \in (\mathfrak G 
    \setminus \mathfrak D) \cap \Pi_{\kappa ,\ell} \} }  \Big] \Big| \nonumber \\
    \le \ & L^{4\aleph} \sum_{\kappa,\ell} n^{\kappa_{22} +0.5\kappa_{21} + 0.5\kappa_{12} -0.5\kappa_{01} - 0.5\kappa_{10}} \mu\big( \Pi_{\kappa,\ell} \cap (\mathfrak G \setminus \mathfrak D) \big) \,. \label{second-moment-B-setminus-C-1}
\end{align}
Recall \eqref{eq-def-frak-G}. Defining $\mathcal U=\{ (\kappa,\ell): 2\ell_4+\ell_3-\ell_1 \geq 2\kappa_4+\kappa_3-\kappa_1+2 \}$, we have
\begin{align}
    \eqref{second-moment-B-setminus-C-1} &\leq L^{4\aleph} \sum_{(\kappa,\ell) \in \mathcal U} n^{\kappa_{22} +0.5\kappa_{21} + 0.5\kappa_{12} -0.5\kappa_{01} - 0.5\kappa_{10}} \mu \big( \operatorname{Overlap}^*_{\ell_{11} + \ell_{12} + \ell_{21} + \ell_{22}} \big)  \nonumber \\
    &\leq L^{4\aleph} \sum_{(\kappa,\ell) \in \mathcal U} n^{\kappa_{22} +0.5\kappa_{21} + 0.5\kappa_{12} -0.5\kappa_{01} - 0.5\kappa_{10} - \ell_{11} - \ell_{12} - \ell_{21} - \ell_{22} + 0.1} \nonumber \\ 
    &\leq L^{4\aleph} \sum_{\kappa,\ell} n^{\ell_{22} +0.5\ell_{21} + 0.5\ell_{12} -0.5\ell_{01} - 0.5\ell_{10} - \ell_{11} - \ell_{12} - \ell_{21} - \ell_{22} - 0.9} \nonumber \\
    &= L^{4\aleph} \sum_{\kappa , \ell} n^{-0.5\ell_{01} - 0.5\ell_{10} - \ell_{11} - 0.5\ell_{12} - 0.5\ell_{21} - 0.9 } \,, \label{second-moment-B-setminus-C-2}
\end{align}
where the second inequality follows from Lemma~\ref{lemma_order_of_growth_second} and the third inequality is from the definition of $\mathcal{U}$.
Since $|V(S_1) \triangle V(T_1)| = \ell_{10} + \ell_{11} + \ell_{12}$ and $|V(S_2) \triangle V(T_2)| = \ell_{01} + \ell_{11} + \ell_{21}$, we have
\begin{align}
    \eqref{second-moment-B-setminus-C-2} \le L^{4\aleph} (16\aleph(\aleph+1))^{6} n^{-0.5(|V(S_1 )\triangle V(T_1 )| + |V(S_2 )\triangle V(T_2 )|) - 0.9 } \,. \label{second-moment-B-setminus-C-3}
\end{align}
For sufficiently large $n$ we have $((1+\lambda)^2 L)^{4\aleph} (16\aleph(\aleph+1))^{6} \le n^{0.1}$. Thus, combining \eqref{second-moment-B-setminus-C-1}, \eqref{second-moment-B-setminus-C-2} and \eqref{second-moment-B-setminus-C-3}, we have
\begin{equation}\label{phiS-1B-backslash-C}
    \big| \mathbb{E}_\Pb[\phi_{S_1 , S_2}\phi_{T_1 , T_2} \mathbf{1}_{\{ \pi_* \in \mathfrak G  \setminus \mathfrak D \} } ] \big| \le [1+o(1)] \cdot n^{-0.5(|V(S_1)\triangle V(T_1)| + |V(S_2)\triangle V(T_2)|) - 0.8 } \,.
\end{equation}
We now treat the term $\mathbb{E}_\Pb[\phi_{S_1,S_2} \phi_{T_1,T_2} \mathbf{1}_{\{ \pi_* \in \mathfrak D \}}]$ in the case of $(S_1,S_2)=(T_1,T_2)$. Note that for sufficiently large $n$ we have $\mathbb{E}_{\Pb_{\sigma,\pi}} \big[ \big(A_{i,j} - \tfrac{\lambda s}{n} \big)^2 \big]\le \frac{\sqrt{h}\lambda s(1+\epsilon\omega(\sigma_i,\sigma_j))}{n}$ by the fact that (recall our assumption that $h>1$)
\begin{align*}
    \mathbb{E}_{\Pb_{\sigma,\pi}} \Big[ \big( A_{i,j} - \tfrac{\lambda s}{n} \big)^2 \Big] = \big( \tfrac{\lambda s}{n} \big)^2 \big( 1 - \tfrac{\lambda s(1+\epsilon\omega(\sigma_i,\sigma_j))}{n} \big) + \big( 1 - \tfrac{\lambda s}{n}\big)^2 \tfrac{\lambda s(1+\epsilon \omega (\sigma_i ,\sigma_j))}{n} \,.
\end{align*}
By independence of edges under $\mathbb P_{\sigma,\pi}$ we have for $\pi\in\mathfrak{D}$
\begin{align*}
    \mathbb E_{\mathbb P_{\sigma,\pi}}[\phi_{S_1,S_2}^2] &= \prod_{(i,j) \in E(S_1)} \mathbb{E}_{\Pb_{\sigma, \pi}} \big[ (A_{i,j}-\tfrac{\lambda s}{n})^2 \big] \prod_{(i',j') \in E(S_2)} \mathbb{E}_{\Pb_{\sigma,\pi}} \big[ (B_{i',j'}-\tfrac{\lambda s}{n})^2 \big]  \\
    &\leq h^{\aleph}\prod_{(i,j) \in E(S_1  \cup \pi^{-1}(S_2))} \tfrac{\lambda s(1+\epsilon\omega (\sigma_i,\sigma_j))}{n} \,.
\end{align*}
Thus, we get that when $(S_1,S_2)=(T_1,T_2)$
\begin{align*}
    &\mathbb{E}_\Pb\big[ \phi_{S_1,S_2}\phi_{T_1,T_2} \mathbf{1}_{ \{ \pi_* \in \mathfrak D \}} \big] 
    = \mathbb{E}_{\pi\sim\mu,\sigma\sim\nu}\Big[ \mathbf{1}_{ \{ \pi \in \mathfrak D \}} \cdot \mathbb{E}_{\Pb_{\pi,\sigma}}[\phi_{S_1 ,S_2}^2 ] \Big]  \\
    \le\ & \Big( \tfrac{\lambda s}{n\sqrt{h}}(1-\tfrac{\lambda s}{n}) \Big)^{-2\aleph} \mathbb{E}_{\pi\sim\mu,\sigma\sim\nu} \Bigg[ \mathbf{1}_{ \{ \pi \in \mathfrak D \}} \prod_{(i,j) \in E(S_1  \cup \pi^{-1}(S_2))} \tfrac{\lambda s(1+\epsilon\omega (\sigma_i,\sigma_j))}{n}  \Bigg] \\
    \leq\ & \big( \tfrac{1}{\sqrt{h}}(1-\tfrac{\lambda s}{n}) \big)^{-2\aleph} \,,
\end{align*}
where the last equality follows from the fact that $S_1 \cup \pi^{-1}(S_2)$ is a forest for $\pi\in\mathfrak{D}$ and \eqref{eq-prod-edges-expectation-tree}. Since $1-\frac{\lambda s}{n} \ge \frac{1}{\sqrt{h}}$, we know
\begin{equation}\label{phiS-1C}
    \mathbb{E}_\Pb \big[ \phi_{S_1,S_2} \phi_{T_1,T_2} \mathbf{1}_{ \{ \pi_{*} \in \mathfrak D \}} \big] \le O_h(1) \cdot h^{2\aleph}.
\end{equation}
When $(S_1,S_2) \ne (T_1,T_2)$, we recall from \eqref{eq-def-mathfrak-D} that $\mathfrak D=\emptyset$, which gives 
$$
    \mathbb{E}_\Pb[\phi_{S_1 ,S_2}\phi_{T_1 ,T_2} \mathbf{1}_{ \{ \pi_* \in \mathfrak D} \}] = 0 \,.
$$
Therefore, combining \eqref{phiS-1B-backslash-C} and \eqref{phiS-1C}, we finish the proof.
\end{proof}

\subsection{Proof of Item (ii) of Proposition~\ref{prop-first-second-moment-f-T}}{\label{subsec:proof-lem-3.2(ii)}}

we use Lemma~\ref{prop_principal} to estimate $\operatorname{Var}_{\Pb} [f_{\mathcal{T}}]$. Recall that
\begin{align}
    \operatorname{Var}_{\Pb} [f_{\mathcal{T}}] &= \sum_{\mathbf H,\mathbf I \in \mathcal{T}} \sum_{(S_1,S_2;T_1,T_2) \in \mathsf R_{\mathbf H,\mathbf I}} a_{\mathbf H} a_{\mathbf I} (\mathbb{E}_{\Pb} [\phi_{S_1,S_2} \phi_{T_1,T_2}] - \mathbb{E}_{\Pb}[\phi_{S_1,S_2}] \mathbb{E}_\Pb[\phi_{T_1,T_2}]) \nonumber \\
    &= \sum_{\mathbf H,\mathbf I \in \mathcal{T}} \sum_{(S_1,S_2;T_1,T_2) \in \mathsf R_{\mathbf H,\mathbf I}^*} a_{\mathbf H} a_{\mathbf I} \mathbb{E}_\Pb [\phi_{S_1,S_2} \phi_{T_1,T_2}] \label{eq-part-1-var-Pb} \\
    &+ \sum_{\mathbf H,\mathbf I \in \mathcal{T}} \sum_{(S_1,S_2;T_1,T_2) \in \mathsf R_{\mathbf H,\mathbf I} \setminus \mathsf R_{\mathbf H,\mathbf I}^*} a_{\mathbf H} a_{\mathbf I} \mathbb{E}_\Pb [\phi_{S_1,S_2} \phi_{T_1,T_2}] \label{eq-part-2-var-Pb} \\
    &- \sum_{\mathbf H,\mathbf I \in \mathcal{T}} \sum_{(S_1,S_2;T_1,T_2) \in \mathsf R_{\mathbf H,\mathbf I}} a_{\mathbf H} a_{\mathbf I} \mathbb{E}_\Pb [\phi_{S_1,S_2}] \mathbb{E}_\Pb [\phi_{T_1,T_2}] \,. \label{eq-part-3-var-Pb}
\end{align}
By Item (ii) in Lemma~\ref{prop_principal}, we have
\begin{align}
    \eqref{eq-part-1-var-Pb} \circeq\ & \sum_{\mathbf H,\mathbf I \in \mathcal{T}} \sum_{(S_1,S_2;T_1,T_2) \in \mathsf R_{\mathbf H,\mathbf I}^*} a_{\mathbf H} a_{\mathbf I} \mathbb{E}_\Pb [\phi_{S_1,S_2}] \mathbb{E}_\Pb [\phi_{T_1,T_2}] \label{eq-part-1.1-var-Pb} \\
    &+ \sum_{\mathbf H \in \mathcal{T}} \sum_{(S_1,S_2;T_1,T_2) \in \mathsf R^*_{\mathbf H,\mathbf H}} a_{\mathbf H}^2 \mathbb{E}_\Pb [\phi_{S_1,S_2}] \mathbb{E}_\Pb [\phi_{T_1,T_2}] \,. \label{eq-part-1.2-var-Pb}
\end{align}
We first deal with \eqref{eq-part-1.1-var-Pb} using the fact that
\begin{equation}{\label{eq-card-R-H,I^*}}
    \#\mathsf R^*_{\mathbf H,\mathbf I}= \frac{ n! }{ (n-2\aleph-2)! \operatorname{Aut}(\mathbf H)^2 } \cdot \frac{ n! }{ (n-2\aleph-2)! \operatorname{Aut}(\mathbf I)^2 } \,.
\end{equation}
Applying Lemma~\ref{prop_first_moment_phi} we have
\begin{align}
    \eqref{eq-part-1.1-var-Pb} & \overset{(9)}{\circeq} \sum_{\mathbf H,\mathbf I \in \mathcal{T}} a_{\mathbf H} a_{\mathbf I} \cdot \#\mathsf R^*_{\mathbf H,\mathbf I} \cdot \frac{s^{2\aleph} \operatorname{Aut}(\mathbf H) \operatorname{Aut}(\mathbf I) ((n-\aleph-1)!)^2}{(n!)^2} \nonumber \\
    & \overset{\eqref{eq-card-R-H,I^*}}{=}  \sum_{\mathbf H,\mathbf I \in \mathcal{T}} a_{\mathbf H} a_{\mathbf I}  \cdot \frac{s^{2\aleph} ((n-\aleph-1)!)^2}{((n-2\aleph-2)!)^2 \operatorname{Aut}(\mathbf H) \operatorname{Aut}(\mathbf I)} \nonumber \\
    & \overset{(8)}{=} \sum_{\mathbf H,\mathbf I \in \mathcal{T}} s^{4\aleph} \Big( \frac{((n-\aleph-1)!)^2}{n!(n-2\aleph-2)!} \Big)^2 \overset{(5)}{\circeq} s^{4\aleph} |\mathcal T|^2 \overset{\text{Lemma}~\ref{lem-first-moment-P}, \text{(ii)}}{\circeq} \mathbb{E}_{\Pb} [f_{\mathcal{T}}]^2 \,. \label{eq-bound-part-1.1-var-Pb}
\end{align}
Similarly, by applying Lemma~\ref{prop_first_moment_phi} we see that 
\begin{align}
    \eqref{eq-part-1.2-var-Pb} \circeq s^{4\aleph} |\mathcal{T}| = o(1) \cdot \mathbb{E}_{\Pb} [f_{\mathcal{T}}]^2 \,. \label{eq-bound-part-1.2-var-Pb}
\end{align}
Thus, we get that 
\begin{equation}\label{eq-principal-term-sum-in-second-moment}
    \eqref{eq-part-1-var-Pb} \circeq \mathbb{E}_{\Pb} [f_{\mathcal{T}}]^2\,.
\end{equation}
 Next we estimate \eqref{eq-part-3-var-Pb} (for convenience below we use \eqref{eq-part-3-var-Pb} to denote the term therein without the minus sign). Note that
\begin{equation}{\label{eq-card-R-H,I}}
    \#\mathsf R_{\mathbf H,\mathbf I}= \frac{ (n!)^2 }{ ((n-\aleph-1)!)^2 \operatorname{Aut}(\mathbf H)^2 } \cdot \frac{ (n!)^2 }{ ((n-\aleph-1)!)^2 \operatorname{Aut}(\mathbf I)^2 } \,.
\end{equation}
Combined with Lemma~\ref{prop_first_moment_phi}, it yields that
\begin{align}
    \eqref{eq-part-3-var-Pb} & \overset{\eqref{eq-first-moment-phi-S}}{\circeq} \sum_{\mathbf H,\mathbf I \in \mathcal{T}} a_{\mathbf H} a_{\mathbf I} \cdot \#\mathsf R_{\mathbf H,\mathbf I} \cdot \frac{s^{2\aleph} \operatorname{Aut}(\mathbf H) \operatorname{Aut}(\mathbf I) ((n-\aleph-1)!)^2}{(n!)^2} \nonumber \\
    & \overset{\eqref{eq-card-R-H,I}}{=} \sum_{\mathbf H,\mathbf I \in \mathcal{T}} a_{\mathbf H} a_{\mathbf I} \cdot \frac{s^{2\aleph} (n!)^2 }{ ((n-\aleph-1)!)^2 \operatorname{Aut}(\mathbf H) \operatorname{Aut}(\mathbf I)} \nonumber \\
    & \overset{\eqref{eq-def-a-H}}{\circeq} \sum_{\mathbf H,\mathbf I \in \mathcal{T}} s^{4\aleph} = s^{4\aleph} |\mathcal T|^2 \overset{\text{Lemma}~\ref{lem-first-moment-P}, \text{(ii)}}{\circeq} \mathbb{E}_{\Pb} [f_{\mathcal{T}}]^2 \,. \label{eq-bound-part-3-var-Pb}
\end{align}
Finally we deal with \eqref{eq-part-2-var-Pb}. Denote $\gamma_s=\big( \frac{1}{2}+\frac{s^2}{2\alpha} \big)^{1/2}>1$. Applying Item (i) of Lemma~\ref{prop_principal} with $h=\gamma_s$ we have $|\eqref{eq-part-2-var-Pb}|$ is bounded by
\begin{align}
     & [1+o(1)] \cdot \Bigg( O_{\gamma_s}(1) \cdot \sum_{\mathbf H \in \mathcal{T}} \sum_{(S_1, S_2 ; T_1, T_2) \in \mathsf R_{\mathbf H,\mathbf H} \setminus \mathsf R_{\mathbf H,\mathbf H}^* } a_{\mathbf H}^2 \mathbf{1}_{\{ (S_1,S_2)=(T_1,T_2) \} }\gamma_s^{2\aleph} \nonumber \\
    & + \sum_{\mathbf H,\mathbf I \in \mathcal{T}} \sum_{(S_1, S_2 ; T_1, T_2) \in \mathsf R_{\mathbf H,\mathbf I} \setminus \mathsf R_{\mathbf H,\mathbf I}^* } a_{\mathbf H} a_{\mathbf I}  n^{-0.5(|V(S_1) \triangle V(T_1)| + |V(S_2)\triangle V(T_2)|) - 0.8}  \Bigg) \nonumber \\
    \le\ & [1+o(1)] \cdot \Bigg( O_{\gamma_s}(1) \cdot \sum_{\mathbf H \in \mathcal{T}} \frac{a_{\mathbf H}^2 \gamma_{s}^{2\aleph}(n!)^2}{((n-\aleph-1)!\operatorname{Aut(\mathbf{H}))^2}}
    + \sum_{\mathbf H,\mathbf I \in \mathcal{T}} a_{\mathbf H} a_{\mathbf I} \sum_{\substack{i \neq \aleph + 1 \mbox{ or }\\j\neq \aleph + 1}} \frac{ |\mathsf R_{\mathbf H,\mathbf I}^{(i,j)}| }{ n^{i+j+0.8} } \Bigg) \,, \label{eq-part-2-var-Pb-relax}
\end{align}
where $\mathsf R_{\mathbf H,\mathbf I}^{(i,j)}$ is the collection of $(S_1,S_2;T_1,T_2) \in \mathsf R_{\mathbf H,\mathbf I}$ such that $|V(S_1) \cap V(T_1)| = \aleph+1-i$ and $|V(S_2) \cap V(T_2)| = \aleph+1-j$.
We know from direct enumeration that
\begin{align*}
    \#\mathsf R_{\mathbf H,\mathbf I}^{(i,j)} &= \frac{1}{(\operatorname{Aut}(\mathbf H) \operatorname{Aut}(\mathbf I))^2} \cdot \frac{n!(\aleph+1)!\binom{\aleph+1}{i}}{(n-\aleph-1-i)!i!} \cdot \frac{n!(\aleph+1)!\binom{\aleph+1}{j}}{(n-\aleph-1-j)!j!} \\
    & \le (2(\aleph+1))^{\aleph+1}n^{2\aleph+2+i+j} \overset{ (5)}{ \leq } n^{2\aleph+2.1+i+j} \,,
\end{align*}
and we have the bound (recall \eqref{eq-def-a-H})
\begin{align*}
    a_{\mathbf H} \le [1+o(1)] \cdot \frac{(\aleph+1)! s^\aleph}{n^{\aleph+1}} \overset{ \eqref{eq-choice-K} }{\le} [1+o(1)] \cdot n^{-\aleph-0.9} s^\aleph \,,
\end{align*}
where in the first inequality we used the crude bound that $\operatorname{Aut}(H)\leq (\aleph+1)!$ and in the last inequality we used $(\aleph+1)!=n^{o(1)}$. Hence, we have
\begin{align}
    & \sum_{\mathbf H, \mathbf I \in \mathcal{T}} a_{\mathbf H} a_{\mathbf I} \sum_{(i,j) \ne (\aleph+1, \aleph+1)} n^{-i-j-0.8} \big| \mathsf R_{\mathbf H,\mathbf I}^{(i,j)} \big| \le  \sum_{\mathbf H, \mathbf I \in \mathcal{T}} n^{-0.5} s^{2\aleph} \le |\mathcal{T}|^2 n^{-0.5} s^{4\aleph}2^{2\aleph} \nonumber \\
    \leq \ & [1+o(1)]\cdot n^{-0.5}2^{2\aleph} \mathbb{E}_{\Pb} [f_{\mathcal{T}}]^2 \overset{\eqref{eq-choice-K}}{=} o(1) \cdot \mathbb{E}_{\Pb} [f_{\mathcal{T}}]^2  \,, \label{eq-bound-part-2.1-var-Pb}
\end{align}
where in the second inequality we used the fact that $s^2 > \alpha > \frac{1}{4}$ and the third equality follows from Item (ii) of Lemma~\ref{lem-first-moment-P}. In addition, we have
\begin{align}
    &\sum_{\mathbf H \in \mathcal{T}} \frac{ a_{\mathbf H}^2 \gamma_{s}^{2\aleph} (n!)^2 }{((n-\aleph-1)!\operatorname{Aut}(\mathbf H))^2} \nonumber 
    \overset{\eqref{eq-choice-K}}{\le} \  [1+o(1)]\cdot |\mathcal{T}|s^{2\aleph} \gamma_{s}^{2\aleph} \\
    \overset{s^2>\alpha, \eqref{eq-num-trees}}{=} & 
    o(1) \cdot (|\mathcal{T}|s^{2\aleph})^2 = o(1) \cdot (\mathbb{E}_{\Pb} [f_{\mathcal{T}}])^2 \,. \label{eq-bound-part-2.2-var-Pb}
\end{align}
Combined with \eqref{eq-part-2-var-Pb-relax} and \eqref{eq-bound-part-2.1-var-Pb}, it gives that
\begin{equation}{\label{eq-bound-part-2-var-Pb}}
    \eqref{eq-part-2-var-Pb} = o(1) \cdot \mathbb{E}_{\Pb} [f_{\mathcal{T}}]^2 \,.
\end{equation}
Plugging \eqref{eq-principal-term-sum-in-second-moment}, \eqref{eq-bound-part-3-var-Pb} and \eqref{eq-bound-part-2-var-Pb} into the decomposition formula for $\operatorname{Var}_{\mathbb P}[f_\mathcal T]$, we get that
\begin{align*}
    \operatorname{Var}_{\Pb} [f_{\mathcal{T}}]
    &= [1+o(1)] \cdot \mathbb{E}_{\Pb} [f_{\mathcal{T}}]^2 + o(1) \cdot \mathbb{E}_{\Pb} [f_{\mathcal{T}}]^2 - [1+o(1)] \cdot \mathbb{E}_{\Pb} [f_{\mathcal{T}}]^2 + o(1) \cdot \mathbb{E}_{\Pb} [f_{\mathcal{T}}]^2 \\
    &= o(1) \cdot \mathbb{E}_{\Pb} [f_{\mathcal{T}}]^2 \,,
\end{align*}
which yields Item (ii) of Proposition~\ref{prop-first-second-moment-f-T}.

\section{Preliminaries on graphs}{\label{sec:prelim-graphs}}

\begin{lemma}{\label{lemma-facts-graphs}}
Let $S,T\subset \mathcal K_n$. Recall that $S \Cap T \Subset \mathcal{K}_n$ is defined as edge-induced subgraphs of $\mathcal{K}_n$. We have the following properties:
\begin{enumerate}
    \item[(i)] $|V(S\cup T)|+|V(S\Cap T)|\le|V(S)|+|V(T)|, |E(S\cup T)|+|E(S\Cap T)|=|E(S)|+|E(T)|$.
    \item[(ii)] $\tau(S \cup T) + \tau(S \Cap T) \geq \tau(S) + \tau(T)$ and $\Phi(S \cup T) \Phi(S \Cap T) \leq \Phi(S) \Phi(T)$.
    \item[(iii)] $|\mathtt C_j(S \cup T)| + |\mathtt C_j(S \cap T)| \geq |\mathtt C_j(S)| + |\mathtt C_j(T)|$. 
    \item[(iv)] Recall the notion of self-bad in Definition~\ref{def-addmisible}. If $S \subset T$, $S$ is self-bad and $V(S)=V(T)$, then $T$ is self-bad.
    \item[(v)] If $S$ and $T$ are both self-bad, then $S \cup T$ is self-bad.
\end{enumerate}
\end{lemma}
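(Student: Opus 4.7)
The plan is to prove each item in turn, exploiting the product form
\begin{equation*}
\Phi(H) = A^{|V(H)|} B^{|E(H)|}, \quad A := \tfrac{2 \Tilde{\lambda}^2 k^2 n}{D^{50}}, \quad B := \tfrac{1000 \Tilde{\lambda}^{20} k^{20} D^{50}}{n},
\end{equation*}
so that $\log \Phi$ is an affine function of $(|V|,|E|)$ with $\log A,\log(1/B) = \Theta(\log n) > 0$ for $n$ large. For (i), $E(S \cup T) = E(S) \cup E(T)$ and $E(S \Cap T) = E(S) \cap E(T)$ yield the edge equality, while $V(S \cup T) = V(S) \cup V(T)$ and $V(S \Cap T) \subset V(S) \cap V(T)$ (since $\Cap$ is edge-induced) combined with the identity $|X \cup Y| + |X \cap Y| = |X| + |Y|$ give the vertex inequality. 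Item (ii) then follows by direct substitution: subtracting the vertex inequality from the edge equality yields the excess bound, and multiplying the vertex inequality by $\log A > 0$ gives log-submodularity of $\Phi$. For (iii), one checks $\mathtt{C}_j(S \cap T) = \mathtt{C}_j(S) \cap \mathtt{C}_j(T)$ and $\mathtt{C}_j(S \cup T) \supset \mathtt{C}_j(S) \cup \mathtt{C}_j(T)$, whence inclusion-exclusion concludes.

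For (iv), $V(S) = V(T)$ together with $E(S) \subset E(T)$ and $B < 1$ yields $\Phi(T) \leq \Phi(S) < (\log n)^{-1}$, so $T$ is bad. For $K \subsetneq T$, introduce the auxiliary subgraph $K'' = (V(K), E(K) \cap E(S))$ of $S$. When $K'' \subsetneq S$, self-badness of $S$ gives $\Phi(K'')/\Phi(S) > 1$, and the elementary observation $|E(K) \setminus E(S)| \leq |E(T) \setminus E(S)|$ (using $E(K) \subset E(T)$) together with $B < 1$ upgrades this to $\Phi(K)/\Phi(T) \geq \Phi(K'')/\Phi(S) > 1$. Otherwise $K'' = S$ forces $E(S) \subset E(K) \subsetneq E(T)$ with $V(K) = V(T)$, so $\Phi(K)/\Phi(T) = B^{|E(K)| - |E(T)|} > 1$ directly.

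Item (v) is the main obstacle, and I expect the delicate part to be the interplay between log-submodularity and self-badness on both sides. First, to see $S \cup T$ is bad: if $E(S) \subset E(T)$ then $S \cup T = T$; otherwise $S \Cap T \subsetneq S$, so $\Phi(S \Cap T) > \Phi(S)$ and log-submodularity forces $\Phi(S \cup T) < \Phi(T) < (\log n)^{-1}$. For the minimality condition, fix $K \subsetneq S \cup T$; then one of $E(S) \not\subset E(K)$ or $E(T) \not\subset E(K)$ must hold, since otherwise $S \cup T \subset K$. WLOG $K \Cap S \subsetneq S$, so $\Phi(K \Cap S) > \Phi(S)$ and log-submodularity applied to $(K,S)$ gives
\begin{equation*}
\Phi(K) \geq \frac{\Phi(K \cup S)\, \Phi(K \Cap S)}{\Phi(S)} > \Phi(K \cup S).
\end{equation*}
If $K \cup S = S \cup T$ we are done. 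Otherwise, the crucial observation uses the fact that self-badness prevents isolated vertices (else removing one would strictly decrease $\Phi$): if $(K \cup S) \Cap T = T$, then $E(T) \subset E(K \cup S)$, and no-isolated-vertex of $T$ forces $V(T) \subset V(K \cup S)$, which combined with $K \cup S \subset S \cup T$ yields $K \cup S = S \cup T$, a contradiction. Thus $(K \cup S) \Cap T \subsetneq T$, and a second application of log-submodularity to $(K \cup S, T)$ together with self-badness of $T$ produces $\Phi(K \cup S) > \Phi(S \cup T)$, completing $\Phi(K) > \Phi(S \cup T)$. The main subtlety lies in propagating the strict inequalities through both log-submodularity steps and ruling out the degenerate configuration via the no-isolated-vertex property inherited from self-badness.
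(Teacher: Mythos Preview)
Your proof is correct and follows essentially the same approach as the paper: (i)--(iii) are identical, (iv) uses the auxiliary $K'' = (V(K), E(K) \cap E(S))$ where the paper uses vertex-induced subgraphs $T_V, S_V$ (both exploit the same edge-count comparison $|E(K) \setminus E(S)| \le |E(T) \setminus E(S)|$), and (v) is the same two-step log-submodularity argument, differing only in that you pair with $S$ first while the paper pairs with $T$ first. One refinement worth noting: the paper's definition of self-bad uses a strict ``$<$'' but its own proof of (iv)--(v) only yields weak inequalities, whereas you recover strictness by invoking the no-isolated-vertex property of self-bad graphs to rule out the degenerate equality cases; just be aware that your earlier claims ``if $E(S) \subset E(T)$ then $S \cup T = T$'' and ``otherwise $S \cup T \subset K$'' already implicitly rely on this same no-isolated-vertex fact before you state it explicitly.
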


{\begin{proof}
    By definition, we have $V(S\cup T)=V(S)\cup V(T)$, $E(S\cup T)=E(S)\cup E(T)$ and $E(S\Cap T)=E(S) \cap E(T)$. In addition, we have $V(S\Cap T) \subset V(S) \cap V(T)$; this is because for any $i \in V(S \Cap T)$, there exists some $j$ such that $(i,j) \in E(S\Cap T)$ and thus $i \in V(S) \cap V(T)$. Therefore, (i) follows from the inclusion-exclusion formula.
    Provided with (i), (ii) follows directly from Equation~\eqref{eq-def-Phi}. 
    
    For (iii), since $|\mathtt C_j(S)| = \sum_{C \in \mathtt C_j(\mathcal K_n)} \mathbf{1}_{\{C \subset S\}}$, it suffices to show that
    \[
        \mathbf{1}_{\{C \subset S\}} + \mathbf{1}_{\{C \subset T\}} \leq \mathbf{1}_{\{C \subset S \cap T \}} + \mathbf{1}_{\{C \subset S \cup T\}} \,,
    \]
    which can be verified directly.
    
    For (iv), since clearly $T$ is bad, it remains to show that $\Phi(K) \geq \Phi(T)$ for all $K \subset T$. Denoting $V=V(K) \subset V(T)=V(S)$ and recalling the definition of $T_V,S_V$ in the notation section, we have
    \begin{align*}
        \Phi(K) &\geq \Phi(T_{V}) = \Phi(S_{V}) \cdot \big( \tfrac{ 1000 \Tilde{\lambda}^{20} k^{20} D^{50} }{ n } \big)^{ |E(T_{V})|-|E(S_{V})| } \\
        &\geq \Phi(S_{V}) \cdot \big( \tfrac{ 1000 \Tilde{\lambda}^{20} k^{20} D^{50} }{ n } \big)^{ |E(T)|-|E(S)| } \geq \Phi(S) \cdot \big( \tfrac{ 1000 \Tilde{\lambda}^{20} k^{20} D^{50} }{ n } \big)^{ |E(T)|-|E(S)| } = \Phi(T) \,,
    \end{align*}
    where the first inequality follows from $K \subset T_V$, the second inequality follows from the fact that $|E(T)|-|E(T_{V})| = |E(T_{\setminus V})|\geq |E(S_{\setminus V})| = |E(S)|-|E(S_{V})|$, the third inequality follows from the assumption that $S$ is self-bad, and the last equality follows from $V(S)=V(T)$. Thus, $T$ is also self-bad.

    Finally, for (v), first note that 
    \[
        \Phi(S \cup T) \leq \Phi(S) \Phi(T) / \Phi(S \Cap T) \leq \Phi(S) \,,
    \]
    where the first inequality follows from (ii) and the second inequality follows from the assumption that $T$ is self-bad. This implies that $S \cup T$ is bad. It remains to show that $\Phi(S \cup T) \leq \Phi(K)$ for all $K \subset S \cup T$. Applying (ii) and the assumption that $T$ is self-bad, we have
    \[
        \Phi(K \cup T) \leq \Phi(T) \Phi(K) / \Phi(K \Cap T) \leq \Phi(K) \,.
    \]
    Again, applying (ii) and the assumption that $S$ is self-bad, we have
    \[
        \Phi(S \cup T) \leq \Phi(S) \Phi(K \cup T) / \Phi(S \Cap (K \cup T)) \leq \Phi(K \cup T) \,.
    \]
    Combining the preceding two inequalities yields $\Phi(K) \geq \Phi(K \cup T) \geq \Phi(S \cup T)$.
\end{proof}

The next few lemmas prove some properties for subgraphs of $S$.

\begin{lemma}{\label{lem-property-H-Subset-S}}
    For $H \ltimes S$, we have $|\mathcal L(S) \setminus V(H)| \geq 2(\tau(H)-\tau(S))$. In particular, for $H \ltimes S$ such that $\mathcal L(S) \subset V(H)$, we have $\tau(H) \leq \tau(S)$. 
\end{lemma}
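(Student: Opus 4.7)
The plan is to unpack the excess function $\tau(H) = |E(H)| - |V(H)|$ so that the target inequality becomes a degree-counting statement in the ``excess subgraph'' $E(S) \setminus E(H)$. Set $V' = V(S) \setminus V(H)$ and $E' = E(S) \setminus E(H)$. Since $H \subset S$, the two inclusions are edge-disjoint in the sense that $V(H), E(H)$ sit inside $V(S), E(S)$, and a direct computation gives $\tau(H) - \tau(S) = |V'| - |E'|$. So the claim is equivalent to $|\mathcal L(S) \setminus V(H)| \geq 2(|V'| - |E'|)$.

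Next I would exploit the hypothesis $H \ltimes S$, namely $\mathcal I(S) \subset \mathcal I(H) \subset V(H)$: every vertex $v \in V'$ is non-isolated in $S$, and since no edge of $H$ is incident to $v$, we actually have $\deg_S(v) = \deg_{E'}(v) \geq 1$. Write $L = \mathcal L(S) \cap V' = \mathcal L(S) \setminus V(H)$; vertices in $L$ have $S$-degree exactly $1$, while vertices in $V' \setminus L$ have $S$-degree $\geq 2$. Summing degrees in $E'$ only over $V'$ gives a lower bound, while summing over all of $V(S)$ gives $2|E'|$, so
\[
2|E'| \;\geq\; \sum_{v \in V'} \deg_{E'}(v) \;=\; \sum_{v \in V'} \deg_S(v) \;\geq\; |L| + 2(|V'| - |L|) \;=\; 2|V'| - |L|.
\]
Rearranging yields $|L| \geq 2(|V'| - |E'|) = 2(\tau(H) - \tau(S))$, which is the first assertion.

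For the ``in particular'' statement, if $\mathcal L(S) \subset V(H)$ then $L = \emptyset$, so the inequality just proved forces $2(\tau(H) - \tau(S)) \leq 0$, i.e., $\tau(H) \leq \tau(S)$. There is essentially no technical obstacle here: the only subtlety is correctly using the $\ltimes$ hypothesis to guarantee $\deg_S(v) \geq 1$ for $v \in V'$ (without which a vertex in $V'$ that is isolated in $S$ could sabotage the degree bound), and making sure one does not double-count by restricting the sum on the left to $V'$ rather than $V(S)$. Once both of those are in place the argument is a one-line handshake lemma.
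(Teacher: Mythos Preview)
Your proof is correct and is essentially the same double-counting/handshake argument as the paper's. The paper phrases it via an auxiliary bipartite graph between $V(S\doublesetminus H)$ and $E(S)\setminus E(H)$ and counts incidences two ways, but the content is identical to your degree sum over $V' = V(S)\setminus V(H)$, using $H\ltimes S$ to guarantee $\deg_S(v)\geq 1$ for $v\in V'$.
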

\begin{proof}
    Without loss of generality, we may assume that $S$ contains no isolated vertex. Clearly we have $V(S) \setminus V(H) \subset V(S \doublesetminus H)$ from $H \ltimes S$. We now construct a bipartite graph $(\mathbf{V}_1,\mathbf{V}_2,\mathbf{E})$ as follows: denote $\mathbf{V}_1 = V(S \doublesetminus H)$ and $\mathbf{V}_2 = E(S) \setminus E(H)$ (note that each vertex in $\mathbf V_2$ is an edge in the graph $S$) and connect $(v,u) \in \mathbf{V}_1 \times \mathbf{V}_2$ (that is, let $(v, u)\in \mathbf E$) if and only if $v$ is incident to the edge $u$. We derive the desired inequality by calculating $|\mathbf{E}|$ in two different ways. On the one hand, clearly each $u \in \mathbf{V}_2$ is connected to exactly two endpoints of $u$ and thus $|\mathbf{E}|=2|\mathbf{V}_2|=2(|E(S)|-|E(H)|)$. On the other hand, each $v \in \mathcal L(S) \setminus V(H)$ is connected to at least one element in $\mathbf{V}_2$, and each $v \in (V(S) \setminus V(H)) \setminus (\mathcal L(S) \setminus V(H))$ is connected to at least two elements in $\mathbf{V}_2$. Thus, we have
    \begin{align*}
        |\mathbf E| \geq 2 |V(S) \setminus V(H)| - |\mathcal L(S) \setminus V(H)| = 2(|V(S)|-|V(H)|) - |\mathcal L(S) \setminus V(H)| \,,
    \end{align*}
    which yields that $|\mathcal L(S) \setminus V(H)| \geq 2(\tau(H)-\tau(S))$ (recall that $|\mathbf E|=2(|E(S)|-|E(H)|)$). 
\end{proof}

\begin{lemma}{\label{lem-decomposition-H-Subset-S}}
    For $H \subset S$, we can decompose $E(S)\setminus E(H)$ into $\mathtt m$ cycles ${C}_{\mathtt 1}, \ldots, {C}_{\mathtt m}$ and $\mathtt t$ paths ${P}_{\mathtt 1}, \ldots, {P}_{\mathtt t}$ (with a slight abuse of notations, we will also let a path $P$ to denote a subgraph of $\mathcal K_n$) for some $\mathtt m, \mathtt t\geq 0$ such that the following hold.
    \begin{enumerate}
        \item[(i)] ${C}_{\mathtt 1}, \ldots, {C}_{\mathtt m}$ are vertex-disjoint (i.e., $V(C_{\mathtt i}) \cap V(C_{\mathtt j})= \emptyset$ for all $\mathtt i \neq \mathtt j$) and $V(C_{\mathtt i}) \cap V(H)=\emptyset$ for all $1\leq\mathtt i\leq \mathtt m$.
        \item[(ii)] $\operatorname{EndP}({P}_{\mathtt j}) \subset V(H) \cup (\cup_{\mathtt i=1}^{\mathtt m} V(C_{\mathtt i})) \cup (\cup_{\mathtt k=1}^{\mathtt j-1} V(P_{\mathtt k})) \cup \mathcal L(S)$ for $1 \leq \mathtt j \leq \mathtt t$.
        \item[(iii)] $\big( V(P_{\mathtt j}) \setminus \operatorname{EndP}(P_{\mathtt j}) \big) \cap \big( V(H) \cup (\cup_{\mathtt i=1}^{\mathtt m} V(C_{\mathtt i})) \cup (\cup_{\mathtt k=1}^{\mathtt j-1} V(P_{\mathtt k}) ) \cup \mathcal L (S) \big) = \emptyset$ for $\mathtt 1 \leq \mathtt j \leq \mathtt t$.
        \item[(iv)] $\mathtt t = |\mathcal L(S) \setminus V(H)|+\tau(S)-\tau(H)$.
    \end{enumerate}
\end{lemma}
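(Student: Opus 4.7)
The plan is to prove the lemma by strong induction on $M := |E(S) \setminus E(H)|$. In the base case $M = 0$ we take $\mathtt m = \mathtt t = 0$; conditions (i)--(iii) hold vacuously, and (iv) reduces to the observation that every leaf of $S$ has its unique incident edge in $E(S) = E(H)$ so $\mathcal L(S) \subset V(H)$, while (under the no-isolated-vertex convention) $V(S) = V(H)$, whence $\tau(S) = \tau(H)$.

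For the induction step, I would branch on whether $S \doublesetminus H$ contains a cycle vertex-disjoint from $V(H)$. In \textbf{Case A} such a cycle $C$ exists; I fix one, set $C_{\mathtt 1} := C$, and apply the induction hypothesis to $(H \cup C_{\mathtt 1}, S)$, which has strictly smaller $M$. Prepending $C_{\mathtt 1}$ to the resulting cycle list and keeping the path list unchanged yields the required decomposition: condition~(i) holds since all further cycles are vertex-disjoint from $V(H \cup C_{\mathtt 1}) \supset V(C_{\mathtt 1}) \cup V(H)$; conditions~(ii) and~(iii) follow because the anchor set $V(H \cup C_{\mathtt 1}) \cup \bigcup_{\mathtt i \geq 2} V(C_{\mathtt i}) \cup \bigcup V(P_{\mathtt k}) \cup \mathcal L(S)$ used in the induction hypothesis matches the one required here.

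In \textbf{Case B}, no cycle in $S \doublesetminus H$ avoids $V(H)$. I first show some edge of $E(S) \setminus E(H)$ is incident to $V(H) \cup \mathcal L(S)$: otherwise every such edge lies inside $V(S) \setminus V(H) \setminus \mathcal L(S)$, on which every vertex $w$ satisfies $\deg_{S \doublesetminus H}(w) = \deg_S(w) \geq 2$ (non-leaf, outside $V(H)$, so no $H$-edge is incident); the minimum-degree-$2$ condition forces a cycle, contradicting Case~B. I fix such an edge $(u, v)$ with $u \in V(H) \cup \mathcal L(S)$ and greedily extend a walk from $v$ in $E(S) \setminus E(H)$, never revisiting vertices and terminating only upon entering $V(H) \cup \mathcal L(S)$. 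By the Case~B hypothesis, the walk cannot be forced to close back on its own interior (which would produce a cycle in $S \doublesetminus H$ disjoint from $V(H)$); hence it terminates at a vertex of $V(H) \cup \mathcal L(S)$, possibly $u$ itself (in which case the endpoint must lie in $V(H)$, since a leaf of $S$ cannot have degree $\geq 2$ in $P_{\mathtt 1}$). I set this walk to be $P_{\mathtt 1}$, apply induction to $(H \cup P_{\mathtt 1}, S)$, and concatenate.

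The main technical obstacle is the bookkeeping for~(iv). In Case~A I use $\tau(H \cup C_{\mathtt 1}) = \tau(H)$ (since $|E(C_{\mathtt 1})| = |V(C_{\mathtt 1})|$ and $C_{\mathtt 1}$ is vertex-disjoint from $H$) together with $\mathcal L(S) \setminus V(H \cup C_{\mathtt 1}) = \mathcal L(S) \setminus V(H)$ (leaves are not on cycles), so the induction's $\mathtt t'$ already equals the required $\mathtt t$. In Case~B, setting $a = |V(P_{\mathtt 1}) \cap V(H)|$ and $b = |V(P_{\mathtt 1}) \cap \mathcal L(S)|$, I check that $\tau(H \cup P_{\mathtt 1}) = \tau(H) + \tau(P_{\mathtt 1}) + a$ and $|\mathcal L(S) \setminus V(H \cup P_{\mathtt 1})| = |\mathcal L(S) \setminus V(H)| - b$. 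For a simple path $\tau(P_{\mathtt 1}) = -1$ and $a + b = 2$, since a leaf endpoint of $P_{\mathtt 1}$ cannot lie in $V(H)$ (its only $S$-edge belongs to $P_{\mathtt 1} \subset S \doublesetminus H$); for a loop $\tau(P_{\mathtt 1}) = 0$ and $(a, b) = (1, 0)$. Either way $\tau(P_{\mathtt 1}) + a + b = 1$, and substituting into the inductive identity gives $\mathtt t = 1 + \mathtt t' = |\mathcal L(S) \setminus V(H)| + \tau(S) - \tau(H)$, completing the induction.
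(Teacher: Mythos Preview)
Your inductive argument is essentially the paper's greedy procedure rephrased: the paper first adjoins $\mathcal L(S)$ to $V(H)$ to reduce to the case $\mathcal L(S)\subset V(H)$, then explicitly extracts all cycles disjoint from $V(H)$ followed by all anchored paths, tracking $\tau(H\cup\overline{\mathcal P\cup\mathcal C})$ to obtain (iv); your induction on $|E(S)\setminus E(H)|$ with Case~A/Case~B unwinds to the same extraction order, handling leaves on the fly rather than via a preliminary reduction.

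One small slip in your (iv) bookkeeping: the claim ``a leaf endpoint of $P_{\mathtt 1}$ cannot lie in $V(H)$'' is not valid as stated, since the lemma only assumes $H\subset S$ and $H$ may well have isolated vertices (indeed in the paper's applications $H=S_1\cap\pi^{-1}(S_2)$ typically does). The clean fix is to set $b:=|V(P_{\mathtt 1})\cap(\mathcal L(S)\setminus V(H))|$ rather than $|V(P_{\mathtt 1})\cap\mathcal L(S)|$; then the identity $|\mathcal L(S)\setminus V(H\cup P_{\mathtt 1})|=|\mathcal L(S)\setminus V(H)|-b$ is immediate, and $a+b=|\operatorname{EndP}(P_{\mathtt 1})|$ follows directly from the fact that $V(H)$ and $\mathcal L(S)\setminus V(H)$ partition $V(H)\cup\mathcal L(S)\supset\operatorname{EndP}(P_{\mathtt 1})$, with no further justification needed. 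The rest of your argument then goes through unchanged.
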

\begin{proof}
    We prove our lemma when $\mathcal L(S) \subset V(H)$ first. If $S$ can be decomposed into connected components $S=S_1 \sqcup S_2 \ldots \sqcup S_{r}$ and $H \cap S_i = H_i$, then it suffices to show the results for each $(H_i,S_i)$ since $\mathcal L(S_i) \subset V(H_i)$. Thus, we may assume without loss of generality that $S$ is connected. We initialize $\mathcal P= \mathcal C=\emptyset$ and perform the following procedure until $\overline {\mathcal P\cup \mathcal C} :=(\cup_{P\in \mathcal P} P) \cup (\cup_{C\in \mathcal C} C)$ contains all edges in $E(S) \setminus E(H)$.

    \

    \noindent (a) As long as there exists a cycle $C$ such that $V(C) \subset V(S) \setminus (V(H) \cup V(\mathcal C))$, we update $\mathcal C$ by adding $C$ to it (here we slightly abuse the notation by $V(\mathcal C) = V(\cup_{C\in \mathcal C} C)$, and we will do the same for $\mathcal P$ and for $E$).

    \

    \noindent (b) After Step (a) is finished, as long as $E(S) \setminus E(H) \not\subset E(\overline{\mathcal{P} \cup \mathcal{C}})$, we may construct a path $P \subset S \doublesetminus (H \cup\overline{ \mathcal{P} \cup \mathcal{C}})$ such that $V(P)\cap( V(\mathcal P) \cup V(H) \cup V(\mathcal C)) = \operatorname{EndP}(P)$ as follows. First, 
    \begin{figure}[!ht]
        \centering
        \vspace{0cm}
        \includegraphics[height=5.5cm,width=12cm]{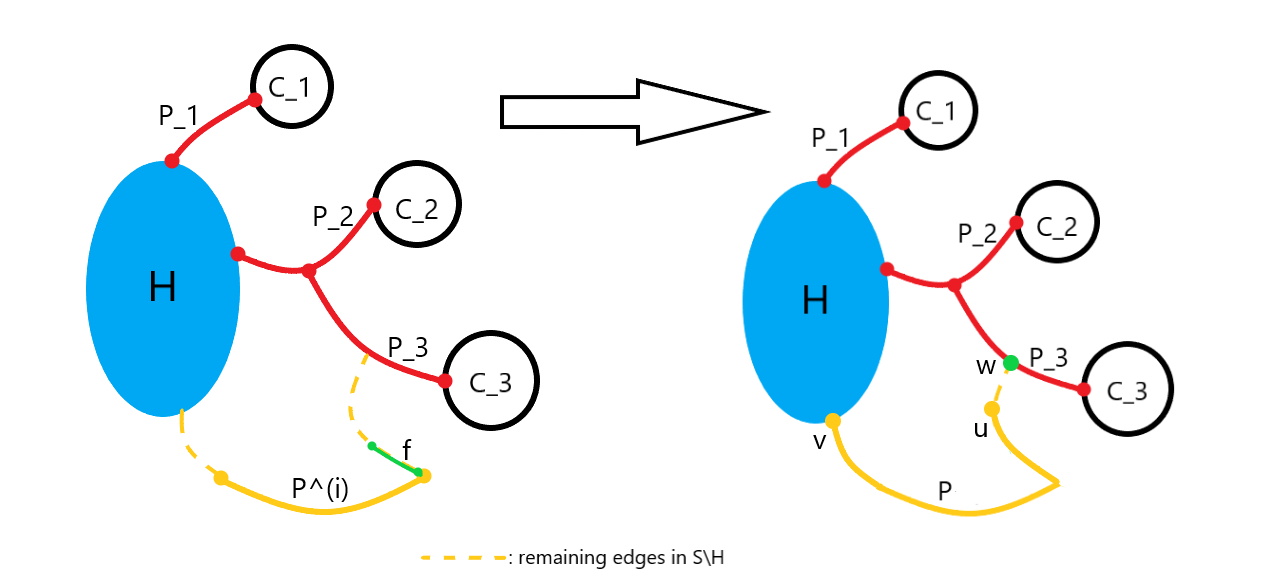}
        \caption{Construction of Paths}
        \label{fig:2}
    \end{figure}
    choose an arbitrary edge $\mathtt e = (\mathtt u_0 , \mathtt v_0 ) \in E(S \doublesetminus (H \cup \overline{\mathcal{P} \cup \mathcal{C}}))$. Then, starting from $P^{(1)} = \{ \mathtt e \}$, for $i\geq 1$ we replace $P^{(i)}$ by $P^{(i+1)} \defby P^{(i)} \cup \{ \mathtt f \}$ whenever there exists an edge $\mathtt f \in  E(S \doublesetminus (H \cup \overline{\mathcal{P} \cup \mathcal{C}}))$ incident to $\operatorname{EndP}(P^{(i)})$ such that $V(P^{(i)} \cup \{\mathtt f \})\cap( V(\mathcal P) \cup V(H) \cup V(\mathcal C)) \subset \operatorname{EndP}(P^{(i)} \cup \{ \mathtt f \} )$ (see the left-hand side of Figure~\ref{fig:2} for an illustration). Clearly this sub-procedure will stop at some point and we suppose that it yields a path $P$ with endpoints $u,v$. We claim $u,v\in V(\mathcal{P})\cup V(\mathcal{C}) \cup V(H)$. Since Step (a) was completed, $P$ is not a cycle disjoint with $V(H)\cup V(\mathcal C ) $. Thus, when $|\operatorname{EndP}(P)| = 1$ we have $\emptyset \ne V(P) \cap ( V(\mathcal P) \cup V(H) \cup V(\mathcal C)) \subset \operatorname{EndP}(P)$, and as a result $u = v \in V(\mathcal P) \cup V(H) \cup V(\mathcal C)$. When $|\operatorname{EndP}(P)| = 2$, we prove our claim by contradiction, for which we suppose $u\notin  V(\mathcal P) \cup V(H) \cup V(\mathcal C)$. Then we have $u\notin \mathcal{L}(S)$ since $\mathcal{L}(S)\subset V(H)$. Thus, there exists $w \in V(S)$ such that $(u,w) \in E(S) \setminus E(P)$. If $w \in V(P) \setminus \{ v \}$, by $V(P) \cap ( V(\mathcal P) \cup V(H) \cup V(\mathcal C)) \subset \operatorname{EndP}(P)$ we have $w \not\in V(\mathcal P) \cup V(H) \cup V(\mathcal C)$. In this case, $P$ contains a cycle disjoint with $V(H) \cup V(\mathcal C)$, which contradicts to (a). Thus, $w \not\in V(P) \setminus \{ v \}$. But in this case, the sub-procedure for producing $P$ would not have stopped at $u$ (as it should extend $w$ at least; see the right-hand side of Figure~\ref{fig:2} for an illustration); this implies that $u$ is not an endpoint of $P$, arriving at a contradiction. Therefore, we have that $u \in  V(\mathcal P) \cup V(H) \cup V(\mathcal C)$. By symmetry we know $u,v \in V(\mathcal P) \cup V(H) \cup V(\mathcal C)$. Hence
    $$ 
    V(P)\cap( V(\mathcal P) \cup V(H) \cup V(\mathcal C)) \subset \operatorname{EndP}(P) \subset V(P)\cap( V(\mathcal P) \cup V(H) \cup V(\mathcal C)) \,,
    $$
    which yields that $P$ satisfies our conditions. Therefore, we can update $\mathcal P$ by putting $P$ in it.
    
    When the procedure stops, we obtain the following:
    \begin{equation}
        \mathcal{C} = \{ {C}_1, \ldots, {C}_{\mathtt m} \} \mbox{ and } \mathcal{P}=\{ {P}_1, \ldots, {P}_{\mathtt t} \} \,.
    \end{equation}
    Now we verify this choice of $\mathcal C, \mathcal P$ satisfies (i)--(iv). (i), (ii) and (iii) are straightforward by our procedure. For (iv), note that we may track the update of $\tau(H \cup \overline{\mathcal P \cup \mathcal C})$ through our whole procedure when performing an update resulted from adding $C_{\mathtt i}$ or $P_{\mathtt j}$: $\tau(H \cup \overline{\mathcal P \cup \mathcal C})$ remains unchanged in each update from $C_{\mathtt i}$, and $\tau(H \cup \overline{\mathcal P \cup \mathcal C})$ increases by 1 in each update from $P_{\mathtt j}$. Therefore, the total increase of $\tau(H \cup \overline{\mathcal P \cup \mathcal C})$ through our whole procedure is $\mathtt t$, which proves (iv). 

    For general cases, we finish our proof by applying the preceding proof to $H_{\operatorname{leaf}} \subset S$ such that $E(H_{\operatorname{leaf}}) = E(H)$ and $V(H_{\operatorname{leaf}}) = V(H) \cup \mathcal L(S)$.
\end{proof}

\begin{cor}{\label{cor-revised-decomposition-H-Subset-S}}
    For $H \subset S$, we can decompose $E(S)\setminus E(H)$ into $\mathtt m$ cycles ${C}_{\mathtt 1}, \ldots, {C}_{\mathtt m}$ and $\mathtt t$ paths ${P}_{\mathtt 1}, \ldots, {P}_{\mathtt t}$ for some $\mathtt m, \mathtt t\geq 0$ such that the following hold.
    \begin{enumerate}
        \item[(i)] ${C}_{\mathtt 1}, \ldots, {C}_{\mathtt m}$ are independent cycles in $S$.
        \item[(ii)] $V(P_{\mathtt j}) \cap \big( V(H) \cup (\cup_{\mathtt i=1}^{\mathtt m} V(C_{\mathtt i})) \cup (\cup_{\mathtt k \neq \mathtt j} V(P_{\mathtt k}) ) \cup \mathcal L (S) \big) = \operatorname{EndP}(P_{\mathtt j})$ for $1 \leq \mathtt j \leq \mathtt t$.
        \item[(iii)] $\mathtt t \leq 5(|\mathcal L (S) \setminus V(H) | + \tau(S)-\tau(H))$.
    \end{enumerate}
\end{cor}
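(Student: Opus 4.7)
The plan is to obtain the refined decomposition by post-processing the one produced by Lemma~\ref{lem-decomposition-H-Subset-S}. I would start with the cycles $C_{\mathtt 1},\ldots,C_{\mathtt m_0}$ and paths $P_{\mathtt 1},\ldots,P_{\mathtt t_0}$ from that lemma, where $\mathtt t_0=|\mathcal L(S)\setminus V(H)|+\tau(S)-\tau(H)$. For each original cycle $C_{\mathtt i}$ let $\mathcal V_{\mathtt i}\subset V(C_{\mathtt i})$ be the set of vertices that serve as endpoints of some original path $P_{\mathtt j}$. Because the interior of each $P_{\mathtt j}$ is disjoint from $V(C_{\mathtt i})$ by Lemma~\ref{lem-decomposition-H-Subset-S}(iii), and because $V(C_{\mathtt i})\cap V(H)=\emptyset$ and the original cycles are mutually vertex-disjoint, the set $\mathcal V_{\mathtt i}$ records exactly the obstructions to $C_{\mathtt i}$ being independent in $S$: if $\mathcal V_{\mathtt i}=\emptyset$ then $C_{\mathtt i}$ is automatically an independent cycle in $S$ and we keep it; otherwise it must be converted to paths.

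For each $C_{\mathtt i}$ with $\mathcal V_{\mathtt i}\neq\emptyset$, I would remove it from the cycle collection and replace it with arcs. If $|\mathcal V_{\mathtt i}|=d\ge 2$, cut $C_{\mathtt i}$ at the $d$ attachment vertices to produce $d$ arcs; if $d=1$ with $\mathcal V_{\mathtt i}=\{v\}$, turn $C_{\mathtt i}$ into the single path $(v,v,C_{\mathtt i})$ permitted by the conventions in Section~\ref{sec-notation}. The resulting sub-paths have endpoints in $\mathcal V_{\mathtt i}$ (so in $V(P_{\mathtt j})$ for some original path), while their interior vertices, being in $V(C_{\mathtt i})\setminus\mathcal V_{\mathtt i}$, have degree exactly two in $S$, so they are neither leaves nor in $V(H)$ nor in any other path/cycle. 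After this step condition~(i) is met and the arcs satisfy condition~(ii); the total number of such arcs is $\sum_{\mathtt i:\mathcal V_{\mathtt i}\neq\emptyset}|\mathcal V_{\mathtt i}|\le 2\mathtt t_0$, since these are distinct path-endpoints of the original decomposition.

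The remaining issue is that an original path $P_{\mathtt j}$ may fail condition~(ii) because an interior vertex of $P_{\mathtt j}$ coincides with an endpoint of some other original path $P_{\mathtt k}$ (with $\mathtt k>\mathtt j$, which is permitted by Lemma~\ref{lem-decomposition-H-Subset-S}(ii)). For each such $P_{\mathtt j}$, let $\mathsf S_{\mathtt j}$ be the set of its interior vertices that are endpoints of some other path or of some sub-path from the previous step, and split $P_{\mathtt j}$ at every vertex of $\mathsf S_{\mathtt j}$. This produces $|\mathsf S_{\mathtt j}|+1$ sub-paths in place of $P_{\mathtt j}$, whose interiors remain disjoint from all the distinguished vertex sets. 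Here the key observation is that any vertex $v$ can lie in the interior of at most one of the $P_{\mathtt j}$'s (using Lemma~\ref{lem-decomposition-H-Subset-S}(iii) and the fact that an interior vertex already has both of its $S$-incidences accounted for by that path), so $\sum_{\mathtt j}|\mathsf S_{\mathtt j}|$ is bounded by the number of distinct endpoints among all the intermediate paths, which is at most $2\mathtt t_0$ because every such endpoint is itself an endpoint of some original path.

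Putting the three contributions together gives the final count
\begin{equation*}
\mathtt t \;\le\; \underbrace{\mathtt t_0}_{\text{original paths}} \;+\; \underbrace{\sum_{\mathtt j}|\mathsf S_{\mathtt j}|}_{\text{interior splits}} \;+\; \underbrace{\sum_{\mathtt i:\mathcal V_{\mathtt i}\neq\emptyset}|\mathcal V_{\mathtt i}|}_{\text{arcs from cycles}} \;\le\; \mathtt t_0+2\mathtt t_0+2\mathtt t_0 \;=\; 5\mathtt t_0,
\end{equation*}
which is precisely~(iii). I expect the main technical obstacle to be the careful bookkeeping in the verification of condition~(ii) after the two surgical steps: one has to confirm that every interior vertex produced in the final decomposition is simultaneously outside $V(H)$, outside every remaining independent cycle, outside every other path, and not in $\mathcal L(S)$, and that endpoints land in the allowed union. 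All four exclusions follow from the properties preserved from Lemma~\ref{lem-decomposition-H-Subset-S}, but the arguments must be re-examined after each surgery to rule out new coincidences, and in particular to confirm that sub-paths coming from broken cycles require no further splitting (their interiors have $S$-degree $2$ and are therefore untouched by anything).
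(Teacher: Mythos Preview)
Your proposal is correct and follows essentially the same approach as the paper: start from the decomposition of Lemma~\ref{lem-decomposition-H-Subset-S}, break each non-independent cycle at its (at most $2\mathtt t_0$ total) attachment vertices into arcs, then split the original paths at the (at most $2\mathtt t_0$ total) interior vertices that coincide with other endpoints, yielding the bound $\mathtt t\le \mathtt t_0+2\mathtt t_0+2\mathtt t_0=5\mathtt t_0$. The paper organizes the second surgery sequentially (adding the $P_{\mathtt j}$ one at a time and breaking whichever path in the current pool contains $\operatorname{EndP}(P_{\mathtt j})$ in its interior) and first reduces to the case $\mathcal L(S)\subset V(H)$, but these are purely presentational differences and the counting is identical.
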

\begin{proof}
    We prove our corollary when $\mathcal L(S) \subset V(H)$ first. Using Lemma~\ref{lem-decomposition-H-Subset-S}, we can decompose $E(S) \setminus E(H)$ into $\mathtt m'$ cycles and $\mathtt t'=\tau(S)-\tau(H)$ paths satisfying (i)--(iv) in Lemma~\ref{lem-decomposition-H-Subset-S}. Denote $\mathtt I=\{ \mathtt 1 \leq \mathtt i \leq \mathtt m' : C_{\mathtt i} \not \in \mathcal C(S) \}$. For each $\mathtt i \in \mathtt I$, writing 
    \[
    X_{\mathtt i} = \# \big( V(C_{\mathtt i}) \cap ( \cup_{\mathtt j=1}^{\mathtt t'} \operatorname{EndP}(P_{\mathtt j}) ) \big) \,,
    \]
    we then have $\sum_{\mathtt i \in \mathtt I} X_i \leq 2 \mathtt t'$. Thus, we can decompose $\{ C_{\mathtt i}: \mathtt i \in \mathtt I \}$ into at most $2\mathtt t'$ paths $\widetilde{P}_{1}, \ldots, \widetilde{P}_{2 \mathtt t'}$, with their endpoints in $\cup_{\mathtt j=1}^{\mathtt t'}\operatorname{EndP}(P_{\mathtt j})$. Now set $\mathcal C = \{ C_{\mathtt i} : \mathtt i \not \in \mathtt I \}$ and initialize $\mathcal P= \{ \widetilde{P}_{1}, \ldots, \widetilde{P}_{2 \mathtt t'} \}$. 
    Next for $1 \leq \mathtt j \leq \mathtt t'$, we perform the following procedure: (a) for each $u \in \operatorname{EndP}(P_{\mathtt j}) \cap \operatorname{IntP}(\mathcal P)$ where $\operatorname{IntP}(\mathcal P) = \big( \cup_{P \in \mathcal P} V(P) \big) \setminus \big( \cup_{p\in \mathcal P} \operatorname{EndP}(P) \big)$, we find $P^{u} \in \mathcal P$ such that $u \in V(P^u)$ and break $P^u$ at $u$ into two sub-paths $P^u (1)$, $P^u (2)$; (b) we update $\mathcal P$ by removing $P^u$ and adding $P^u(1), P^u(2)$ for each $u\in \operatorname{EndP}(P_{\mathtt j}) \setminus \operatorname{EndP}(\mathcal P )$; (c) we update $\mathcal P$ by adding $P_\mathsf j$. Since $|\operatorname{EndP}(P_{\mathtt j})| \le 2$, the whole procedure for each $\mathtt j$ increases $|\mathcal P|$ by $3$ at most. 
    
    After completing the aforementioned procedures for $1 \leq \mathtt j \leq \mathtt t'$, we finally obtain $\mathcal P=\{ P_{\mathtt 1},\ldots, P_{\mathtt t} \}$. From our construction, we see that $\mathcal P$ satisfies (i) and (ii). As for (iii), it holds since $|\mathcal P | \le 2\mathtt t '+ 3 \mathtt t '= 5 \mathtt t '$. This completes our proof when $\mathcal L(S) \subset V(H)$.

    For general cases, we complete our proof by applying the preceding proof to $H_{\operatorname{leaf}} \subset S$ such that $E(H_{\operatorname{leaf}}) = E(H)$ and $V(H_{\operatorname{leaf}}) = V(H) \cup \mathcal L(S)$. 
\end{proof}
\begin{remark}{\label{rmk-endpoints-have-degree-geq-3}}
    Note that in the special case where $\mathcal L(S) \subset V(H)$, we may further require that for each $u \in \operatorname{EndP}(P_{\mathtt j}) \setminus V(H)$, there are at least $3$ different $P_{\mathtt i}$'s having $u$ as endpoints. Otherwise, $u$ is exactly the endpoint of two paths $P_{\mathtt i}, P_{\mathtt j}$, and we can merge $P_{\mathtt i}$ and $P_{\mathtt j}$ into a longer path.
\end{remark}

The next few lemmas deal with enumerations of specific graphs, which will take advantage of previous results in this section.

\begin{lemma}{\label{lem-enu-cycle-path}}
    Given a vertex set $\mathsf A$ with $|\mathsf A| \leq D$, we have 
    \begin{align*}
        \#\Big\{ & (C_{\mathtt 1},\ldots,C_{\mathtt m};P_{\mathtt 1},\ldots,P_{\mathtt t}) : C_{\mathtt i} \mbox{ and } (\cup_{\mathtt i} V(C_{\mathtt i})) \cap \mathsf A = \emptyset, P_{\mathtt j} \text{'s are paths} ; \\
        & \#\big( (\cup_{\mathtt i} V(C_{\mathtt i})) \cup (\cup_{\mathtt j} V(P_{\mathtt j})) \big) \leq 2D; \#\{ \mathtt i: |V(C_{\mathtt i})|=x \}=p_x, |E(P_{\mathtt j})|=q_{\mathtt j} \Big\} \\
        \leq\ & (2D)^{2\mathtt t} \prod_{x} \frac{n^{xp_x}}{p_x!} \prod_{\mathtt j=1}^{\mathtt t} n^{q_{\mathtt j}-1} \,.
    \end{align*}
\end{lemma}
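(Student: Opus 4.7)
The plan is to enumerate the cycles and paths separately, using the vertex budget $\#\bigl((\cup_{\mathtt i} V(C_{\mathtt i}))\cup(\cup_{\mathtt j} V(P_{\mathtt j}))\bigr)\le 2D$ to localize the two endpoints of each path. As is standard when this lemma is invoked (cf.\ Lemma~\ref{lem-decomposition-H-Subset-S} and Corollary~\ref{cor-revised-decomposition-H-Subset-S}), each path's endpoints lie in the union of $\mathsf A$, the chosen cycles, and the previously chosen paths, whose total vertex count is bounded by $2D$.

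First I would count the cycles. For each $x\ge 3$, the number of $x$-cycles in $\mathcal K_n$ is $(n)_x/(2x)\le n^x/(2x)$, since every $x$-cycle corresponds to $2x$ ordered vertex sequences via rotation and reflection. Choosing $p_x$ mutually vertex-disjoint $x$-cycles (all avoiding $\mathsf A$) up to the ordering of the $p_x$ items within each size class produces at most $\tfrac{1}{p_x!}\bigl(\tfrac{n^x}{2x}\bigr)^{p_x}\le \tfrac{n^{xp_x}}{p_x!}$ choices. Multiplying across $x$ yields $\prod_{x}\tfrac{n^{xp_x}}{p_x!}$, which is the first factor in the target bound.

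Next I would count the paths, processing them in the order $P_{\mathtt 1},\ldots,P_{\mathtt t}$. Before adding $P_{\mathtt j}$, the ``built'' vertex set $\mathsf A\cup(\cup_{\mathtt i} V(C_{\mathtt i}))\cup(\cup_{\mathtt k<\mathtt j} V(P_{\mathtt k}))$ has size at most $2D$, and the two endpoints of $P_{\mathtt j}$ must lie inside this set, contributing at most $(2D)^2$ ordered endpoint pairs. The remaining $q_{\mathtt j}-1$ interior vertices are an arbitrary ordered sequence of distinct vertices in $[n]$, contributing at most $n^{q_{\mathtt j}-1}$ choices. Multiplying over $\mathtt j$ gives $(2D)^{2\mathtt t}\prod_{\mathtt j} n^{q_{\mathtt j}-1}$; combining with the cycle count yields the stated inequality.

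The main subtlety is the endpoint localization: without the constraint that both endpoints of $P_{\mathtt j}$ lie in the already-constructed vertex set, a free path with $q_{\mathtt j}$ edges would contribute $n^{q_{\mathtt j}+1}$ choices, and one would lose a factor of order $(n/D)^{2\mathtt t}$ per tuple, which is fatal in the regime $n\gg D$. This localization is precisely what the decomposition structure of Lemma~\ref{lem-decomposition-H-Subset-S} and Corollary~\ref{cor-revised-decomposition-H-Subset-S} supplies, so modulo invoking that structure the remaining argument is bookkeeping.
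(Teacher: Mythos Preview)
Your proposal is correct and follows essentially the same approach as the paper: bound the cycle enumeration by $\prod_x n^{xp_x}/p_x!$, then process paths sequentially, using the $2D$ vertex budget to localize each pair of endpoints (giving $(2D)^2$ per path) and $n^{q_{\mathtt j}-1}$ for the ordered interior vertices. Your explicit discussion of the endpoint-localization subtlety is in fact more careful than the paper's own proof, which simply asserts the $(2D)^2$ bound by appeal to the total vertex count without spelling out that the endpoints lie in the already-built set.
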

\begin{proof}
    Clearly, the enumeration of $\{ C_{\mathtt 1}, \ldots, C_{\mathtt m} \}$ is bounded by $\prod_{x} \frac{n^{x p_x}}{p_x!}$. In addition, given $\{ C_{\mathtt 1}, \ldots, C_{\mathtt m} , P_{\mathtt 1}, \ldots, P_{\mathtt j-1} \}$, we have at most $(2D)^2$ choices for the possible endpoints of $P_{\mathtt j}$ (here we use the bound on $\#\big( (\cup_{\mathtt i} V(C_{\mathtt i})) \cup (\cup_{\mathtt j} V(P_{\mathtt j})) \big)$), and at most $n^{q_{\mathtt j}-1}$ choices for $V(P_{\mathtt j}) \setminus \operatorname{EndP}(P_{\mathtt j})$. Thus, given $\{ C_{\mathtt 1}, \ldots, C_{\mathtt m} \}$, the enumeration of $\{ P_{\mathtt 1}, \ldots, P_{\mathtt t} \}$ is bounded by
    \begin{align*}
        \prod_{\mathtt j=1}^{\mathtt t} (2D)^2 n^{q_{\mathtt j}-1} = (2D)^{2\mathtt t} n^{ q_{1} + \ldots + q_{\mathtt t} -\mathtt t } \,,
    \end{align*}
    and the desired result follows from the multiplication principle.
\end{proof}

\begin{lemma}{\label{lem-enu-Subset-large-graph}}
    For $H \subset \mathcal{K}_n$ with $|E(H)| \leq D$, we have     
    \begin{equation}{\label{eq-enu-Subset-large-graph}}
        \begin{aligned}
            \#\Big\{ & S: H \ltimes S, |E(S)| \leq D, |E(S)|-|E(H)|=\ell+\kappa, \tau(S)-\tau(H)=\ell; \\
            & \mathcal{L}(S)\subset V(H), \cup_{j>N} \mathcal{C}_j(S) \subset H \Big\}
            \leq (2D)^{4\ell} n^{\kappa} \sum_{3p_3+\ldots+Np_N \leq \kappa+l} \prod_{j=3}^{N} \frac{1}{p_j!} \,.
        \end{aligned}
    \end{equation}    
\end{lemma}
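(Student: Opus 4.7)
My plan is to enumerate the admissible $S$'s by overcounting their decompositions via Corollary~\ref{cor-revised-decomposition-H-Subset-S}. Given any $S$ satisfying the stated constraints, the hypothesis $\mathcal L(S) \subset V(H)$ makes the corollary directly applicable, producing $m$ independent cycles $C_1,\ldots,C_m$ of $S$ and $t$ paths $P_1,\ldots,P_t$ whose edges partition $E(S)\setminus E(H)$. Since each $C_i$ is an independent cycle of $S$ that avoids $E(H)$, we get $V(C_i)\cap V(H)=\emptyset$ and the $C_i$'s are mutually vertex-disjoint. The hypothesis $\bigcup_{j>N}\mathcal C_j(S)\subset H$ then forces $|V(C_i)|\leq N$ for every $i$. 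Let $p_j$ count the length-$j$ cycles and set $q_i=|E(P_i)|$.

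By item (ii) of the corollary, each $P_i$ contributes exactly $q_i-1$ interior vertices (all new, and disjoint from $V(H)$, from the cycles, and from the other paths), while its endpoints lie in $V(H)\cup V(\mathcal C)\cup V(\mathcal P_{<i})$ and are not new. Counting new edges and new vertices in $E(S)\setminus E(H)$ yields
\begin{equation*}
\sum_{j=3}^{N} jp_j + \sum_{i=1}^{t} q_i = \ell+\kappa, \qquad \sum_{j=3}^{N} jp_j + \sum_{i=1}^{t}(q_i-1) = \kappa,
\end{equation*}
so subtracting gives $t=\ell$ exactly, and in particular $\sum_j jp_j\leq \ell+\kappa$ (which is precisely the summation range in the claim).

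The enumeration then proceeds in stages: first choose the multiplicities $(p_3,\ldots,p_N)$ (this indexes the outer sum); then the $p_j$ unordered $j$-cycles on $[n]$, contributing at most $\prod_j n^{jp_j}/p_j!$; then a composition $(q_1,\ldots,q_\ell)$ of $\ell+\kappa-\sum jp_j$ into $\ell$ positive parts, contributing at most $(2D)^{\ell}$; then, for each $P_i$, an ordered pair of endpoints from $V(S\doublesetminus H)$, contributing $(2D)^{2\ell}$ overall; and finally the $q_i-1$ interior vertices of each path, contributing $\prod_i n^{q_i-1}$. Using $\sum_i(q_i-1)=\kappa-\sum_j jp_j$, the $n$-powers telescope to $n^\kappa$, producing at most $(2D)^{3\ell}\,n^\kappa\prod_j (p_j!)^{-1}$ per cycle profile; absorbing $(2D)^{3\ell}$ into $(2D)^{4\ell}$ and summing over $(p_j)$ gives the claimed inequality.

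The one technical point to verify is the $(2D)^{2\ell}$ endpoint bound, which must not depend on $|V(H)|$ (potentially large because $H$ may carry many isolated vertices). This is where the formulation of $\doublesetminus$ pays off: every path endpoint is incident to an edge of $E(S)\setminus E(H)$, hence lies in $V(S\doublesetminus H)$, and
\begin{equation*}
|V(S\doublesetminus H)| \leq 2\,|E(S\doublesetminus H)| = 2(\ell+\kappa) \leq 2|E(S)| \leq 2D,
\end{equation*}
so each of the $2\ell$ endpoint slots draws from a pool of size at most $2D$ uniformly.
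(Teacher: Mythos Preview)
Your overall strategy---overcounting $S$ via a cycles-plus-paths decomposition of $E(S)\setminus E(H)$---matches the paper's, but there is a genuine gap coming from a conflation of Lemma~\ref{lem-decomposition-H-Subset-S} and Corollary~\ref{cor-revised-decomposition-H-Subset-S}. You invoke the corollary so that the $C_i$ are \emph{independent} cycles of $S$ (hence of length $\le N$ by the hypothesis $\cup_{j>N}\mathcal C_j(S)\subset H$), but you then assert that the endpoints of $P_i$ lie in $V(H)\cup V(\mathcal C)\cup V(\mathcal P_{<i})$. That sequential containment is item~(ii) of Lemma~\ref{lem-decomposition-H-Subset-S}, not of the corollary. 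Under Corollary~\ref{cor-revised-decomposition-H-Subset-S} item~(ii) only places $\operatorname{EndP}(P_j)$ inside $V(H)\cup V(\mathcal C)\cup\bigl(\cup_{k\ne j}V(P_k)\bigr)$, so there can be ``floating'' endpoint vertices shared by several paths yet lying outside $V(H)\cup V(\mathcal C)$. If $|E_1|$ counts these, your vertex identity becomes $\sum_j jp_j+\sum_i(q_i-1)+|E_1|=\kappa$, whence $t=\ell+|E_1|$, not $t=\ell$. A theta graph $S$ with $H$ consisting of a single degree-$3$ vertex already gives $t=3>\ell=2$.

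The paper sidesteps this by using Lemma~\ref{lem-decomposition-H-Subset-S} directly: its item~(iv) gives $t=\ell$ on the nose (since $\mathcal L(S)\subset V(H)$). The price is that those cycles are only vertex-disjoint from $V(H)$ and need not be independent in $S$, so their lengths are not bounded by $N$ a priori. The paper then argues that any $C_i$ with $|V(C_i)|>N$ cannot be an independent cycle of $S$ (those are contained in $H$), hence must carry some $\operatorname{EndP}(P_j)$; this bounds the number of long cycles by $2\ell$, and the enumeration over their length profile contributes an extra $(2D)^{O(\ell)}$ factor. Your final observation that all path endpoints lie in $V(S\doublesetminus H)$, whose size is at most $2(\ell+\kappa)\le 2D$, is correct and is exactly what makes the $(2D)^{2}$-per-path endpoint count work without any bound on $|V(H)|$; the paper uses the same device implicitly when invoking Lemma~\ref{lem-enu-cycle-path}.
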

\begin{proof}
    Take $S$ as an element in the set of \eqref{eq-enu-Subset-large-graph}. Using Lemma~\ref{lem-decomposition-H-Subset-S}, we can write (for some $\mathtt m, \ell \geq 0$)
    \[
    S \doublesetminus H = \big( \sqcup_{\mathtt i=1}^{\mathtt m} C_{\mathtt i} \big) \bigsqcup \big( \sqcup_{\mathtt j=1}^{\ell} P_{\mathtt j} \big) \,,
    \]
    where $\{ C_{\mathtt i} : 1 \leq \mathtt i \leq \mathtt m \}$ is a collection of disjoint cycles and $\{ P_{\mathtt j} : 1 \leq \mathtt j \leq \ell \}$ is a collection of paths satisfying (i)--(iv) in Lemma~\ref{lem-decomposition-H-Subset-S}. In addition, since $V(\mathcal C_j(S)) \subset V(H)$ for all $j>N$, for each $C_{\mathtt i}$ with $|V(C_{\mathtt i})|>N$, from Item (iii) in Lemma~\ref{lem-decomposition-H-Subset-S} there must exist $P_{\mathtt j}$ such that $\operatorname{End}(P_{\mathtt j}) \cap V(C_{\mathtt i}) \neq \emptyset$ (since independent cycles with length at least $N+1$ are contained in $H$). This yields that
    \begin{equation}{\label{eq-constraint-p-q-1}}
        \# \big\{ \mathtt i: |V(C_{\mathtt i})| > N \big\} \leq 2\ell \,.
    \end{equation}
    We are now ready to prove \eqref{eq-enu-Subset-large-graph} by bounding the enumeration of $\{ C_{\mathtt i} : 1 \leq \mathtt i \leq \mathtt m \}$ and $\{ P_{\mathtt j} : 1 \leq \mathtt j \leq \ell \}$. To this end, we assume $p_{x}=\#\{ \mathtt i: |V(C_{\mathtt i})|=x \}$ and $q_{\mathtt j} = |E(P_{\mathtt j})|$. Then we have
    \begin{equation}{\label{eq-constraint-p-q-2}}
        \sum_{i=3}^{\kappa} ip_i + \sum_{\mathtt j=1}^{\ell} q_{\mathtt j} = |E(S \doublesetminus H)| = \ell+\kappa \,.
    \end{equation}
    We first fix $p_3,\ldots,p_{\kappa}$ and $q_{\mathtt 1}, \ldots, q_{\ell}$. Applying Lemma~\ref{lem-enu-cycle-path} with $\mathsf A=V(H)$ we get that the enumeration of $\{ C_{\mathtt i} : 1 \leq \mathtt i \leq \mathtt m \}$ and $\{ P_{\mathtt j} : 1 \leq \mathtt j \leq \ell \}$ is bounded by 
    \begin{equation}\label{eq-pqfix-enum}
        (2D)^{2\ell} n^{ q_1+\ldots+q_{\ell}+ 3p_3+\ldots+\kappa p_\kappa-\ell } \prod_{i=3}^{\kappa} \frac{1}{p_i!} = (2D)^{2\ell} n^{\kappa} \prod_{i=3}^{N} \frac{1}{p_i!} \,.
    \end{equation}
    We next bound the enumeration on $p_3, \ldots, p_\kappa$ and $q_1, \ldots, q_\ell$ satisfying \eqref{eq-constraint-p-q-1} and \eqref{eq-constraint-p-q-2}. Note that
    \begin{align*}
        & \#\big\{ (p_{N+1},\ldots,p_{\kappa}, q_1,\ldots,q_{\ell}): p_{N+1}+\ldots+p_{\kappa} \leq 2\ell, q_1+\ldots+q_{\ell} \leq \ell+\kappa \big\} \\
        & \leq \kappa^{2\ell} \cdot (\ell+\kappa )^{\ell} \leq (2D)^{3\ell}\,, 
    \end{align*}
    where the last inequality follows from $\kappa,\ell \leq |E(S)|\leq D$. Combined with \eqref{eq-pqfix-enum}, this completes the proof of the lemma.
\end{proof}

\begin{lemma}{\label{lem-enu-Subset-small-graph}}
    For $S \subset \mathcal K_n$ with $|E(S)| \leq D$, we have
    \begin{equation}{\label{eq-enu-Subset-small-graph}}
    \begin{aligned}
        \# \Big\{ & H: H \ltimes S, \mathtt C_j(H) = \emptyset \mbox{ for } j \leq N ; \tau(S)-\tau(H) = \ell,  \\ 
        & \mathcal L(S) \cup (\cup_{j>N}V(\mathcal C_j(S))) \subset V(H) \Big\} \leq 2D^{15\ell} \,.
    \end{aligned}
    \end{equation}
\end{lemma}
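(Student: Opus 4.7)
The plan is to apply Corollary~\ref{cor-revised-decomposition-H-Subset-S} to the pair $(H,S)$ and bound the number of valid $H$'s by counting the resulting decompositions of $E(S)\setminus E(H)$. Since $\mathcal L(S)\subset V(H)$, part~(iii) of the Corollary yields at most $\mathtt t\leq 5\ell$ paths in the decomposition. Moreover, each cycle $C_{\mathtt i}$ in the decomposition is independent in $S$ with $V(C_{\mathtt i})\cap V(H)=\emptyset$; combined with the hypothesis $\cup_{j>N}V(\mathcal C_j(S))\subset V(H)$, every such $C_{\mathtt i}$ must have length at most $N$.

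The key structural observation is that every interior vertex $v$ of any path $P_{\mathtt j}$ satisfies $\deg_S(v)=2$. Indeed, by condition~(ii) of the Corollary, $v\notin V(H)$, $v$ lies outside every other part of the decomposition, and $v\notin\mathcal L(S)$; the two path-edges at $v$ lie in $E(S)\setminus E(H)$, and any additional edge of $S$ incident to $v$ would either lie in $E(H)$ (forcing $v\in V(H)$, a contradiction) or in some other cycle/path of the decomposition (again contradicting~(ii)). Hence each path $P_{\mathtt j}$ is a contiguous sub-path of a maximal ``degree-$2$ chain'' (super-edge) of $S$ obtained by contracting vertices of degree $2$ in $S$.

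The next step is to bound the number of candidate paths. Since $|E(S)|\leq D$, the graph $S$ contains at most $D$ super-edges, and a super-edge of length $L$ admits at most $O(L^2)$ sub-paths (including loop sub-paths at a single endpoint). Thus the total number of candidate paths with degree-$2$ interior is at most $\sum_{\text{super-edges}} O(L^2) \leq O\bigl((\sum L)^2\bigr) \leq O(D^2)$. Choosing $\mathtt t\leq 5\ell$ edge-disjoint such paths gives at most $\binom{O(D^2)}{\mathtt t}\leq O(D^{2\mathtt t})$ possibilities; summing over $\mathtt t\in\{0,1,\ldots,5\ell\}$ and absorbing constants produces a bound of $2D^{15\ell}$ with plenty of slack in the exponent.

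Finally, I would argue that the collection $\{P_{\mathtt j}\}$ forces everything else: the cycles $\{C_{\mathtt i}\}$ must be precisely the independent cycles of $S$ of length $\leq N$ whose edge sets are disjoint from $\cup E(P_{\mathtt j})$, for otherwise $H$ would still contain some forbidden $j$-cycle ($j\leq N$); and $V(H)$ is determined by the constraints (cycle vertices and path interiors excluded, shared endpoints of paths excluded, all other vertices of $V(E(S))$ included to support edges of $E(H)$, and $\mathcal I(S)\subset V(H)$). Since each valid $H$ admits at least one such decomposition, the number of valid $H$'s is at most the number of valid decompositions. The main obstacles I expect are the careful verification of the degree-$2$ claim for loop paths and for paths whose endpoints are themselves of degree $2$ in $S$ (arising from partial removals of independent small cycles), and the fact that the decomposition is not unique for a given $H$, which is fine for an upper bound but must be tracked cleanly.
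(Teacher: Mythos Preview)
Your proposal is correct and follows essentially the same approach as the paper: both apply Corollary~\ref{cor-revised-decomposition-H-Subset-S}, use $\mathtt t\le 5\ell$, observe that path interiors consist of degree-$2$ vertices of $S$, argue that the cycle part of the decomposition is forced once the paths are fixed, and then enumerate the paths. The only cosmetic difference is bookkeeping in the path count: the paper bounds each $P_{\mathtt j}$ by $D^2$ endpoint choices times $D$ ways to follow the degree-$2$ chain (giving $\sum_{\mathtt t\le 5\ell}D^{3\mathtt t}\le 2D^{15\ell}$), whereas you count all candidate sub-paths of super-edges as $O(D^2)$ and then choose $\mathtt t$ of them.
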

\begin{proof}
    Take $H$ as an element in the set of \eqref{eq-enu-Subset-small-graph}. Using Corollary~\ref{cor-revised-decomposition-H-Subset-S}, we have $S \doublesetminus H$ can be written as (for some $\mathtt m, \mathtt t\geq 0$)
    \[
        S \doublesetminus H = \big( \sqcup_{\mathtt i=1}^{\mathtt m} C_{\mathtt i} \big) \bigsqcup \big( \sqcup_{\mathtt j=1}^{\mathtt t} P_{\mathtt j} \big) \,,
    \]
    where $\{ C_{\mathtt i} : 1 \leq \mathtt i \leq \mathtt m \}$ is a collection of independent cycles of $S$ and $\{ P_{\mathtt j} : 1 \leq \mathtt j \leq \mathtt t \}$ is a collection of paths satisfying (i)--(iii) in Corollary~\ref{cor-revised-decomposition-H-Subset-S}. Thus, in order to bound the enumeration of $H$ it suffices to bound the enumeration of $\{C_{\mathtt i}:1\leq \mathtt i\leq \mathtt m\}$ and $\{P_{\mathtt j}:1\leq \mathtt j\leq \mathtt t\}$. In addition, since for any valid $H$ we have $\mathtt C_j(H) = \emptyset$ for $j \leq N$ and $\cup_{j>N} \cup_{C \in \mathcal C_j(S)} C \subset V(H)$, the choice of $\{ C_{\mathtt i} : 1 \leq \mathtt i \leq \mathtt m \}$ is fixed given $\{ P_{\mathtt j} : 1 \leq \mathtt j \leq \mathtt t \}$. Thus, it suffices to upper-bound the total enumeration of $\{ P_{\mathtt j} : 1 \leq \mathtt j \leq \mathtt t \}$. 
    Given $\mathtt t \leq 5(\tau(S)-\tau(H)) =5\ell$, for each $1 \leq \mathtt j \leq \mathtt t$, the enumeration of $\operatorname{EndP}(P_{\mathtt j})$ is bounded by $D^2$. In addition, given $\operatorname{EndP}(P_{\mathtt j})$, since (by (ii) in Corollary~\ref{cor-revised-decomposition-H-Subset-S}) the vertices in $V(P_{\mathtt j}) \setminus \operatorname{EndP}(P_{\mathtt j})$ have exactly degree $2$ in $S$, the enumeration of $P_{\mathtt j}$ is bounded by $D$ (since once you choose the vertex right after the starting point of $P_{\mathtt j}$, the whole path is determined). Thus, the total enumeration of $\{ P_{\mathtt j} : 1 \leq \mathtt j \leq \mathtt t \}$ is bounded by
    \[
        \sum_{\mathtt t \leq 5\ell} D^{3\mathtt t} \leq 2 D^{15\ell} \,,
    \]
    finishing the proof of the lemma.
\end{proof}

\begin{lemma}{\label{lem-enu-general-subset-large-graph}}
    For $H \subset \mathcal{K}_n$, we have (below we write $\mathfrak P=\{(p_{N+1},\ldots,p_D): \sum_{i=N+1}^{D} p_i$ $\leq p, p_l \geq c_l \text{ for all } N+1 \leq l\leq D \}$ and $\sum_\mathfrak P$ for the summation over $(p_{N+1}, \ldots, p_D)\in \mathfrak P$)
    \begin{align}
        & \#\Big\{ S \mbox{ admissible}\!: H \ltimes S; |\mathfrak C_l(S,H)|=c_l \mbox{ for } l > N; |\mathcal L(S) \setminus V(H)| + \tau(S)-\tau(H) = m;  \nonumber \\
        & |E(S)|-|E(H)|=p, |V(S)|-|V(H)|=q, |E(S)| \leq D \Big\} \leq (2D)^{3m} n^{q} \sum_{\mathfrak P} \prod_{j=N+1}^{D} \frac{1}{p_j!} \,. \label{eq-enu-general-subset-large-graph}
    \end{align}
\end{lemma}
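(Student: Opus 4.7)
The approach is to use the canonical decomposition of Lemma~\ref{lem-decomposition-H-Subset-S} as a bookkeeping device: for each admissible $S$ in the target set, this decomposition produces $\mathtt m'\ge 0$ cycles $C_{\mathtt 1},\ldots,C_{\mathtt m'}$ vertex-disjoint from $V(H)$ together with exactly $m$ paths $P_{\mathtt 1},\ldots,P_{\mathtt m}$ (item~(iv) of that lemma applied with $|\mathcal L(S)\setminus V(H)|+\tau(S)-\tau(H)=m$). Admissibility of $S$ forces every $C_{\mathtt i}$ to have length $>N$. Moreover, any $C\in \mathfrak C_l(S,H)$ is independent in $S$ and vertex-disjoint from $V(H)$, so no edge of $S$ outside $C$ meets $V(C)$; in particular $C$ always remains a valid candidate in the greedy step~(a) of the decomposition's construction and so is eventually added, giving $p_l:=\#\{\mathtt i:|V(C_{\mathtt i})|=l\}\ge c_l$ for every $l>N$. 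The edge accounting $\sum_l l p_l+\sum_{\mathtt j}q_{\mathtt j}=p$ (with $q_{\mathtt j}:=|E(P_{\mathtt j})|$) then forces $\sum_l p_l\le p$, so the tuple $(p_l)$ lies in $\mathfrak P$.

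The count of admissible $S$ will then be bounded by the count of decomposition tuples. For a fixed $(p_l)\in \mathfrak P$ and a fixed composition $(q_{\mathtt j})_{\mathtt j=1}^m$ with $q_{\mathtt j}\ge 1$ and $\sum_{\mathtt j}q_{\mathtt j}=p-\sum_l lp_l$ (of which there are at most $(2D)^m$), the cycles contribute at most $\prod_l n^{lp_l}/p_l!$ choices (ordered vertex lists modulo cyclic, reflection, and cycle-class symmetries). The interior of each $P_{\mathtt j}$ contributes at most $n^{q_{\mathtt j}-1}$ further choices, since by property~(iii) of the decomposition those $q_{\mathtt j}-1$ vertices are fresh. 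The crucial step is to show that the ordered pair of endpoints of $P_{\mathtt j}$ admits at most $(2D)^2$ choices: this holds because every endpoint is incident to an edge of $E(S)\setminus E(H)$ and hence lies in $V(S\doublesetminus H)$, a set of size at most $2|E(S\doublesetminus H)|=2p\le 2D$. Multiplying the path contributions and summing over compositions and profiles collapses to
\begin{equation*}
(2D)^{3m}\,n^{p-m}\sum_{\mathfrak P}\prod_{j=N+1}^D \frac{1}{p_j!},
\end{equation*}
and the inequality $p-m\le q$ (the $\sum_l l p_l$ cycle vertices and the $\sum_{\mathtt j}(q_{\mathtt j}-1)$ interior path vertices are pairwise disjoint and all lie in $V(S)\setminus V(H)$, totalling $p-m$) lets us replace $n^{p-m}$ by $n^q$, yielding the claimed bound.

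The main obstacle will be the endpoint enumeration. A priori an endpoint may lie anywhere in $V(H)$, possibly including vertices of $\mathcal I(H)$, and a naive sequential enumeration over $V(H)\cup(\text{existing cycle/path vertices})$ would introduce a dependence on $|V(H)|$ or $|\mathcal I(H)|$ that the target bound cannot absorb. The resolution is precisely the observation that such endpoints must sit in $V(S\doublesetminus H)$, whose cardinality depends only on $|E(S\doublesetminus H)|\le p\le D$; this bounds the endpoint choice by $(2D)^2$ uniformly in $|V(H)|$ and mirrors the mechanism already used in the proof of Lemma~\ref{lem-enu-Subset-large-graph}.
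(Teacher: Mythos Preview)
Your proposal is correct and follows essentially the same route as the paper: decompose $E(S)\setminus E(H)$ via Lemma~\ref{lem-decomposition-H-Subset-S}, use admissibility to force cycle lengths $>N$ and $p_l\ge c_l$, and enumerate cycles and paths with the $(2D)^2$ endpoint bound coming from $|V(S\doublesetminus H)|\le 2p\le 2D$ (which is exactly the mechanism behind Lemma~\ref{lem-enu-cycle-path}). The only cosmetic difference is in the final accounting: the paper first enumerates the set $\mathcal L(S)\setminus V(H)$ at cost $n^{m-p+q}$ and then applies Lemma~\ref{lem-enu-cycle-path} with $\mathsf A=V(H)\cup(\mathcal L(S)\setminus V(H))$ to pick up $n^{p-m}$, landing on $n^q$ exactly, whereas you reach $n^{p-m}$ directly and then upgrade to $n^q$ via the inequality $p-m\le q$.
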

\begin{proof}
    Take $S$ as an element in the set of \eqref{eq-enu-general-subset-large-graph}. By Lemma~\ref{lem-decomposition-H-Subset-S}, we can decompose $S\doublesetminus H$ as (for some $\mathtt t\geq 0$)
    \[
        S \doublesetminus H = \big( \sqcup_{\mathtt i=1}^{\mathtt t} C_{\mathtt i} \big) \bigsqcup \big( \sqcup_{\mathtt j=1}^{m} P_{\mathtt j} \big) \,,
    \]
    where $\{ C_{\mathtt i} : 1 \leq \mathtt i \leq \mathtt t \}$ is a collection of disjoint cycles and $\{ P_{\mathtt j} : 1 \leq \mathtt j \leq m \}$ is a collection of paths satisfying (i)--(iv) in Lemma~\ref{lem-decomposition-H-Subset-S}. In addition, we have $|V(C_{\mathtt i})|>N$ for $1\leq \mathtt i\leq \mathtt t$ since $S$ is admissible. As before, it suffices to bound the enumeration of $\{ C_{\mathtt i} : 1 \leq \mathtt i \leq \mathtt t \}$ and $\{ P_{\mathtt j} : 1 \leq \mathtt j \leq m \}$. To this end, we assume $p_{x}=\#\{ \mathtt i: |V(C_{\mathtt i})|=x \}$ and $q_{\mathtt j} = |E(P_{\mathtt j})|$. Then we have
    \begin{equation}{\label{eq-constraint-p-q-3}}
        \sum_{i=N+1}^{D} ip_i + \sum_{\mathtt j=1}^{m} q_{\mathtt j} = |E(S)|-|E(H)| = p \mbox{ and } p_i \geq c_i \mbox{ for } N+1 \leq i \leq D \,.
    \end{equation}
    Thus, each valid choice of $p_{N+1}, \ldots, p_D$ and $q_1, \ldots, q_m$ satisfies that $\sum_{j=1}^m q_j \leq p$, implying that the enumeration of valid $q_1, \ldots, q_m$ is bounded by $(2D)^m$. For each valid $q_1, \ldots, q_m$, we fix $p_{N+1}, \ldots, p_D$ such that \eqref{eq-constraint-p-q-3} holds. Clearly, we have $(p_{N+1}, \ldots, p_D) \in \mathfrak P$. We now bound the enumeration of $S$ provided with fixed $p_{N+1}, \ldots, p_D$ and $q_1, \ldots, q_m$.
    Noting that $|\mathcal{L}(S)\setminus V(H)|=m-p+q$, we have at most $n^{m-p+q}$ choices for $\mathcal{L}(S)\setminus V(H)$. Given $\mathcal{L}(S)\setminus V(H)$, applying Lemma~\ref{lem-enu-cycle-path} with $\mathsf A=V(H) \cup (\mathcal{L}(S)\setminus V(H))$ we get that the enumeration of $\{ C_{\mathtt i} : 1 \leq \mathtt i \leq \mathtt t \}$ and $\{ P_{\mathtt j} : 1 \leq \mathtt j \leq m \}$ is bounded by
    \begin{align*}
        (2D)^{2m} n^{ q_1+\ldots+q_{m}+ (N+1)p_{N+1}+\ldots+Dp_D -m } \prod_{i=N+1}^{D} \frac{1}{p_i!} \,.
    \end{align*}
    Thus, (given $p_{N+1},\ldots,p_D$ and $q_{\mathtt 1}, \ldots, q_{m}$) the total enumeration of $\{ C_{\mathtt i} : 1 \leq \mathtt i \leq \mathtt t \}$ and $\{ P_{\mathtt j} : 1 \leq \mathtt j \leq m \}$ is bounded by
    \begin{align*}
        (2D)^{2m} n^{ m-p+q + q_1+\ldots+q_{m}+ (N+1)p_{N+1}+\ldots+Dp_D -m } \prod_{i=N+1}^{D} \frac{1}{p_i!} = (2D)^{2m} n^{q} \prod_{i=N+1}^{D} \frac{1}{p_i!} \,.
    \end{align*}
    Combined with preceding discussions on the enumeration for $q_1, \ldots, q_m$ and the requirement for $p_{N+1}, \ldots, p_D$, this implies the desired bound as in \eqref{eq-enu-general-subset-large-graph}.    
\end{proof}

\begin{lemma}{\label{lem-enu-general-subset-small-graph}}
    For an admissible $S$ with $|E(S)| \leq D$, we have
    \begin{equation}{\label{eq-enu-general-subset-small-graph}}
    \begin{aligned}
        \# \Big\{ & H: H \ltimes S, |\mathcal L(S) \setminus V(H)| + \tau(S)-\tau(H) = m ,  \\
        & \mathfrak C_j(S;H) = m_j, N+1 \leq j \leq D \Big\} \leq D^{15m} \prod_{j=N+1}^{D} \binom{ |\mathcal C_j(S)| }{ m_j } \,.
    \end{aligned}
    \end{equation}
\end{lemma}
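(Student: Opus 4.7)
The plan is to prove Lemma~\ref{lem-enu-general-subset-small-graph} by closely parallelling the proof of Lemma~\ref{lem-enu-Subset-small-graph}, but inserting an initial combinatorial step that handles the choice of the cycles in $\mathfrak C(S,H)$. First, I would observe that for any valid $H$ in the set being counted, the family $\mathcal F := \mathfrak C(S,H) = \sqcup_{j=N+1}^{D} \mathfrak C_j(S,H)$ is precisely a selection of $m_j$ pairwise vertex-disjoint independent $j$-cycles from $\mathcal C_j(S)$ for each $j>N$ (note that $\mathcal C_j(S)=\emptyset$ for $j\le N$ since $S$ is admissible). Since distinct independent cycles of $S$ are vertex-disjoint, the number of choices for $\mathcal F$ is exactly $\prod_{j=N+1}^{D}\binom{|\mathcal C_j(S)|}{m_j}$. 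The task then reduces to showing that, for each fixed $\mathcal F$, the number of valid $H$ with $\mathfrak C(S,H)=\mathcal F$ is at most $D^{15m}$.

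Next, I would reduce to an auxiliary graph $S'$ obtained from $S$ by deleting the vertices $V(\mathcal F)$. Because each cycle in $\mathcal F$ is independent in $S$, the vertices of $V(\mathcal F)$ have $S$-degree $2$ with both edges internal to their cycle, so $S'$ is well-behaved: $\mathcal L(S')=\mathcal L(S)$, $\mathcal I(S')=\mathcal I(S)$, $\tau(S')=\tau(S)$, and $\mathcal C_j(S')=\mathcal C_j(S)\setminus\mathcal F_j$ (with $\mathcal C_j(S')=\emptyset$ for $j\le N$). For any $H$ with $\mathfrak C(S,H)=\mathcal F$, we automatically have $V(H)\cap V(\mathcal F)=\emptyset$, so $H\ltimes S'$, the parameter $m$ is preserved, and crucially $\mathfrak C_j(S',H)=\emptyset$ for all $j>N$.

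Third, I would mimic the proof of Lemma~\ref{lem-enu-Subset-small-graph} on $(H,S')$: apply Corollary~\ref{cor-revised-decomposition-H-Subset-S} to decompose $S'\doublesetminus H = \bigsqcup_\mathtt{i}C'_\mathtt{i}\sqcup\bigsqcup_\mathtt{j}P'_\mathtt{j}$ with at most $\mathtt t\le 5m$ paths by (iii); each path is determined by its (at most) two endpoints ($\le D^2$ choices) and its interior, which by (ii) consists of vertices of $S'$-degree $2$, giving at most $D$ further choices per path. Summing $\sum_{\mathtt t\le 5m}D^{3\mathtt t}$ absorbs into $D^{15m}$. Combined with Step~1, we get the target bound $D^{15m}\prod_j\binom{|\mathcal C_j(S)|}{m_j}$, provided we can argue that, given the paths, the cycles $\{C'_\mathtt{i}\}$ (and hence $E(H)$) are determined.

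The main obstacle is precisely this last claim. In Lemma~\ref{lem-enu-Subset-small-graph} it rested on the strong hypothesis $\mathcal L(S)\cup\bigcup_{j>N}V(\mathcal C_j(S))\subset V(H)$, whereas here we only have the weaker $\mathfrak C(S',H)=\emptyset$. For each independent cycle $C\in\mathcal C(S')$ disjoint from $\sqcup P'_\mathtt{j}$, independence of $C$ in $S'$ forces its edges to lie entirely in $E(H)$ or entirely in $\sqcup E(C'_\mathtt{i})$ (no mixed case), so one must rule out the ``$C\subset H$'' alternative: for $|V(C)|\le N$ this is blocked automatically since $H\subset S$ and $S$ admissible gives $\mathtt C_j(H)=\emptyset$ for $j\le N$, while for $|V(C)|>N$ the alternative $C\subset H$ is incompatible with the structural requirements on how $V(H)$, $\mathcal L(S')$, and $V(\mathcal C(S'))$ interact through the decomposition's leaf/cycle constraints in Corollary~\ref{cor-revised-decomposition-H-Subset-S} and the parameter $m$. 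Handling this case analysis carefully, and confirming that any residual freedom in the isolated-vertex part of $V(H)$ is absorbed into the bound, is the delicate technical point that must be executed in the full proof.
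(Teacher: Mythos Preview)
Your approach is essentially the same as the paper's, but you have introduced an unnecessary detour through $S'$ and, in doing so, have manufactured an obstacle that does not exist.

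The paper applies Corollary~\ref{cor-revised-decomposition-H-Subset-S} directly to the pair $(H,S)$. The key observation---which dissolves your concern---is that the cycle family $\{C_{\mathtt i}\}$ produced by Corollary~\ref{cor-revised-decomposition-H-Subset-S} is \emph{always exactly} $\mathfrak C(S,H)$. Indeed, each $C_{\mathtt i}$ is by construction an independent cycle of $S$ with $V(C_{\mathtt i})\cap V(H)=\emptyset$, hence lies in $\mathfrak C(S,H)$; conversely, every $C\in\mathfrak C(S,H)$ has all vertices of $S$-degree $2$, so no other cycle picked in step~(a) of Lemma~\ref{lem-decomposition-H-Subset-S} can share a vertex with it, forcing $C$ itself to be picked and then retained (as it lies in $\mathcal C(S)$). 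Since $S$ is admissible, all these cycles have length at least $N+1$, and the constraint $|\mathfrak C_j(S,H)|=m_j$ bounds their enumeration by $\prod_{j}\binom{|\mathcal C_j(S)|}{m_j}$. The paths are then counted exactly as in Lemma~\ref{lem-enu-Subset-small-graph}, giving $D^{15m}$.

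Your route---fix $\mathcal F=\mathfrak C(S,H)$, pass to $S'=S\setminus V(\mathcal F)$, then decompose---amounts to the same thing. But you then worry about cycles $C'_{\mathtt i}$ in the decomposition of $S'\doublesetminus H$ and about ruling out ``$C\subset H$'' for $C\in\mathcal C(S')$. This worry is empty: by the same observation, the cycles in Corollary~\ref{cor-revised-decomposition-H-Subset-S} applied to $(H,S')$ are exactly $\mathfrak C(S',H)$, which you yourself correctly computed to be empty. Hence the decomposition of $S'\doublesetminus H$ consists of paths only, $E(H)$ is determined by them, and there is nothing left to ``rule out''. The case analysis you sketch in your final paragraph is not needed.
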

\begin{proof}
    Take $H$ as an element in the set of \eqref{eq-enu-general-subset-small-graph}. Using Corollary~\ref{cor-revised-decomposition-H-Subset-S}, we have $S \doublesetminus H$ can be written as (for some $\mathtt t, \mathtt m\geq 0$)
    \[
        S \doublesetminus H = \big( \sqcup_{\mathtt i=1}^{\mathtt m} C_{\mathtt i} \big) \sqcup \big( \sqcup_{\mathtt j=1}^{\mathtt t} P_{\mathtt j} \big) \,,
    \]
    where $\{ C_{\mathtt i} : 1 \leq \mathtt i \leq \mathtt m \}$ is a collection of independent cycles of $S$ and $\{ P_{\mathtt j} : 1 \leq \mathtt j \leq \mathtt t \}$ is a collection of paths satisfying (i)--(iii) in Corollary~\ref{cor-revised-decomposition-H-Subset-S}. In addition, since $S$ is admissible, we have $|V(C_{\mathtt i})| \geq N+1$. Thus, the total enumeration of $\{ C_{\mathtt i} : 1 \leq \mathtt i \leq \mathtt m \}$ is bounded by $\prod_{j=N+1}^{D} \binom{| \mathcal C_j(S) |}{ m_j }$.
    Following the proof of Lemma~\ref{lem-enu-Subset-small-graph}, the total enumeration of $\{ P_{\mathtt j} : 1 \leq \mathtt j \leq \mathtt t \}$ is bounded by $D^{15m}$ in the following manner: for each $\mathtt j$,  we first bound the enumeration for $\operatorname{EndP}(P_{\mathtt j} )$ by $D^2$, and given this we bound the enumeration for $P_{\mathtt j}$ by $D$, and we finally sum over $\mathtt j$. Altogether, this completes the proof of the lemma.
\end{proof}

\section{Proof of Lemma~\ref{lem-trun-expectation-given-pi}}{\label{sec:Proof-Lem-4.10}}

Recall the definition of $G'$ in Definition~\ref{def-G'-P'}. Let $\mathcal{F}_G'=\sigma(\{G_e':e\in \operatorname{U}\})$ be the $\sigma$-field generated by the edge set of $G'$. It is important to note that $\mathcal{F}_G'$ is independent of $\pi_*$. The first step of our proof is to condition on $\mathcal F_G'$. Clearly we have
\begin{align*}
    \mathbb{E}_{\Pb_\pi'}[ A_e' \mid \mathcal F_G' ] = s G_e' \,, \ \mathbb{E}_{\Pb_\pi'}[ B_e' \mid \mathcal F_G' ] = s G_{\pi^{-1}(e)}' \,, \ \mathbb{E}_{\Pb_\pi'}[ A_e' B_{\pi(e)}' \mid \mathcal F_G' ] = s^2 G_{e}' \,.
\end{align*}
Thus,
\begin{align}
    & \mathbb{E}_{\Pb_\pi'} \Bigg[ \prod_{ e_1 \in E(S_1) } \frac{ A_{e_1}' - \frac{\lambda s}{n} }{ \sqrt{\lambda s/n} } \prod_{ e_2 \in E(S_2) } \frac{ B_{e_2}'-\frac{\lambda s}{n} }{ \sqrt{\lambda s/n} } \Bigg] \nonumber \\
    =& \mathbb{E}_{\Pb_\pi'} \Bigg[  \mathbb{E}_{\Pb_\pi'} \Big[ \prod_{ e_1 \in E(S_1) } \frac{ A'_{e_1} - \frac{\lambda s}{n} }{ \sqrt{\lambda s/n} } \prod_{ e_2 \in E(S_2) } \frac{ B'_{e_2}-\frac{\lambda s}{n} }{ \sqrt{\lambda s/n} } \mid \mathcal F_G' \Big]  \Bigg] \nonumber \\
    =& s^{\frac{1}{2}(|E(S_1)|+|E(S_2)|)} \cdot \mathbb{E}_{\Pb'_{\pi}}\Bigg[  \prod_{ e_1 \in E(S_1) } \frac{ G_{e_1}' - \frac{\lambda}{n} }{ \sqrt{\lambda/n} } \prod_{ e_2 \in E(\pi^{-1}(S_2)) } \frac{ G_{e_2}'-\frac{\lambda}{n} }{ \sqrt{\lambda/n} }  \Bigg] \,. \label{eq-B.2}
\end{align}
Thus, it suffices to show that for all admissible $S_1, S_2 \Subset \mathcal K_n$ and $H=S_1 \cap S_2$ we have (note that by replacing $S_2$ to $\pi^{-1}(S_2)$ the right-hand side of \eqref{eq-B.2} becomes the left-hand side of \eqref{eq-lem-technical-relax-1})
\begin{align}
    & s^{\frac{1}{2}(|E(S_1)|+|E(S_2)|)} \cdot \mathbb{E}_{\Pb'_*}\Bigg[  \prod_{ e_1 \in E(S_1) } \frac{ G_{e_1}' - \frac{\lambda}{n} }{ \sqrt{\lambda/n} } \prod_{ e_2 \in E(S_2) } \frac{ G_{e_2}'-\frac{\lambda}{n} }{ \sqrt{\lambda/n} }  \Bigg] \nonumber \\
    \leq\ & (\sqrt{\alpha}-\delta/2)^{|E(H)|} \sum_{H \ltimes K_1 \subset S_1} \sum_{H \ltimes K_2 \subset S_2} \tfrac{ \mathtt M(S_1,K_1) \mathtt M(S_2,K_2) \mathtt M(K_1,H) \mathtt M(K_2,H) }{ n^{ \frac{1}{2}( |V(S_1)|+|V(S_2)|-2|V(H)| ) } } \,. \label{eq-lem-technical-relax-1}
\end{align}
We estimate the left-hand side of \eqref{eq-lem-technical-relax-1} by the following two-step arguments. The first step is to deal with the special case $S_1=S_2=H$, where our strategy is to bound the left-hand side of \eqref{eq-lem-technical-relax-1} by its (slightly modified) first moment under $\Pb_*$.
\begin{lemma}{\label{lem-first-moment-bound}}
    For $H$ admissible with $|E(H)|\leq D$ we have
    \begin{align}
        & s^{|E(H)|} \mathbb{E}_{\Pb_*} \Bigg[ \prod_{e \in E(H)}\frac{ (G_{e}-\tfrac{\lambda}{n})^2 }{ \lambda/n } \Bigg] 
        \leq\ O(1) \cdot (\sqrt{\alpha}-\delta/4)^{|E(H)|} \,. \label{eq-lem-B.1}
    \end{align}
\end{lemma}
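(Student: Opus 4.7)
The plan is to reduce the expectation under $\Pb_*$ to an expectation over the community labels $\sigma$, and then perform a subgraph expansion whose convergence is driven by the admissibility of $H$.

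First, conditionally on $\sigma$ the parent-graph edges $G_e$ are mutually independent Bernoullis, so an edge-by-edge computation gives $\mathbb{E}_{\Pb_*}[(G_e-\lambda/n)^2/(\lambda/n) \mid \sigma] = (1+\epsilon\omega(\sigma_i,\sigma_j))(1-\lambda(1+\epsilon\omega)/n)$. Taking the product over $E(H)$ and absorbing the second factor (which contributes $1+o(1)$ since $|E(H)|\le D=n^{o(1)}$) gives
\[
\mathbb{E}_{\Pb_*}\Big[\prod_{e\in E(H)} \tfrac{(G_e-\lambda/n)^2}{\lambda/n}\Big] \le [1+o(1)]\,\mathbb{E}_{\sigma}\Big[\prod_{e\in E(H)}(1+\epsilon\omega(\sigma_i,\sigma_j))\Big].
\]
Expanding the right-hand side as $\sum_{K\subset H} \epsilon^{|E(K)|}\mathbb{E}_\sigma[\prod_{e\in K}\omega]$ and invoking Item~(i) of Lemma~\ref{lemma_observation_appendix_C} (leaf integration), only subgraphs $K$ with every $K$-vertex of degree $\ge 2$ contribute; iterating the leaf removal restricts the expansion to $K\subset H^{(2)}$, the $2$-core of $H$. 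Furthermore, the identity $M^2=kM$ for $M(a,b)=k\mathbf{1}_{a=b}-1$ yields $\mathbb{E}_{\sigma_w}[\omega(\sigma_u,\sigma_w)\omega(\sigma_w,\sigma_v)]=\omega(\sigma_u,\sigma_v)$, so integrating out degree-$2$ vertices contracts paths into edges and in particular $\mathbb{E}[\prod_{e\in K}\omega]=(k-1)^r$ whenever $K$ is a vertex-disjoint union of $r$ cycles.

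Next, I combine this with the $s^{|E(H)|}$ factor and exploit admissibility. Since $H\setminus H^{(2)}$ is a forest attached to the $2$-core, $|E(H)|-|E(H^{(2)})|=|V(H)|-|V(H^{(2)})|$, and $s\le \sqrt{\alpha}-\delta$ immediately gives $s^{|E(H)|-|E(H^{(2)})|}\le (\sqrt{\alpha}-\delta/4)^{|E(H)|-|E(H^{(2)})|}$. For the $2$-core, admissibility forces every cycle to have length at least $N+1$. In the case $H^{(2)}$ is a vertex-disjoint union of cycles of lengths $\ell_1,\dots,\ell_m$, the per-cycle bound
\[
s^{\ell_i}\big(1+(k-1)\epsilon^{\ell_i}\big) \le s^{\ell_i-1}\cdot s(1+k\epsilon^{N}) \le s^{\ell_i-1}(\sqrt{\alpha}-\tfrac{\delta}{2}) \le (\sqrt{\alpha}-\tfrac{\delta}{4})^{\ell_i}
\]
holds (the second inequality is the first constraint in \eqref{eq-def-N}, and the third reduces to $(\sqrt\alpha-\delta)^{\ell_i-1}(\sqrt\alpha-\delta/2)\le(\sqrt\alpha-\delta/4)^{\ell_i}$, a monotone inequality). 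Multiplying over $i$ and combining with the forest part yields the desired bound $(\sqrt{\alpha}-\delta/4)^{|E(H)|}$.

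The main obstacle is the case $\tau(H^{(2)})>0$, where the contributing $K\subset H^{(2)}$ can have more complex topology (e.g., theta-type multi-cycles) than disjoint cycle unions. In this regime I reduce each such $K$ to its topological reduction by contracting degree-$2$ paths, bound the resulting $\omega$-expectation by $(k-1)^{\beta_1(K)}$ via the eigendecomposition $M=kP_\perp$, and enumerate the reduced topologies using Lemma~\ref{lem-enu-cycle-path} and Lemma~\ref{lem-enu-general-subset-large-graph}. Admissibility ensures $\tau(H^{(2)})\le O(D/\log n)$, so only a bounded number of reduced topologies appear at each order, and the geometric convergence of the resulting combinatorial sum is guaranteed by the constraints $(1-\delta/2)^N(N+1)\le 1$ and $10k(1-\delta)^N\le (1-\delta/2)^N$ in \eqref{eq-def-N}. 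These corrections contribute only an absolute constant, completing the proof.
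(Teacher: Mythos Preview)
Your reduction to the $2$-core and the treatment of the vertex-disjoint-cycle case are correct and match the paper. The divergence is in how you handle the general $2$-core (i.e.\ $\tau(H^{(2)})>0$), and there the proposal has a genuine gap.

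Your subgraph-expansion route requires two ingredients that are not established. First, the bound $|\mathbb{E}_\sigma[\prod_{e\in K}\omega_e]|\le (k-1)^{\beta_1(K)}$ does not follow just from ``$M=kP_\perp$'': writing the expectation as $k^{|E(K')|-|V(K')|}\sum_\sigma\prod(P_\perp)_{\sigma_i\sigma_j}$ over the reduced multigraph $K'$, there is no obvious reason the tensor contraction is bounded by $k^{-c(K)}$ without further work. Second, the enumeration of min-degree-$2$ subgraphs $K\subset H^{(2)}$ is not what Lemma~\ref{lem-enu-cycle-path} or Lemma~\ref{lem-enu-general-subset-large-graph} provide: those lemmas enumerate \emph{supergraphs} $S\supset H$ (or tuples of cycles and paths in $\mathcal K_n$), not subgraphs of a fixed small graph subject to a min-degree constraint. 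So as written the general case is only a sketch.

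The paper avoids both issues by never expanding into subgraphs. After reducing to the $2$-core, it applies Corollary~\ref{cor-revised-decomposition-H-Subset-S} with $\emptyset\ltimes H$ to write $E(H)$ as a disjoint union of $\mathtt m$ independent cycles $C_i$ (each of length $\ge N$ by admissibility) and $\mathtt t\le 5\tau(H)$ paths $P_j$. Conditioning on $\{\sigma_u:u\in\cup_j\operatorname{EndP}(P_j)\}$ makes the factors $\prod_{e\in C_i}(1+\epsilon\omega_e)$ and $\prod_{e\in P_j}(1+\epsilon\omega_e)$ conditionally independent, and Claim~\ref{claim-expectation-over-chain} evaluates each conditional expectation \emph{exactly} as $1+(k-1)\epsilon^{|E(C_i)|}$ or $1+\epsilon^{|E(P_j)|}\omega(\cdot,\cdot)\le k$. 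This gives directly
\[
s^{|E(H)|}\mathbb{E}_\sigma\Big[\prod_{e\in H}(1+\epsilon\omega_e)\Big]\ \le\ k^{\mathtt t}\,(\sqrt{\alpha}-\delta/2)^{|E(H)|}\ \le\ k^{5\tau(H)}\,(\sqrt{\alpha}-\delta/2)^{|E(H)|},
\]
and admissibility forces $\tau(H)=o(|E(H)|)$ (since $\Phi(H)\ge(\log n)^{-1}$ with $|E(H)|\le D=n^{o(1)}$), so $k^{5\tau(H)}$ is absorbed into the $\delta/4$ slack. No subgraph enumeration or $\beta_1$-bound is needed.
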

To show Lemma~\ref{lem-first-moment-bound}, we first prove a useful lemma regarding the conditional expectation of a certain product along a path, given its endpoints. Denote
\begin{equation}{\label{eq-def-omega-sigma-i,j}}
    \omega(\sigma_i,\sigma_j) = 
    \begin{cases}
    k-1 , & \sigma_i=\sigma_j \,; \\
    -1 , & \sigma_i \ne \sigma_j \,. 
    \end{cases}
\end{equation}
\begin{claim}{\label{claim-expectation-over-chain}}
    For a path $\mathcal{P}$ with $V(\mathcal P)= \{ v_0, \ldots, v_l \} $ and $\operatorname{EndP}(\mathcal P)=\{ v_0,v_l \}$, we have 
    \begin{equation}{\label{eq-expectation-over-chain}}
    \begin{aligned}
        \mathbb{E}_{\sigma \sim \nu} \Big[ \prod_{i=1}^{l} \Big( 1+\epsilon\omega(\sigma_{i-1},\sigma_i) \Big) \mid \sigma_0, \sigma_l \Big] 
        = 1 + \epsilon^l \cdot \omega(\sigma_0, \sigma_l)  \,.
    \end{aligned}
    \end{equation}
\end{claim}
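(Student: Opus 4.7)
The identity \eqref{eq-expectation-over-chain} is a standard transfer-matrix computation on the path, and I would approach it by induction on the length $l$. The base case $l=1$ is immediate: by definition, conditional on $\sigma_0, \sigma_1$, the product is simply $1+\epsilon\omega(\sigma_0,\sigma_1)$. For the inductive step, observe that conditional on $\sigma_0=a$ and $\sigma_l=b$, the intermediate labels $\sigma_1,\ldots,\sigma_{l-1}$ are i.i.d.\ uniform on $[k]$ (since $\sigma\sim \nu$ is uniform on $[k]^n$). I would integrate out $\sigma_1$ using the tower property, which reduces the claim for length $l+1$ to computing
\[
\frac{1}{k}\sum_{c\in[k]} \bigl(1+\epsilon\,\omega(a,c)\bigr)\bigl(1+\epsilon^{l}\omega(c,b)\bigr)
\]
and matching it with $1+\epsilon^{l+1}\omega(a,b)$ via the inductive hypothesis.

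Expanding the product and using $\tfrac{1}{k}\sum_{c}\omega(a,c)=0$ (which follows directly from the definition \eqref{eq-def-omega-sigma-i,j} since one value of $c$ contributes $k-1$ and the other $k-1$ values contribute $-1$), the linear terms vanish and the problem reduces to the single identity
\[
\frac{1}{k}\sum_{c\in[k]} \omega(a,c)\,\omega(c,b) \;=\; \omega(a,b).
\]
This is the crux of the calculation, and I would verify it by splitting into the two cases $a=b$ and $a\neq b$. When $a=b$, the term $c=a$ contributes $(k-1)^2$ and the remaining $k-1$ values of $c$ each contribute $(-1)(-1)=1$, giving $(k-1)^2+(k-1)=k(k-1)$, which after dividing by $k$ yields $k-1=\omega(a,a)$. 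When $a\neq b$, the two terms $c=a$ and $c=b$ each contribute $-(k-1)$, the $k-2$ other values each contribute $1$, giving $-2(k-1)+(k-2)=-k$, which yields $-1=\omega(a,b)$ after dividing by $k$.

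Once this key identity is in hand, the inductive step closes immediately and the lemma follows. The main obstacle is really just the case analysis for the quadratic identity, which is combinatorially routine; there is no genuine technical subtlety. As an alternative (essentially equivalent) viewpoint, one could diagonalize the transfer matrix $M_{ab}=1+\epsilon\omega(a,b)$, noting that $M=k\epsilon\, I+(1-\epsilon)J$ has eigenvalue $k$ on the all-ones vector and $k\epsilon$ on its orthogonal complement, so $\tfrac{1}{k^{l-1}}(M^l)_{ab}=1+\epsilon^l\omega(a,b)$; I would probably just mention this alternative for intuition and carry out the inductive argument above as the formal proof.
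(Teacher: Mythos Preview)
Your proof is correct and uses essentially the same induction on the path length as the paper, hinging on the same key identity $\tfrac{1}{k}\sum_{c}\omega(a,c)\omega(c,b)=\omega(a,b)$. The only organizational difference is that the paper first expands $\prod_i(1+\epsilon\omega(\sigma_{i-1},\sigma_i))$ over subsets $I\subset[l]$, observes that every proper subset contributes zero (some intermediate label appears in a single factor and averages to zero), and then inducts on the reduced quantity $\mathbb{E}[\prod_i\omega(\sigma_{i-1},\sigma_i)\mid\sigma_0,\sigma_l]$; you instead induct directly on the full product, which is slightly more streamlined and avoids the expansion step. Your transfer-matrix remark is a nice alternative viewpoint not mentioned in the paper.
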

\begin{proof}
    By independence, we see that $\mathbb E_{\sigma \sim \nu} \big[ \prod_{i \in I} \omega(\sigma_{i-1},\sigma_i) \mid \sigma_0, \sigma_l \big] = 0$ if $I \subsetneq [l]$. Thus,
    \begin{align*}
        \mathbb{E}_{\sigma \sim \nu} \Big[ \prod_{i=1}^{l} \Big( 1+\epsilon\omega(\sigma_{i-1},\sigma_i) \Big) \mid \sigma_0, \sigma_l \Big] = 1+ \epsilon^l\mathbb{E}_{\sigma \sim \nu} \Big[ \prod_{i=1}^{l} \omega(\sigma_{i-1},\sigma_i) \mid \sigma_0, \sigma_l \Big] \,.
    \end{align*}
    It remains to prove that
    \begin{equation}\label{eq-finalgoal1-sign-of-path-cycle-conditioned-on-endpoints}
        \mathbb{E}_{\sigma \sim \nu} \Big[ \prod_{i=1}^{l} \omega(\sigma_{i-1},\sigma_i) \mid \sigma_0, \sigma_l \Big] = \omega(\sigma_0, \sigma_l)\,.
    \end{equation}
    We shall show \eqref{eq-finalgoal1-sign-of-path-cycle-conditioned-on-endpoints} by induction. The case $l=1$ follows immediately. Now we assume that \eqref{eq-finalgoal1-sign-of-path-cycle-conditioned-on-endpoints} holds for $l$. Then we have
    \begin{align*}
        &\mathbb{E}_{\sigma \sim \nu} \Big[ \prod_{i=1}^{l+1} \omega(\sigma_{i-1},\sigma_i) \mid \sigma_0, \sigma_{l+1} \Big]\\
        = & \mathbb{E}_{\sigma \sim \nu}\Big[\omega(\sigma_l,\sigma_{l+1})\mathbb{E}_{\sigma \sim \nu} \Big[ \prod_{i=1}^{l} \omega(\sigma_{i-1},\sigma_i) \mid \sigma_0, \sigma_l,\sigma_{l+1} \Big] \mid \sigma_0,\sigma_{l+1}\Big]\\
        = & \mathbb{E}_{\sigma \sim \nu}\Big[ \omega(\sigma_l,\sigma_{l+1}) \omega(\sigma_0,\sigma_l) \mid \sigma_0,\sigma_{l+1}\Big] = \omega(\sigma_0,\sigma_{l+1})\,,
    \end{align*}
    which completes the induction procedure. 
\end{proof}
Based on Claim~\ref{claim-expectation-over-chain}, we can prove Lemma~\ref{lem-first-moment-bound} by a straightforward calculation, as incorporated in the upcoming Section~\ref{sec:supp-proofs-sec-4}. Now we estimate the expectation under $\Pb_*'$ in a more sophisticated way. Recall that $H = S_1 \cap S_2$. Firstly, by averaging over the conditioning on community labels we have (we write $\Pb'_{\sigma}=\Pb'_*(\cdot\mid\sigma_*=\sigma)$)
\begin{align}
    & s^{\frac{1}{2}(|E(S_1)|+|E(S_2)|)} \mathbb E_{\Pb_*'}\Bigg[ \prod_{e \in E(S_1) \triangle E(S_2)} \frac{ \big( G_{e}'-\tfrac{\lambda}{n} \big) }{ \sqrt{\lambda/n} } \prod_{e \in E(H)} \frac{ \big( G_{e}'-\tfrac{\lambda}{n} \big)^2 }{ \lambda/n }  \Bigg] \nonumber \\
    = \ & s^{\frac{1}{2}(|E(S_1)|+|E(S_2)|)} \mathbb E_{\sigma\sim\nu} \Bigg\{ \mathbb{E}_{\Pb'_{\sigma}} \Bigg[ \prod_{e \in E(S_1) \triangle E(S_2)} \frac{ \big( G_{e}'-\tfrac{\lambda}{n} \big) }{ \sqrt{\lambda/n} } \prod_{e \in E(H)} \frac{ \big( G_{e}'-\tfrac{\lambda}{n} \big)^2 }{ \lambda/n } \Bigg] \Bigg\} \,. \label{eq-B.3}
\end{align}
Note that given $\sigma_*=\sigma$, we have $G_{i,j} \sim \operatorname{Ber}\big( \frac{ (1+\epsilon \omega(\sigma_i,\sigma_j))\lambda }{n} \big)$ independently. This motivates us to write the above expression in the centered form as follows:
\begin{align*}
    & s^{\frac{1}{2}(|E(S_1)|+|E(S_2)|)} \prod_{(i,j) \in E(S_1) \triangle E(S_2)} \frac{ \big( G_{i,j}'-\tfrac{\lambda}{n} \big) }{ \sqrt{\lambda/n} } \prod_{(i,j) \in E(H)} \frac{ \big( G_{i,j}'-\tfrac{\lambda}{n} \big)^2 }{ \lambda/n }  \\
    = & s^{|E(H)|} \prod_{(i,j) \in E(S_1) \triangle E(S_2)} \frac{ \big( G_{i,j}'-\tfrac{(1+\epsilon\omega( \sigma_{i},\sigma_j))\lambda}{n} + \tfrac{\epsilon \omega(\sigma_{i},\sigma_j) \lambda}{n} \big) }{ \sqrt{\lambda/ns} } \prod_{(i,j) \in E(H)} \frac{ \big( G_{i,j}'-\tfrac{\lambda}{n} \big)^2 }{ \lambda/n } \\
    = & s^{|E(H)|} \sum_{H \ltimes K_1 \subset S_1} \sum_{H \ltimes K_2 \subset S_2} h_{\sigma}(S_1,S_2;K_1,K_2) \varphi_{\sigma}(K_1,K_2;H)  \,,
\end{align*}
where
\begin{align}
     & h_{\sigma}(S_1,S_2;K_1,K_2) = \prod_{ (i,j) \in (E(S_1) \cup E(S_2)) \setminus (E(K_1) \cup E(K_2)) } \frac{ \omega(\sigma_i,\sigma_j) \sqrt{\epsilon^2\lambda s} }{ \sqrt{n} } \,, \label{eq-def-h-sigma} \\
     & \varphi_{\sigma}(K_1,K_2;H) = \prod_{(i,j) \in (E(K_1) \cup E(K_2)) \setminus E(H)} \frac{ \big( {G}_{i,j}' - \tfrac{ (1+\epsilon \omega(\sigma_i,\sigma_j))\lambda }{n} \big) }{ \sqrt{\lambda/ns} } \prod_{(i,j) \in E(H)} \frac{ \big( G_{i,j}'-\tfrac{\lambda}{n} \big)^2 }{ \lambda/n }  \,. \label{eq-def-varphi}
\end{align}
In conclusion, we can write \eqref{eq-B.3} as (note that below the summation is over $K_1,K_2$)
\begin{align}
    \eqref{eq-B.3} = & \sum_{H \ltimes K_1 \subset S_1} \sum_{H \ltimes K_2 \subset S_2} s^{|E(H)|} \mathbb E_{\sigma\sim\nu} \Bigg\{ h_{\sigma}(S_1,S_2;K_1,K_2) \mathbb{E}_{\Pb_{\sigma}'} \Big[ \varphi_{\sigma}(K_1,K_2;H) \Big] \Bigg\} \,. \label{eq-B.6}
\end{align}
We now show the following bound on the summand in \eqref{eq-B.6}.
\begin{lemma}{\label{lem-extreme-technical}}
    Recall Equation~\eqref{eq-def-mathtt-M}. We have 
    \begin{align*}
        & \Bigg| \mathbb E_{\sigma\sim\nu} \Big\{ h_{\sigma}(S_1,S_2;K_1,K_2) \mathbb{E}_{\Pb_{\sigma}'} \big[ \varphi_{\sigma}(K_1,K_2;H) \big] \Big\} \Bigg| \\
        \leq\ & \mathbb E_{\Pb_*} \Bigg[ \prod_{(i,j) \in E(H)} \frac{ \big( G_{i,j}-\tfrac{\lambda}{n} \big)^2 }{ \lambda/n } \Bigg] \cdot \frac{ \mathtt M(S_1,K_1) \mathtt M(S_2,K_2) \mathtt M(K_1,H) \mathtt M(K_2,H) }{ n^{ \frac{1}{2}(|V(S_1)|+|V(S_2)|-2|V(H)|) } } \,.
    \end{align*}
\end{lemma}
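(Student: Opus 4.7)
The plan is to peel off the three disjoint edge layers—$E(H)$ (squared factors in $\varphi_\sigma$), $(E(K_1)\cup E(K_2))\setminus E(H)$ (centered first-power factors), and $(E(S_1)\cup E(S_2))\setminus (E(K_1)\cup E(K_2))$ (the $\omega(\sigma_i,\sigma_j)$-factors in $h_\sigma$)—one layer at a time along the chains $H\subset K_i\subset S_i$, which matches the structure $\mathtt M(S_i,K_i)\mathtt M(K_i,H)$ on the right-hand side.

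For the innermost $E(H)$-layer I would push the $\Pb'_\sigma$-expectation through to $\Pb_\sigma$: writing $G'_e=G_e-G_eR_e$ with $R_e$ the removal indicator, the admissibility of $S_1\cup S_2$ (hence of every subgraph in sight) forces $R_e\neq 0$ only on a low-probability event whose effect on the moment can be bounded by Lemma~\ref{lem-first-moment-bound} applied to nearby subgraphs. The dominant part then factors as $\mathbb{E}_{\Pb_\sigma}\bigl[\prod_{e\in E(H)}(G_e-\lambda/n)^2/(\lambda/n)\bigr]$, and combining this with the $\sigma$-average yields exactly the $\mathbb{E}_{\Pb_*}[\cdot]$-factor appearing on the right-hand side.

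For the middle $(K_i\setminus H)$-layer, once we are working under the (approximately) edge-independent $\Pb_\sigma$, the centered factor on an isolated first-power edge contributes zero; the nonvanishing contributions come from edges that get paired either by the $\Pb'$-correction (through a common bad subgraph in $G$) or by the outer-layer decomposition (through a common endpoint with a path in $S_i\setminus K_i$). For the outer $(S_i\setminus K_i)$-layer I would apply Lemma~\ref{lem-decomposition-H-Subset-S} to decompose the $\omega$-product into cycles disjoint from $K_i$ and paths with endpoints in $V(K_i)$, prior paths/cycles, or $\mathcal L(S_i)$; Claim~\ref{claim-expectation-over-chain} (and the parallel identity for cycles) collapses each path's $\sigma$-average to an endpoint-label function that vanishes unless both endpoints are anchored in $V(K_i)$. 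Each edge then contributes $\sqrt{\epsilon^2\lambda s/n}$, which the hypothesis $\epsilon^2\lambda s\leq 1-\delta$ converts into the geometric $(1-\delta/2)$-factor appearing in $\mathtt M$; the counting of admissible paths and cycles via Lemmas~\ref{lem-enu-cycle-path} and \ref{lem-enu-general-subset-large-graph} then packages the $(D^8/n^{0.1})^{1/2}$-per-new-leaf-or-excess factor together with the $1/\sqrt n$-per-new-vertex factor, producing the $\mathtt M(S_i,K_i)\mathtt M(K_i,H)/n^{(|V(S_i)|-|V(H)|)/2}$ factor.

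The main obstacle is that the three layers do not cleanly decouple: the removal step in passing from $\Pb$ to $\Pb'$ couples edges inside $K_i\setminus H$ through bad subgraphs of $G$ that may extend into $S_i\setminus K_i$, and the path/cycle endpoints in the outer layer can land in $V(K_i)\setminus V(H)$, coupling the middle layer's $\sigma$-labels to the outer layer's. The delicate bookkeeping—ensuring every excess edge produces at most one $(1-\delta/2)$-factor and every new vertex at most one $1/\sqrt n$-factor, with the sharper exponent $D^8$ in $\mathtt M$ (as opposed to $D^{28}$ in $\mathtt N$) just barely sufficing to absorb the enumeration costs of these cross-layer couplings—will be the technical heart of the proof.
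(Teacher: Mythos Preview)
Your three-layer picture is right, and you correctly identify that the obstacle is the coupling between layers introduced by the edge-removal in passing from $G$ to $G'$. But the proposal lacks the key mechanism the paper uses to break this coupling, and without it your plan does not go through.

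The crux is that $\mathbb{E}_{\Pb'_\sigma}[\varphi_\sigma]$ depends on the full label $\sigma=\varkappa\oplus\gamma$ through the \emph{distribution} $\Pb'_\sigma$, not just through the function $\varphi_\sigma$: whether an edge in $\mathtt E=E(K_1)\cup E(K_2)$ gets removed depends on bad subgraphs that may involve edges outside $\mathtt E$, whose presence is governed by labels on $\mathtt V=(V(S_1)\setminus V(K_1))\cup(V(S_2)\setminus V(K_2))$. Thus the $\varkappa$-average of $h_{\varkappa\oplus\gamma}$ and the inner expectation do not separate, and neither your leaf-cancellation for the outer layer nor your ``isolated centered edge contributes zero'' for the middle layer is valid as stated. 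The paper resolves this by constructing a single coupling measure $\widetilde{\Pb}$ in which all graphs $\{G'(\varkappa)\}_{\varkappa\in[k]^{\mathtt V}}$ are subsampled from a common parent $G(\operatorname{par})$ with maximal edge probabilities, and then conditioning on the realization $\chi$ of $G(\operatorname{par})$ on $\operatorname{U}\setminus\mathtt E$. This is what lets you talk about a single object that simultaneously controls all $\varkappa$.

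The second missing idea is the \emph{bad vertex set} $\mathcal B(\chi)$: the set of vertices in $(V(S_1)\cup V(S_2))\setminus V(H)$ that lie in some small cycle or self-bad subgraph of $\chi\oplus\mathtt 1_{\mathtt E}$. The cancellation you want (leaf-label averaging for $h_\sigma$; independent centered edge for $\varphi$) holds precisely when the relevant vertex lies \emph{outside} $\mathcal B(\chi)$, because then the coupling makes $G'(\varkappa)$ agree across $\varkappa$ at that vertex (Claim~\ref{claim-condition-on-G-par-case-1}). So the conditional expectation is nonzero only on the event $\mathtt W\subset\mathcal B(\chi)$, where $\mathtt W$ collects exactly the leaves of $S_i$ outside $K_i$ and the vertices of $K_i$ outside $H$. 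Bounding the probability of this event (Claim~\ref{claim-prob-bad-set-realization}) is what produces the $n^{-0.1}$-type factors in $\mathtt M$; this requires a separate construction of a dense witness subgraph $\mathtt G\supset H$ covering $\mathtt W$, followed by a union bound over path decompositions of $\mathtt G\setminus(K_1\cup K_2)$. Your proposal has no analogue of this probability estimate, and the enumeration lemmas you cite (Lemmas~\ref{lem-enu-cycle-path}, \ref{lem-enu-general-subset-large-graph}) are used elsewhere in the argument, not here.

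In short: the layering and the source of the $(1-\delta/2)$ factors are as you describe, but the proof hinges on (i) the parent-graph coupling $\widetilde{\Pb}$, (ii) the bad-vertex-set dichotomy that localizes where cancellation fails, and (iii) a dense-subgraph probability bound for the failure event. None of these is present in your outline, and your ``push $\Pb'_\sigma$ to $\Pb_\sigma$'' step cannot be made rigorous without them.
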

We can now finish the proof of Lemma~\ref{lem-trun-expectation-given-pi}.
\begin{proof}[Proof of Lemma~\ref{lem-trun-expectation-given-pi}]
    Plugging the estimation of Lemma~\ref{lem-first-moment-bound} into the right-hand side of Lemma~\ref{lem-extreme-technical}, we see that 
    \begin{align*}
        & s^{|E(H)|} \cdot \Bigg| \mathbb E_{\sigma\sim\nu} \Big\{  h_{\sigma}(S_1,S_2;K_1,K_2) \mathbb{E}_{\Pb_{\sigma}'} \big[ \varphi_{\sigma}(K_1,K_2;H) \big] \Big\} \Bigg| \\
        \leq\ & O(1) \cdot (\sqrt{\alpha}-\delta/4)^{|E(H)|} \cdot \frac{ \mathtt M(S_1,K_1) \mathtt M(S_2,K_2) \mathtt M(K_1,H) \mathtt M(K_2,H) }{ n^{ \frac{1}{2}(|V(S_1)|+|V(S_2)|-2|V(H)|) } } \,.
    \end{align*}
    Combined with \eqref{eq-B.3} and \eqref{eq-B.6}, this yields \eqref{eq-lem-technical-relax-1}, leading to Lemma~\ref{lem-trun-expectation-given-pi}.
\end{proof}

The rest of this section is devoted to the proof of Lemma~\ref{lem-extreme-technical}. Recall that $H=S_1 \cap S_2$. Denote $\mathtt L=\mathtt L_1 \cup \mathtt L_2$, where
\begin{equation}{\label{eq-def-mathtt-L}}
\begin{aligned}
    \mathtt L_1 &= \big( \mathcal L(S_1) \setminus V(K_1) \big) \cup \big( \mathcal L(S_2) \setminus V(K_2) \big) \,; \\
    \mathtt L_2 &= \big( \mathcal L(K_1) \setminus V(H) \big) \cup \big( \mathcal L(K_2) \setminus V(H) \big) \,. 
\end{aligned}    
\end{equation}
In addition, denote
\begin{align}
    \mathtt V &= \big( V(S_1) \setminus V(K_1) \big) \cup \big( V(S_2) \setminus V(K_2) \big) \,; \label{eq-def-mathtt-V} \\
    \mathtt W &= \mathtt L_1 \cup \big( V(K_1) \setminus V(H) \big) \cup \big( V(K_2) \setminus V(H) \big) \,. \label{eq-def-mathtt-W} 
\end{align} 
We also define
\begin{equation}{\label{eq-def-Gamma}}
\begin{aligned}
    \Gamma_1 &= |\mathcal L(S_1) \setminus V(K_1)| + |\mathcal L(S_2) \setminus V(K_2)| + \tau(S_1)-\tau(K_1) + \tau(S_2)-\tau(K_2) \,; \\
    \Gamma_2 &= |\mathcal L(K_1) \setminus V(H)| + |\mathcal L(K_2) \setminus V(H)| + \tau(K_1) + \tau(K_2) - 2\tau(H) \,.
\end{aligned}
\end{equation}
For any $\sigma \in [k]^{n}$, denote by $\varkappa$ and $\gamma$ the restriction of $\sigma$ on $\mathtt V$ and $[n] \setminus \mathtt V$, respectively. We also write $\sigma=\varkappa\oplus\gamma$. Then
\begin{align*}
    & \mathbb E_{\sigma\sim\nu} \Bigg\{ h_{\sigma}(S_1,S_2;K_1,K_2) \mathbb E_{\Pb_{\sigma}'}\Big[ \varphi_{\sigma}(K_1,K_2;H) \Big] \Bigg\} \\
    =\ & \mathbb E_{\gamma\sim\nu_{ [n] \setminus \mathtt V }} \mathbb E_{\varkappa\sim\nu_{\mathtt V}} \Bigg\{ h_{\varkappa\oplus\gamma}(S_1,S_2;K_1,K_2) \mathbb E_{\Pb_{\varkappa\oplus\gamma}'}\Big[ \varphi_{\varkappa\oplus\gamma}(K_1,K_2;H) \Big] \Bigg\}\,.
\end{align*}
Clearly, it suffices to show that for all $\gamma$ we have the following estimates:
\begin{equation}{\label{eq-extreme-technical}}
    \begin{aligned}
        & \Bigg| \mathbb E_{\varkappa\sim \nu_{_{\mathtt V}}} \Big\{ h_{\varkappa\oplus\gamma}(S_1,S_2;K_1,K_2) \mathbb E_{\Pb_{\varkappa\oplus\gamma}'}\big[ \varphi_{\varkappa\oplus\gamma}(K_1,K_2;H) \big] \Big\} \Bigg| \\
        \leq\ & \mathbb E_{\Pb_{\gamma}} \Big[ \prod_{(i,j) \in E(H)} \frac{ (G_{i,j}-\lambda/n)^2 }{ \lambda/n } \Big] \cdot \frac{ \mathtt M(S_1,K_1) \mathtt M(S_2,K_2) \mathtt M(K_1,H) \mathtt M(K_2,H) }{ n^{ \frac{1}{2}(|V(S_1)|+|V(S_2)|-2|V(H)|) } } \,.
    \end{aligned}
\end{equation}
We begin our proof of \eqref{eq-extreme-technical} by constructing a probability measure (below $\operatorname{par}$ is the index of a special element to be defined)
\begin{align*}
    \widetilde \Pb = \widetilde \Pb_{\gamma} \mbox{ on } \Big( \widetilde{\Omega}, 2^{\widetilde{\Omega}} \Big) \,, \mbox{ where } \widetilde{\Omega} = \Big\{ \big( \chi(\varkappa) \big)_{\varkappa \in [k]^{\mathtt V} \cup \{ \operatorname{par} \} } : \chi(\varkappa) \in \{ 0,1 \}^{ \operatorname{U} }, \forall \varkappa \in [k]^{\mathtt V} \cup \{ \operatorname{par} \} \Big\} \,,
\end{align*} 
(we will write $\widetilde{\mathbb P}$ instead of $\widetilde{\mathbb P}_{\gamma}$ for simplicity when there is no ambiguity) such that for $(G'(\varkappa))_{\varkappa \in [k]^{\mathtt V}}$ sampled from $\widetilde{\mathbb P}$, we have $G'(\varkappa) \sim \Pb_{\varkappa \oplus \gamma}'$ for each $\varkappa \in [k]^{\mathtt V}$. This measure $\widetilde {\mathbb P}_{\gamma}$ is constructed as follows. First we generate a parent graph $G(\operatorname{par})$ such that $\{G(\operatorname{par})_{i,j}\}$ is a collection of independent Bernoulli variables which take value 1 with probability $\tfrac{ ( 1 + \epsilon \omega( \gamma_i, \gamma_j ) ) \lambda}{n}$ if $i,j \in [n] \setminus \mathtt V$ and with probability $\tfrac{(1+\epsilon(k-1))\lambda}{n}$ if $i \in \mathtt V \mbox{ or } j \in \mathtt V$. Let $\{J(\varkappa)_{i, j} : \varkappa\in [k]^{\mathtt V}, (i,j)\in \operatorname{U}\}$ be a collection of independent Bernoulli variables with parameter $\tfrac{1-\epsilon}{1+\epsilon (k-1)}$. Given $G(\operatorname{par})$, for each $\varkappa \in [k]^{\mathtt V}$ define
\begin{equation}{\label{eq-def-G-tau}}
    G(\varkappa)_{i,j} = 
    \begin{cases}
        G(\operatorname{par})_{i,j} \,, & i,j \in [n] \setminus \mathtt V \mbox{ or } (\varkappa\oplus\gamma)_i = (\varkappa\oplus\gamma)_j \,; \\
        G(\operatorname{par})_{i,j} J(\varkappa)_{i,j} \,, & \mbox{ otherwise} \,.
    \end{cases}
\end{equation}
Recall from Definition~\ref{def-G'-P'} that $\mathtt B_1, \ldots, \mathtt B_M$ is the collection of all cycles in $\mathcal K_n$ with lengths at most $N$ and all self-bad graphs in $\mathcal K_n$ with at most $D^3$ vertices.
For each $1 \leq i \leq M$, if $V(\mathtt B_i) \subset [n] \setminus \mathtt V$ and $\mathtt B_i \subset G(\operatorname{par})$, we choose a uniform edge from $\mathtt B_i$ and delete this edge in each $G(\varkappa)$; if $V(\mathtt B_i) \cap \mathtt V \neq \emptyset$, for each $\varkappa \in [k]^{\mathtt V}$, if $\mathtt B_i \subset G(\varkappa)$ we independently delete a uniform edge of $\mathtt B_i$ in $G(\varkappa)$. The remaining edges of $G(\varkappa)$ constitute $G'(\varkappa)$ and we let $\widetilde{\Pb}$ to be the joint measure of $\big( G(\operatorname{par}), (G'(\varkappa))_{\varkappa \in [k]^{\mathtt V}} \big)$ (so $\chi(\mathrm{par})$ represents the realization for $G(\mathrm{par})$ as hinted earlier). Clearly we have $G'(\varkappa) \sim \Pb_{\varkappa\oplus\gamma}'$ as we wished. Thus, we can write the left-hand side of \eqref{eq-extreme-technical} as
\begin{align}
    \Bigg| \mathbb E_{\widetilde \Pb} \Big[ \frac{1}{k^{|\mathtt V|}} \sum_{\varkappa \in [k]^{\mathtt V}} h_{\varkappa\oplus\gamma}(S_1,S_2;K_1,K_2) \varphi_{\gamma;K_1,K_2;H}(G'(\varkappa)) \Big] \Bigg| \,,\label{eq-extreme-technical-relax-1}
\end{align}
where for each $X \in \{ 0,1 \}^{\operatorname{U}}$, we used $\varphi_{\gamma;K_1,K_2;H}(X)$ to denote the formula obtained from replacing $G'_{i,j}$ by $X_{i,j}$ and replacing $\sigma$ from $\varkappa \oplus \gamma$ in \eqref{eq-def-varphi} for an arbitrary $\varkappa \in [k]^{\mathtt V}$ (note that $\varphi_{\gamma;K_1,K_2;H}(X)$ only depends on $\gamma$ and the values of $X$ on $\mathtt E= E(K_1) \cup E(K_2)$). 
To calculate \eqref{eq-extreme-technical-relax-1}, we will condition on 
\begin{align*}
    \mathcal F_{\operatorname{par}} = \sigma\Big\{ G(\operatorname{par})_{i,j}: (i,j) \in \operatorname{U} \setminus \mathtt E \Big\} \,.
\end{align*}
We will argue that unless the realization $\chi\in \{0, 1\}^{\operatorname{U}\setminus \mathtt E}$ satisfies a specific condition, we have that the conditional expectation (below $G(\operatorname{par})|_{\mathsf A}$ denotes the restriction of $G(\operatorname{par})$ on $\mathsf A$)
\begin{equation}{\label{eq-conditional-expectation}}
    \mathbb E_{\widetilde \Pb} \Big[ \frac{1}{k^{|\mathtt V|}} \sum_{\varkappa \in [k]^{\mathtt V}} h_{\varkappa\oplus\gamma}(S_1,S_2;K_1,K_2) \varphi_{\gamma;K_1,K_2;H}(G'(\varkappa)) \mid G(\operatorname{par})|_{\operatorname{U} \setminus \mathtt E} = \chi \Big]
\end{equation}
cancels to $0$. We need to introduce more notations before presenting our proofs.
\begin{defn}{\label{def-bad-vertex-set}}
    For $\chi \in \{ 0,1 \}^{ \operatorname{U} \setminus \mathtt E}$, we define the \emph{bad vertex set} with respect to $\chi$ as
    \begin{align*}
        \mathcal B(\chi) = \Big\{ & u \in (V(S_1) \cup V(S_2)) \setminus V(H) : \exists K \subset \chi \oplus \{ \mathtt{1}_{\mathtt E} \}, u \in V(K), K \mbox{ is a cycle with} \\
        & \mbox{length at most } N \mbox{ or a self-bad graph with at most } D^3 \mbox{ vertices} \Big\} \,.
    \end{align*}
\end{defn}
Clearly from this definition we see that $\mathcal B(G(\operatorname{par})|_{\operatorname{U} \setminus \mathtt E})$ is measurable with respect to $\mathcal F_{\operatorname{par}}$. Our proof will employ the following estimates. Recall \eqref{eq-def-mathtt-W} and \eqref{eq-def-Gamma}.
\begin{claim}{\label{claim-condition-on-G-par-case-1}}
    For any $\chi \in \{ 0,1 \}^{ \operatorname{U} \setminus \mathtt E }$ such that $\mathtt W \not \subset \mathcal B(\chi)$, we have $\eqref{eq-conditional-expectation} = 0$.
\end{claim}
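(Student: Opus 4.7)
The plan is to find some $u \in \mathtt W \setminus \mathcal B(\chi)$ (which exists by hypothesis) and exhibit an exact cancellation that forces \eqref{eq-conditional-expectation} to vanish. Decompose $\mathtt W = \mathtt L_1 \cup \big((V(K_1) \cup V(K_2)) \setminus V(H)\big)$ (these two sets being disjoint since $\mathtt L_1 \cap (V(K_1) \cup V(K_2)) = \emptyset$); the argument will split into two cases depending on which part contains the chosen $u$. In both cases, the role of the hypothesis $u \notin \mathcal B(\chi)$ is the same: together with the pointwise containment $G(\varkappa) \subset \chi \oplus \{\mathtt 1_{\mathtt E}\}$, it guarantees that no cycle of length $\leq N$ and no self-bad subgraph of $G(\varkappa)$ contains $u$, so the edge-removal step producing $G'(\varkappa)$ from $G(\varkappa)$ never acts on any subgraph passing through $u$.

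First suppose $u \in \mathtt L_1$, say $u \in \mathcal L(S_1) \setminus V(K_1)$ (the other alternative is symmetric). Using $V(H) = V(S_1) \cap V(S_2) \subset V(K_1)$ one immediately checks $u \notin V(S_2) \cup V(K_1) \cup V(K_2)$, hence $u \in \mathtt V$; moreover the unique $S_1$-edge $(u, v_0)$ incident to $u$ is the only edge of $E(S_1) \cup E(S_2)$ touching $u$ and lies in $(E(S_1) \cup E(S_2)) \setminus (E(K_1) \cup E(K_2))$. I would then verify that every piece of $h_{\varkappa \oplus \gamma}\,\varphi_{\gamma; K_1, K_2; H}(G'(\varkappa))$ other than the single factor $\omega(\varkappa_u, \sigma_{v_0})$ inside $h$ is independent of $\varkappa_u$: the centering factors $(1 + \epsilon \omega(\sigma_i, \sigma_j))$ in $\varphi$ involve only labels at $V(K_1) \cup V(K_2)$, and $G'(\varkappa)|_{\mathtt E}$ is unaffected by $\varkappa_u$ because $G(\varkappa)|_{\mathtt E}$ depends only on labels at $V(K_1) \cup V(K_2)$ while the subgraphs processed by the removal step avoid $u$, so both their presence in $G(\varkappa)$ and the uniformly chosen edges removed from them are determined by randomness independent of $\varkappa_u$. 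Averaging the lone factor via $\sum_{c \in [k]} \omega(c, \sigma_{v_0}) = (k-1) - (k-1) = 0$ then finishes this case.

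Otherwise $\mathtt L_1 \subset \mathcal B(\chi)$ and we may take $u \in V(K_1) \setminus V(H)$ (the $K_2$ case being symmetric). Since $V(K_1) \cap V(S_2) \subset V(H) \subset V(K_2)$, we get $u \notin V(S_2)$ and thus $u \notin \mathtt V$, so $\sigma_u = \gamma_u$ is a fixed constant and the previous mechanism is unavailable. Because $H \ltimes K_1$ and $u \notin V(H)$, the vertex $u$ is non-isolated in $K_1$; pick any $K_1$-neighbour $v_0$ of $u$ and set $e_0 = (u, v_0)$, which lies in $E(K_1) \setminus E(H)$ (using $u \notin V(K_2)$) and contributes the factor $[G'(\varkappa)_{e_0} - (1 + \epsilon \omega(\sigma_u, \sigma_{v_0}))\lambda/n]/\sqrt{\lambda/(ns)}$ to $\varphi_{\gamma; K_1, K_2; H}$. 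Using $u, v_0 \in [n] \setminus \mathtt V$ gives $G(\varkappa)_{e_0} = G(\operatorname{par})_{e_0}$, and using $u \notin \mathcal B(\chi)$ shows that the edge-removal process touches neither $e_0$ nor any other edge of $\mathtt E$ in a way depending on the single Bernoulli $G(\operatorname{par})_{e_0}$. Consequently, conditionally on $\mathcal F_{\operatorname{par}}$, the $e_0$-factor is independent of every other factor in $h_{\varkappa \oplus \gamma}\,\varphi_{\gamma; K_1, K_2; H}(G'(\varkappa))$, while the defining edge-probabilities of $\widetilde{\Pb}$ give $\mathbb E[G(\operatorname{par})_{e_0} \mid \mathcal F_{\operatorname{par}}] = (1 + \epsilon \omega(\gamma_u, \gamma_{v_0}))\lambda/n$, which is exactly the centering; so the $e_0$-factor has vanishing conditional mean, and summing over $\varkappa$ preserves the zero.

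The main technical obstacle, shared by both cases, is the careful verification that the cycle/self-bad-graph removal introduces no hidden dependence on the ``variable of cancellation'' --- either $\varkappa_u$ or the Bernoulli $G(\operatorname{par})_{e_0}$. This point is delicate because the edge-removal is a global operation on $G(\varkappa)$, yet we need it to be ``invisible'' at the vertex $u$; the definition of $\mathcal B(\chi)$ is engineered precisely so that $u \notin \mathcal B(\chi)$ combined with $G(\varkappa) \subset \chi \oplus \{\mathtt 1_{\mathtt E}\}$ reduces every candidate subgraph through $u$ to the empty collection, after which the remaining algebra reduces to the two elementary identities noted above.
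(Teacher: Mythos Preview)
Your proof is correct and follows essentially the same two-case split as the paper: cancellation via averaging over $\varkappa_u$ when $u\in\mathtt L_1$, and cancellation via the mean-zero centered factor at an edge $e_0\in E(K_1)\setminus E(H)$ when $u\in (V(K_1)\cup V(K_2))\setminus V(H)$. One phrasing to tighten: in Case~1 you write that $G'(\varkappa)|_{\mathtt E}$ is ``unaffected by $\varkappa_u$'' and ``determined by randomness independent of $\varkappa_u$''; since the $J(\varkappa)$'s are indexed by the full $\varkappa$, this is not literally a pathwise statement --- what holds (and what you actually use) is that the \emph{conditional distribution} of $G'(\varkappa)|_{\mathtt E}$ given $\chi$ does not depend on $\varkappa_u$, which is exactly the paper's assertion that $G'(\varkappa_{i(u)})\stackrel{d}{=}G'(\varkappa_{j(u)})$.
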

\begin{claim}{\label{claim-condition-on-G-par-case-2}}
    For any $\chi \in \{ 0,1 \}^{ \operatorname{U} \setminus \mathtt E }$ such that $\mathtt W \subset \mathcal B(\chi)$ and $|\mathcal B(\chi) \setminus \mathtt W|=\ell$, we have
    \begin{align*}
        \big| \eqref{eq-conditional-expectation} \big| \leq k^{5\Gamma_1+5l} \frac{ (1-\delta/2)^{|E(S_1)|+|E(S_2)|-|E(K_1)|-|E(K_2)|} }{n^{\frac{1}{2}(|E(S_1)|+|E(S_2)|-|E(K_1)|-|E(K_2)|)}} \cdot \mathbb E_{\widetilde{\Pb}} \Big[ \big| \varphi_{\gamma;K_1,K_2;H} (G(\operatorname{par})|_{\mathtt E}) \big| \Big] \,.
    \end{align*}
\end{claim}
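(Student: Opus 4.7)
The strategy is to evaluate the conditional expectation via a path-cycle decomposition of $E(S_i)\setminus E(K_i)$, combined with a cancellation argument on the $\varkappa$-sum via Claim~\ref{claim-expectation-over-chain} and a domination of the $\varphi$-expectation by its unconditional counterpart under $\widetilde\Pb$. I would first decompose $E(S_i)\setminus E(K_i)$ using Lemma~\ref{lem-decomposition-H-Subset-S} into $m_i$ vertex-disjoint independent cycles $C^{(i)}_a\subset \mathtt V$ and $t_i$ paths $P^{(i)}_b$ whose endpoints lie in $V(K_i)\cup\mathcal L(S_i)\cup V(\text{prior pieces})$, with $t_1+t_2=\Gamma_1$. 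The factor $h_{\varkappa\oplus\gamma}(S_1,S_2;K_1,K_2)$ splits as a product over these components; using $\sqrt{\epsilon^2\lambda s}\le 1-\delta/2$ extracts the deterministic factor $((1-\delta/2)/\sqrt n)^{|E(S_1)\setminus E(K_1)|+|E(S_2)\setminus E(K_2)|}$ cleanly.

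Next, I would observe that every edge in $\mathtt E=E(K_1)\cup E(K_2)$ has both endpoints in $V(K_1)\cup V(K_2)\subset [n]\setminus \mathtt V$, so $G(\varkappa)|_\mathtt E = G(\operatorname{par})|_\mathtt E$ identically. Hence $G'(\varkappa)|_\mathtt E$ differs from $G(\operatorname{par})|_\mathtt E$ only through the uniform edge-deletions arising from bad subgraphs of $G(\operatorname{par})$ that use some edges of $\mathtt E$; by the definition of $\mathcal B(\chi)$ these are supported on vertices of $\mathcal B(\chi)$. Expanding $\varphi_\gamma(G'(\varkappa))$ term-by-term and absorbing the constant-factor effect of these deletions yields $\mathbb E_{\widetilde\Pb}[|\varphi_{\gamma;K_1,K_2;H}(G'(\varkappa))|\mid \mathcal F_{\operatorname{par}}=\chi]\le C\cdot\mathbb E_{\widetilde\Pb}[|\varphi_{\gamma;K_1,K_2;H}(G(\operatorname{par})|_\mathtt E)|]$ uniformly in $\varkappa$, with the constant absorbed into the $k^{5\Gamma_1+5\ell}$ factor.

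It remains to bound $k^{-|\mathtt V|}\sum_\varkappa\bigl|\prod_e \omega((\varkappa\oplus\gamma)_e)\bigr|$ by $k^{5\Gamma_1+5\ell}$. For each component of the decomposition whose interior vertices all avoid $\mathcal B(\chi)\setminus\mathtt W$, I would integrate out the corresponding $\varkappa$-coordinates via Claim~\ref{claim-expectation-over-chain}: each path collapses to a single endpoint factor $|\omega(\sigma_{\operatorname{end}_1},\sigma_{\operatorname{end}_2})|\le k-1$, and each cycle to a constant of magnitude $k-1$, producing only one $k$-factor per component rather than one per edge. Combined with the crude bound $|\omega|\le k-1$ on every edge incident to a vertex in $\mathcal B(\chi)\setminus\mathtt W$, this gives an overall factor of $k^{O(\Gamma_1+\ell)}$. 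The main obstacle is obtaining the clean exponent $5\Gamma_1+5\ell$: a priori a single bad vertex in $\mathcal B(\chi)\setminus\mathtt W$ could sit in the interior of many paths/cycles of the decomposition, each independently losing its cancellation. To control this I would invoke the refined decomposition of Corollary~\ref{cor-revised-decomposition-H-Subset-S} together with Remark~\ref{rmk-endpoints-have-degree-geq-3} (interior path-vertices have $(S_i\setminus K_i)$-degree exactly two and path-endpoints carry degree at least three) and the admissibility of $S_1,S_2$ via Definition~\ref{def-addmisible} (which through $\Phi$ rules out locally dense configurations). Together these force each bad vertex to be interior to at most $O(1)$ components and to contribute at most $O(1)$ extra $\omega$-factors, yielding the claimed $k^{5\Gamma_1+5\ell}$ bound.
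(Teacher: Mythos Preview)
Your overall architecture---decompose $E(S_i)\setminus E(K_i)$ into paths and cycles, show the conditional expectation of $\varphi_\gamma(G'(\varkappa))$ is insensitive to the $\varkappa$-coordinates at ``good'' interior vertices, integrate those out via Claim~\ref{claim-expectation-over-chain}, and bound the remaining part by the unconditioned $\mathbb E[|\varphi(G(\operatorname{par})|_{\mathtt E})|]$---matches the paper. Your step~3 is fine: the paper gets it with constant $C=1$ simply because $|\varphi|$ is monotone in its arguments and $G'(\varkappa)\le G(\operatorname{par})$ entrywise.

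The real gap is the cycle accounting. After collapsing each path you indeed get one $k$-factor per path, but you also get one $k$-factor \emph{per cycle}, and the number of cycles $m_1+m_2$ in the decomposition is not controlled by $\Gamma_1+\ell$; your bound would be $k^{\,t_1+t_2+m_1+m_2+O(\ell)}$, not $k^{5\Gamma_1+5\ell}$. The paper handles this with a trick you do not mention: it uses Corollary~\ref{cor-revised-decomposition-H-Subset-S} so that the cycles are \emph{independent} cycles of $S_i$, hence of length $\ge N$ by admissibility, and then the first line of \eqref{eq-def-N} gives $k(1-\delta)^{|E(C)|}\le(1-\delta/2)^{|E(C)|}$, so the cycle $k$-factors are absorbed into the $(1-\delta/2)^{\sum|E(S_i)\setminus E(K_i)|}$ factor. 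This is exactly why the paper extracts $(1-\delta)^{\cdots}$ first (not $(1-\delta/2)^{\cdots}$ as you do) and only upgrades to $1-\delta/2$ after swallowing the cycles. By pulling out $(1-\delta/2)$ already in your first step you have burned the slack needed for this absorption.

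Your stated ``main obstacle''---a single bad vertex sitting in the interior of many components---is a red herring: property~(iii) of Lemma~\ref{lem-decomposition-H-Subset-S} (or (ii) of Corollary~\ref{cor-revised-decomposition-H-Subset-S}) already forces each interior vertex to belong to exactly one component, so no density argument via $\Phi$ is needed there. The paper sidesteps the bad-vertex bookkeeping altogether by a cleaner device: it enlarges $K_i$ to $\check K_i=K_i\cup(\mathcal B(\chi)\setminus\mathtt W)$ and decomposes $E(S_i)\setminus E(\check K_i)$ instead, so that every interior vertex automatically lies outside $\mathcal B(\chi)$ and the number of paths is bounded by $5(\Gamma_1+\ell)$ directly from Corollary~\ref{cor-revised-decomposition-H-Subset-S}(iii).
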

\begin{claim}{\label{claim-prob-bad-set-realization}}
    Suppose that $S_1,S_2\Subset \mathcal K_n$ are admissible, $H=S_1 \cap S_2$, $H \ltimes K_1 \subset S_1, H \ltimes K_2 \subset S_2$. For any $\mathtt B \subset V(S_1 \cup S_2 )$ such that $\mathtt W \subset \mathtt B, |\mathtt B \setminus \mathtt W|=\ell$, we have 
    \begin{equation*}
    \begin{aligned}
        \widetilde\Pb \Big( \mathcal B( G(\operatorname{par})|_{\operatorname{U}\setminus\mathtt E} ) = \mathtt B \Big) \leq \frac{ n^{ \frac{1}{4}( \tau(K_1)+\tau(K_2)-2\tau(H) ) } n^{ - \frac{1}{4}|\mathtt L_2| - \frac{1}{2}|\mathtt L_1| - \frac{1}{4}\ell } ( 2000\Tilde{\lambda}^{22}k^{22} )^{2N^2(\Gamma_1+\Gamma_2)} }{ (4\Tilde{\lambda}^{2} k^{2} )^{|E(K_1)|+|E(K_2)|-2|E(H)|} } \,.
    \end{aligned}
    \end{equation*}
\end{claim}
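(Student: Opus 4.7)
The plan is a union bound over the possible ``witnesses'' of the bad vertex set. By Definition~\ref{def-bad-vertex-set}, for every $u\in\mathtt B$ there is a subgraph $K_u\subset\chi\oplus\mathbf 1_{\mathtt E}$ containing $u$ that is either a cycle of length $\le N$ or a self-bad graph of at most $D^3$ vertices. Setting $\mathsf W=\bigcup_{u\in\mathtt B}K_u$, we have $\mathtt B\subset V(\mathsf W)$ and every edge of $E(\mathsf W)\setminus\mathtt E$ must be present in $G(\operatorname{par})$. Under $\widetilde\Pb$ the parent-graph edge probabilities are bounded by $(1+\epsilon(k-1))\lambda/n\le \Tilde\lambda k/n$, giving
\[
\widetilde\Pb\bigl(\mathsf W\setminus\mathtt E\subset G(\operatorname{par})\bigr)\le (\Tilde\lambda k/n)^{|E(\mathsf W)|-|E(\mathsf W)\cap \mathtt E|}.
\]
The remainder of the proof is a careful enumeration of the admissible structures of $\mathsf W$ with $\mathtt B\subset V(\mathsf W)$, together with sharp bookkeeping of which edges of $\mathsf W$ sit in $\mathtt E$ versus outside.

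To execute the enumeration I would split $\mathsf W$ into its cycle-type witnesses (each of length $\le N$) and its self-bad-type part $\mathsf W_{\operatorname{bad}}$. The cycle witnesses can be enumerated using Lemma~\ref{lem-enu-cycle-path}, giving at most $N^2$-many choices per witness piece; since there are at most $\Gamma_1+\Gamma_2$ witness pieces needed to cover $\mathtt W\cup (\mathtt B\setminus\mathtt W)$ (because each excess vertex or excess edge in $K_1,K_2,S_1,S_2$ allows at most one new witness), this produces the crude combinatorial constant $(2000\Tilde\lambda^{22}k^{22})^{2N^2(\Gamma_1+\Gamma_2)}$. For $\mathsf W_{\operatorname{bad}}$, Lemma~\ref{lemma-facts-graphs}(v) allows me to treat its connected self-bad components as themselves self-bad, and Definition~\ref{def-addmisible} together with \eqref{eq-def-Phi} then turns the per-vertex/per-edge factor $n^{|V(\mathsf W_{\operatorname{bad}})|}(\Tilde\lambda k/n)^{|E(\mathsf W_{\operatorname{bad}})|}$ into a strong suppression of the type $(\log n)^{-1}$, which after multiplying against the cycle-part contribution yields the denominator factor $(4\Tilde\lambda^2 k^2)^{|E(K_1)|+|E(K_2)|-2|E(H)|}$ once one re-absorbs the edges of $\mathsf W$ already lying in $\mathtt E$.

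The remaining gain $n^{-\frac12|\mathtt L_1|-\frac14|\mathtt L_2|-\frac14\ell}$ comes from tracking which vertices of $\mathtt B$ must be endpoints of brand-new edges in $\mathsf W\setminus\mathtt E$. Any $u\in\mathtt L_1=(\mathcal L(S_1)\setminus V(K_1))\cup(\mathcal L(S_2)\setminus V(K_2))$ is a leaf of $S_i$ outside $V(K_i)$; since $\mathtt E=E(K_1)\cup E(K_2)$, every incident witness edge of $u$ lies outside $\mathtt E$, producing an $n^{-1/2}$ saving per such vertex. Vertices in $\mathtt L_2=(\mathcal L(K_i)\setminus V(H))$ and in $\mathtt B\setminus\mathtt W$ are only partially forced to lie on new edges, and a path-decomposition argument in the spirit of Lemma~\ref{lem-decomposition-H-Subset-S} gives $n^{-1/4}$ per such vertex. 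The positive factor $n^{\frac14(\tau(K_1)+\tau(K_2)-2\tau(H))}$ reflects the tradeoff that witness pieces passing through cycles of $K_i$ can avoid introducing a new vertex, at the cost of contributing $n^{1/4}$ per excess edge.

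The main obstacle will be the tight enumeration of the self-bad witness pieces anchored at prescribed vertices: naive counts of self-bad subgraphs lose too many powers of $n$. The resolution, as in \cite{DDL23+}, is to apply the log-submodularity in Lemma~\ref{lemma-facts-graphs}(ii) iteratively to reduce $\mathsf W_{\operatorname{bad}}$ to a controllable ``minimal'' family and then invoke the enumeration estimates in Section~\ref{sec:prelim-graphs} (particularly Lemmas~\ref{lem-enu-Subset-large-graph} and~\ref{lem-enu-general-subset-large-graph}) to absorb the constants into the $(2000\Tilde\lambda^{22}k^{22})^{2N^2(\Gamma_1+\Gamma_2)}$ slack without spending $n$-factors.
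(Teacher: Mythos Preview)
Your sketch captures the broad architecture (union bound over witness structures, use of $\Phi$ via log-submodularity) but it contains a genuine gap at the counting step, and the mechanism by which the $n$-savings arise is not the one you describe.

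\medskip
\textbf{The main gap.} You assert that ``there are at most $\Gamma_1+\Gamma_2$ witness pieces needed to cover $\mathtt W\cup(\mathtt B\setminus\mathtt W)$ because each excess vertex or excess edge allows at most one new witness''. This is not justified and in fact is the heart of the difficulty. Recall $\mathtt W\supset (V(K_1)\setminus V(H))\cup(V(K_2)\setminus V(H))$, so $|\mathtt W|$ can be of order $|E(K_1)|+|E(K_2)|-2|E(H)|$, which may be far larger than $\Gamma_1+\Gamma_2$. The paper does \emph{not} bound the number of witness pieces directly. Instead it distinguishes self-bad witnesses $B_u$ from cycle witnesses $C_u$, and shows (i) adjoining a self-bad piece never increases $\Phi$ (by submodularity, since $B_u$ is self-bad), (ii) adjoining a cycle $C_u$ that touches $V(H)$ or has a $K_1\cup K_2$-neighbor in $V(H)$ also never increases $\Phi$, and (iii) only cycles in the residual set $\mathcal B_{\operatorname{cyc}}$ can increase $\Phi$, by at most $(2000\Tilde\lambda^{22}k^{22})^{N}$ each. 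The nontrivial step is then to prove $|\mathcal B_{\operatorname{cyc}}|\le 2N(\Gamma_1+\Gamma_2+\ell)$: this requires showing that for every $u\in\mathcal B_{\operatorname{cyc}}\cap(V(K_1)\cup V(K_2))$ the cycle $C_u$ must meet $\mathcal L(K_i)\setminus V(H)$ (see \eqref{eq-char-C_u}), which is a delicate case analysis exploiting that $u\notin\mathcal B_{\operatorname{dense}}$. Your proposal does not contain this argument, and without it the exponent $2N^2(\Gamma_1+\Gamma_2)$ cannot be obtained.

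\medskip
\textbf{The source of the $n$-savings is different.} You attribute $n^{-1/2}$ per $\mathtt L_1$-vertex to ``every incident witness edge lies outside $\mathtt E$'' and $n^{-1/4}$ per $\mathtt L_2$-vertex to a path argument. The paper instead assembles a single graph $\mathtt G$ satisfying the four conditions (i)--(iv) above, then decomposes $E(\mathtt G)\setminus E(\mathtt G_0)$ (where $\mathtt G_0$ has vertex set $\mathtt B\cup V(K_1)\cup V(K_2)$ and edge set $E(K_1)\cup E(K_2)$) into $\mathtt t$ paths via Corollary~\ref{cor-revised-decomposition-H-Subset-S}. The savings come from the lower bound $\mathtt t\ge (|\mathtt L|+\ell+3p)/2$ (each vertex of $\mathtt L\cup(\mathtt B\setminus\mathtt W)$ must be a path endpoint, and each extra endpoint outside $\mathtt B\cup V(H)$ accounts for at least three paths by Remark~\ref{rmk-endpoints-have-degree-geq-3}), combined with the $\Phi$-constraint on $(X_1,\dots,X_{\mathtt t})$. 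The final inequality is extracted by interpolating the raw bound $n^{-\mathtt t}D^{8\mathtt t}(k\lambda)^{\sum X_i}$ against the $\Phi$-constraint with exponent $\iota\in(\tfrac14,\tfrac13)$ chosen so that $(n/D^{50})^{\iota}=n^{1/4}$; this is how the peculiar exponents $\tfrac14(\tau(K_1)+\tau(K_2)-2\tau(H))$, $-\tfrac12|\mathtt L_1|-\tfrac14|\mathtt L_2|-\tfrac14\ell$ actually arise. Your per-vertex heuristic does not reproduce this, and in particular does not explain the role of $\tau(K_i)-\tau(H)$.
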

The proofs of Claims~\ref{claim-condition-on-G-par-case-1}, \ref{claim-condition-on-G-par-case-2} and \ref{claim-prob-bad-set-realization} are incorporated in Sections~\ref{subsec:Proof-Claim-B.5}, \ref{subsec:Proof-Claim-B.6} and \ref{subsec:Proof-Claim-B.7}, respectively.
Now we can present the proof of \eqref{eq-extreme-technical}, thus completing the proof of Lemma~\ref{lem-extreme-technical}.
\begin{proof}[Proof of \eqref{eq-extreme-technical}]
    Recall \eqref{eq-extreme-technical-relax-1}. We can write it as
    \begin{align*}
        \eqref{eq-extreme-technical-relax-1} = \Bigg| \mathbb E \Bigg\{ \mathbb E_{\widetilde \Pb} \Big[ \frac{1}{k^{|\mathtt V|}} \sum_{\varkappa \in [k]^{\mathtt V}} h_{\varkappa\oplus\gamma}(S_1,S_2;K_1,K_2) \varphi_{\gamma;K_1,K_2;H}(G'(\varkappa)) \mid \mathcal F_{\operatorname{par}} \Big] \Bigg\} \Bigg| \,.
    \end{align*}
    Combining Claims~\ref{claim-condition-on-G-par-case-1} and \ref{claim-condition-on-G-par-case-2}, we see that the above expression is bounded by the product of $\frac{(1-\delta/2)^{ |E(S_1)|+|E(S_2)|-|E(K_1)|-|E(K_2)| }}{n^{\frac{1}{2}(|E(S_1)|+|E(S_2)|-|E(K_1)|-|E(K_2)|)}} \cdot \mathbb E_{\widetilde \Pb} \Big[ \big| \varphi_{\gamma,K_1,K_2,H} \big( G(\operatorname{par})|_{\mathtt E} \big) \big| \Big]$ and the following term:
    \begin{align}
        & \sum_{ \ell \geq 0 } \sum_{ \substack{ \mathtt W \subset \mathtt B \subset V(S_1 \cup S_2 ) \\ |\mathtt B \setminus \mathtt W|=\ell } } k^{5\Gamma_1+5\ell} \cdot \widetilde{\mathbb P}\Big( \mathcal B\big( G(\operatorname{par})|_{\operatorname{U}\setminus\mathtt E} \big) \big) = \mathtt B \Big)  \,. \label{eq-C.49} 
    \end{align}
    Since $|V(S_1)|,|V(S_2)| \leq 2D$, we have
    \begin{align*}
        \# \big\{ \mathtt B: \mathtt W \subset \mathtt B \subset V(S_1 \cup S_2) , |\mathtt B \setminus \mathtt W|=\ell \big\} \leq (4D)^{\ell} \,.
    \end{align*}
    Combined with Claim~\ref{claim-prob-bad-set-realization}, it yields that
    \begin{align}
        \eqref{eq-C.49} &\leq \sum_{ \ell \geq 0 } \frac{ k^{5\ell} (4D)^{\ell} n^{\frac{1}{4}(\tau(K_1)+\tau(K_2) -2\tau(H))} n^{ -\frac{1}{4}|\mathtt L_2| - \frac{1}{2} |\mathtt L_1| -\frac{1}{4}\ell } (2000\Tilde{\lambda}^{22}k^{23})^{2N^2(\Gamma_1+\Gamma_2)} }{ (4\Tilde{\lambda}^{2}k^{2})^{ |E(K_1)|+|E(K_2)|-2|E(H)| } } \nonumber \\
        &\leq [1+o(1)] \cdot \frac{ n^{\frac{1}{4}(\tau(K_1)+\tau(K_2) -2\tau(H))} n^{ -\frac{1}{4}|\mathtt L_2| - \frac{1}{2} |\mathtt L_1| } (2000\Tilde{\lambda}^{22}k^{23})^{2N^2(\Gamma_1+\Gamma_2)} }{ (4\Tilde{\lambda}^{2}k^{2})^{ |E(K_1)|+|E(K_2)|-2|E(H)| } } \,. \label{eq-C.50}
    \end{align}
    Since the entries in $G(\operatorname{par})$ are stochastically dominated by a family of i.i.d.\ Bernoulli random variables with parameter $\frac{ (1+\epsilon k)\lambda }{n}$, we have that $\mathbb E_{ \widetilde{\Pb}_{\gamma} } \Big[ \big| \varphi_{\gamma,K_1,K_2,H} \big( G(\operatorname{par})|_{\mathtt E} \big) \big| \Big]$ is bounded by (note that below we used $s\leq 1$ for simplification)
    \begin{align}
        & \mathbb E_{ \widetilde{\Pb}_{\gamma} } \Bigg[ \prod_{(i,j) \in E(K_1 \cup K_2 ) \setminus E(H)} \frac{ \big| G(\operatorname{par})_{i,j} - \frac{ (1+\epsilon \omega(\gamma_i,\gamma_j) )\lambda }{n} \big| }{ \sqrt{\lambda/ns} } \prod_{(i,j) \in E(H)}  \frac{ \big( G(\operatorname{par})_{i,j} - \tfrac{\lambda}{n} \big)^2 }{ \lambda/n } \Bigg] \nonumber \\
        \leq\ & \big( \tfrac{2k\lambda}{n} \big)^{ \frac{1}{2}( |E(K_1)|+|E(K_2)|-2|E(H)| ) } \mathbb E_{ \widetilde{\Pb}_{\gamma} } \Bigg[ \prod_{(i,j) \in E(H)}  \frac{ \big( G(\operatorname{par})_{i,j} - \tfrac{\lambda}{n} \big)^2 }{ \lambda/n } \Bigg] \nonumber \\
        =\ &\big( \tfrac{2k\lambda}{n} \big)^{ \frac{1}{2}( |E(K_1)|+|E(K_2)|-2|E(H)| ) } \mathbb E_{ \Pb_{\gamma} } \Bigg[ \prod_{(i,j) \in E(H)} \frac{ \big( G_{i,j} - \tfrac{\lambda}{n} \big)^2 }{ \lambda/n } \Bigg] \,,  \label{eq-C.51}
    \end{align}
    where the equality follows from the fact that the distribution of $G(\operatorname{par})$ under $\widetilde{\Pb}_\gamma$ is equal to the distribution of $G$ under $\Pb_\gamma$. Plugging \eqref{eq-C.51} and \eqref{eq-C.50} into the bound surrounding \eqref{eq-C.49}, we obtain that \eqref{eq-extreme-technical-relax-1} is bounded by the product of $\mathbb E_{ \Pb_{\gamma} } \Big[ \prod_{(i,j) \in E(H)} \frac{ ( G_{i,j} - \tfrac{\lambda}{n} )^2 }{ \lambda/n } \Big]$ and
    \begin{align*}
        & \frac{ (1-\tfrac{\delta}{2})^{ |E(S_1)|+|E(S_2)|-2|E(H)|} n^{ \frac{1}{4}(\tau(K_1)+\tau(K_2)-2\tau(H)) } (2000\Tilde{\lambda}^{22}k^{23})^{2N^2(\Gamma_1+\Gamma_2)} }{ n^{\frac{1}{2}(|E(S_1)|+|E(S_2)|-2|E(H)|)} n^{ \frac{1}{2}|\mathtt L_1|+\frac{1}{4}|\mathtt L_2| } } \\
        =\ & \frac{ (1-\tfrac{\delta}{2})^{ |E(S_1)|+|E(S_2)|-2|E(H)|} (2000\Tilde{\lambda}^{22}k^{23})^{2N^2(\Gamma_1+\Gamma_2)} }{ n^{\frac{1}{2}(|V(S_1)|+|V(S_2)|-2|V(H)|)} n^{ \frac{1}{2}\Gamma_1 + \frac{1}{4}\Gamma_2 } }  \\
        \leq\ & \frac{ \mathtt M(S_1,K_1) \mathtt M(S_2,K_2) \mathtt M(K_1,H) \mathtt M(K_2,H) }{ n^{\frac{1}{2}(|V(S_1)|+|V(S_2)|-2|V(H)|)} }  \,,
    \end{align*}
    where the equality follows from \eqref{eq-def-mathtt-L} and \eqref{eq-def-Gamma}, and the inequality follows from Equation \eqref{eq-def-mathtt-M}. Thus we have shown \eqref{eq-extreme-technical}. 
\end{proof}

\section{Supplementary proofs in Section~\ref{sec:detection-lower-bound}}{\label{sec:supp-proofs-sec-4}}

\subsection{Proof of Lemma~\ref{lem-Gc-is-typical}}{\label{subsec:proof-lem-4.2}}

Recall Definition~\ref{def-addmisible}. Note that $G$ is a stochastic block model with average degree $\lambda=O(1)$, and $G$ is independent of $\pi_*$. Hence, it suffices to show that with positive probability such a stochastic block model contains no ``undesirable'' subgraph (as described when defining $\mathcal E$). To this end, it suffices to prove the following two items:
\begin{enumerate}
    \item[(i)] With probability $1-o(1)$, $G$ does not contain a subgraph $H$ such that $|V(H)| \leq D^3$ and $\Phi(H) < (\log n)^{-1}$.
    \item[(ii)] With probability at least $c$, $G$ contains no cycle with length no more than $N$.
\end{enumerate}
Denoting by $C_l(G)$ the number of $l$-cycles in $G$, it was known in \cite[Theorem~3.1]{MNS15} that
\begin{equation}{\label{eq-Poisson-convergence}}
    \Big( C_3(G), \ldots, C_N(G) \Big) \Longrightarrow \Big( \operatorname{Pois}(c_3), \ldots, \operatorname{Pois}(c_N) \Big) \,,
\end{equation}
where $\{ \operatorname{Pois}(c_j):3 \leq j \leq N \}$ is a collection of independent Poisson variables with parameters $c_j = \frac{ (1+(k-1)\epsilon^j)\lambda^j }{ 2j }$. Thus, we have Item (ii) holds. We now verify Item (i) via a union bound. For each $1 \leq j \leq D^3$, define
\begin{equation}{\label{eq-def-E(k)}}
    \kappa(j) = \min\Big\{ j' \geq 0: \big( \tfrac{2k^2 \Tilde{\lambda}^2 n}{D^{50}} \big)^{j} \big( \tfrac{ 1000 k^{20} \Tilde{\lambda}^{20} D^{50} }{ n } \big)^{j'} < (\log n)^{-1} \Big\} \,.
\end{equation}
A simple calculation yields $\kappa(j)>j$. In order to prove Item (i), it suffices to upper-bound the probability (by $o(1)$) that there exists $W \subset [n]$ with $|W|=j \leq D^3$ and $|E(G_W)| \geq \kappa(j)$. By a union bound, the aforementioned probability is upper-bounded by
\begin{align}{ \label{eq-upper-bound-prob-G^c} }
    \sum_{j=1}^{D^3} \sum_{W\subset[n],|W|=j} \mathbb{P}_{*}\Big( |E(G_W)| \ge \kappa(j) \Big) \leq \sum_{j=1}^{D^3} \binom{n}{j} \mathbb{P} \Big( \mathbf{B}\Big(\binom{j}{2},\frac{k\lambda}{n} \Big) \geq \kappa(j) \Big) \,,
\end{align}
where $\mathbf{B}\big(\binom{j}{2}, \frac{k\lambda}{n} \big)$ is a binomial variable with parameters $\big(\binom{j}{2},\frac{k\lambda}{n}\big)$, and the inequality holds since this binomial variable stochastically dominates $|E(G_W)|$ for any $W\subset [n]$ with $|W|=j$. For $j\le D^3$, we have $\binom{j}{2}k\lambda/n=o(1)$ and thus by Poisson approximation we get that
\begin{align}{ \label{eq-upper-bound-prob-G^c-.2} }
    \binom{n}{j} \mathbb{P}\Big( \mathbf{B} \Big(\binom{j}{2},\frac{k\lambda}{n} \Big) \geq \kappa(j) \Big) &\leq \frac{n^j}{j!} \cdot \frac{(j^2 \lambda k/n)^{\kappa(j)}}{(\kappa(j))!} \leq n^{j-\kappa(j)} (10\lambda k)^{\kappa(j)} j^{\kappa(j)-j} \nonumber \\
    &\leq 2^{-j} \big( \tfrac{2\Tilde{\lambda}^2 k^2 n}{D^{50}} \big)^{j} \big( \tfrac{ 1000 \Tilde{\lambda}^{20} k^{20} D^{50} }{ n } \big)^{\kappa(j)} \overset{\eqref{eq-def-E(k)}}{\leq} 2^{-j} (\log n)^{-1} \,,
\end{align}
where the third inequality follows from the fact that $\Tilde{\lambda}\geq \lambda,\kappa(j)>j$ and $j \le D^3$. Plugging this estimation into \eqref{eq-upper-bound-prob-G^c}, we get that the right-hand side of \eqref{eq-upper-bound-prob-G^c} is further bounded by
\begin{align}{ \label{eq-upper-bound-prob-G^c-.3} }
    (\log n)^{-1} \sum_{j=1}^{D^3} 2^{-j}=o(1) \,.
\end{align}
This gives Item (i), thereby completing the proof of the lemma.

\subsection{Proof of Lemma~\ref{lem-TV-Pb-Pb'}}{\label{subsec:proof-lem-4.4}}

We first introduce some notation for convenience. Denote $\mathcal P_j(\chi)$ the set of $j$-paths (i.e., paths with $j$ vertices) of $\chi$, and denote 
\begin{equation}{\label{eq-def-CAND}}
\begin{aligned}
    \operatorname{CAND}^{=}_j(\chi) = \big\{ (u,v) \in \operatorname{U} : \chi_{u,v}=0, \sigma_u = \sigma_v, \exists P \in \mathcal P_j(\chi), \operatorname{EndP}(P)=\{u,v\} \big\} \,, \\ 
    \operatorname{CAND}_j^{\neq}(\chi) = \big\{ (u,v) \in \operatorname{U} : \chi_{u,v}=0, \sigma_u \neq \sigma_v, \exists P \in \mathcal P_j(\chi), \operatorname{EndP}(P)=\{u,v\} \big\} \,,
\end{aligned}
\end{equation}
as the sets of non-neighboring pairs $(u,v)$ for which there exists a $j$-path connecting this pair. These sets are candidates for the edges in $E(G) \setminus E(G')$. For a fixed labeling $\sigma \in [k]^n$, we say that $\sigma$ is \emph{typical} if (in what follows, $\chi'$ is the random edge vector corresponding to $G'$ as in Definition~\ref{def-G'-P'})
\begin{align}
    & \big| \#\{ u \in [n] : \sigma_u = i \} - n/k \big| \leq n^{0.9} \mbox{ for all } i \in [k] \,; \label{eq-def-typical-labeling-1} \\
    & \Pb'_{\sigma}\Big( \#\big(\operatorname{CAND}^{=}_j(\chi') \cap \operatorname{CAND}^{=}_l(\chi') \big) \leq 3n^{0.1} \Big) = 1-o(1)\mbox{ for }2 \leq j \neq l \leq N\,; \label{eq-def-typical-labeling-3} \\
    & \Pb'_{\sigma}\Big( \#\big(\operatorname{CAND}^{\neq}_j(\chi') \cap \operatorname{CAND}^{\neq}_l(\chi') \big) \leq 3n^{0.1} \Big) = 1-o(1) \mbox{ for }2 \leq j \neq l \leq N\,; \label{eq-def-typical-labeling-4} \\
    & \Pb'_{\sigma}\Big( \big| \#\operatorname{CAND}^{=}_j(\chi') - \tfrac{n \lambda^{j-1}(1+(k-1)\epsilon^{j-1})}{2k} \big| \leq 2n^{0.9}, \forall\ 2 \leq j \leq N \Big) = 1-o(1) \,; \label{eq-def-typical-labeling-5} \\
    & \Pb'_{\sigma} \Big( \big| \#\operatorname{CAND}^{\neq}_j(\chi') - \tfrac{n \lambda^{j-1}(k-1)(1-\epsilon^{j-1})}{2k} \big| \leq 2n^{0.9}, \forall\ 2 \leq j \leq N \Big) = 1-o(1) \,. \label{eq-def-typical-labeling-6}
\end{align}
\begin{claim}{\label{claim-typical-labeling}}
    We have $\nu( \{ \sigma \in [k]^{n} : \sigma \mbox{ is typical} \} )=1-o(1)$.
\end{claim}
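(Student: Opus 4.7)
The plan is to split the typicality property into the deterministic condition \eqref{eq-def-typical-labeling-1} and the probabilistic conditions \eqref{eq-def-typical-labeling-3}--\eqref{eq-def-typical-labeling-6}, and combine with a union bound over the finite list of constraints. For \eqref{eq-def-typical-labeling-1}, since $\sigma \sim \nu$ is uniform on $[k]^n$, each count $n_i = \#\{u : \sigma_u = i\}$ is a $\mathrm{Bin}(n, 1/k)$ variable, so Hoeffding's inequality gives $\nu(|n_i - n/k| > n^{0.9}) \leq 2\exp(-2 n^{0.8})$, and a union bound over $i \in [k]$ handles this condition. For each of \eqref{eq-def-typical-labeling-3}--\eqref{eq-def-typical-labeling-6}, let $\mathcal F$ denote the corresponding failure event under $\Pb'_\sigma$. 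The Markov--Fubini inequality
\[
\nu\big(\big\{\sigma : \Pb'_\sigma(\mathcal F) > \sqrt{\Pb'_*(\mathcal F)}\big\}\big) \leq \sqrt{\Pb'_*(\mathcal F)}
\]
reduces the task to proving $\Pb'_*(\mathcal F) = o(1)$, i.e.\ we only need to control the joint probability under $\Pb'_*$.

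As a preliminary simplification, I compare the counts in $\chi' = G'$ with those in $\chi = G$. By the Poisson convergence \eqref{eq-Poisson-convergence} and the computation at \eqref{eq-upper-bound-prob-G^c}--\eqref{eq-upper-bound-prob-G^c-.3}, under $\Pb_*$ the number of short cycles (lengths $\leq N$) and of self-bad subgraphs with $\leq D^3$ vertices in $G$ is $O(1)$ with probability $1 - o(1)$. Hence $|E(G) \setminus E(G')| = O(1)$ w.h.p., and removing $O(1)$ edges changes each of the counts $\#\operatorname{CAND}^{=}_j$, $\#\operatorname{CAND}^{\neq}_j$ by at most $O(1)$, since for $j \leq N$ each edge lies in only $O(\lambda^{j-2}) = O(1)$ many $j$-paths. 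Thus, up to additive error $o(n^{0.1})$ it suffices to analyze $\chi = G$ under $\Pb_*$.

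For \eqref{eq-def-typical-labeling-5} and \eqref{eq-def-typical-labeling-6}, I compute the first moment by enumerating ordered $j$-paths according to their label sequences. For fixed endpoint labels $a, b$, summing the product of edge probabilities $\prod_{i=0}^{j-2}(\lambda/n)(1 + \epsilon\omega(a_i, a_{i+1}))$ over $(j-2)$-tuples of interior labels and applying Claim~\ref{claim-expectation-over-chain} yields
\[
\mathbb E_{\Pb_*}\big[\#\{j\text{-paths from } u \text{ to } v\} \mid \sigma\big] = \tfrac{\lambda^{j-1}}{n}\big(1 + \epsilon^{j-1}\omega(a,b)\big) + O(n^{-2}).
\]
Summing over unordered pairs with $\sigma_u = \sigma_v$ (resp.\ $\sigma_u \ne \sigma_v$), using \eqref{eq-def-typical-labeling-1} to approximate each community size by $n/k + O(n^{0.9})$, and noting that the expected number of pairs connected by two or more $j$-paths is $O(1)$ (so $\#\operatorname{CAND}^{=}_j$ equals this expected path count up to $O(1)$), one obtains $\mathbb E_{\Pb_*}[\#\operatorname{CAND}^{=}_j] = \tfrac{n\lambda^{j-1}(1 + (k-1)\epsilon^{j-1})}{2k} + O(n^{0.9})$ and the analogous formula for $\operatorname{CAND}^{\neq}_j$. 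A standard second-moment calculation, grouping pairs of $j$-paths by the structure of their shared vertices, shows that vertex-disjoint pairs contribute the square of the mean up to a relative error $O(1/n)$, while pairs that meet contribute $O(n)$ in total by the enumeration bounds of Section~\ref{sec:prelim-graphs}; hence $\operatorname{Var}(\#\operatorname{CAND}^{=}_j) = O(n)$ (and similarly for $\operatorname{CAND}^{\neq}_j$) under $\Pb_*$. Chebyshev's inequality with deviation $n^{0.9}$ then yields $\Pb_*(\mathcal F) = O(n^{-0.8}) = o(1)$, and a union bound over $2 \leq j \leq N$ completes this case.

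For \eqref{eq-def-typical-labeling-3} and \eqref{eq-def-typical-labeling-4}, any pair $(u,v) \in \operatorname{CAND}^{=}_j(\chi) \cap \operatorname{CAND}^{=}_l(\chi)$ with $j \neq l$ (and analogously for $\neq$) is the common endpoint of a $j$-path and an $l$-path in $G$, whose edge-union is a subgraph of $G$ on at most $j + l - 2 \leq 2N - 2$ vertices containing at least one cycle of length $\leq 2N - 2$ (since two distinct paths with matching endpoints cannot form a forest). By the standard first-moment count for small cycles in a constant-degree SBM---the same Poisson-type calculation behind \eqref{eq-Poisson-convergence}---the expected number of such configurations is $O(1)$. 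Markov's inequality therefore gives $\Pb_*(\#\text{overlap} > 3n^{0.1}) = O(n^{-0.1}) = o(1)$, and a union bound over the $O(N^2)$ pairs $(j,l)$ concludes. The main technical step is the variance bound for $\#\operatorname{CAND}^{\bullet}_j$ in the previous paragraph, which requires careful enumeration of the overlap patterns between two $j$-paths; this is closely analogous to the second-moment analyses carried out in Section~\ref{subsec:est-2nd-moment} and in the proof of Lemma~\ref{prop_principal}, and so I expect no essentially new difficulty.
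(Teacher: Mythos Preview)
Your proposal is correct and follows essentially the same approach as the paper: both reduce from $\chi'$ to $\chi$ by bounding the number of removed edges, establish \eqref{eq-def-typical-labeling-3}--\eqref{eq-def-typical-labeling-4} via a Markov bound on the count of pairs of paths sharing endpoints, and handle \eqref{eq-def-typical-labeling-5}--\eqref{eq-def-typical-labeling-6} by first- and second-moment computations under the averaged measure $\Pb$ together with Chebyshev (your Markov--Fubini reduction to $\Pb'_*$ is just a repackaging of the paper's direct averaging over $\sigma$). One small imprecision worth flagging: the assertion that ``each edge lies in only $O(\lambda^{j-2})=O(1)$ many $j$-paths'' is an expectation, not a deterministic bound---edges incident to high-degree vertices can lie in more---but since the maximum degree is $n^{o(1)}$ w.h.p.\ the needed conclusion (perturbation $o(n^{0.1})$) still holds, and the paper simply states the $<n^{0.1}$ bound on path differences directly in \eqref{eq-diff-chi-chi'}.
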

\begin{proof}
    Clearly, we have that $\sigma$ satisfies \eqref{eq-def-typical-labeling-1} with probability $1-o(1)$. Suppose $\chi' \sim \Pb_{\sigma}'$ is subsampled from $\chi \sim \Pb_{\sigma}$ (recalling Definition~\ref{def-G'-P'}, this means that $\chi$ is the random edge vector according to $G \sim \mathbb P_\sigma$, and $\chi'$ is the random edge vector according to $G'$ which is obtained from $G$ after appropriate edge removal). Denote 
    \begin{align*}
        \Check{\mathcal P}^{=}_j(\chi) &= \Big\{ P \in \mathcal P_j(\chi):   \operatorname{EndP}(P) = \{ u,v \} \text{ for some }(u,v)\mbox{ in }\operatorname{U}\text{ with } \chi_{u,v}=0, \sigma_u=\sigma_v \Big\} \,; \\
        \Check{\mathcal P}^{\neq}_j(\chi) &= \Big\{ P \in \mathcal P_j(\chi):   \operatorname{EndP}(P) = \{ u,v \} \text{ for some }(u,v)\mbox{ in }\operatorname{U}\text{ with } \chi_{u,v}=0, \sigma_u\neq \sigma_v \Big\} \,.
    \end{align*}
    (Note that it is possible that $\# \Check{\mathcal P}^{=}_j \neq \# \operatorname{CAND}^{=}_j$ since pairs in $\operatorname{CAND}^{=}_j$ may correspond to multiple paths in $\Check{\mathcal P}^{=}_j$.)
    Recalling Definition~\ref{def-G'-P'} and applying a union bound (over all $B_i$'s in Definition~\ref{def-G'-P'}), we have for all $\sigma\in[k]^n$
    \begin{align}
        &\Pb_{\sigma}'\Big( \#\big\{ e \in \operatorname{U} : \chi_e > \chi'_{e} \big\} < n^{0.1} \Big),\  \Pb_{\sigma}'\Big( \#\big( \mathcal P_j(\chi) \setminus \mathcal P_j(\chi') \big) < n^{0.1} \Big) =1-o(1)\,,\label{eq-diff-chi-chi'}\\
        &\Pb_{\sigma}\Big( |\#\Check{\mathcal P}^{=}_j(\chi) - \#\operatorname{CAND}^{=}_j(\chi)| \leq n^{0.1} \Big) =1-o(1)\label{eq-diff-cyc-cand} \,.
    \end{align}
    In addition, it can be shown by Markov inequality that for all $\sigma \in [k]^n$ and $2\leq j\neq l\leq N$, with $\mathbb P'_\sigma$-probability $1-o(1)$ we have that 
    \begin{align*}
        & \#\big\{ (P_1,P_2) : P_1 \in \Check{\mathcal P}^{=}_j(\chi'), P_2 \in \Check{\mathcal P}^{=}_l(\chi'), \operatorname{EndP}(P_1) = \operatorname{EndP}(P_2) \big\}  \leq n^{0.1}  \,;  \\
        & \#\big\{ (P_1,P_2) : P_1 \in \Check{\mathcal P}^{\neq}_j(\chi'), P_2 \in \Check{\mathcal P}^{\neq}_l(\chi'), \operatorname{EndP}(P_1) = \operatorname{EndP}(P_2) \big\} \leq n^{0.1}  \,.
    \end{align*}
    Thus \eqref{eq-def-typical-labeling-3} and \eqref{eq-def-typical-labeling-4} hold for all $\sigma \in [k]^n$.
    We next deal with \eqref{eq-def-typical-labeling-5}. To this end, by \eqref{eq-diff-cyc-cand} and \eqref{eq-diff-chi-chi'}, it suffices to show that the measure of $\sigma \in [k]^n$ such that
    \begin{align*}
        \Pb_{\sigma} \Big( \big| \#\Check{\mathcal P}^{=}_j(\chi) - \tfrac{n \lambda^{j-1}(1+(k-1)\epsilon^{j-1})}{2k} \big| \leq n^{0.9}, \forall\ 3 \leq j \leq N \Big) = 1-o(1)
    \end{align*}
    is $1-o(1)$. Note that
    \begin{align*}
        & \mathbb E_{\sigma\sim\nu} \Big[ \mathbb E_{\Pb_{\sigma}}\big[ \#\Check{\mathcal P}^{=}_j \big] \Big] = \mathbb E_{\Pb}\Big[ \#\Check{\mathcal P}^{=}_j \Big] = \sum_{ P \in \mathcal P_j (1_{\operatorname{U}})} \Pb\Big( P \in \Check{\mathcal P}^{=}_j \Big) \\
        =\ & \sum_{ \substack{P \in \mathcal P_j (1_{\operatorname{U}})\\ (u,v) \in \operatorname{U}}}\Pb \Big( \operatorname{EndP}(P) = \{ u,v \}, \sigma_u = \sigma_v, \chi_{u,v} = 0, \chi_e =1 \mbox{ for all } e \in E(P) \Big) \\
        =\ & \sum_{ P \in \mathcal P_j (1_{\operatorname{U}})} \tfrac{1}{k} \cdot ( 1- \tfrac{(1+\epsilon(k-1))\lambda}{n} ) \cdot \tfrac{(1+(k-1) \epsilon^{j-1} ) \lambda^{j-1} }{n^{j-1}} = [1+O(\tfrac{1}{n})] \cdot \tfrac{n\lambda^{j-1}(1+(k-1)\epsilon^{j-1})}{2k} \,,
    \end{align*}
    where the fourth equality follows from Claim~\ref{claim-expectation-over-chain}. We now estimate the second moment. We have
    \begin{align*}
        \mathbb E_{\Pb}\Big[ \big( \#\Check{\mathcal P}^{=}_j \big)^2 \Big] = \sum_{ P_1 , P_2 \in \mathcal P_j (1_{\operatorname{U}}) } \Pb( P_1,P_2 \in \Check{\mathcal P}^{=}_j) \,.
    \end{align*}
    For $|V(P_1) \cap V(P_2)|=m \geq 1$, we have (note that $|E(P_1) \cap E(P_2)| \leq m-1$)
    \begin{align*}
        \Pb( P_1,P_2 \in \Check{\mathcal P}^{=}_j ) \leq \Pb( \chi_e=1 \mbox{ for all } e \in E(P_1) \cup E(P_2) ) \leq \big( \tfrac{k\lambda}{n} \big)^{ 2(j-1)-m+1 } \,.
    \end{align*}
    Since the number of pairs $(P_1,P_2)$ with $|V(P_2) \cap V(P_2)|=m$ is at most $j^m n^{2j-m}$, we have that 
    \begin{align*}
        \sum_{ V(P_1) \cap V(P_2) \neq \emptyset } \Pb( P_1,P_2 \in \Check{\mathcal P}^{=}_j) &\leq \sum_{m=1}^{j} j^m n^{2j-m} \big( \tfrac{k\lambda}{n} \big)^{ 2(j-1)-m+1 } \\ &\leq n^{-0.4} \Big( \tfrac{n\lambda^{j-1}(1+(k-1)\epsilon^{j-1})}{2k} \Big)^2 \,.
    \end{align*} 
    In addition, for $V(P_1) \cap V(P_2) = \emptyset$, we have $\Pb( P_1,P_2 \in \Check{\mathcal P}^{=}_j ) = \Pb( P_1 \in \Check{\mathcal P}^{=}_j )^2$. Thus, we have 
    \begin{align*}
        \mathbb E_{\sigma\sim\nu} \Big[ \big( \#\Check{\mathcal P}^{=}_j - \tfrac{n\lambda^{j-1}(1+(k-1)\epsilon^{j-1})}{2k} \big)^2 \Big] \leq n^{-0.4} \cdot  \Big( \tfrac{n\lambda^{j-1}(1+(k-1)\epsilon^{j-1})}{2k} \Big)^2 \,,
    \end{align*}
    and we can deduce \eqref{eq-def-typical-labeling-5} by Chebyshev inequality. The requirement \eqref{eq-def-typical-labeling-6} can be dealt with in a similar manner.
\end{proof}

\begin{lemma}{\label{lem-Poisson-approx}}
    Fix a typical $\sigma\in [k]^n$. For $G^{\sigma} \sim \Pb_{\sigma}$ we get
    $$ \Big( C_3(G^{\sigma}), \ldots, C_N(G^{\sigma}) \Big) \Longrightarrow \Big( \operatorname{Pois}(c_3), \ldots, \operatorname{Pois}(c_N) \Big) \,,$$
    where $\{ \operatorname{Pois}(c_j):3 \leq j \leq N \}$ is a collection of independent Poisson variables with parameters $ \tfrac{ (1+(k-1)\epsilon^j)\lambda^j }{ 2j }$. 
\end{lemma}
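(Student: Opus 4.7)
The plan is to apply the method of factorial moments: to obtain joint convergence of $(C_3(G^{\sigma}),\ldots,C_N(G^{\sigma}))$ to independent Poissons with parameters $c_3,\ldots,c_N$, it suffices to show that for any fixed non-negative integers $m_3,\ldots,m_N$,
\begin{equation*}
    \mathbb E_{\Pb_\sigma}\Big[\prod_{j=3}^{N}(C_j(G^\sigma))_{m_j}\Big]\ \longrightarrow\ \prod_{j=3}^{N} c_j^{m_j}\,,
\end{equation*}
where $(x)_m=x(x-1)\cdots(x-m+1)$. Expanding each falling factorial, the left-hand side becomes a sum over ordered tuples of pairwise-distinct $j$-cycles (of appropriate lengths) of the probability that all these cycles appear in $G^\sigma$.

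First I would handle the first moment. For a $j$-cycle $C$ with vertex-sequence $v_1,\ldots,v_j$, under $\Pb_\sigma$ the edges are independent so $\Pb_\sigma(C\subset G^\sigma)=\prod_{i=1}^{j}\tfrac{(1+\epsilon\omega(\sigma_{v_i},\sigma_{v_{i+1}}))\lambda}{n}$. Summing over $v_1,\ldots,v_j$ (and dividing by $2j$ for cyclic symmetries and orientation), the typicality condition \eqref{eq-def-typical-labeling-1} gives $|V_a|=n/k+O(n^{0.9})$ for each $a\in[k]$, so
\begin{equation*}
    \mathbb E_{\Pb_\sigma}[C_j(G^\sigma)]\ =\ [1+o(1)]\cdot\frac{\lambda^j}{2j\,n^j}\Big(\frac{n}{k}\Big)^{j}\sum_{a_1,\ldots,a_j\in[k]}\prod_{i=1}^{j}\big(1+\epsilon\omega(a_i,a_{i+1})\big)\,.
\end{equation*}
The final sum equals $\operatorname{tr}(M^j)$ for the $k\times k$ matrix $M=(1-\epsilon)J+\epsilon k\, I$, whose eigenvalues are $k$ (simple) and $\epsilon k$ (multiplicity $k-1$). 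Hence $\operatorname{tr}(M^j)=k^j+(k-1)(\epsilon k)^j=k^j(1+(k-1)\epsilon^j)$, and we obtain $\mathbb E_{\Pb_\sigma}[C_j]\to c_j$ as desired.

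Next I would handle the joint factorial moment by splitting the sum over ordered tuples into the \emph{vertex-disjoint} and the \emph{overlapping} contributions. For a vertex-disjoint tuple $(C_1,\ldots,C_r)$ (with $C_l$ of length $j_l$), edge-independence under $\Pb_\sigma$ yields $\Pb_\sigma(\cup_l C_l\subset G^\sigma)=\prod_l\Pb_\sigma(C_l\subset G^\sigma)$, so the vertex-disjoint contribution factorizes and, by the same trace computation applied $r$ times together with the balance of $\sigma$, converges to $\prod_{j}c_j^{m_j}$. It remains to bound the overlapping contribution. If two cycles in the tuple share at least one vertex, then merging reduces the total vertex count by at least one compared to the disjoint case, and each cycle still requires $j_l$ edges each contributing a factor $\Theta(1/n)$; hence by a standard enumeration one can bound the total overlapping contribution by $O(n^{-1})$ times the disjoint contribution, using only that $|V_a|\leq 2n/k$ for all $a$ (which again follows from typicality \eqref{eq-def-typical-labeling-1}). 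Combining the two parts yields the desired factorial-moment convergence and hence the claimed joint Poisson limit.

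The main technical step — but a routine one — will be the uniform control of the overlapping contribution: one organizes shared-vertex patterns by the union graph of the $r$-tuple, enumerates such union graphs of given excess, bounds each cycle-indicator probability by $(k\lambda/n)^{\text{edges}}$, and sums. Since $N$ is fixed and all $j_l\leq N$, the number of combinatorial patterns is $O(1)$ and each gives a gain of at least one power of $n^{-1}$, so the overlapping contribution is $O_N(n^{-1})=o(1)$. Once the two contributions are combined, the method of moments for Poisson random variables completes the proof.
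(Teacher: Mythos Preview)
Your proof is correct, but it takes a genuinely different route from the paper. The paper does \emph{not} redo a factorial-moment computation. Instead it imports the joint Poisson limit for the \emph{label-averaged} model $G\sim\Pb$ from \cite[Theorem~3.1]{MNS15} (which is precisely \eqref{eq-Poisson-convergence}), and then argues that for any two typical labelings $\sigma,\sigma'$ one can couple $G^\sigma$ and $G^{\sigma'}$ so that their small-cycle counts agree with probability $1-o(1)$: by permutation-invariance of the cycle-count law one may assume $|\operatorname{DIF}(\sigma,\sigma')|\le 2kn^{0.9}$, and under the obvious uniform-threshold coupling the two graphs only differ on edges touching $\operatorname{DIF}(\sigma,\sigma')$, which a first-moment bound shows are avoided by all $j$-cycles ($j\le N$) with high probability. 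Since $\nu$-almost all $\sigma$ are typical (Claim~\ref{claim-typical-labeling}), this transfers the Poisson limit from $\Pb$ to each $\Pb_\sigma$.

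Your approach is more self-contained: you reprove the Poisson limit directly for fixed balanced $\sigma$, using only \eqref{eq-def-typical-labeling-1}, via the classical moment method. The eigenvalue/trace computation of $M=(1-\epsilon)J+\epsilon kI$ that identifies $c_j$ is clean, and the vertex-disjoint/overlapping split is the standard way to control the joint factorial moments (the key fact being that a union of two distinct cycles sharing a vertex has excess $\ge 1$, so each overlap costs a factor $n^{-1}$). The paper's route is shorter because it outsources exactly this moment computation to \cite{MNS15}; yours avoids the external citation at the price of repeating that argument. Both proofs ultimately rest on the same combinatorics and use only the balance condition \eqref{eq-def-typical-labeling-1} from ``typicality''.
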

\begin{proof}
    Let $G \sim \Pb$ and recall that $C_j(G)$ is the number of $j$-cycles in $G$. Recalling \eqref{eq-Poisson-convergence}, it suffices to show that for typical $\sigma$ we have
    \begin{equation}{\label{eq-TV-cycle-distribution}}
        \operatorname{TV}\Big( (C_3(G),\ldots,C_N(G)), (C_3(G^{\sigma}),\ldots,C_N(G^{\sigma})) \Big) = o(1) \,.
    \end{equation}
    From Claim~\ref{claim-typical-labeling}, it suffices to show that for arbitrary typical $\sigma'$, there exists a coupling $\check{\Pb}$ of $G^{\sigma} \sim \Pb_{\sigma}$ and $G^{\sigma'} \sim \Pb_{\sigma'}$ such that
    \begin{equation}\label{eq-dist-cycles-identical-on-typical-sigma}
        \check{\Pb}\Big( \big( C_3(G^\sigma), \ldots, C_N(G^\sigma ) \big) \neq \big( C_3(G^{\sigma'}), \ldots, C_N(G^{\sigma'}) \big) \Big) = o(1).
    \end{equation}
    Define $\operatorname{DIF}(\sigma,\sigma') = \{ i \in [n] : \sigma_i \neq \sigma'_i \}$. Since the distribution of $\big( C_3(G^\sigma ), \ldots, C_N(G^\sigma ) \big)$ is invariant under any permutation of $\sigma$, by \eqref{eq-def-typical-labeling-1} it suffices to show \eqref{eq-dist-cycles-identical-on-typical-sigma} assuming that $| \operatorname{DIF}(\sigma, \sigma')| \leq 2kn^{0.9}$. We couple $G^{\sigma} \sim \Pb_{\sigma}$ and $G^{\sigma'} \sim \Pb_{\sigma'}$ as follows: for each $(i,j) \in \operatorname{U}$, we independently sample a random variable $x_{i,j} \sim \mathcal U[0,1]$, and then for $y \in \{ \sigma , \sigma' \}$ take $G^{y}_{i,j} = 1$ if and only if $x_{i,j} \leq \frac{1+\epsilon\omega(y_i ,y_j)}{n}$. Let $\check{\Pb}$ be the law of $(G^{\sigma},G^{\sigma'})$. Since for any $3\le j\le N$, $y \in \{ \sigma,\sigma'\}$ and $u \in \operatorname{DIF}(\sigma , \sigma')$, we have
    \begin{align*}
        \check{\Pb}\Big(u\in V(\mathtt C_j(G^y)) \Big) \leq n^{j-1} (\tfrac{\lambda k}{n})^{j} = o(n^{-0.9}) \,,
    \end{align*}
    it follows that
    \begin{align*}
    &\check{\Pb}\Big( \big( C_3(G^\sigma), \ldots, C_N(G^\sigma) \big) \neq \big( C_3(G^{\sigma'}), \ldots, C_N(G^{\sigma'}) \big) \Big)\\
    \leq &\ \check{\Pb}\Big( \operatorname{DIF}(\sigma , \sigma') \cap \big( \cup_{3\le j\le N , y \in \{ \sigma, \sigma'\} } V(\mathtt C_j(G^{y})) \big) \neq \emptyset \Big) \\
    \leq &\  2N|\operatorname{DIF}(\sigma,\sigma')| \cdot  o(n^{-0.9})=o(1).
    \end{align*}
    Therefore we have verified \eqref{eq-dist-cycles-identical-on-typical-sigma}, finishing our proof of  Lemma~\ref{lem-Poisson-approx}.
\end{proof}
Let $\Pb_{*,\sigma}=\Pb_*(\cdot \mid \sigma_*=\sigma)$ and define $\Pb_{*,\sigma}'$ in the similar manner.
Based on Claim~\ref{claim-typical-labeling}, it suffices to show that $\operatorname{TV}\big( \Pb_{*,\sigma}((A,B)\in \cdot \mid \mathcal E ), \Pb'_{*,\sigma}((A,B)\in\cdot) \big)=o(1)$ for all typical $\sigma$. Let $G=G({\sigma})$ and $G'=G'(\sigma)$ be two parent graphs sampled from $\Pb_{*,\sigma}$ and $\Pb'_{*,\sigma}$ respectively (and coupled naturally via the mechanism in Definition~\ref{def-G'-P'}). From the data processing inequality, it suffices to show that $\operatorname{TV}\big(\mathbb P_{*,\sigma} (G\in\cdot\mid\mathcal E) ,\mathbb{P}_{*,\sigma}' (G'\in\cdot)\big)=o(1)$. Denote $\Gc$ the event that $G$ does not contain any self-bad subgraph $H$ such that $|V(H)| \le D^3$ and that the number of cycles of length at most $N$ is at most $\log n$ (note that sometimes we also view $\mathcal G$ as a collection of vectors that correspond to edges in $G$ satisfying $\mathcal G$). It is known from \eqref{eq-upper-bound-prob-G^c} that $\Pb_{*,\sigma}'(\Gc) =1-o(1)$ (the label $\sigma$ does not matter here since the stochastic domination employed in \eqref{eq-upper-bound-prob-G^c} holds for all $\sigma$). By the triangle inequality 
\begin{equation*}
    \operatorname{TV}\big(\mathbb P_{*,\sigma} (G\in\cdot\mid\mathcal E) ,\mathbb{P}_{*,\sigma}'(G'\in\cdot)\big) \leq \operatorname{TV}\big(\mathbb P_{*,\sigma}(G \in \cdot \mid \mathcal E), \mathbb P_{*,\sigma}'(G'\in\cdot \mid \Gc)\big) + \mathbb P_{*,\sigma}'(\Gc^c) \,,
\end{equation*}
in order to prove Lemma~\ref{lem-TV-Pb-Pb'} it suffices to show  
\begin{align}\label{ref-for-end-of-lem-TV-Pb'-Pb}
    \operatorname{TV}\big(\mathbb P_{*,\sigma}(G\in\cdot\mid\mathcal{E}),\mathbb P_{*,\sigma}'(G'\in\cdot\mid\mathcal{G}))\big)=o(1) \,.
\end{align}
Denote $\mathtt p = \frac{ (1+\epsilon(k-1))\lambda }{ n }$ and $\mathtt q = \frac{ (1-\epsilon)\lambda }{ n }$. For any $\chi \in \{ 0,1 \}^{\operatorname{U}}$, denote 
\begin{align*}
    \mathtt E_{1,=}(\chi) = \# \big\{ (i,j) \in \operatorname{U}: \chi_{i,j}=1, \sigma_i=\sigma_j \big\} \,; \quad
    \mathtt E_{1,\neq}(\chi) = \#\big\{ (i,j) \in \operatorname{U}: \chi_{i,j}=1, \sigma_i\neq\sigma_j \big\} \,; \\
    \mathtt E_{0,=}(\chi) = \# \big\{ (i,j) \in \operatorname{U}: \chi_{i,j}=0, \sigma_i=\sigma_j \big\} \,; \quad
    \mathtt E_{0,\neq}(\chi) = \# \big\{ (i,j) \in \operatorname{U}: \chi_{i,j}=0, \sigma_i\neq\sigma_j \big\} \,. 
\end{align*}
Note that for $\chi \in \{0, 1\}^{\operatorname{U}}$ such that $\mathcal E$ holds, we have
\begin{align}
    & \Pb_{*,\sigma}( G=\chi \mid \mathcal E ) = \frac{ \Pb_{*,\sigma}(G=\chi) }{ \Pb_{*,\sigma}(\mathcal E) } = \frac{ \mathtt p^{ \mathtt E_{1,=}(\chi) } \mathtt q^{ \mathtt E_{1,\neq}(\chi) } (1-\mathtt p)^{ \mathtt E_{0,=}(\chi) } (1-\mathtt q)^{ \mathtt E_{0,\neq}(\chi) } }{ \mathbb P_{*,\sigma}(\mathcal E) }  \,. \label{eq-prob-realization-Pb}
\end{align}
In addition, for any $\sigma \in [k]^{n}$, by applying \eqref{eq-upper-bound-prob-G^c}, \eqref{eq-upper-bound-prob-G^c-.2} and \eqref{eq-upper-bound-prob-G^c-.3} in the proof of Lemma~\ref{lem-Gc-is-typical} (the label $\sigma$ does not matter here for the same reason as explained earlier), we have 
\begin{align}\label{detailed-approx-mathcalE-1}
    \Pb_{*,\sigma}(( \mathcal E ^{(1)})^c ) \le o(1). 
\end{align}
Also, using Lemma~\ref{lem-Poisson-approx} we have 
\begin{equation}\label{detailed-approx-mathcalE-2}
    \Pb_{*,\sigma}(\mathcal E^{(2)}) \circeq \mathbb P\Big( (\operatorname{Pois}(c_3),\ldots,\operatorname{Pois}(c_N))=(0,\ldots,0) \Big) \,.
\end{equation}
Combining \eqref{detailed-approx-mathcalE-1} and \eqref{detailed-approx-mathcalE-2}, for a typical $\sigma \in [k]^n$, we have
\begin{align}{\label{eq-prob-mathcal-E}}
    \Pb_{*,\sigma}(\mathcal E) &\circeq \mathbb P\Big( (\operatorname{Pois}(c_3),\ldots,\operatorname{Pois}(c_N))=(0,\ldots,0) \Big) \circeq \prod_{ j=3 }^{N} e^{ - \frac{ (1+(k-1)\epsilon^j)\lambda^j }{ 2j } } \,. 
\end{align}
We now estimate $\Pb'_{*,\sigma}(G'=\chi' \mid \Gc)$. Since $\Pb_{*,\sigma} (\mathcal G )=1-o(1)$, we have
\begin{align*}
    \Pb_{*,\sigma}'(G'=\chi' \mid \Gc) & \circeq \sum_{ \chi \in \Gc } \Pb_{*,\sigma}(G=\chi) \cdot \Pb_{*,\sigma}'(G'=\chi' \mid G=\chi) \\
    &\circeq \sum_{ \chi \in \Gc } \mathtt p^{ \mathtt E_{1,=}(\chi) } \mathtt q^{ \mathtt E_{1,\neq}(\chi) } (1-\mathtt p)^{ \mathtt E_{0,=}(\chi) } (1-\mathtt q)^{ \mathtt E_{0,\neq}(\chi) } \cdot \Pb_{*,\sigma}'(G'=\chi' \mid G=\chi) \,.
\end{align*}
And it remains to estimate $\Pb_{*,\sigma}'(G'=\chi' \mid G=\chi)$. For $\chi' \leq \chi$, denote 
\begin{align*}
    \Upsilon(\chi';\chi) = \Big\{ e \in \operatorname{U} : \chi_e=1, \chi_e'=0 \Big\} \mbox{ and } \Xi_j(\chi) = \bigcup_{C \in \mathtt C_j(\chi)}E(C) \,.
\end{align*}
Recall Definition~\ref{def-addmisible}. For $\chi \in \mathcal G$, we have $\Xi_j (\chi)\cap \Xi_l (\chi)= \emptyset$ for $3 \leq j < l \leq N$. Denote $\chi' \lhd \chi$ when $\Upsilon(\chi';\chi) \subset \cup_{j=3}^{N} \Xi_j(\chi)$ and $|\Upsilon(\chi';\chi) \cap E(C)| \leq 1$ for $C \in \cup_{3 \le j \le N} \mathtt C_j(\chi)$ (note the cycles in $\cup_{3 \le j \le N} \mathtt C_j(\chi)$ cannot intersect for $\chi\in \mathcal G$). Then for $\chi \in \mathcal G$ we have
\begin{align*}
    \Pb_{*,\sigma}'(G'=\chi' \mid G=\chi) = \mathbf{1}_{ \{ \chi' \lhd \chi \} } \cdot \prod_{j=3}^{N} (1/j)^{ | \Upsilon (\chi';\chi) \cap \Xi_j(\chi) | }  \,.
\end{align*}
Thus, we have (recall $\Pb_{*,\sigma}(\mathcal G) = 1-o(1)$)
\begin{align*}
    \Pb_{*,\sigma}'(G'=\chi'\mid \Gc) \circeq \sum_{ \substack{ \chi: \chi \in \Gc \\ \chi' \lhd \chi } } \frac{ \mathtt p^{ \mathtt E_{1,=}(\chi) } \mathtt q^{ \mathtt E_{1,\neq}(\chi) } (1-\mathtt p)^{ \mathtt E_{0,=}(\chi) } (1-\mathtt q)^{ \mathtt E_{0,\neq}(\chi) } }{ \prod_{j=3}^{N} j^{ | \Upsilon (\chi';\chi) \cap \Xi_j | } } \,. 
\end{align*}
Denote
\begin{align*}
    \Upsilon_{=}(\chi';\chi)= \{ (i,j) \in \Upsilon(\chi';\chi) : \sigma_i = \sigma_j \} \mbox{ and } \Upsilon_{\neq}(\chi';\chi) = \Upsilon(\chi';\chi) \setminus \Upsilon_{=}(\chi';\chi) \,.
\end{align*}
We then have that  
\begin{align}\label{eq-prob-realization-Pb'}
    & \Pb_{*,\sigma}'(G'=\chi'\mid \Gc) \circeq \mathtt p^{ \mathtt E_{1,=}(\chi') } \mathtt q^{ \mathtt E_{1,\neq}(\chi') } (1-\mathtt p)^{ \mathtt E_{0,=}(\chi')} (1-\mathtt q)^{ \mathtt E_{0,\neq}(\chi') } \sum_{\mathbf{m},\mathbf{n}=0}^{\log n} \Bigg( \prod_{j=3}^{N} \frac{ \mathtt p^{m_j} \mathtt q^{n_j} }{ j^{m_j+n_j} } * \nonumber \\
    & \# \Big\{ \chi \in \Gc : \chi'\leq \chi, | \Upsilon_{=}(\chi';\chi) \cap \Xi_j(\chi) | = m_j, | \Upsilon_{\neq}(\chi';\chi) \cap \Xi_j(\chi) | = n_j \Big\} \Bigg) \,,
\end{align}
where $\mathbf{m}=(m_1,\ldots,m_N)$, $\mathbf{n}=(n_1,\ldots,n_N)$, and the summation indicates summing each entry in $\mathbf{m}$ and $\mathbf{n}$ from $0$ to $\log n$.
We next bound the cardinality for the set in \eqref{eq-prob-realization-Pb'}. To this end, we have (we denote by $\mathsf{CAN}^{=}_j=\#( \operatorname{CAND}^{=}_j(\chi') \setminus \cup_{l \leq N, l \neq j} \operatorname{CAND}^{\neq}_l(\chi') )$ and $\mathsf{CAN}^{\neq}_j=\#( \operatorname{CAND}^{\neq}_j(\chi') \setminus \cup_{l \leq N, l \neq j} \operatorname{CAND}^{\neq}_l(\chi') )$ below)
\begin{align*}
    & \# \Big\{ \chi \in \Gc : \chi'\leq \chi, | \Upsilon_=(\chi';\chi) \cap \Xi_j(\chi) | = m_j, | \Upsilon_{\neq}(\chi';\chi) \cap \Xi_j(\chi) | = n_j \Big\} \\
    \geq \ & \prod_{j=3}^{N} \binom{ \mathsf{CAN}^{=}_j }{ m_j } \prod_{j=3}^{N} \binom{ \mathsf{CAN}^{\neq}_j }{ n_j }  \geq [1+o(1)] \cdot \prod_{j=3}^{N} \frac{ ( \mathsf{CAN}^{=}_j )^{m_j} ( \mathsf{CAN}^{\neq}_j )^{n_j} }{ m_j! n_j! } 
\end{align*}
and 
\begin{align*}
    & \# \Big\{ \chi \in \Gc : \chi'\leq \chi, | \Upsilon_=(\chi';\chi) \cap \Xi_j(\chi) | = m_j, | \Upsilon_{\neq}(\chi';\chi) \cap \Xi_j(\chi) | = n_j \Big\} \\
    \leq \ & \prod_{j=3}^{N} \binom{ \# \operatorname{CAND}^{=}_j(\chi') }{ m_j } \prod_{j=3}^{N} \binom{ \# \operatorname{CAND}^{\neq}_j(\chi') }{ n_j } \\
    \leq \ & \prod_{j=3}^{N} \frac{ ( \# \operatorname{CAND}^{=}_j(\chi') )^{m_j} ( \# \operatorname{CAND}^{\neq}_j(\chi') )^{n_j} }{ m_j! n_j! } \,.
\end{align*}
Denote  
\begin{equation}\label{def-A-CANDs}
\begin{aligned}
    \mathcal{A} = &\Big\{\chi' : \Big| \# \operatorname{CAND}^{=}_j(\chi') - \tfrac{n \lambda^{j-1}(1+(k-1)\epsilon^{j-1})}{2k} \Big| \leq 2n^{0.9} \mbox{ for } j=3,\cdots,N \Big\} \\
    &\bigcap \Big\{\chi' : \Big| \# \operatorname{CAND}^{\neq}_j(\chi') - \tfrac{n \lambda^{j-1}(1-\epsilon^{j-1})}{2k} \Big| \leq 2n^{0.9} \mbox{ for } j=3,\cdots,N \Big\} \\
    &\bigcap \Big\{ \chi': \#\big( \operatorname{CAND}^{i}_j(\chi') \cap \operatorname{CAND}^{i}_{l}(\chi') \big) \leq 3n^{0.1} \mbox{ for } i\in \{ =,\neq \}; j \neq l \Big\} \,.
\end{aligned}
\end{equation}
Since $\sigma$ is typical, we see that $\Pb_{\sigma}'(\mathcal A)=1-o(1)$. In addition, for $\chi' \in \mathcal A$, we have
\begin{align*}
    \mathsf{CAN}^{=}_j,\ \# \operatorname{CAND}^{=}_j(\chi') &\circeq \tfrac{n \lambda^{j-1}(1+(k-1)\epsilon^{j-1})}{2k} \,; \\
    \mathsf{CAN}^{\neq}_j,\ \# \operatorname{CAND}^{\neq}_j(\chi') &\circeq \tfrac{n(k-1) \lambda^{j-1}(1-\epsilon^{j-1})}{2k} \,.
\end{align*}
Thus, for such $\chi'$ we have
\begin{align*}
    & \# \Big\{\chi \in \Gc : \chi'\leq \chi, | \Upsilon_=(\chi';\chi) \cap \Xi_j(\chi) | = m_j, | \Upsilon_{\neq}(\chi';\chi) \cap \Xi_j(\chi) | = n_j \Big\} \\
    \circeq\ & \prod_{j=3}^{N} \frac{ 1 }{ m_j! } \Big( \frac{n \lambda^{j-1}(1+(k-1)\epsilon^{j-1})}{2k} \Big)^{m_j} \prod_{j=3}^{N} \frac{ 1 }{ n_j! } \Big( \frac{n (k-1) \lambda^{j-1}(1-\epsilon^{j-1})}{2k} \Big)^{n_j} \,.
\end{align*}
Plugging this estimation into \eqref{eq-prob-realization-Pb'}, we get that for $\chi'\in \mathcal A$
\begin{align*}
    \Pb_{*,\sigma}'(G'=\chi'\mid &\Gc) \circeq\ \mathtt p^{ \mathtt E_{1,=}(\chi') } \mathtt q^{ \mathtt E_{1,\neq}(\chi') } (1-\mathtt p)^{ \mathtt E_{0,=}(\chi')} (1-\mathtt q)^{ \mathtt E_{0,\neq}(\chi') }  * \\
    &\qquad \sum_{\mathbf{m},\mathbf{n}=0}^{\log n} \prod_{j=3}^{N} \frac{ \lambda^{j(m_j + n_j )} (\frac{(1+(k-1)\epsilon )(1+(k-1)\epsilon^{j-1})}{k})^{m_j} (\frac{(k-1)(1-\epsilon)(1-\epsilon^{j-1})}{k})^{n_j} }{ {(2j)}^{m_j + n_j} m_j! n_j! } \\
    &\ \overset{\eqref{eq-prob-mathcal-E}}{\circeq}\frac{ \mathtt p^{ \mathtt E_{1,=}(\chi') } \mathtt q^{ \mathtt E_{1,\neq}(\chi') } (1-\mathtt p)^{ \mathtt E_{0,=}(\chi')} (1-\mathtt q)^{ \mathtt E_{0,\neq}(\chi') } }{ \mathbb{P}(\mathcal E) } \\
    &\quad \circeq \Pb_{*,\sigma}(G=\chi' \mid \mathcal E) \,.
\end{align*}
Thus, we have for all typical $\sigma$
\begin{align*}
     \operatorname{TV}\big(\mathbb P_{*,\sigma}(G\in\cdot\mid\mathcal{E}),\mathbb P_{*,\sigma}'(G'\in\cdot\mid\mathcal{G}))\big)\leq \Pb_{*,\sigma}'(G' \in \mathcal A^c) + \max_{\chi' \in \mathcal A} \Bigg\{ \Big|  \frac{\Pb_{*,\sigma}'(G'=\chi'\mid \Gc)}{\Pb_{*,\sigma}(G=\chi' \mid \mathcal E)} - 1 \Big| \Bigg\} \,,
\end{align*}
which vanishes, thereby yielding \eqref{ref-for-end-of-lem-TV-Pb'-Pb} as desired.

\subsection{Proof of Claim~\ref{claim-relation-mathtt-M-N}}{\label{subsec:Proof-Claim-4.11}}
Note that
\begin{align*}
    &\sum_{K:H \ltimes K \subset S} \mathtt M(S,K) \mathtt M(K,H) \\
    \overset{\eqref{eq-def-mathtt-M}}{=}&\  \mathtt M(S,H) \sum_{K:H \ltimes K \subset S} \big( \tfrac{D^8}{n^{0.1}} \big)^{\frac{1}{2} ( |\mathcal L(S) \setminus V(K)| + |\mathcal L(K) \setminus V(H)| - |\mathcal L(S) \setminus V(H)| ) }  \,.
\end{align*}
In light of \eqref{eq-def-mathtt-N}, in order to prove Claim~\ref{claim-relation-mathtt-M-N}, it suffices to prove
\begin{equation}{\label{eq-estimates-on-mathtt-P}}
\begin{aligned}
    & \sum_{K:H \ltimes K \subset S} n^{ -0.04( |\mathcal L(S) \setminus V(K)| + |\mathcal L(K) \setminus V(H)| - |\mathcal L(S) \setminus V(H)| ) } \\
    \leq\ & [1+o(1)] \cdot 2^{|\mathfrak{C}(S,H)|} D^{ 10(|\mathcal L(S) \setminus V(H)| + \tau(S) - \tau(H)) } \,.
\end{aligned}
\end{equation}
\begin{figure}[!ht]
    \centering
    \vspace{0cm}
    \includegraphics[height=5.5cm,width=12cm]{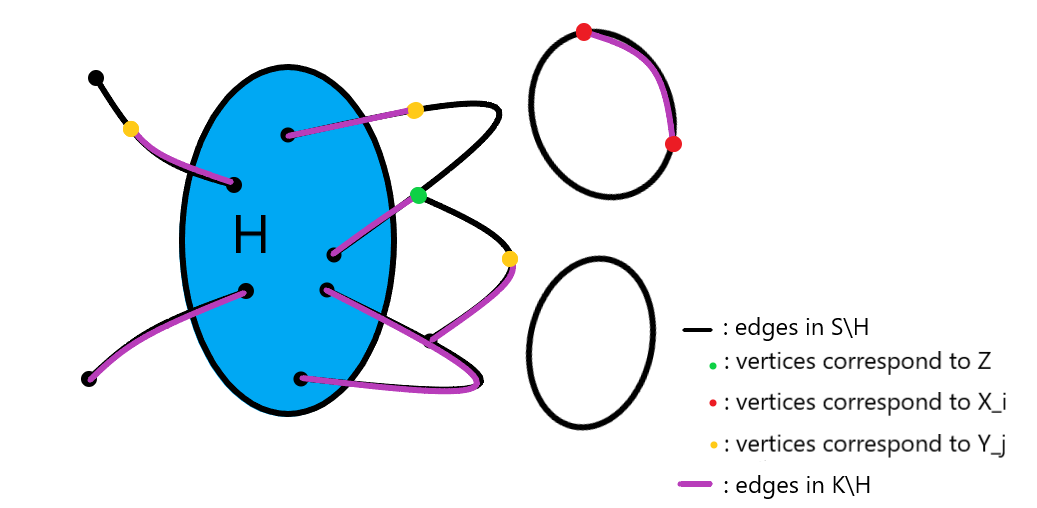}
    \caption{Illustration of the decomposition}
    \label{fig:1}
\end{figure}
To this end, we consider the decomposition of $E(S)\setminus E(H)$ given by Corollary~\ref{cor-revised-decomposition-H-Subset-S}: for $\mathtt m = |\mathfrak{C}(S,H)|$ and some $0 \leq \mathtt t \leq 5(|\mathcal{L}(S)\setminus V(H)|+\tau(H)-\tau(S))$ we can decompose $E(S) \setminus E(H)$ into $\mathtt m$ independent cycles $C_{1}, \ldots, C_{\mathtt m}$ and $\mathtt t$ paths $P_{1}, \ldots, P_{\mathtt t}$. Denote 
\begin{align*}
    X_{\mathtt i}(K) =\ & \#\Big\{ u \in V(C_{\mathtt i}): u \in \mathcal L(K) \setminus V(H)  \Big\} \,, \mathtt 1 \leq \mathtt i \leq \mathtt m \,; \\
    Y_{\mathtt j}(K) =\ & \#\Big\{ u \in V(P_{\mathtt j}) \setminus \operatorname{EndP}(P_{\mathtt j}) : u \in  \mathcal L(K) \setminus V(H) \Big\} \,, \mathtt 1 \leq \mathtt j \leq \mathtt t \,; \\
    Z(K) =\ & \# \Big( \big\{ u \in \cup_{\mathtt j=\mathtt 1}^{\mathtt t} \operatorname{EndP}(P_{\mathtt j}): u \in \big( (\mathcal L(S) \setminus V(K)) \cup (\mathcal L(K) \setminus V(H)) \big) \big\} \setminus \\
     & \big\{ u \in \cup_{\mathtt j=\mathtt 1}^{\mathtt t} \operatorname{EndP}(P_{\mathtt j}): u \in \mathcal L(S) \setminus V(H) \big\} \Big) \,. 
\end{align*}
(See Figure~\ref{fig:1} for an illustration.) Clearly we have $X_{\mathtt i}, Y_{\mathtt j} \geq 0$. We argue that
\begin{equation}{\label{eq-inclusion-leaves}}
    \big( \mathcal L(S) \setminus V(H) \big) \cap \operatorname{EndP}(P_{\mathtt j}) \subset \big( (\mathcal L(S) \setminus V(K)) \cup (\mathcal L(K) \setminus V(H)) \big) \cap \operatorname{EndP}(P_{\mathtt j})  
\end{equation}
for $1 \leq \mathtt j \leq \mathtt t$ and thus we also have $Z \geq 0$. Assume that $u \in (\mathcal L(S) \setminus V(H)) \cap \operatorname{EndP}(P_{\mathtt j})$. If $u \not \in V(K)$, then $u \in \mathcal L(S) \setminus V(K)$. If $u \in V(K) \setminus V(H)$, we see that $u \not \in \mathcal I(K)$ since $H\ltimes K$, which together with $u\in \mathcal L(S)$ implies that $u\in \mathcal L(K)$ (and thus $u\in \mathcal L(K)\setminus V(H)$). Combining the above two arguments leads to \eqref{eq-inclusion-leaves}. 

Note that the vertices in $C_{\mathtt i}$ and $V(P_{\mathtt j}) \setminus \operatorname{EndP}(P_{\mathtt j})$ have degree at least 2 in $S$ and thus they do not belong to $\mathcal L(S)$. By the definition of $X_{\mathtt i}, Y_{\mathtt j}$ and $Z$, we have
\[
    |\mathcal L(S) \setminus V(K)| + |\mathcal L(K) \setminus V(H)| - |\mathcal L(S) \setminus V(H)|= Z+ \sum_{\mathtt i=\mathtt 1}^{\mathtt m} X_{\mathtt i} + \sum_{\mathtt j=\mathtt 1}^{\mathtt t} Y_{\mathtt j} \,.
\]
Thus, the left-hand side of \eqref{eq-estimates-on-mathtt-P} equals (below we write $\mathbf{X}(K) = (X_{\mathtt 1}(K), \ldots, X_{\mathtt m}(K))$ and the same applies to $\mathbf{Y}(K)$, $\mathbf{x}$ and $\mathbf{y}$)
\begin{align*}
    & \sum_{ K: H \ltimes K \subset S } n^{ -0.04( Z(K)+ \sum_{\mathtt i=\mathtt 1}^{\mathtt m} X_{\mathtt i}(K) + \sum_{\mathtt j=\mathtt 1}^{\mathtt t} Y_{\mathtt j}(K) )} \\
    = & \sum_{ z,\mathbf{x},\mathbf{y} \geq 0 } n^{ - 0.04(z+\sum_{\mathtt i=\mathtt 1}^{\mathtt m} x_{\mathtt i} + \sum_{\mathtt j=\mathtt 1}^{\mathtt t} y_{\mathtt j} )} \#\big\{ K: H \ltimes K \subset S, Z(K)=z, \mathbf{X}(K)=\mathbf{x}, \mathbf{Y}(K)=\mathbf{y} \big\} \,,
\end{align*}
where  $\mathbf{x}\geq 0$ means $x_{\mathtt i} \geq 0$ for all $1 \leq \mathtt i \leq \mathtt m$ (and similarly for $\mathbf{y}\geq 0$).
We now bound $\mathrm{CARD}$, the cardinality of the above set as follows. First note that the enumeration of $(\mathcal L(S) \setminus V(K)) \cup (\mathcal L(K) \setminus V(H))$ is bounded by $D^{z+\sum_{\mathtt i=\mathtt 1}^{\mathtt m} x_{\mathtt i} + \sum_{\mathtt j=\mathtt 1}^{\mathtt t} y_{\mathtt j}}$. Since the vertices in $(\mathcal L(S) \setminus V(K)) \cup (\mathcal L(K) \setminus V(H))$ split $C_{\mathtt i}$'s and $P_{\mathtt j}$'s into $\mathtt m' \leq \mathtt m$ independent cycles and at most $\sum_{\mathtt i} X_{\mathtt i} + \sum_{\mathtt j} (Y_{\mathtt j}+1)$ new paths, where each paths/cycles belong to either $K$ or $S \setminus K$, leading to a bound of $2^{\mathtt m+\sum_{\mathtt i=\mathtt 1}^{\mathtt m} x_{\mathtt i} + \sum_{\mathtt j=\mathtt 1}^{\mathtt t} (y_{\mathtt j}+1)}$ on the enumeration. Thus for $|E(S)| \leq D$ we have
\begin{align*}
    \mathrm{CARD} \leq D^{ z+ \sum_{\mathtt i=\mathtt 1}^{\mathtt m} x_{\mathtt i} + \sum_{\mathtt j=\mathtt 1}^{\mathtt t} y_{\mathtt j} } 2^{\mathtt m+\sum_{\mathtt i=\mathtt 1}^{\mathtt m} x_{\mathtt i} + \sum_{\mathtt j=\mathtt 1}^{\mathtt t} (y_{\mathtt j}+1)}  \,.
\end{align*}
Therefore, the left-hand side of \eqref{eq-estimates-on-mathtt-P} is bounded by
\begin{align*}
    2^{\mathtt m} (2D)^{\mathtt t} \sum_{ z,x,y \geq 0 } \big( \tfrac{2D}{n^{0.04}} \big)^{ z+ \sum_{\mathtt i=\mathtt 1}^{\mathtt m} x_{\mathtt i} + \sum_{\mathtt j=\mathtt 1}^{\mathtt t} y_{\mathtt j} } \circeq 2^{\mathtt m} (2D)^{\mathtt t} \,,
\end{align*}
concluding \eqref{eq-estimates-on-mathtt-P} by recalling that $\mathtt m=|\mathfrak{C}(S,H)|$ and $\mathtt{t} \le 5 (\tau (S)- \tau (H)) $.

\subsection{Proof of Lemma~\ref{lem-first-moment-bound}}{\label{subsec:Proof-Lem-B.1}}

To prove Lemma~\ref{lem-first-moment-bound}, by averaging over the conditioning of community labels we have that the left-hand side of \eqref{eq-lem-B.1} is bounded by 
\begin{align*}
    s^{|E(H)|} \mathbb{E}_{\sigma\sim\nu} \Bigg[ \prod_{e \in E(H)} \mathbb E_{\Pb_{\sigma}}\Big[ \frac{ (G_{e}-\tfrac{\lambda}{n})^2 }{ \lambda/n } \Big] \Bigg] = s^{|E(H)|} \mathbb{E}_{\sigma\sim\nu} \Bigg[ \prod_{(i,j) \in E(H)} \big( 1+\epsilon \omega(\sigma_i,\sigma_j) \big) \Bigg] \,.
\end{align*}
Thus, it suffices to show that for any admissible $H$ we have
\begin{equation}{\label{eq-important-inequality-first-moment}}
    s^{|E(H)|} \mathbb{E}_{\sigma\sim\nu} \Bigg[ \prod_{(i,j) \in E(H)} \big( 1+\epsilon \omega(\sigma_i,\sigma_j) \big) \Bigg] \leq O(1)\cdot(\sqrt{\alpha}-\delta/4)^{|E(H)|} \,.
\end{equation}
Now we provide the proof of \eqref{eq-important-inequality-first-moment}, thus finishing the proof of Lemma~\ref{lem-first-moment-bound}.

\begin{proof}[Proof of \eqref{eq-important-inequality-first-moment}]
    Denoting $\mathsf{Core}(H)$ the $2$-core of $H$, we can write $H$ as $H=\mathsf{Core}(H) \cup \big( \cup_{\mathtt i=1}^{\mathtt t} T_{\mathtt i} \big)$, where $\{ T_{\mathtt i}: 1 \leq \mathtt i \leq \mathtt t \}$ are disjoint rooted trees such that $V(T_{\mathtt i}) \cap V( \mathsf{Core}(H) )$ is (the singleton of) the root of $T_{\mathtt i}$, denoted as $\mathfrak R(T_{\mathtt i})$. Clearly, conditioned on $\{ \sigma_{u} : u \in V(\mathsf{Core}(H)) \}$, we have that
    \[
    \Bigg\{  \prod_{(i,j) \in E(T_{\mathtt i})} \big( 1+\epsilon \omega(\sigma_i,\sigma_j) \big) : 1 \leq \mathtt i \leq \mathtt t  \Bigg\}
    \]
    are conditionally independent. In addition, since for any tree $T$ we have
    \begin{align*}
        \mathbb{E} \Big[  \prod_{(i,j) \in E(T)} \big( 1+\epsilon \omega(\sigma_i,\sigma_j) \big) \mid \sigma_{\mathfrak R(T)} \Big] = 1 \,,
    \end{align*}
    we then get that
    \begin{align*}
        \mathbb{E} \Big[ \prod_{\mathtt i=1}^{\mathtt t}  \prod_{(i,j) \in E(T_{\mathtt i})} \big( 1+\epsilon \omega(\sigma_i,\sigma_j) \big) \mid \{ \sigma_u : u \in V(\mathsf{Core}(H)) \} \Big] = 1 \,.
    \end{align*}
    Therefore (noting that the product over edges outside of the 2-core can be decomposed as product over edges in $T_{\mathtt i}$ for $1\leq \mathtt i\leq \mathtt t$),
    \begin{align*}
        & \mathbb{E} \Big[ \prod_{(i,j) \in E(H)} \big( 1+\epsilon \omega(\sigma_i,\sigma_j) \big) \Big] \\
        =\ & \mathbb E \Bigg[ \mathbb{E} \Big[ \prod_{(i,j) \in E(H)} \big( 1+\epsilon \omega(\sigma_i,\sigma_j) \big) \mid \{ \sigma_u : u \in V(\mathsf{Core}(H)) \} \Big] \Bigg] \\
        =\ & \mathbb E \Bigg[ \prod_{(i,j) \in E(\mathsf{Core}(H))} \big( 1+\epsilon \omega(\sigma_i,\sigma_j) \big) \prod_{\mathtt i=1}^{\mathtt t} \mathbb{E} \Big[ \prod_{(i,j) \in E(T_{\mathtt i})} \big( 1+\epsilon \omega(\sigma_i,\sigma_j) \big) \mid \sigma_{\mathfrak R(T_\mathtt i)} \Big] \Bigg] \\
        =\ & \mathbb E \Bigg[ \prod_{(i,j) \in E(\mathsf{Core}(H))} \big( 1+\epsilon \omega(\sigma_i,\sigma_j) \big) \Bigg] \,.
    \end{align*}
    Since in addition, $s<\sqrt{\alpha}-\delta$, it suffices to show that for any admissible $H$ with at most $D$ edges and with minimum degree at least 2, we have \eqref{eq-important-inequality-first-moment} holds for $H$. For any such graph $H$ (note that in this case $\mathcal I(H)=\emptyset$), by applying Corollary~\ref{cor-revised-decomposition-H-Subset-S} with $\emptyset \ltimes H$ (in place of $H\ltimes S$ as in the corollary-statement), we see that $H$ can be decomposed into $\mathtt m$ independent cycles $C_1, \ldots C_{\mathtt m}$ and $\mathtt t$ paths $P_{1}, \ldots, P_{\mathtt t}$ satisfying Item (i)--(iii) in Corollary~\ref{cor-revised-decomposition-H-Subset-S}. In particular, since $H$ is admissible, recalling Definition 4.1 we have $\mathtt t \leq 5\tau(H) = O(1)$ and $|E(C_{\mathtt i})| \geq N$. Keeping this in mind, we now proceed to show \eqref{eq-important-inequality-first-moment}. Denoting $\mathsf{End}=\cup_{\mathtt i=\mathtt 1}^{\mathtt t} \operatorname{EndP}(P_{\mathtt i})$, conditioned on $\{ \sigma_{\mathsf u}: \mathsf u \in \mathsf{End} \}$ we have 
    \begin{align*}
        \Big\{ \prod_{(i,j) \in E(C_{\mathtt i})} \big( 1+\epsilon \omega(\sigma_i,\sigma_j) \big), \prod_{(i,j) \in E(P_{\mathtt j})} \big( 1+\epsilon \omega(\sigma_i,\sigma_j) \big) : \mathtt 1 \leq \mathtt i \leq \mathtt m, \mathtt 1 \leq \mathtt j \leq \mathtt t \Big\}
    \end{align*}
    are conditionally independent. In addition, by Claim~\ref{claim-expectation-over-chain} we have
    \begin{align*}
        & s^{|E(C_{\mathtt i})|} \mathbb{E}_{\sigma\sim\nu} \Big[ \prod_{(i,j) \in E(C_{\mathtt i})} \big( 1+\epsilon \omega(\sigma_i,\sigma_j) \big) \mid \{ \sigma_{\mathsf u} : \mathsf u \in \mathsf{End} \} \Big] \\
        =\ & s^{|E(C_{\mathtt i})|} (1+(k-1)\epsilon^{|E(C_{\mathtt i})|}) \overset{(12)}{\leq} (\sqrt{\alpha}-\delta/2)^{|E(C_{\mathtt i})|} \,,
    \end{align*}
    where in the last inequality we also used $|E(C_{\mathtt i})| \geq N$. Furthermore, using Claim~\ref{claim-expectation-over-chain} and the fact that $|\omega(\sigma_u, \sigma_v)| \leq k-1$ we get that
    \begin{align*}
        & s^{|E(P_{\mathtt j})|} \mathbb{E}_{\sigma\sim\nu} \Big[ \prod_{(i,j) \in E(P_{\mathtt j})} \big( 1+\epsilon \omega(\sigma_i,\sigma_j) \big) \mid \{ \sigma_{\mathsf u} : \mathsf u \in \mathsf{End} \} \Big] \\
        \leq\ & s^{|E(P_{\mathtt j})|} (1+(k-1)\epsilon^{|E(P_{\mathtt j})|}) \leq k (\sqrt{\alpha}-\delta/2)^{|E(P_{\mathtt j})|} \,,
    \end{align*}
    where the last inequality follows from $s<\sqrt{\alpha}-\delta/2$ and $\epsilon<1$. Putting these together, we have (note that $\tau(H) \leq o(1) \cdot |E(H)|$ as $H$ is admissible)
    \begin{align}
        & s^{|E(H)|} \mathbb{E}_{\sigma\sim\nu} \Big[ \prod_{(i,j) \in E(H)} \big( 1+\epsilon \omega(\sigma_i,\sigma_j) \big) \Big] \nonumber \\ 
        \leq\ & \prod_{\mathtt i=\mathtt 1}^{\mathtt m} (\sqrt{\alpha}-\delta/2)^{|E(C_{\mathtt i})|} \prod_{\mathtt j=\mathtt 1}^{\mathtt t} k (\sqrt{\alpha}-\delta/2)^{|E(P_{\mathtt j})|} \nonumber \\
        \overset{\mathtt t \leq 5\tau(H)}{\leq} & k^{5\tau(H)} \cdot (\sqrt{\alpha}-\delta/2)^{|E(H)|} \leq (\sqrt{\alpha}-\delta/4)^{|E(H)|} \label{admissible-product-reduction-1} \,,
    \end{align}
    which yields the desired result.
\end{proof}

\subsection{Proof of Claim~\ref{claim-condition-on-G-par-case-1}}{\label{subsec:Proof-Claim-B.5}}

Recall \eqref{eq-def-mathtt-W}. We divide the assumption $\mathtt W \not \subset \mathcal B(\chi)$ into two cases. 

$\textbf{Case 1}$: There exists $u \in \mathcal L(S_1) \setminus V(K_1) \subset \mathtt V$ such that $u \not \in \mathcal B(\chi)$. For each $\varkappa \in [k]^{\mathtt V}$ and $i \in [k]$, define $\varkappa_{ i(u) } \in [k]^{\mathtt V}$ such that $\varkappa_{ i(u) } (v) = \varkappa(v)$ for $v \in \mathtt V \setminus \{ u \}$ and $\varkappa_{ i(u) }(u)=i$. Since $u \not \in \mathcal B(\chi)$, we know that in $\chi \oplus \{ \mathtt 1_{\mathtt E} \}$, there is neither small cycle nor self-bad graph containing $u$. Thus, given $G(\operatorname{par})|_{\operatorname{U} \setminus \mathtt E} = \chi$, in each $G(\varkappa)$ for $\varkappa \in [k]^{\mathtt V}$ there is no small cycle nor self-bad graph containing $u$. Thus we have given $G(\operatorname{par})|_{\operatorname{U} \setminus \mathtt E} = \chi$,
\[
    G'(\varkappa_{i(u)}) \mbox{ is equal in distribution with } G'(\varkappa_{j(u)}) \mbox{ for all } \varkappa \in [k]^{\mathtt V}, i,j \in [k] \,.
\]
Thus, the left-hand side of \eqref{eq-conditional-expectation} equals
\begin{align*}
    \frac{1}{k^{|\mathtt V|+1}} \sum_{i \in [k]} \sum_{\varkappa \in [k]^{\mathtt V}} h_{\varkappa_{i(u)}\oplus\gamma}(S_1,S_2;K_1,K_2) \mathbb E_{\widetilde \Pb} \Big[  \varphi_{\gamma;K_1,K_2;H}(G'(\varkappa_{i(u)})) \mid G(\operatorname{par})|_{\operatorname{U} \setminus \mathtt E} = \chi \Big] \,.
\end{align*}
Noticing from \eqref{eq-def-h-sigma} that 
\begin{align*}
    \sum_{i \in [k]} h_{\varkappa_{i(u)}\oplus\gamma}(S_1,S_2;K_1,K_2) = 0 \,,
\end{align*}
we have that the left-hand side of \eqref{eq-conditional-expectation} must cancel to $0$. The result follows similarly if there exists $u \in \mathcal L(S_2) \setminus V(K_2)$ such that $u \not \in \mathcal B(\chi)$.

$\textbf{Case 2}$: There exists $u \in V(K_1) \setminus V(H)$ such that $u \not \in \mathcal B(\chi)$. We argue that now we must have
\begin{align}
    \mathbb E_{\widetilde \Pb} \Big[  \varphi_{\gamma;K_1,K_2;H}(G'(\varkappa)) \mid G(\operatorname{par})|_{\operatorname{U} \setminus \mathtt E} = \chi \Big] =0 \,, \forall \varkappa \in [k]^{\mathtt V} \,. \label{eq-cancelling-conditional-expectation}
\end{align}
In fact, since $H \ltimes K_1$, there exists $e \in E(K_1) \setminus E(H)$ such that $e=(u,v)$. Since in $\chi \oplus \{ \mathtt 1_{\mathtt E} \}$ there is no small cycle nor self-bad graph containing $u$, we know that for any realization $G(\operatorname{par})$ such that $G(\operatorname{par})|_{\operatorname{U} \setminus \mathtt E} = \chi$, for each $\varkappa \in [k]^{\mathtt V}$ there is no small cycle nor self-bad graph containing $u$ in $G(\varkappa)$. Therefore 
\[
    G'(\varkappa)_{u,v} = G(\varkappa)_{u,v} \mbox{ for all } \varkappa \in [k]^{\mathtt V} 
\]
and thus $G'(\varkappa)_{u,v}$ is conditionally independent with $\{ G'(\varkappa)_{i,j}: (i,j) \in \operatorname{U} \setminus (u,v) \}$. Thus recalling \eqref{eq-def-varphi} we see that the conditional expectation in \eqref{eq-cancelling-conditional-expectation} cancels to $0$. The result follows similarly if there exists $u \in V(K_2) \setminus V(H)$ such that $u \not \in \mathcal B(\chi)$.

\subsection{Proof of Claim~\ref{claim-condition-on-G-par-case-2}}{\label{subsec:Proof-Claim-B.6}}

Consider the graph $\check{K}_1, \check{K}_2$ such that $E(\check{K}_i)=E(K_i)$ and $V(\check{K}_i)=V(K_i) \cup (\mathcal{B}(\chi) \setminus \mathtt W)$. Recalling \eqref{eq-def-mathtt-W} and Definition~\ref{def-bad-vertex-set}, we have
\begin{align*}
    \mathcal B(\chi) \setminus \mathtt W \subset (V(S_1) \setminus V(K_1)) \cup (V(S_2)\setminus V(K_2)) \,,
\end{align*}
and thus $\tau(\check{K}_i)= \tau(K_i)-|\mathcal B(\chi) \setminus \mathtt W|=\tau(K_i)-\ell$. Noting that $\check K_1 \ltimes S_1$ and $\check K_2 \ltimes S_2$ since $\mathcal I(S_1)=\mathcal I(S_2)=\emptyset$, we can decompose $E(S_1) \setminus E(\check K_1)$ into $\mathtt t$ paths $P_{1}, \ldots, P_{\mathtt t}$ and $\mathtt x$ independent cycles $C_{\mathtt 1},\ldots,C_{\mathtt x}$ satisfying Items (i)--(iii) in Corollary~\ref{cor-revised-decomposition-H-Subset-S}, and similarly we can decompose $E(S_2) \setminus E(\check K_2)$ into into $\mathtt r$ paths $Q_1, \ldots, Q_{\mathtt r}$ and $\mathtt y$ independent cycles $D_{\mathtt 1},\ldots,D_{\mathtt y}$. What's more, since $C_{\mathtt i} \in \mathfrak{C}(S_1,K_1)$ we have $V(C_{\mathtt i}) \cap V(H) \subset V(C_{\mathtt i}) \cap V(K_1)=\emptyset$, and thus from $S_1 \cap S_2 = H$ we have $V(C_{\mathtt i}) \cap V(Q_{\mathtt j})= V(C_{\mathtt i}) \cap V(D_{\mathtt j'})=\emptyset$. This yields that
\begin{equation}{\label{eq-disjointness-cycles}}
    V(C_{\mathtt i}) \cap \Big( \big(\cup_{ \mathtt m \neq \mathtt i } V(C_{\mathtt m}) \big) \cup \big( \cup_{ \mathtt i' } V(P_{\mathtt i'}) \big) \cup \big( \cup_{\mathtt j} V(D_{\mathtt j}) \big) \cup \big( \cup_{\mathtt j'} V(Q_{\mathtt j'}) \big) \Big) = \emptyset 
\end{equation}
and similar results hold for $D_{\mathtt j}$. Also from Item (ii) in Corollary~\ref{cor-revised-decomposition-H-Subset-S} we have $V(P_{\mathtt i'}) \cap V(H) \subset V(P_{\mathtt i'}) \cap V(K_1) \subset \operatorname{EndP}(P_{\mathtt i'})$, thus 
\begin{equation}{\label{eq-almost-disjointness-paths}}
    V(P_{\mathtt i'}) \cap \Big( \big( \cup_{ \mathtt i } V(C_{\mathtt i})\big) \cup \big( \cup_{ \mathtt m' \neq \mathtt i' } V(P_{\mathtt m'})\big) \cup \big( \cup_{\mathtt j} V(D_{\mathtt j}) \big) \cup \big( \cup_{\mathtt j'} V(Q_{\mathtt j'}) \big) \Big) \subset \operatorname{EndP}(P_{\mathtt i'})  
\end{equation}
and similar results hold for $Q_{\mathtt j'}$. In addition, since $S_1,S_2$ are admissible, we must have $|V(C_{\mathtt i})|,|V(D_{\mathtt j})| \geq N$ for all $1\le \mathtt i \le \mathtt x$ and $1 \le \mathtt j \le \mathtt y$. Denote 
\begin{align*}
    \mathtt S= \Big( \big( \cup_{\mathtt i=1}^{\mathtt t} \operatorname{EndP}(P_{\mathtt i}) \big) \cup \big( \cup_{\mathtt j=1}^{\mathtt r} \operatorname{EndP}(Q_{\mathtt j}) \big) \Big) \setminus V(K_1 \cup K_2) \,.
\end{align*}
By \eqref{eq-def-mathtt-W}, \eqref{eq-def-mathtt-L} and Definition~\ref{def-bad-vertex-set}, we have $((\mathcal B(\chi) \setminus \mathtt W )\cup \mathtt L_1) \cap V(K_1 \cup K_2)=\emptyset$. In addition, we can see that each vertex in $(\mathcal B(\chi) \setminus \mathtt W )\cup \mathtt L_1$ has degree at least $1$ in $S_1\cup S_2$ but has degree $0$ in $\Check{K}_1,\Check{K}_2$. Thus, from Item (ii) in Corollary~\ref{cor-revised-decomposition-H-Subset-S} we have
\begin{align*}
    (\mathcal B(\chi) \setminus \mathtt W )\cup \mathtt L_1 \subset \big( \cup_{\mathtt i=1}^{\mathtt t} \operatorname{EndP}(P_{\mathtt i}) \big) \cup \big( \cup_{\mathtt j=1}^{\mathtt r} \operatorname{EndP}(Q_{\mathtt j}) \,.
\end{align*}
In conclusion, we have that $(\mathcal B(\chi) \setminus \mathtt W )\cup \mathtt L_1 \subset \mathtt S \subset \mathtt V$ and
\begin{align*}
    |\mathtt S| \leq 2(\mathtt t + \mathtt r) \leq 10 \sum_{i=1,2} \big( |\mathcal L(S_i) \setminus V(\check K_i)| + \tau(S_i) -\tau(K_i) +\ell \big) \overset{\eqref{eq-def-Gamma}}{\leq} 10 (\Gamma_1 + 2\ell) \,,
\end{align*}
where the second inequality follows from Item (iii) in Corollary~\ref{cor-revised-decomposition-H-Subset-S}. Recall that we use $\varkappa$ and $\gamma$ to denote the community labeling restricted on $\mathtt V$ and $[n] \setminus \mathtt V$, respectively. Also recall the definition of $\varphi_{\gamma;K_1,K_2;H}(G'(\varkappa))$ in Claim~\ref{claim-condition-on-G-par-case-2} (recall that it was defined for the partial label $\gamma$ since it only depends on the community labeling on $[n]\setminus \mathtt V$). We have that for all $\varkappa|_{\mathtt S} = \eta|_{\mathtt S}$
\begin{align}
    & \Big| \mathbb E_{\widetilde \Pb} \Big[  \varphi_{\gamma;K_1,K_2;H}(G'(\varkappa)) \mid G(\operatorname{par})_{i,j} = \chi_{i,j} , (i,j) \in \operatorname{U} \setminus \mathtt E \Big] \Big| \nonumber \\
    =\ & \Big| \mathbb E_{\widetilde \Pb} \Big[ \varphi_{\gamma;K_1,K_2;H}(G'(\eta)) \mid G(\operatorname{par})_{i,j} = \chi_{i,j} , (i,j) \in \operatorname{U} \setminus \mathtt E \Big] \Big| \label{eq-equality-given-mathtt-S} \\
    \leq\ & \mathbb E \Big[ \big|\varphi_{\gamma;K_1,K_2;H}\big( G(\operatorname{par})|_{\mathtt E} \big) \big| \Big] \,, \label{eq-inequality-given-mathtt-S}
\end{align}
where the last inequality follows from $G'(\varkappa)_{i,j} \leq G(\operatorname{par})_{i,j}$ for all $\varkappa \in [k]^{\mathtt V}$ and $(i,j) \in \operatorname{U}$, and the fact that $|\varphi_{\gamma;K_1,K_2;H}|$ (as a function on $\{ 0,1 \}^{\mathtt E}$) is increasing.
In addition, for all $\zeta \in [k]^{\mathtt S},\eta \in [k]^{\mathtt V \setminus \mathtt S}$ we can write $h_{\eta\oplus\zeta\oplus\gamma}(S_1,S_2;K_1,K_2)$ as (recall \eqref{eq-def-h-sigma})
\begin{align*}
    &\frac{ (1-\delta)^{|E(S_1)|+|E(S_2)|-|E(K_1)|-|E(K_2)|} }{n^{\frac{1}{2}(|E(S_1)|+|E(S_2)|-|E(K_1)|-|E(K_2)|)}} \\
*\ & \prod_{\mathtt i=\mathtt 1}^{\mathtt t} \mathtt h_{ \eta\oplus\zeta } (P_{\mathtt i}) \prod_{\mathtt j=\mathtt 1}^{\mathtt r} \mathtt h_{ \eta\oplus\zeta } (Q_{\mathtt j}) \prod_{\mathtt i'=\mathtt 1}^{\mathtt x} \mathtt h_{ \eta\oplus\zeta } (C_{\mathtt i'}) \prod_{\mathtt j'=\mathtt 1}^{\mathtt y} \mathtt h_{ \eta\oplus\zeta } (D_{\mathtt j'}) \,,
\end{align*}
where for each $V(P_{\mathtt i})=\{u_0,\ldots,u_l\}$ with $\operatorname{EndP}(P_{\mathtt i})=\{u_0,u_l\}$ and $V(C_{\mathtt i'})=\{ v_0,\ldots,v_{l'} \}$
\begin{align*}
    \mathtt h_{ \eta\oplus\zeta } (P_{\mathtt i}) = \omega(\zeta_{u_0},\eta_{u_1}) \omega(\eta_{u_{l-1}},\zeta_{u_{l}}) \prod_{m=1}^{l-2} \omega(\eta_{u_m},\eta_{u_{m+1}}), \quad \mathtt h_{ \eta\oplus\zeta } (C_{\mathtt i'}) = \prod_{m=0}^{l'} \omega(\eta_{u_m},\eta_{u_{m+1}}) \,,
\end{align*}
and $\mathtt h_{ \eta\oplus\zeta } (Q_{\mathtt j}), \mathtt h_{ \eta\oplus\zeta } (D_{\mathtt j'})$ are defined in the similar manner. By \eqref{eq-disjointness-cycles} and \eqref{eq-almost-disjointness-paths}, we have that given a fixed $\zeta \in [k]^{\mathtt S}$,
$\big\{ \mathtt h_{ \eta\oplus\zeta } (P_{\mathtt i}), \mathtt h_{ \eta\oplus\zeta } (C_{\mathtt i'}), \mathtt h_{ \eta\oplus\zeta } (Q_{\mathtt j}), \mathtt h_{ \eta\oplus\zeta } (D_{\mathtt j'}) \big\}$ (where $\eta\sim\nu_{\mathtt V \setminus \mathtt S}$) are conditionally independent. In addition, from \eqref{eq-finalgoal1-sign-of-path-cycle-conditioned-on-endpoints} we have that for each $P_{\mathtt i}$ 
\begin{align*}
    \Big| \mathbb E_{\eta\sim\nu_{\mathtt V \setminus \mathtt S}} \big[ \mathtt h_{ \eta\oplus\zeta } (P_{\mathtt i}) \mid \zeta \big] \Big| \leq k 
\end{align*}
and the same bound holds for each $Q_{\mathtt j}$, $C_{\mathtt i'}$ and $D_{\mathtt j'}$.
Thus for all $\zeta \in [k]^{\mathtt S}$ we have 
\begin{align}
    & \frac{(1-\delta)^{|E(S_1)|+|E(S_2)|-|E(K_1)|-|E(K_2)|}}{ k^{|\mathtt V \setminus \mathtt S|} } \Big| \sum_{\eta \in [k]^{\mathtt V \setminus \mathtt S}} h_{\eta\oplus\zeta\oplus\gamma}(S_1,S_2;K_1,K_2) \Big| \nonumber \\
    \leq\ & \frac{ \prod_{ \mathtt i=\mathtt 1 }^{ \mathtt t } k(1-\delta)^{|E(P_{\mathtt i})|} \prod_{ \mathtt j=\mathtt 1 }^{ \mathtt r } k(1-\delta)^{|E(D_{\mathtt j})|} \prod_{ \mathtt i'=\mathtt 1 }^{ \mathtt x } k(1-\delta)^{|E(C_{\mathtt i'})|} \prod_{ \mathtt j'=\mathtt 1 }^{ \mathtt y } k(1-\delta)^{|E(D_{\mathtt j'})|}  }{ n^{\frac{1}{2}(|E(S_1)|+|E(S_2)|-|E(K_1)|-|E(K_2)|)} }  \nonumber \\
    \leq\ & k^{\mathtt t+\mathtt r} \frac{ (1-\delta/2)^{|E(S_1)|+|E(S_2)|-|E(K_1)|-|E(K_2)|} }{n^{\frac{1}{2}(|E(S_1)|+|E(S_2)|-|E(K_1)|-|E(K_2)|)}}  \,, \label{eq-h-averaging}
\end{align}
where the second inequality follows from \eqref{eq-def-N} and $|E(C_{\mathtt i})|,|E(D_{\mathtt j})|\geq N$. 
Thus, by \eqref{eq-equality-given-mathtt-S} and \eqref{eq-inequality-given-mathtt-S} we have that \eqref{eq-conditional-expectation} is bounded by 
\begin{align*}
    &\mathbb E \Big[ \big| \varphi_{\gamma;K_1,K_2;H}\big( G(\operatorname{par})|_{\mathtt E} \big) \big| \Big] \cdot \frac{1}{k^{|\mathtt S|}} \sum_{ \zeta \in [k]^{\mathtt S} } \frac{1}{k^{|\mathtt V \setminus \mathtt S |}} \Big| \sum_{\eta \in [k]^{\mathtt V \setminus \mathtt S}} h_{\eta\oplus\zeta\oplus\gamma}(S_1,S_2;K_1,K_2) \Big| \\
    \overset{\eqref{eq-h-averaging}}{\leq}\ & \mathbb E \Big[ \big| \varphi_{\gamma;K_1,K_2;H}\big( G(\operatorname{par})|_{\mathtt E} \big) \big| \Big] \cdot k^{ \mathtt t + \mathtt r } \frac{ (1-\delta/2)^{|E(S_1)|+|E(S_2)|-|E(K_1)|-|E(K_2)|} }{n^{\frac{1}{2}(|E(S_1)|+|E(S_2)|-|E(K_1)|-|E(K_2)|)}} \,,
\end{align*} 
as desired.

\subsection{Proof of Claim~\ref{claim-prob-bad-set-realization}}{\label{subsec:Proof-Claim-B.7}}

This subsection is devoted to the proof of Claim~\ref{claim-prob-bad-set-realization}. We first outline our strategy for bounding $\widetilde{\Pb}(\mathcal B(G(\operatorname{par})|_{\operatorname{U} \setminus \mathtt E}) = \mathtt B )$. Roughly speaking, we will show that for all $\chi \in \{ 0,1 \}^{\operatorname{U} \setminus \mathtt E}$ such that $\mathcal B(\chi)=\mathtt B$, there exists a subgraph $\mathtt G \subset \chi \oplus \mathtt 1_{\mathtt E}$ such that $V(K_1) \cup V(K_2) \cup \mathtt B \subset V(\mathtt G)$ and $\mathtt G$ has high edge density (or equivalently, $\Phi(\mathtt G)$ is small). With this observation, we can reduce the problem to bounding the probability that in the graph $G(\operatorname{par})|_{\operatorname{U} \setminus \mathtt E} \oplus \mathtt 1_{\mathtt E}$ there exists a subgraph with high edge density and it turns out that a union bound suffices for this, though some further delicacy in bounding enumerations of such subgraphs also arise.

Intuitively, the existence of $\mathtt G$ follows from the fact that there exists $\mathtt I\subset \mathtt B$ such that for each $u\in \mathtt I$ there exists a self-bad graph $B_u$ containing $u$, and for each $u\in \mathtt B\setminus \mathtt I$ there exists a small cycle $C_u$ containing $u$. We expect the graph $H \cup (\cup_{u \in \mathtt I} B_u) \cup (\cup_{u \in \mathtt B \setminus \mathtt I} C_u)$ to have high edge density (and it contains all the vertices in $V(K_1) \cup V(K_2) \cup \mathtt B$, as desired). To verify this, we list $\mathtt B$ as $\{ u_1, \ldots, u_{\mathtt M} \}$ in an arbitrary order and we define $\mathtt G_{i}$ to be the subgraph in $\chi\oplus\mathtt 1_{\mathtt E}$ induced by
\[
V(H) \cup \big( \cup_{j \leq i,u_j \in \mathtt I} V(B_{u_j}) \big) \cup \big( \cup_{j \leq i, u_j \in \mathtt B \setminus \mathtt I} V(C_{u_j}) \big)\,.
\]
We will track the change of $\Phi(\mathtt G_{i})$ and we will show that: (a) for each $u_j \in \mathtt I$ we have $\Phi(\mathtt G_{j}) \leq \Phi(\mathtt G_{j-1})$; (b) for each $u_j \in \mathtt B \setminus \mathtt I$ such that $V(C_{u_j})$ or the neighborhood of $V(C_{u_j})$ in $K_1 \cup K_2$ intersect with $\mathtt G_{j-1}$, we also have $\Phi(\mathtt G_{j}) \leq \Phi(\mathtt G_{j-1})$; (c) for each $u_j \in \mathtt B \setminus \mathtt I$ such that neither $V(C_{u_j})$ nor the neighborhood of $V(C_{u_j})$ in $K_1 \cup K_2$ intersects with $\mathtt G_{j-1}$, we have $\Phi(\mathtt G_j) \leq (2000 \Tilde{\lambda}^{22} k^{22})^{N} \Phi(\mathtt G_{j-1})$. Thus, to control $\Phi(\mathtt G_{\mathtt M})$, it suffices to bound the number of ``undesired'' vertices that fall into category (c). 

Now we present our proof formally. For each $\chi \in \{ 0,1 \}^{ \operatorname{U} \setminus \mathtt E }$, we write $\chi \sim \mathtt E$ if in the realization $\chi \oplus \mathtt 1_{\mathtt E}$, there is no self-bad graph $K$ with $D^3-N \leq |V(K)| \leq D^3$. Similar to \eqref{eq-upper-bound-prob-G^c-.2} and \eqref{eq-upper-bound-prob-G^c-.3} in Lemma~\ref{lem-Gc-is-typical}, we can show that $\Pb( \chi \not\sim \mathtt E ) \leq n^{-D^2}$ (recall we have assumed $D\geq 2\log_2 n$ at the beginning of Section~\ref{sec:detection-lower-bound}, and in \eqref{eq-upper-bound-prob-G^c-.2} we get an extra factor $2^{-|V(K)|} \leq 2^{-D^3 /2} \leq n^{-D^2}$). Thus, it suffices to bound the probability that $\chi \sim \mathtt E$ and $\mathcal B(\chi)=\mathtt B$. For each $\chi \sim \mathtt E$ and $\mathcal B(\chi)=\mathtt B$, denote 
\begin{align*}
    \mathcal B_{\operatorname{dense}}(\chi) = \Big\{ u \in \mathtt B : \exists K \subset \chi \oplus \mathtt 1_{\mathtt E}, u \in V(K), K \mbox{ is self-bad}, |V(K)| \leq D^3 \Big\} \,.
\end{align*}
Since $\chi\sim\mathtt E$, we also know that for all $u \in \mathcal B_{\operatorname{dense}}(\chi)$, there exists a self-bad graph $K=K(u)$ such that $|V(K)| \leq D^3-N$ (by $\chi \sim \mathtt E$, we have excluded self-bad graphs with the number of vertices in $[D^3 -N,D^3]$). In addition, for all $u \in \mathcal B(\chi) \setminus \mathcal B_{\operatorname{dense}}(\chi)$, there must exist a cycle $C_u \subset \chi \oplus \mathtt 1_{\mathtt E}$ with length at most $N$ such that $u \in V(C_u)$. Clearly, we have either $C_u=C_w$ or $V(C_u) \cap V(C_w)=\emptyset$ for all $u,w \in \mathcal B(\chi) \setminus \mathcal B_{\operatorname{dense}}(\chi)$, since otherwise $C_u \cup C_w$ is a self-bad graph containing $u$ and $w$ (leading to $u, w\in \mathcal B_{\operatorname{dense}}(\chi)$). This also implies that the cycle $C_u$ is unique for each $u \in \mathcal B(\chi) \setminus \mathcal B_{\mathrm{dense}}(\chi)$. Define
\begin{align*}
    \mathcal B_{\operatorname{cyc}}(\chi)= \Big\{ u \in \mathtt B \setminus \mathcal B_{\operatorname{dense}}(\chi) : C_u \text{ and its neighbors in }K_1\cup K_2\text{ do not intersect }H\Big\} \,.
\end{align*}
The set $\mathcal B_{\operatorname{cyc}}$ is the set of ``undesired'' vertices as we discussed at the beginning of this subsection. Our proof will follow the following three steps, as shown in the boldface font below.

\

\noindent{\bf Control the number of undesired vertices.} We first show that (recall \eqref{eq-def-Gamma})
\begin{equation}{\label{eq-bound-card-mathcal-B-cyc}}
    |\mathcal B_{\operatorname{cyc}}(\chi)| \leq 2N(\Gamma_1 + \Gamma_2 + \ell) \,.
\end{equation}
Recalling that $\mathcal B_{\operatorname{cyc}}(\chi) \subset \mathcal B(\chi)$ and our assumption that $|\mathcal B(\chi) \setminus \mathtt W|=\ell$, we have (recall \eqref{eq-def-mathtt-W})
\begin{align}
     & \# \Big( \mathcal B_{\operatorname{cyc}}(\chi) \cap \big( (V(S_1) \setminus V(K_1)) \cup (V(S_2) \setminus V(K_2)) \big) \Big) \nonumber \\
     \leq\ & |\mathtt L_1| + \ell \overset{\eqref{eq-def-mathtt-L}}{\leq} |\mathcal L(S_1) \setminus V(K_1)| + |\mathcal L(S_2) \setminus V(K_2)| + \ell \leq 2 \Gamma_1 + \ell \,, \label{ref-L-leq-double-gamma}
\end{align}
where the third inequality follows from applying Lemma~\ref{lem-property-H-Subset-S} to $K_1 \ltimes S_1$ and $K_2 \ltimes S_2$ (note that $\mathcal I (S_1) = \mathcal I(S_2) = \emptyset $) respectively. 
Clearly, it suffices to show that 
\begin{equation*}
    \# \Big( \mathcal B_{\operatorname{cyc}}(\chi) \cap \big( V(K_1) \cup V(K_2) \big) \Big) \leq 2N\Gamma_2 \,.
\end{equation*}
For all $u \in \mathcal B_{\operatorname{cyc}}(\chi) \cap \big( V(K_1) \cup V(K_2) \big)$, note that $u \not \in V(H)$, and thus we have $u \not \in V(K_1) \cap V(K_2)$. We may assume that $u \in V(K_1) \setminus V(K_2) \subset V(K_1) \setminus V(H)$. Since $H \ltimes K_1$ (which implies $\mathcal I(K_1) \subset \mathcal I (H) \subset V(H)$) we see that $u \not \in \mathcal I(K_1)$. Recall that $C_u$ is a cycle with length at most $N$.  Also recall that $V(C_u) \cap V(H)=\emptyset$, which implies (recall that $V(H) = V(S_1) \cap V(S_2)$)
$$
V(C_u) \cap \big( V(K_1) \cap V(K_2) \big) \subset V(C_u) \cap \big( V(S_1 \cap S_2) \big) = V(C_u) \cap V(H) = \emptyset \,.
$$
Also, from the fact that $S_1$ is admissible we have $C_u \not\subset K_1$. We now claim that
\begin{align}
    V(C_u) \cap (\mathcal L(K_1) \setminus V(H)) \neq \emptyset \,. \label{eq-char-C_u}
\end{align}
Indeed, suppose on the contrary that $V(C_u) \cap (\mathcal L(K_1) \setminus V(H)) = \emptyset$. Let $I_u$ be the connected component containing $u$ in $K_1 \cap C_u$. Since we have that $I_u \neq C_u$ from $C_u \not \subset K_1$, it must hold that either $V(I_u)=\{u\}$ or $\mathcal L(I_u ) \neq \emptyset$. If $V(I_u)=\{u\}$, since $u \not \in \mathcal I(K_1)$ there must exist a neighbor of $u$ (denoted as $y$) in $K_1$, and by $y \not\in V(I_u)$ we have $(u,y) \not \in E(C_u)$. If $\mathcal L(I_u) \neq \emptyset$, take an arbitrary $x \in \mathcal L(I_u)$. By the definition of $\mathcal B_{\operatorname{cyc}}(\chi)$ we have $x \not \in V(H)$ and thus $x \not \in \mathcal L(K_1)$ (by our assumption that $V(C_u) \cap (\mathcal L(K_1) \setminus V(H)) = \emptyset$). Thus there must exist a neighbor of $x$ (denoted as $y$) in $K_1$ such that $(x,y) \not \in E(C_u)$ (otherwise $x$ has two neighbors in $C_u \cap K_1$, which contradicts to $x \in \mathcal L(I_u)$). In conclusion, in both cases we have shown that there exists an $x \in V(C_u)$ (whereas $x=u$ in the first case) such that $x$ has a neighbor $y \in V(K_1)$ and $(x,y) \not \in E(C_u)$. Then using the definition of $u \in \mathcal B_{\operatorname{cyc}}(\chi)$ we see that $y \not \in V(H)$, which gives $y \in \mathtt W \subset \mathcal B (\chi)$ (recall \eqref{eq-def-mathtt-W} and recall our assumption that $\mathtt W \subset \mathcal B(\chi)$). Therefore, there exists either a self-bad graph $B_y$ with $|V(B_y)| \leq D^3$ (which further implies that $|V(B_y)| \leq D^3-N$ since $\chi\sim\mathtt E$) or a cycle $C_y$ with $|V(C_y)| \leq N$. In addition, $y \not \in V(C_u)$ since otherwise the graph $\mathring{C}_u$ with $V(\mathring{C}_u)=V(C_u)$ and $E(\mathring{C}_u)=E(C_u)\cup\{ (x,y) \}$ is a self-bad subgraph of $\chi\oplus\mathtt 1_{\mathtt E}$ containing $u$, contradicting to $u \not \in \mathcal B_{\operatorname{dense}}(\chi)$. Therefore, neither $B_y$ nor $C_y$ is identical to $C_u$. Since $(x,y)$ is an edge in $K_1 \cup K_2$, then accordingly the graph $B$ induced by $V(B_y) \cup V(C_u)$ or $V(C_y) \cup V(C_u)$ is a self-bad graph in $\chi\oplus\{\mathtt 1_{\mathtt E}\}$ with $|V(B)| \leq D^3$ and $u \in V(B)$, contradicting the fact that $u \not\in \mathcal B_{\operatorname{dense}}(\chi)$. This completes the proof of \eqref{eq-char-C_u}.
Using \eqref{eq-char-C_u} and the fact that either $C_u=C_w$ or $V(C_u) \cap V(C_w)=\emptyset$ for all $u,w \in \mathcal B_{\operatorname{cyc}}$, we see that $|\mathcal B_{\operatorname{cyc}}| \leq N|\mathtt L_2| \leq 2N\Gamma_2$, where the second inequality follows from Lemma~\ref{lem-property-H-Subset-S} by an argument similar to \eqref{ref-L-leq-double-gamma}.

\

\noindent{\bf Construct the dense graph $\mathtt G$.} Now based on \eqref{eq-bound-card-mathcal-B-cyc}, we construct a graph $\mathtt G \subset \chi \oplus \mathtt 1_{\mathtt E}$ as follows. Recall that for each $u \in \mathcal B_{\operatorname{dense}}(\chi)$, there exists a self-bad graph $B_u$ such that $u \in V(B_u)$ and $|V(B_u)| \leq D^3-N$. Thus, we have
\begin{align}
    \Phi(B_u \cup J) \overset{\text{Lemma~\ref{lemma-facts-graphs}(ii)}}{\leq} \frac{ \Phi(B_u) \Phi(J) }{ \Phi(B_u \Cap J) } \leq \Phi(J) \mbox{ for all } J \subset \mathcal K_n \,. \label{eq-increase-Phi-adding-dense}
\end{align}
For each $u \in \mathcal B(\chi) \setminus \mathcal B_{\operatorname{dense}}(\chi)$, if $V(C_u)$ intersect with $V(H)$, it is straightforward to check that 
\begin{align}
    \Phi(C_u \cup J) \leq \Phi(J) \mbox{ for all } J \supset H \,. \label{eq-increase-Phi-adding-cycle-1}
\end{align}
Similarly, if the neighborhood of $V(C_u)$ in $K_1 \cup K_2$ intersect with $V(H)$ (i.e. there exists $x \in V(C_u)$ and $y \in V(H)$ such that $(x,y) \in E(K_1) \cup E(K_2)$), it is straightforward to check that 
\begin{align}
    \Phi(C_u \cup J \cup \{ (x,y) \}) \leq \Phi(J) \mbox{ for all } J \supset H  \,. \label{eq-increase-Phi-adding-cycle-2}
\end{align}
Finally, if $u \in \mathcal B_{\operatorname{cyc}}(\chi)$, it is straightforward to check that 
\begin{align}
    \Phi(C_u \cup J) \leq ( 2000\Tilde{\lambda}^{22}k^{22} )^N \cdot \Phi(J) \mbox{ for all } J \supset H  \,. \label{eq-increase-Phi-adding-cycle-3}
\end{align}
Now we take $\mathtt G$ to be the subgraph in $\chi\oplus\mathtt 1_{\mathtt E}$ induced by 
\begin{align*}
    V(H) \cup \big( \cup_{u \in \mathcal B_{\operatorname{dense}}(\chi)} V(B_u) \big) \cup \big( \cup_{u \in \mathtt B \setminus \mathcal B_{\operatorname{dense}}(\chi)} V(C_u) \big) \,.
\end{align*}
We claim that $\mathtt G$ satisfies the following conditions:
\begin{enumerate}
    \item[(i)] $V(K_1) \cup V(K_2) \cup \mathtt B \subset V(\mathtt G)$ and $|V(\mathtt G)| \leq D^4$;
    \item[(ii)] $\mathcal I(\mathtt G), \mathcal L(\mathtt G) \subset V(H)$;
    \item[(iii)] All the independent cycles of $\mathtt G$ must intersect with $V(K_1) \cup V(K_2) \cup \mathtt B$; 
    \item[(iv)] $\Phi(\mathtt G) \leq (2000 \Tilde{\lambda}^{22} k^{22})^{2N^2(\Gamma_1+\Gamma_2+\ell)} \cdot \Phi(H)$.
\end{enumerate}
We check these four conditions one by one. Condition (i) is straightforward since (recall \eqref{eq-def-mathtt-W})
$$
V(K_1) \cup V(K_2) \cup \mathtt B = V(H) \cup \mathtt B
$$
for all $\mathtt W \subset \mathtt B$. Condition (ii) follows from the fact that $\mathcal I(B_u), \mathcal I(C_u), \mathcal L(B_u), \mathcal L(C_u)=\emptyset$. Condition (iv) directly follows from \eqref{eq-increase-Phi-adding-dense}, \eqref{eq-increase-Phi-adding-cycle-1}, \eqref{eq-increase-Phi-adding-cycle-2} and \eqref{eq-increase-Phi-adding-cycle-3}. As for Condition (iii), suppose on the contrary that there exists an independent cycle $C$ of $\mathtt G$ such that $V(C) \cap (V(K_1) \cup V(K_2) \cup \mathtt B)=\emptyset$. For $u \in \mathtt B \setminus \mathcal B_{\operatorname{dense}}(\chi)$ we have $u \not \in V(C)$, and we must have $V(C) \cap V(C_u)=\emptyset$ since otherwise $C$ is connected to $u$ in $\mathtt G$, contradicting to our assumption that $C$ is an independent cycle. In addition, for $u \in \mathcal B_{\operatorname{dense}}(\chi)$, we must have $V(C)\cap V(B_u)=\emptyset$, since otherwise we have $|V(B_u)|-|V(B_u \doublesetminus C)|=|V(B_u) \cap V(C)| >0$ and $|E(B_u)|-|E(B_u \doublesetminus C)| = |E(B_u) \cap E(C)| \leq |V(B_u)|-|V(B_u \doublesetminus C)|$, leading to $\Phi(B_u)> \Phi(B_u \doublesetminus C)$ and contradicting to the assumption that $B_u$ is self-bad. Altogether, we have 
\begin{align*}
    V(C) &\subset V(\mathtt G) \setminus \big( ( \cup_{u \in \mathcal B_{\operatorname{dense}}(\chi)} V(B_u) ) \cup ( \cup_{u \in \mathtt B \setminus \mathcal B_{\operatorname{dense}}(\chi)} V(C_u) ) \big) \\
    &\subset V(H) \subset V(K_1) \cup V(K_2) \cup \mathtt B \,,
\end{align*}
which contradicts to our assumption and thus verifies Condition (iii). In conclusion, we show that for all $\chi\in\{0,1\}^{\operatorname{U} \setminus \mathtt E}$ such that $\mathcal B(\chi)=\mathtt B$, there exists a graph $\mathtt G=\mathtt G(\chi) \subset \chi\oplus\mathtt 1_{\mathtt E}$ such that Conditions (i)--(iv) hold. Thus we have
\begin{align}
    \mathbb P_{\chi \sim G(\operatorname{par})|_{\operatorname{U} \setminus \mathtt E}}( \mathcal B(\chi)=\mathtt B ) \leq \mathbb P_{\chi \sim G(\operatorname{par})|_{\operatorname{U} \setminus \mathtt E}} ( \exists \mathtt G \subset \chi \oplus \mathtt 1_{\mathtt E} : \mbox{ Conditions (i)--(iv) hold} ) \,. \label{eq-bad-set-prob-relaxation-1}
\end{align}

\

\noindent{\bf Bound the probability in \eqref{eq-bad-set-prob-relaxation-1}.} Denote $\mathtt G_0$ the graph such that $V(\mathtt G_0)=\mathtt B \cup V(K_1) \cup V(K_2)$ and $E(\mathtt G_0)=E(K_1) \cup E(K_2)$. Now, applying Corollary~\ref{cor-revised-decomposition-H-Subset-S} with $\mathtt G_0 \ltimes \mathtt G$ (note that Condition (3) yields $\mathfrak{C}(\mathtt G,\mathtt G_0)=\emptyset$), we see that $E(\mathtt G) \setminus E(\mathtt{G}_0)$ can be decomposed into $\mathtt t$ paths $P_1,\ldots,P_{\mathtt t} \subset \chi$ such that
\begin{enumerate} 
    \item[(I)] $|V(P_{\mathtt i})| \leq D^4$ and $\mathtt t \leq 5D^4$;
    \item[(II)] $V(P_{\mathtt i}) \cap \big( \mathtt B \cup V(K_1 \cup K_2) \cup ( \cup_{\mathtt j \neq \mathtt i} V(P_{\mathtt j}) ) \big) = \operatorname{EndP}(P_{\mathtt i})$.
\end{enumerate}
In addition, we claim that
\begin{enumerate}
    \item[(III)] $\mathtt L \cup (\mathtt B \setminus \mathtt W) \subset \cup_{\mathtt i=1}^{\mathtt t} \operatorname{EndP}(P_{\mathtt i})$; 
    \item[(IV)] $\Phi\big( (\cup_{\mathtt i=\mathtt 1}^{\mathtt t} P_{\mathtt i}) \cup \mathtt G_0 \big) \leq (2000 \Tilde{\lambda}^{22} k^{22})^{2N^2(\Gamma_1+\Gamma_2+\ell)} \Phi(H)$.
\end{enumerate}
Note that Item (IV) follows directly from Condition (iv) above. We next verify Item (III). Since $\mathcal I(\mathtt G), \mathcal L(\mathtt G) \subset V(H)$, each vertex in $\mathtt L \cup (\mathtt B \setminus \mathtt W)$ has degree at least 2 in $\mathtt G$ but has degree at most 1 in $\mathtt G_0$ (recall \eqref{eq-def-mathtt-L} and \eqref{eq-def-mathtt-W}). Thus we have $\mathtt L \cup (\mathtt B \setminus \mathtt W) \subset \cup_{\mathtt i=1}^{\mathtt t}V(P_{\mathtt i})$, implying Item (III) together with Item (II). Now we can apply the union bound to conclude that
\begin{align}
    \eqref{eq-bad-set-prob-relaxation-1} \leq\ & \mathbb P_{\chi \sim G(\operatorname{par})|_{\operatorname{U} \setminus \mathtt E}}( \exists \mbox{ paths } P_1,\ldots,P_{\mathtt t} \subset \chi \mbox{ satisfying Item (I)--(IV)} ) \nonumber \\ 
    \leq\ & \sum_{ (P_1,\ldots,P_{\mathtt t}) \text{ satisfying (I)--(IV) } } \mathbb P_{\chi \sim G(\operatorname{par})|_{\operatorname{U} \setminus \mathtt E}}( P_1,\ldots,P_{\mathtt t} \subset \chi ) \nonumber \\
    \leq\ & \sum_{\mathtt t = 0}^{5D^4} \sum_{ X_1, \ldots, X_{\mathtt t} \leq D^4 } \big( \tfrac{k\lambda}{n} \big)^{ X_1 + \ldots + X_{\mathtt t} } \cdot \mathsf{NumPath}(X_1,\ldots,X_\mathtt t) \,, \label{eq-bad-set-prob-relaxation-2}
\end{align}
where $\mathsf{NumPath}(X_1,\ldots,X_\mathtt t)$ is defined to be
\begin{equation}{\label{eq-def-numpath}}
    \# \Big\{ ( P_1, \ldots P_{\mathtt t} ) \mbox{ satisfying (I)--(IV)} : |E(P_{\mathtt i})|=X_{\mathtt i} , \forall \mathtt i \leq \mathtt t \Big\} \,.
\end{equation}
Denote $p=\# \big( \cup_{\mathtt i=1}^{\mathtt t} \operatorname{EndP}(P_{\mathtt i}) \big) \setminus \big( \mathtt B \cup V(H) \big)$. Note that according to Remark~\ref{rmk-endpoints-have-degree-geq-3}, we may assume without loss of generality that for each $u \in \big( \cup_{\mathtt i=1}^{\mathtt t} \operatorname{EndP}(P_{\mathtt i}) \big) \setminus \big(\mathtt B \cup V(H)\big)$, $u$ belongs to at least $3$ different $P_{\mathtt j}$'s. Thus from Item~(III) we must have 
$$
\mathtt t \geq (|\mathtt L \cup (\mathtt B \setminus \mathtt W)| +3p)/2 = (|\mathtt L|+\ell+3p)/2 \,,
$$
where the equality follows from $\mathtt L \cap (\mathtt B \setminus \mathtt W) = \emptyset$, implied by $\mathtt L \subset \mathtt W$. In addition, from Item~(IV) we see that 
\begin{align*}
    \Phi\big( (\cup_{\mathtt i=\mathtt 1}^{\mathtt t} P_{\mathtt i}) \cup \mathtt G_0) 
    \leq (2000 \Tilde{\lambda}^{22} k^{22})^{ 2N^2(\Gamma_1+\Gamma_2+\ell)} \cdot \Phi(H) \,.
\end{align*}
Note that $|V(\mathtt G_0)|=|V(K_1)|+|V(K_2)|+|\mathtt L_1|+\ell-|V(H)|$ and $|E(\mathtt G_0)|=|E(K_1)|+|E(K_2)|-|E(H)|$. Recalling (13), we see that $\Phi\big( (\cup_{\mathtt i=\mathtt 1}^{\mathtt t} P_{\mathtt i}) \cup \mathtt G_0)$ equals to
\begin{align*}
    \big( \tfrac{2k^2 \Tilde{\lambda}^2 n}{D^{50}} \big)^{ X_{\mathtt 1} + \ldots + X_{\mathtt t}-\mathtt t + |\mathtt L_1|+\ell+p } \big( \tfrac{1000 \Tilde{\lambda}^{20} k^{20} D^{50}}{n} \big)^{ X_{\mathtt 1} + \ldots + X_{\mathtt t} } \cdot \Phi(K_1) \Phi(K_2)/\Phi(H) \,. 
\end{align*}
Combining  the preceding two displays, we obtain that
\begin{align}
    & \big( \tfrac{2k^2 \Tilde{\lambda}^2 n}{D^{50}} \big)^{ X_{\mathtt 1} + \ldots + X_{\mathtt t}-\mathtt t } \big( \tfrac{1000 \Tilde{\lambda}^{20} k^{20} D^{50}}{n} \big)^{ X_{\mathtt 1} + \ldots + X_{\mathtt t} } \nonumber \\ 
    \leq\ & \big( \tfrac{2k^2 \Tilde{\lambda}^2 n}{D^{50}} \big)^{ -|\mathtt L_1|-\ell-p } (2000 \Tilde{\lambda}^{22} k^{22})^{ 2N^2(\Gamma_1+\Gamma_2+\ell)} * \frac{ \Phi(H)^2 }{ \Phi(K_1) \Phi(K_2) } \nonumber \\
    \leq\ & \big( \tfrac{n}{D^{50}} \big)^{ \tau(K_1)+\tau(K_2)-2\tau(H)-|\mathtt L_1|-\ell-p } \cdot \frac{ (2000 \Tilde{\lambda}^{22} k^{22})^{ 2N^2(\Gamma_1+\Gamma_2+\ell)} }{ (1000 \Tilde{\lambda}^{20} k^{20})^{|E(K_1)|+|E(K_2)|-2|E(H)|} }  \,, \label{eq-requirement-X}
\end{align}
where the last inequality follows from (noticing that $H \subset K_1,K_2$ and $\Tilde{\lambda}\geq 1$)
\begin{align*}
    \tfrac{ \Phi(H)^2 }{ \Phi(K_1)\Phi(K_2) } &\overset{(13)}{=} \big( \tfrac{2\Tilde{\lambda}^2 k^2 n}{D^{50}} \big)^{ -|V(K_1)|-|V(K_2)|+2|V(H)| } \big( \tfrac{1000 \Tilde{\lambda}^{20} k^{20} D^{50}}{n} \big)^{ -|E(K_1)|-|E(K_2)|+2|E(H)| } \\
    &\leq \big( \tfrac{n}{D^{50}} \big)^{ -\tau(K_1)-\tau(K_2)+2\tau(H) } \cdot (1000 \Tilde{\lambda}^{20} k^{20})^{-(|E(K_1)|+|E(K_2)|-2|E(H)|)} \,.
\end{align*}
It remains to bound $\mathsf{NumPath}(X_1,\ldots,X_{\mathtt t})$. Firstly, we have at most $n^{p}$ possible choices for the set $\big( \cup_{\mathtt i=1}^{\mathtt t} \operatorname{EndP}(P_{\mathtt i}) \big) \setminus \big( \mathtt B \cup V(H) \big)$. Given this, we have at most $D^{8}$ possible choices of each $\operatorname{EndP}(P_{\mathtt i})$. Given the endpoints of $P_{\mathtt i}$, we have at most $n^{X_{\mathtt i}-1}$ possible choices for the remaining vertices of $P_{\mathtt i}$. Thus, we have
\begin{align*}
    \mathsf{NumPath}(X_1,\ldots,X_{\mathtt t}) \leq n^p D^{8 \mathtt t} n^{ X_1+ \ldots + X_{\mathtt t} - \mathtt t } \,.
\end{align*}
Plugging this estimation into \eqref{eq-bad-set-prob-relaxation-2} we obtain that
\begin{align}
    \eqref{eq-bad-set-prob-relaxation-2} &\leq \sum_{p \leq 5D^4} \sum_{\mathtt t \geq (|\mathtt L|+\ell+3p)/2} \sum_{ (X_1,\ldots,X_{\mathtt t}) \textup{ satisfying } \eqref{eq-requirement-X} } \big( \tfrac{k\lambda}{n} \big)^{ X_1 + \ldots + X_{\mathtt t} } n^p D^{8 \mathtt t} n^{ X_1+ \ldots + X_{\mathtt t} - \mathtt t } \nonumber \\
    &= \sum_{p \leq 5D^4} \sum_{\mathtt t \geq (|\mathtt L|+\ell+3p)/2} \sum_{ (X_1,\ldots,X_{\mathtt t}) \textup{ satisfying } \eqref{eq-requirement-X} } n^{- \mathtt t+p} D^{8 \mathtt t} (k\lambda)^{X_{\mathtt 1}+\ldots+X_{\mathtt t}} \,.  \label{eq-final-relaxation}
\end{align}
Recall \eqref{eq-requirement-X}. For $(X_1,\ldots,X_{\mathtt t})$ satisfying \eqref{eq-requirement-X}, we can then get that the quantity $n^{- \mathtt t} D^{8 \mathtt t} (k\lambda)^{X_{\mathtt 1}+\ldots+X_{\mathtt t}}$ is bounded by
\begin{align*}
    \frac{ (\tfrac{n}{D^{50}})^{ \tau(K_1)+\tau(K_2)-2\tau(H)-|\mathtt L_1|-\ell-p } (2000 \Tilde{\lambda}^{22} k^{22})^{ 2N^2(\Gamma_1+\Gamma_2+\ell)} }{ D^{42 \mathtt t} (1000 \Tilde{\lambda}^{19} k^{19})^{ (X_{\mathtt 1}+\ldots+X_{\mathtt t}) } (1000\Tilde{\lambda}^{20} k^{20})^{|E(K_1)|+|E(K_2)|-2|E(H)| } } \,, 
\end{align*}
which is in turn bounded by (denote $\iota \in (\tfrac{1}{4}, \tfrac{1}{3})$ such that $(\tfrac{n}{D^{50}})^{\iota}=n^{\frac{1}{4}}$)
\begin{align*}
    &\Big( n^{- \mathtt t} D^{8 \mathtt t} \lambda^{X_{\mathtt 1}+\ldots+X_{\mathtt t}} \Big)^{1-\iota} \\
    *\ &\Big( \frac{ (\tfrac{n}{D^{50}})^{ \tau(K_1)+\tau(K_2)-2\tau(H)-|\mathtt L_1|-\ell-p } (2000 \Tilde{\lambda}^{12} k^{12})^{ 2N^2(\Gamma_1+\Gamma_2+\ell)} }{ D^{42\mathtt t} (1000\Tilde{\lambda}^{19} k^{19})^{ (X_{\mathtt 1}+\ldots+X_{\mathtt t}) } (1000\Tilde{\lambda}^{20} k^{20})^{|E(K_1)|+|E(K_2)|-2|E(H)| } } \Big)^{\iota} \\
    \leq\ &  n^{- 2\mathtt t/3} D^{-8\mathtt t}(2k^2)^{ -(X_{\mathtt 1}+\ldots+X_{\mathtt t}) } \\
    *\ &\frac{ n^{ \frac{1}{4}( \tau(K_1)+\tau(K_2)-2\tau(H)-|\mathtt L_1|-\ell-p ) } (2000 \Tilde{\lambda}^{22} k^{22})^{2N^2(\Gamma_1+\Gamma_2+\ell)} }{ (4\Tilde{\lambda}^2 k^2)^{|E(K_1)|+|E(K_2)|-2|E(H)|} } \,.
\end{align*}
Thus, we have that \eqref{eq-final-relaxation} is bounded by (note that $\sum_{ X_1,\ldots,X_{\mathtt t} \geq 1 } (2k^2)^{-(X_{\mathtt 1}+\ldots+X_{\mathtt t})} D^{-8\mathtt t}=1+o(1)$)
\begin{align*}
    & \sum_{p \leq 5D^4} \sum_{\mathtt t \geq (|\mathtt L|+\ell+3p)/2} \frac{ n^{ \frac{1}{4}( \tau(K_1)+\tau(K_2)-2\tau(H)-|\mathtt L_1|-\ell-p ) } (2000 \Tilde{\lambda}^{22} k^{22})^{ 2N^2(\Gamma_1+\Gamma_2+\ell)} n^{-\frac{2}{3}\mathtt t+p} }{ (4\Tilde{\lambda}^2 k^2)^{|E(K_1)|+|E(K_2)|-2|E(H)|} }  \\  
    \leq\ & [1+o(1)] \cdot \frac{ n^{ \frac{1}{4}( \tau(K_1)+\tau(K_2)-2\tau(H)-2|\mathtt L_1|- |\mathtt L_2|-2\ell ) } (2000 \Tilde{\lambda}^{22} k^{22})^{ 2N^2(\Gamma_1+\Gamma_2+\ell)} }{ (4\Tilde{\lambda}^2 k^2)^{|E(K_1)|+|E(K_2)|-2|E(H)|} } \,.
\end{align*}
Combined with \eqref{eq-bad-set-prob-relaxation-1} and \eqref{eq-bad-set-prob-relaxation-2}, this yields the desired bound on $\widetilde{\Pb}(\mathcal B(G(\operatorname{par})|_{\operatorname{U} \setminus \mathtt E}) = \mathtt B )$.

\bibliographystyle{plain}
\small

\end{document}